\begin{document}

    \title[Spectral rigidity of Sinai billiards]{A CAT(0)-approach to the marked length spectral rigidity of Sinai billiards}
    
    \author{Douglas Finamore}
    \address{Instituto de Matem\'atica, Estat\'istica e Computa\c{c}\~ao Cient\'ifica - UNICAMP\\
    	Rua S\'ergio Buarque de Holanda, 651 | 13083-859, Campinas, SP, Brasil.}
    \email{douglas.finamore@alumni.usp.br}

    \author{Martin Leguil}
    \address{Centre de Math\'ematiques Laurent Schwartz - \'Ecole Polytechnique\\
    	Route de Saclay, 91128 Palaiseau Cedex, France.}
    \email{martin.leguil@polytechnique.edu}

\begin{abstract} 
We study the spectral rigidity problem for Sinai billiards with finite horizon, specifically asking whether the geometry of the billiard table can be recovered from the lengths of its (marked) periodic trajectories.
To address this, we introduce an enriched marked length spectrum $\mathcal{EL}$ and prove that two Sinai billiards sharing the same $\mathcal{EL}$ must be isometric.
Our approach involves approximating the billiard flow using geodesic flows on smooth Riemannian surfaces. In the limit, these flows converge to $\mathrm{CAT}(0)$ spaces, which encode both the lengths of periodic orbits and the geometry of the boundary. We adapt Otal’s original method\textemdash developed for marked length spectrum rigidity in negatively curved surfaces\textemdash to this new setting. Here, the lack of curvature control is offset by metric comparison estimates.
By integrating the analysis of geodesic flows with perturbative techniques for periodic orbits, we establish a rigidity theorem for Sinai billiards with finite horizon.
These results extend the classical theory of marked length spectrum rigidity beyond the Riemannian setting, demonstrating that\textemdash even in discontinuous dynamical systems\textemdash geometric information is rigidly encoded in spectral data. 
\end{abstract}
\maketitle

\tableofcontents

\section{Introduction}

We consider dynamical systems with hyperbolic behaviour arising from geometry, such as geodesic flows on negatively curved Riemannian manifolds or dispersing billiards. A central question is how to parameterise the moduli space of such systems. Specifically, given a Riemannian manifold equipped with a negatively curved metric, one seeks to describe the set of all such metrics up to the action of the diffeomorphism group. Similarly, for billiards, the goal is to understand the set of isometries within a fixed diffeomorphism class of dispersing planar billiards.
These systems typically exhibit a dense set of periodic orbits (restricted to the non-wandering set in the non-compact case), making periodic data a natural choice for moduli. The information recorded for periodic orbits can be categorized as follows:
\begin{itemize}
    \item topological or symbolic data: on negatively curved manifolds, and more generally, for metrics with Anosov geodesic flow, it is well-known (see e.g.~\cite{klingenberg_riemannian_1974}) that there is a unique closed geodesic in each free homotopy class. For dispersing billiards, such as open dispersing billiards or Sinai billiards, the symbolic data naturally assigned to each periodic orbit is the sequence of scatterers encountered by the trajectory;
    \item length data: the \emph{length spectrum} is the collection of all lengths (or periods) of closed orbits;  
    \item Lyapunov exponents: each periodic orbit in a hyperbolic system is associated with exponents measuring the exponential divergence rate of nearby orbits. In two dimensions, this reduces to a single exponent.
\end{itemize}
A natural question that arises is: To what extent do periodic orbits characterise the system?
A dynamical analogue of Mark Kac’s famous question\textemdash ``Can one hear the shape of a drum?''\textemdash is the following inverse problem: Does the length spectrum encode the system’s geometry?
Unfortunately, this problem is typically intractable due to the lack of structure in the length spectrum. Even if two systems share the same length spectrum, one cannot \textit{a priori} rule out the possibility that a given length corresponds to periodic orbits with vastly different behaviour in each system. Matching data between such systems is therefore generally a hard task. 
To impose more structure on the spectrum, one usually augments it with symbolic data, yielding a \emph{marked length spectrum}. Informally, this can be viewed as a (set-valued) function
$$
\text{symbolic data } \omega \quad \mapsto\quad \{\text{lengths of all periodic orbits with symbol }\omega\}. 
$$
In the hyperbolic setting, the expansiveness of the dynamics typically reduces the set on the right-hand side to a single positive number. For certain systems\textemdash such as geodesic flows on negatively curved Riemannian manifolds or open dispersing billiards satisfying the no-eclipse condition\textemdash the dynamics is structurally stable. Consequently, the symbolic data does not depend on the choice of negatively curved metric for a given Riemannian manifold, nor on the specific geometry of the open dispersing billiard.
This defines a natural marked length spectrum, allowing us to rephrase the previous question as follows:
Does the marked length spectrum fully determine the system up to isometry?

The question of marked length spectral determination for negatively curved metrics on surfaces was answered affirmatively and simultaneously by Otal~\cite{otal_spectre_1990} and Croke~\cite{croke1990rigidity} in their celebrated work. Significant progress has recently been made in generalizing these results in several directions.
In higher dimensions, Guillarmou and Lefeuvre~\cite{guillarmou_marked_2019} proved local marked length spectral determination for metrics of non-positive curvature with Anosov geodesic flow. In the same work, they also provided new stability estimates, quantifying the extent to which the marked length spectrum controls the distance between metrics.
Even more recently, for surfaces of genus at least $2$, Guillarmou, Lefeuvre, and Paternain~\cite{guillarmou2025marked} were able to drop the negative curvature assumption and establish global marked length spectral determination. Specifically, they showed that two metrics with Anosov geodesic flows on a given surface are isometric via an isometry isotopic to the identity if they share the same marked length spectrum.

A related framework is that of flat metrics, more precisely, non-positively curved Euclidean cone metrics, where curvature is concentrated at conical singularities. Banković and Leininger~\cite{bankovic_marked-length-spectral_2017} showed that two flat metrics on a closed, oriented surface that assign the same lengths to all closed curves differ by an isometry isotopic to the identity.
It is also worth noting that the question of marked length spectral rigidity has been investigated in very different contexts. For instance, recent work by Abbondandolo and Mazzucchelli~\cite{Abb_Mazz} addresses the marked length spectral determination of metrics on the $2$-sphere $\mathbb{S}^2$ with an $\mathbb{S}^1$-action and a unique equator. Here, the dynamics is integrable, which sets it apart from the hyperbolic geodesic flows discussed earlier.

Spectral rigidity of billiards has been studied for various classes of domains. De Simoi, Kaloshin and Wei~\cite{desimoikal} showed that convex domains with $\mathbb{Z}_2$-symmetry and that are sufficiently close to a circle exhibit marked length spectral rigidity. In~\cite{de2023marked}, it is shown that open dispersing billiards with analytic boundaries satisfying the no-eclipse condition and with $\mathbb{Z}_2 \times \mathbb{Z}_2$-symmetry can be recovered from their marked length spectral data. The dynamics of such billiards is hyperbolic, aligning them more closely with Anosov geodesic flows. These billiards are structurally stable and admit natural symbolic dynamics. However, the non-wandering dynamics occurs only on a Cantor set, and analyticity is required to globally determine the domain.\\

In this paper, we investigate the case of Sinai billiards, obtained by removing from the $2$-torus $\mathbb{T}^2$ a finite number of strongly convex scatterers. While the dynamics of Sinai billiards shares many features with Anosov geodesic flows on surfaces\textemdash such as hyperbolicity, ergodicity, and exponential mixing, see e.g.~\cite{chernov_chaotic_2006, baladi2018exponential}\textemdash it also introduces several additional challenges:
\begin{itemize}
\item the billiard map has a dense set of singularities, and the billiard flow is not Anosov;
\item the system is not structurally stable: perturbing the geometry of the boundary can create or destroy periodic orbits.
\end{itemize}
A symbolic coding for Sinai billiards does arise naturally: by considering a $\mathbb{Z}^2$-periodic lift of the billiard table to $\mathbb{R}^2$ and labelling each scatterer, each orbit can be assigned a symbolic sequence consisting of the labels of the obstacles it successively encounters.
However, due to the lack of structural stability, the question of marked length spectral determination for Sinai billiards may seem less canonical than for Anosov metrics on surfaces. The marking is not \textit{a priori} stable under perturbation, and the symbolic data itself must encode substantial geometric information. In particular, if we discard the length data and collect all symbolic data corresponding to sequences realized by (periodic) billiard orbits into a \emph{symbolic spectrum}, we can even pose the following stronger question:
\begin{question}
    Does the symbolic spectrum determine a Sinai billiard up to isometry? 
\end{question}
While we are unable to answer this question for Sinai billiards, we note that several works address similar questions for flat billiards (see, e.g.,~\cite{duchin2021you, calderon2018hear}).
In the present work, rather than focusing on the marked length spectrum corresponding to periodic orbits of a Sinai billiard, we take a different approach. Specifically, we build on a construction\textemdash suggested by Birkhoff and Arnold and detailed in the work of Kourganoff~\cite{kourganoff_anosov_2016}\textemdash which demonstrates that the billiard flow on a Sinai billiard can, in a certain sense, be approximated by a one-parameter family of Anosov geodesic flows on a surface lifting the table. This allows us to define an \emph{enriched length spectrum}, obtained as the ``limit'' of the marked length spectra of the approximating geodesic flows. Although this construction introduces periodic data that do not correspond to actual billiard orbits, it renders the marking more natural by leveraging the symbolic data shared across the family of approximating geodesic flows.

Let us quickly recall the construction in~\cite{kourganoff_anosov_2016}. Fix a finite-horizon Sinai billiard $\mathcal{D}\subset \mathbb{T}^2$. Let \( E= \mathbb{T}^2\times \R \), and denote by \( p\colon E \to \mathbb{T}^2 \) the canonical projection onto the first two coordinates. We consider a smooth closed immersed surface \( K \) in \( E \) such that \( p(K) = \mathcal{D} \), and such that  $K \setminus p^{-1}(\partial\mathcal{D})$ has two connected components each of which is diffeomorphically mapped to \( \mathrm{int}(\mathcal{D}) \) under \( p \). 
We interpret \( K \) as a surface obtained from \( \mathcal{D} \) by gluing different copies of \( \mathcal{D} \) along their respective boundary components.

\begin{figure}[!h]
    \centering
    \includegraphics[width=.8\textwidth]{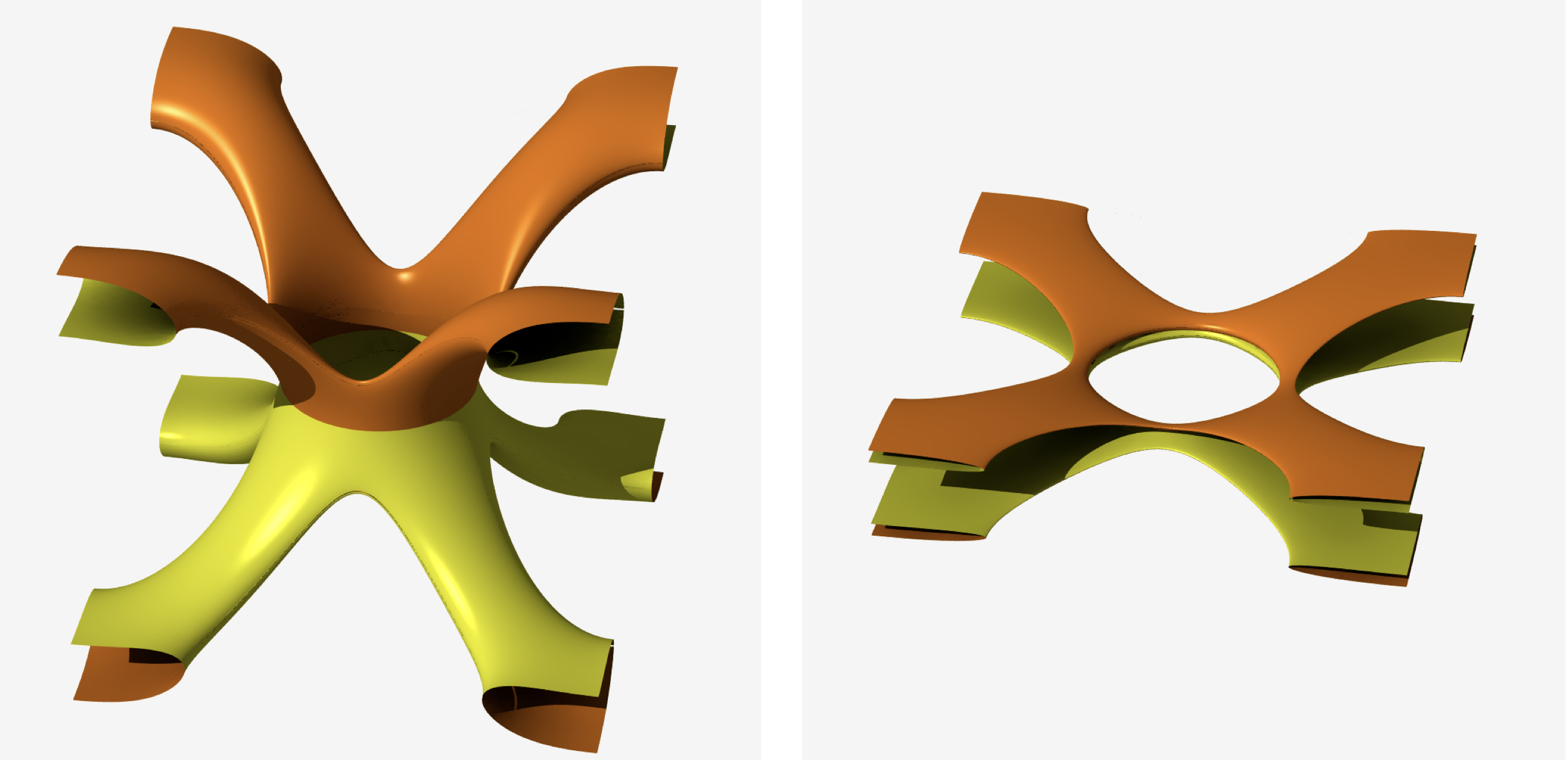}
    \caption{A surface $K$ lifting the table $\mathcal{D}$ and a deformation $K_\epsilon$ \\ (Credits: Mickaël Kourganoff, Jos Leys).\label{liftsurface_png}} 
\end{figure}

\par For \( \epsilon \in (0,1] \), we let \( \alpha_\epsilon \) be the contraction given by \( (x,y,z) \mapsto (x, y, \epsilon z) \) and write \( g_\epsilon \) to denote the metric induced on \( K_\epsilon := \alpha_\epsilon(K) \) by the Euclidean metric.
Pulling back via \( \alpha_\epsilon \), we will consider \( \{g_\epsilon\} \) as a family of metrics on the surface \( K \). 
We denote by \( \mathbf{g}_\epsilon \) the geodesic flow of the surface \( (K, g_\epsilon) \).
Then \( p \) induces a smooth mapping
\begin{equation*} 
\begin{split}
    \pi_K\colon T^1K^\circ &\to T^1 \mathcal{D} \\
    (x,v) &\mapsto \left(p(x), \frac{dp_x(v)}{\|dp_x(v)\|}\right).
\end{split}
\end{equation*} 
Let us denote by \( \mathcal{C}:= \mathrm{Conj}(\pi_1(K)) \) the set of free homotopy classes on \( K \). Note that $\mathcal{C}$ is independent of the choice of surface $K$ lifting $\mathcal{D}$ that satisfies our requirements. By Kourganoff~\cite{kourganoff_anosov_2016}, for $\epsilon_0>0$ small enough, the geodesic flows $\{\mathbf{g}_\epsilon\}_{0<\epsilon<\epsilon_0}$ associated to the family of metrics $\{g_\epsilon\}_{0<\epsilon<\epsilon_0}$ are all Anosov. Therefore, for $0<\epsilon<\epsilon_0$, there exists a unique closed geodesic \( \gamma_\epsilon^\alpha\) of \( (K, g_\epsilon) \) in each free homotopy class \( \alpha \in \mathcal{C}(K) \). We will see in Lemma~\ref{lem:unique_minimising_geodesic} that the 
family $\{p(\gamma_\epsilon^\alpha)\}_{0<\epsilon<\epsilon_0}$ converges in the Hausdorff topology to some closed curve \( \gamma^\alpha_0 \) in $\mathcal{D}$ as $\epsilon\to 0$, and their respective lengths $\{l_\epsilon(\gamma_\epsilon^\alpha)\}_{0<\epsilon<\epsilon_0}$ also converge to a limiting length $l_0(\gamma_0^\alpha)$. We can then define the~\emph{enriched marked length spectrum} of \( \mathcal{D} \) as the function
    \begin{align*}
        \mathcal{EL}_{\mathcal{D}}\colon \mathcal{C} &\to \R \\
        \alpha &\mapsto l_0(\gamma_0^\alpha). 
    \end{align*} 
\begin{remark}
For a ``generic'' Sinai billiard with finite horizon (see Proposition~\ref{prop_good_tables}), every periodic orbit of the billiard is realized as $\gamma_0^\alpha$ for some $\alpha \in \mathcal{C}$. Consequently, $\mathcal{EL}_\mathcal{D}$ can be interpreted as a natural completion of the marked length spectrum of $\mathcal{D}$, which would be the restriction of $\mathcal{EL}_\mathcal{D}$ to those $\alpha \in \mathcal{C}$ associated with genuine billiard orbits.
\end{remark}
\begin{remark}
As we shall see (e.g. in Section~\ref{sec:geodesic_approximations}), there are multiple ways to rigorously define the ``limit'' of the family of Anosov geodesic flows $\{\mathbf{g}_\epsilon\}_{0 < \epsilon < \epsilon_0}$ as $\epsilon \to 0$, all of which are equivalent. Although the resulting limit object is no longer an Anosov flow, it retains several key features inherited from the limiting $\mathrm{CAT(0)}$ spaces.
We also emphasise that the enriched spectrum $\mathcal{EL}_\mathcal{D}$ is independent of the specific choice of the surface $K$ lifting $\mathcal{D}$; in particular, it is intrinsic to the Sinai billiard table $\mathcal{D}$ (see e.g. Appendix~\ref{sec_billiards_cycles_lf}). 
\end{remark}
Let $\mathcal{D}_1,\mathcal{D}_2\subset \mathbb{T}^2$ be two Sinai billiards with finite horizon that are diffeomorphic to each other. We can choose surfaces $K_1,K_2$ lifting $\mathcal{D}_1,\mathcal{D}_2$ which are diffeomorphic to each other; in particular, the sets  of free homotopic classes on $K_1,K_2$ are naturally identified.  
Our main result is: 
\begin{theoalph} 
Let $\mathcal{D}_1,\mathcal{D}_2\subset \mathbb{T}^2$ be two Sinai billiards with finite horizon that are diffeomorphic to each other. If the billiards $\mathcal{D}_1,\mathcal{D}_2$ share the same enriched marked length spectrum, i.e., $\mathcal{EL}_{\mathcal{D}_1}=\mathcal{EL}_{\mathcal{D}_2}$, then $\mathcal{D}_1$ and $\mathcal{D}_2$ are actually isometric. 
\end{theoalph}
Natural questions then arise.
\begin{question}
Does partial information from the enriched marked length spectrum suffice to recover this conclusion? In particular, to what extent does the set of periodic billiard orbits encode the geometry of the billiard $\mathcal{D}$?
\end{question}

\begin{question}
For convex billiards, the Poisson relation\textemdash due to Andersson and Melrose~\cite{andersson1977propagation}\textemdash connects the singular support of the wave trace (defined via the Laplace spectrum) to the length spectrum. By analogy, in the case of Sinai billiards, is it possible to demonstrate that the additional length data, which do not originate from genuine billiard orbits, are also encoded in some Laplace spectral data?
\end{question}

\subsection*{Acknowledgements}
We are grateful to the LESET/MATH-AMSUD project, coordinated by Jérôme Buzzi, Pablo D. Carrasco, and Radu Saghin, for making this collaboration possible. 
We thank the Centre de Mathématiques Laurent Schwartz (CMLS) at École Polytechnique for its hospitality during the development of this project. 
We also thank Andrey Gogolev for his valuable insights and discussions related to this work, and Mickaël Kourganoff for permission to reproduce the figures from~\cite{kourganoff_anosov_2016}. 

D.F. was supported by CAPES Grant No. 88887.898617/2023-00 as part of the LESET/MATH-AMSUD project. 
M.L. was supported by the ANR AAPG 2021 PRC CoSyDy (Conformally symplectic dynamics, beyond symplectic dynamics; Grant No. ANR-CE40-0014) and by the ANR JCJC PADAWAN (Parabolic dynamics, bifurcations, and wandering domains; Grant No. ANR-21-CE40-0012). 

\section{Preliminaries}\label{sec_symbolics_prelim}
\par We begin by introducing some established facts and notation.
All the statements in this section can be found in \cite{chernov_chaotic_2006}, if only with slightly different notation. 
Let \( \mathcal{D}\subset \mathbb{T}^2 \) be a Sinai billiard table with finite horizon whose boundary \( \partial \mathcal{D} \) is of class \( C^r \),  \( r \in \mathbb{N}_{\geq 3}\).
More specifically, we have 
\[
    \partial\mathcal{D} = \Gamma_1 \cup \Gamma_2 \cup \cdots \cup \Gamma_m,
\]    
where \( \Gamma_i \) is a $C^r$ curve bounding a strongly convex open set \( \mathcal{O}_i \subset \mathbb{T}^2 \), called an obstacle, and \( \mathcal{D} \coloneqq \mathbb{T}^2\setminus \cup_{i=1}^m \mathcal{O}_i \).

\par For each \( i \in \{1,\cdots,m\} \), we denote by \( \ell_i>0 \) the perimeter of \( \mathcal{O}_i \), and let \( \gamma_i\colon \mathbb{T}_i\to \T^2 \) be an anticlockwise parameterisation of \( \Gamma_i \) by arc-length, where \( \mathbb{T}_i\coloneqq\R/\ell_i \Z \).
We set \( \mathbb{T}_\mathcal{D}\coloneqq\cup_{i=1}^m \mathbb{T}_i \), and let \( \gamma\colon \mathbb{T}_\mathcal{D}\to  \T^2 \), \( \T_i \ni s\mapsto \gamma_i(s) \). 
For each \( s \in \T_\mathcal{D} \), we denote by \( n_s \) the unit outward normal vector at the point \( \gamma(s)\in \partial \mathcal{D} \), and by \( \mathcal{K}(s) \) the curvature at this point; following the convention in \cite{chernov_chaotic_2006}, we assume that \( \mathcal{K}(s)> 0 \). 
\par We let \( \mathcal{M}\coloneqq\T_\mathcal{D} \times [-\frac{\pi}{2},\frac{\pi}{2}] \), and denote by \( f\colon\mathcal{M} \to\mathcal{M} \), \( (s,\varphi)\mapsto (s',\varphi') \) the billiard map within \( \mathcal{D} \), where \( s \) corresponds to the position on the boundary, and \( \varphi \) is the oriented angle from the inward normal vector \( -n_s \) to the velocity vector at \( \gamma(s) \).
\par Moreover, \( \mathcal{S}_0 \coloneqq \partial \mathcal{M} = \{(s,\varphi); |\varphi| = \pi/2\} \) is the set of \emph{grazing} collisions of \( f \). 
Set
\[
    \mathcal{S}_{n+1} \coloneqq \mathcal{S}_n \cup f^{-1}(\mathcal{S}_n) \text{ and }  \mathcal{S}_{-(n+1)} \coloneqq \mathcal{S}_{-n} \cup f(\mathcal{S}_{-n}).
\] 
Then, for every \( n \in \N \), \( \mathcal{S}_{n+1}\), resp. \( \mathcal{S}_{-(n+1)}\) is the singularity set of the map \( f^{n+1} \), resp. \( f^{-(n+1)} \), while these maps are  of class \( C^{r-1} \) at \( \mathcal{M}\setminus(\mathcal{S}_{n+1} \cup \mathcal{S}_{-(n+1)}) \).
The points in \( \mathcal{S} \coloneqq \cup_{i \in \Z} \mathcal{S}_i \) are called the \emph{singular points} of the billiard map, and \( \mathcal{M}^\mathrm{reg} \coloneqq \mathcal{M}\setminus\mathcal{S} \) is the \emph{regular set} of \( f \).
The singular set has empty interior, but is dense in \(\mathcal{M}\), while the regular set has full Lebesgue measure, but is badly disconnected. 
\par Finally, for each \( (s, \varphi) \in \mathcal{M} \), its free-flight time is the function 
\[
    \tau(s, \varphi) = \|\gamma(\mathrm{Pr}_1(f(s, \varphi))) - \gamma(s)\|,
\]
where we denote by \( \| \cdot \| \) the Euclidean distance in \( \mathbb{T}^2 \) and by \(\mathrm{Pr}_1\colon \mathcal{M} \to \partial\mathcal{D}\) the projection onto the first coordinate.
The finite horizon hypothesis means that there are positive constants \( \tau_\mathrm{min}>0 \) and \( \tau_\mathrm{max}>0 \) such that
\( \tau_\mathrm{min} \leq \tau(x) \leq \tau_\mathrm{max} \) for every \( x \in \mathcal{M} \).
The suspension flow of \( f \) with roof function \( \tau \) is called the billiard flow \( \Psi \) of \( \mathcal{D} \). 
Its phase space is denoted by \( \Omega \), and the usual coordinates are \((x, y, \omega)\), where \((x,y)\) are coordinates on \(\mathcal{D}\) and \(\omega\) is the angle between the velocity of the trajectory and the \(x\)-axis.
The saturations of \(\cS\) and \(\mathcal{M}^{\mathrm{reg}}\) under the flow form the singular and regular set of \(\Omega\), denoted by \(\cS\) and \(\Omega^{\mathrm{reg}}\).
It is known that the billiard flow preserves the Lebesgue measure \(dx\wedge dy\wedge d\omega\) and the contact form \(\lambda = \cos\omega dx + \sin\omega dy\) \cite{chernov_chaotic_2006}.
Indeed, the flow is generated by the Reeb field of such form.
\par We will adopt, instead, the Jacobi coordinates \((\eta, \xi, \omega)\), introduced in \cite{wojtkowski_two_1994}, where
\begin{align*}
    \eta &= x\cos\omega + y\sin\omega, \\
    \xi &= x\sin\omega - y\cos\omega.
\end{align*}
In particular, while in the usual coordinates the action of the flow between collisions is \(\Psi^t(x, y, \omega) = (x + t\cos\omega, y+t\sin\omega, \omega)\), in Jacobi coordinates this action is simply translation on the first coordinate: 
\[
    \Psi^t(\eta, \xi, \omega) = (\eta + t, \xi, \omega).
\]
Inverting the change of coordinates gives \(x = \eta\cos\omega  + \xi\sin\omega \) and \( y = \eta\sin\omega  - \xi\cos\omega \).
Hence the forms
\[
    d\eta + \xi d\omega = \cos\omega dx + \sin\omega dy = \lambda
\]
and
\[
   \lambda \wedge d\lambda =  d\eta\wedge d\xi\wedge d\omega = - dx\wedge dy\wedge d\omega
\]
are both preserved by the flow.
In Jacobi coordinates, the Reeb field of \(\lambda\) is exactly the coordinate field \(\partial_\eta\), which correspond to the displacement of a particle in the direction of the flow.
\par The map \(f\) and the flow \(\Psi\) are hyperbolic: at every regular point there are two distinct Lyapunov exponents (and a zero exponent in the direction \(\partial_\eta\), in the case of the flow). 
This gives a decomposition 
\[
    T_x\Omega = T^c_x\Omega \oplus T^\perp_x\Omega = \R\partial_\eta \oplus E^u(x) \oplus E^s(x),
\]
of the tangent space of every regular point, where \(E^s(x), E^u(x)\) are the stable and unstable bundles, respectively.
As usual, \(E^s(x)\), resp. $E^u(x)$, contracts, resp. expands exponentially under the flow as \(t \to \infty\), and is locally integrable to \(C^{r-2}\) curves, called the strong stable (resp. unstable) local manifolds \(\Ws{x}\) (resp. \(\Wu{x}\)).
The end points of the manifolds \(\cW_{\mathrm{loc}}^*(x)\) (not included in the manifolds) are always singular.
These manifolds are not uniform in length, and get smaller as one approaches the singular set.
In particular, every open set of \(\Omega\) contains points whose unstable manifolds are arbitrarily small, and therefore points in \(\Omega\) do not have product neighbourhoods.
The saturations of \(\Ws{x}\) and \(\Wu{x}\) with respect to the flow are the weak, or central, stable  and unstable manifolds, respectively, denoted by \(\Wcs{x}\) and \(\Wcu{x}\).

\subsection{Liftings and the symbolic space}
We wish to associate with each orbit of $f$ a sequence of symbols that represents the scatterers it encounters along the way.
However, this approach is problematic on the torus because an orbit could hit the same scatterer several times in succession.
To avoid this issue, let \( \widetilde{\mathcal{D}}\subset \R^2 \) be the \( \Z\times \Z \)-periodic billiard table such that \( \pi(\widetilde{\mathcal{D}})=\mathcal{D} \), where \( \pi\colon [0,1)^2\to \T^2 \) denotes the canonical projection. 
Abusing notation, we denote the billiard map and flow on \( \widetilde{\mathcal{D}} \) by \( f \) and \( \Psi \) as well.
\par Let \( \{B_1, \cdots, B_k\} \) be set of connected components of the lift \( \pi^{-1}(\partial\mathcal{D}) \subset [0,1)^2. \)
We index all the scatterers in the lifted table on \( \R^2 \) using the alphabet \( \mathcal{A} \coloneqq \mathbb{Z}^2\times \{1, \cdots, k\} \), as follows: 
\[
    B_{(i,j;l)} \coloneqq B_l + (i,j),
\]
that is, \( B_{(i,j;l)} \) is the unique scatterer obtained by translating \( B_l \) by \( (i,j) \in \Z^2 \).
Let \( \Sigma \) denote the space \( \mathcal{A}^\mathbb{Z} \) of two-sided sequences of elements in \( \mathcal{A} \).
This is a complete metric space with the usual distance
\[
    d_0(\alpha, \beta) = \sum_{i \in \Z} 2^{-|i|}(1 - \delta_{a_ib_i})
\]
between two sequences \( \alpha = \{a_i\}_{i \in \Z} \) and \( \beta = \{b_i\}_{i \in \Z} \).
In what follows, we will work with the equivalent metric \( \rho \) given by
\[
    \rho(\alpha, \beta) \coloneqq 2^{-\max\{n \in \N; a_i = b_i \text{ for all } |i| \leq n\}}.
\]
We associate to the points in \( \mathcal{M} \) the sequence of indices of scatterers where its lifted trajectory bounces. 
\begin{defi}
The map \( \imath \colon \mathcal{M} \to \Sigma \) is given by
\[
    \imath(s,\varphi) = \{a_i\}_{i \in \Z}, 
\]
where $B_{a_0}\subset \pi^{-1}(\partial \mathcal{D})$, and for all $i \in \mathbb{Z}$, \( a_i \) is the unique element of \( \mathcal{A} \) such that \( \mathrm{Pr}_1(f^i(\pi^{-1}(s), \varphi)) \in B_{a_i} \).
We call the set \( \Sigma_\mathcal{M} \coloneqq \imath(\mathcal{M}) \) the \emph{space of admissible sequences}.
\end{defi}
The space $\Sigma$ admits a natural $\mathbb{Z}^2$-action: for any $(i,j)\in \mathbb{Z}^2$, the action is defined by $\{(i_p,j_p;l_p)\}_{p \in \mathbb{Z}}\mapsto \{(i_p+i,j_p+j;l_p)\}_{p \in \mathbb{Z}}$. We denote by $\hat \Sigma$ and $\hat {\Sigma}_\mathcal{M}$ the respective quotients of $\Sigma$ and $\Sigma_\mathcal{M}$ under this action; the associated map is denoted by $\hat \imath\colon \mathcal{M} \to \hat \Sigma$. 
    \begin{figure}[!h]
        \centering
        \includegraphics[width=.8\textwidth]{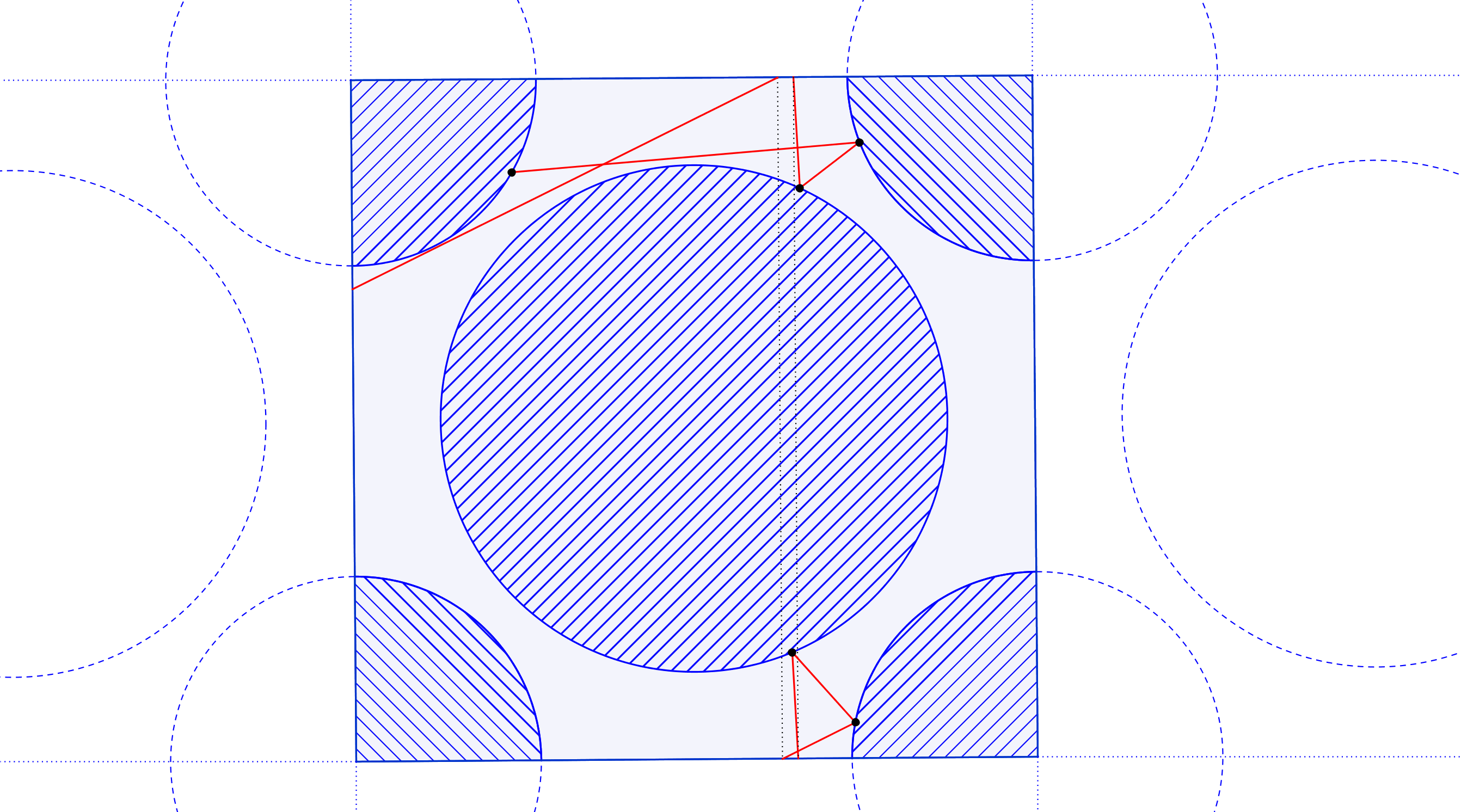}
        \caption{Lifted table.}
        \label{fig:lifted_table}
    \end{figure}
\begin{remark}
    By definition, the space \( \hat{\Sigma}_\mathcal{M} \) is invariant under the left shift \( S\colon \{a_i\}_{i \in \Z} \mapsto \{a_{i+1}\}_{i \in \Z} \), and \( \hat\imath \circ f = S \circ \hat\imath \).
\end{remark}
\begin{prop}\label{pps:representation_map_continuity} 
    The mapping \( \imath\colon \mathcal{M} \to \Sigma_\mathcal{M} \) is a bijection satisfying the following properties:
    \begin{itemize}
        \item[(i)] the restriction of \( \imath \) to the regular set 
        $\mathcal{M}^\mathrm{reg}$ is continuous;
        \item[(ii)] the inverse mapping \( (\imath\rvert_{\mathcal{M}^\mathrm{reg}})^{-1} \) is Hölder continuous. In particular, \( \imath\rvert_{\mathcal{M}^\mathrm{reg}}\colon \mathcal{M}^\mathrm{reg} \to \Sigma_{\mathcal{M}^\mathrm{reg}} \) is a homeomorphism;
        \item[(iii)] the discontinuity set of \( \imath \) is precisely the singular set \( \cS \).
    \end{itemize}
\end{prop}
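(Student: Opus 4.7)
The plan is to prove the four sub-claims (bijectivity, (i), (ii), (iii)) in turn, with the Hölder estimate (ii) as the technical heart. Bijectivity reduces to injectivity, since surjectivity is built into the definition of $\Sigma_\mathcal{M}$. Injectivity follows from expansiveness of $f$: two distinct points sharing the same symbol sequence would lie in arbitrarily small symbolic cylinders, contradicting the diameter shrinking established in (ii).

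For (i), fix $x \in \mathcal{M}^\mathrm{reg}$. Since $x \notin \mathcal{S}$, every iterate $f^i$ is of class $C^{r-1}$ on some open neighbourhood $V_i$ of $x$, and $\mathrm{Pr}_1(f^i(x))$ lies in the interior of a unique lifted scatterer $B_{a_i}$. For any $N \in \mathbb{N}$, the intersection $U_N \coloneqq \bigcap_{|i|\leq N} V_i$ can be further shrunk so that $\mathrm{Pr}_1 \circ f^i$ still takes values in $B_{a_i}$ for all $|i| \leq N$, whence $\rho(\imath(x), \imath(y)) \leq 2^{-N}$ for every $y \in U_N$, yielding continuity of $\imath\rvert_{\mathcal{M}^\mathrm{reg}}$ at $x$.

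For (ii), the core claim is a uniform exponential shrinking estimate for the symbolic cylinders $C_N(x) \coloneqq \{y \in \mathcal{M}^\mathrm{reg} : \imath(y)_i = \imath(x)_i \text{ for all } |i| \leq N\}$. Using the invariant unstable cone field of the dispersing billiard together with the standard expansion lemma (whose constants depend on $\tau_\mathrm{min}$, the maximal collision angle, and $\inf \mathcal{K}$), one extracts a uniform $\Lambda > 1$ such that unstable curves are dilated by at least $\Lambda$ per iterate, and symmetrically for stable curves under $f^{-1}$. Given regular $x, y \in C_N$ close enough, a Smale bracket $z \in W^s_\mathrm{loc}(x) \cap W^u_\mathrm{loc}(y)$ exists; since $f^N(x)$ and $f^N(z)$ both lie over the same scatterer (and hence at bounded distance), pulling back by $f^N$ and using stable contraction yields $d^s(x,z) \leq C \Lambda^{-N}$, and symmetrically $d^u(z,y) \leq C \Lambda^{-N}$. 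Hence $d(x,y) \leq 2C \Lambda^{-N}$, and taking $\alpha = \log_2 \Lambda$ converts this into the Hölder bound $d(x,y) \leq C' \rho(\imath(x), \imath(y))^\alpha$. The homeomorphism assertion follows by combining with (i).

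For (iii), the inclusion $\{\text{discontinuities of } \imath\} \subset \mathcal{S}$ is exactly (i); conversely, given $x \in \mathcal{S}$, pick the minimal $|i_0|$ with $f^{i_0}(x) \in \mathcal{S}_0$. Then $f^{i_0}(x)$ is a grazing collision, and in the lifted table infinitesimal perturbations produce trajectories colliding with distinct scatterers at the $i_0$-th iterate. Approaching $x$ through regular points on opposite sides of the local stable or unstable manifold of $f^{i_0}(x)$ therefore yields sequences whose symbolic images differ in the $i_0$-th entry, giving the discontinuity at $x$. The main obstacle throughout is making the cylinder-shrinking in (ii) uniform in $x$: because local stable and unstable manifolds degenerate in length as one approaches $\mathcal{S}$, the Smale bracket construction must be controlled with care, and the uniform $\Lambda$ is typically obtained via the Chernov--Sinai homogeneity-layer decomposition, which supplies the distortion bounds needed for the expansion estimate to hold uniformly across all cylinders $C_N$.
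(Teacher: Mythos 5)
Your items (i) and (iii), as well as the reduction of bijectivity to the cylinder-shrinking estimate, follow the paper's argument essentially verbatim. The divergence is in (ii), and there the proposal has a genuine gap. You run the classical uniformly hyperbolic argument: produce a Smale bracket \( z \in W^s_{\mathrm{loc}}(x) \cap W^u_{\mathrm{loc}}(y) \) and then use uniform contraction/expansion along the two legs. But in a Sinai billiard this bracket need not exist for the pairs you must handle: as the paper itself stresses in Section~\ref{sec_symbolics_prelim}, the local stable and unstable manifolds have no uniform lower bound on their length\textemdash every open set contains points whose unstable manifolds are arbitrarily short, so points of \( \mathcal{M} \) have no product neighbourhoods. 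Two points \( x, y \) in the same cylinder \( C_N \) can easily be farther apart than the lengths of \( W^s_{\mathrm{loc}}(x) \) and \( W^u_{\mathrm{loc}}(y) \), and your qualifier ``close enough'' cannot be made uniform in \( x \), whereas the Hölder bound must hold for \emph{all} pairs in \( C_N \) with constants independent of the cylinder. Appealing to the Chernov--Sinai homogeneity layers does not repair this: homogeneity layers supply distortion bounds along unstable curves, not lower bounds on the lengths of invariant manifolds at prescribed points, so they cannot manufacture the missing bracket. There is also a local error in the stable branch as written: for \( z \in W^s_{\mathrm{loc}}(x) \), boundedness of \( \|f^N(x) - f^N(z)\| \) together with \emph{forward} contraction yields no upper bound on \( \|x - z\| \); one must instead iterate backwards (stable manifolds expand under \( f^{-1} \)) and first argue that \( z \) inherits the backward coding of \( y \)\textemdash a step that again presupposes the bracket and Markov-type structure that the symbolic cylinders here do not have.

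The paper avoids all of this by never invoking the local product structure: it joins \( x \) and \( y \) by the trace, on the scatterer containing them, of a dispersing wave front \( W \), and uses the uniform expansion of dispersing wave fronts\textemdash with the explicit rate \( \mathcal{B}_{\mathrm{min}} = \left(\tau_{\mathrm{max}} + \frac{1}{2\mathcal{K}_{\mathrm{min}}}\right)^{-1} \)\textemdash together with the bound \( \|f^n(x) - f^n(y)\| \leq \max_i \mathrm{diam}(B_i) \) forced by the shared coding, to conclude \( \|x - y\| \leq c(\mathcal{D}) e^{-\mathcal{B} n} \) for every pair in the cylinder. Wave-front expansion in a dispersing billiard is uniform from the first collision on and requires no recovery-time, bracket, or homogeneity argument, which is exactly what makes the estimate global on each cylinder. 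To salvage your scheme, replace the bracket step by this direct unstable-curve (wave front) expansion: connect \( x \) to \( y \) along the scatterer itself rather than through an intermediate point \( z \).
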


\begin{proof}
    For \( \alpha \in \Sigma \) and \( n \in \N \), consider the preimage \( C(\alpha; n) \coloneqq \imath^{-1}(B(\alpha, 2^{-n})) \) of an open ball of \( \rho \).
    It consists of points \( x \in \mathcal{M} \) whose first \( n+1 \) iterations under \( f \),  both past and future, coincide with the truncation of \( \alpha \), or, in other words, 
    \[
        C(\alpha; n) = \{x \in \mathcal{M} \,\rvert\, f^{i}(x) \in B_{a_i} \text{ for all } |i| \leq n+1\}.
    \]
    Let \( x, y \) be two distinct points in \( C(\alpha; n) \) lying in the same connected component of \( \mathcal{M} \), and \( \gamma \) a geodesic in \( \mathcal{M} \) joining them. 
    Considering an involution of the flow, if necessary, we may assume that \( \gamma \) is the trace left on the scatterer containing \( x \) and \( y \) by a dispersing wave front \( W \) of the billiard flow (the suspension of \( f \)). 
    Thus, the trajectories of the endpoints of \( W \) on \( \widetilde{\mathcal{D}} \) are exactly the trajectories defined by \( x \) and \( y \), and since \( x, y \in C(\alpha; n) \), these trajectories hit the same \( n \) first obstacles.
    In particular, any obstacle \( B \) on the path of \( W \) that is not in the sequence \( B_{a_1}, \cdots, B_{a_n} \) would be completely ``engulfed'' by \( W \), and can safely be ignored in the analysis which follows (see Figure \ref{fig:eclipsed_obstacle} below).
    \begin{figure}[!ht]
        \centering
        \includegraphics[width=.8\textwidth]{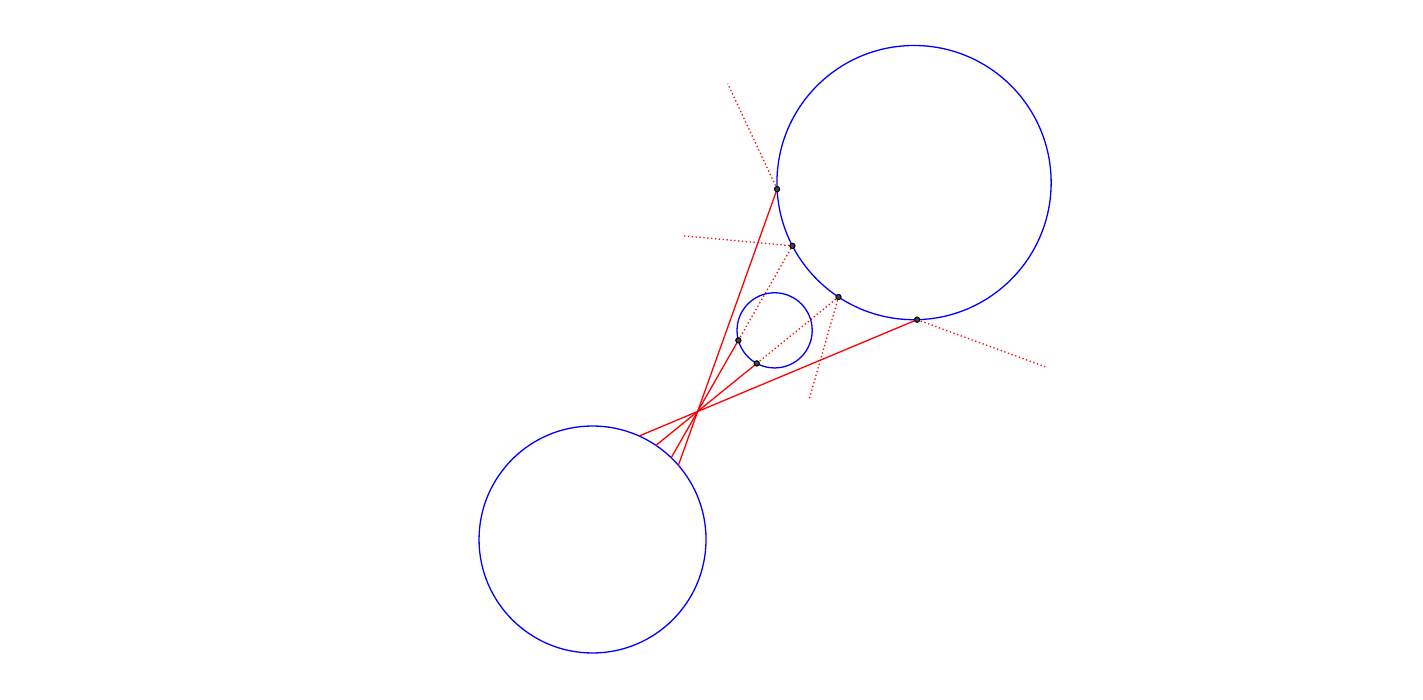}
        \caption{A smaller obstacle eclipsing a section of the interior of a wave front.}
        \label{fig:eclipsed_obstacle}
    \end{figure}
    \par As the image \( W_t \) of \( W \) under the time \( t \) map of the billiard flow grows exponentially (cf. \cite{chernov_chaotic_2006}), up to constants \( K, \cB > 0, \) depending on the table (i.e. on the curvature \( \mathcal{K} \) and the maximum free-flight time \( \tau_\mathrm{max} \)), we have
    \[
        \|f^n(x) - f^n(y)\| \geq Ke^{\cB n}\|x-y\|,
    \]
    where \( \|\cdot\| \) denotes the Euclidean metric on \( \mathcal{M} \).
    In particular, the coefficient in the exponent is
    \begin{equation}\label{eq:anosov_constats_billiard_table}
        \cB = \cB_{\mathrm{min}} := \left(\tau_{\mathrm{max}} + \frac{1}{2 \cK_{\mathrm{min}}}\right)^{-1},
    \end{equation}
    i.e., the minimum curvature of dispersing wave fronts of the table \( \mathcal{D} \).
    \par On the other hand, \( \|f^n(x) - f^n(y)\| \leq D \coloneqq \max_{1 \leq i \leq k} \mathrm{diam}(B_i) \), and therefore
    \begin{equation}\label{eq:lips_estimate}
        \|x -y\| \leq \frac{D}{K}e^{-\cB n} = c(\mathcal{D})e^{-\cB n},
    \end{equation}
    for some constant \( c(\mathcal{D}) \) depending on the table. 
    In particular, if \( \imath(x) = \imath(y) = \omega \), then \( x, y \in \cap_{n}\imath^{-1}(B(\alpha, 2^{-n})) \), and therefore 
     \( \|x -y\| \leq c(\mathcal{D})e^{-\cB n} \) for all \( n \in \N \), hence \( x = y \), and \( \imath \) is indeed a bijection.     
    \par Now, let us denote by \( \jmath \colon \mathcal{M}^\mathrm{reg} \to \Sigma \) the restriction of \( \imath \) to the regular set.
    We claim that \( \jmath^{-1}(B(\alpha, 2^{-n})) = C(\alpha; n)\cap\mathcal{M}^\mathrm{reg} \) is open.
    Indeed, since each \( f^i \) is continuous on \( \mathcal{M}^\mathrm{reg} \), there is a neighbourhood of any \( x \) in such preimage where every point bounces in exactly the same obstacles as \( x \) for \( |i| < n+1 \).
    Hence every point in \( C(\alpha; n)\cap\mathcal{M}^\mathrm{reg} \) is interior and the map \( \jmath \) is continuous, proving (i).
    \par As for (ii), note that if \( \rho(\alpha, \beta) = 2^{-n} \), then it follows from the inequality in \eqref{eq:lips_estimate} that, 
    \[
        \|\imath^{-1}(\alpha)-\imath^{-1}(\beta)\|  \leq c(\mathcal{D})e^{-\cB n} \leq c(\mathcal{D})\rho(\alpha, \beta)^{\cB \log 2^{-1}},
    \]
    hence \( \imath^{-1} \) is \( \cB \log 2^{-1}\)-Hölder continuous. 
    \par Finally, we prove (iii).
    Property (i) can be rephrased as saying that the discontinuity set of \( \imath \) is contained in the singular set \( \cS \), so all that remains to show is the converse inclusion.
    Let \( x_0 \in \cS \) and assume, without loss of generality, that \( f(x_0) \in \cS_0 \). 
    Let \( \pi_\mathcal{D}(x) \in \Gamma_i \) and \( \pi_\mathcal{D}(f(x_0)) \in \Gamma_j \).
    Then, due to the discontinuity of \( f \) at \( x_0 \), we can find points \( x \) arbitrarily close to \( x_0 \) and such that \( \pi_\mathcal{D}(f(x)) \notin \Gamma_j \), as well as points arbitrarily close and such that \( \pi_\mathcal{D}(f(x)) \in \Gamma_j \). 
    Thus, there are sequences \( x_n \to x_0 \) in \( \mathcal{M} \) such that \( \rho(\imath(x_n), \imath(x_{n+1})) = 1 \) for every \( n\in \N \), and \( x_0 \) is therefore a discontinuity point for \( \imath \).\qedhere
\end{proof}

\subsection{Periodic orbits.}\label{ssec:periodic_orbits} 
\par Although it is not needed for the main part of the argument, we collect in this section general properties of periodic orbits and their lengths. 
As a corollary of Proposition~\ref{pps:representation_map_continuity}, we have:
\begin{corollary}\label{cor:fin_card_peri}
	For each \( T>0 \), the number of periodic orbits of period at most \( T \) is finite.
\end{corollary}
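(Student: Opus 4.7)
The plan is to reduce the statement to a counting question for the discrete billiard map $f$ via the symbolic coding of Proposition~\ref{pps:representation_map_continuity}, exploiting the finite horizon hypothesis twice: once to bound the number of reflections in terms of $T$, and once to bound the number of admissible successors at each reflection.

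First, I would use the bound $\tau \geq \tau_\mathrm{min}>0$ to observe that any periodic orbit of the flow $\Psi$ of period at most $T$ undergoes at most $N := \lceil T/\tau_\mathrm{min}\rceil$ reflections, and hence descends to an $f$-periodic orbit in $\mathcal{M}$ of period at most $N$. It therefore suffices to prove that for each $n\in \mathbb{N}$, the set of $n$-periodic points of $f$ in $\mathcal{M}$ is finite.

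To count these periodic points I would pass to the symbolic model. If $x\in \mathcal{M}$ is $n$-periodic for $f$, then lifting its orbit to $\widetilde{\mathcal{D}}$ produces a translation-periodic trajectory with some translation vector $(p,q)\in \mathbb{Z}^2$, so that the symbolic sequence $\imath(x)=\{a_i\}_{i\in\mathbb{Z}}$ satisfies $a_{i+n}=a_i+(p,q,0)$ for every $i\in\mathbb{Z}$. Since by Proposition~\ref{pps:representation_map_continuity} the map $\imath$ is a bijection, distinct $n$-periodic points produce distinct such sequences. The finite horizon assumption $\tau \leq \tau_\mathrm{max}$ then gives that consecutive obstacles $B_{a_i},B_{a_{i+1}}$ along the lifted trajectory have centres within Euclidean distance at most $\tau_\mathrm{max}+2\max_{l}\mathrm{diam}(B_l)$, so the number of admissible successors of any given $a_i$ is bounded by a constant $C=C(\mathcal{D})$. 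Fixing the finitely many possible lifted representatives for $a_0$ (the $k$ scatterers intersecting the fundamental domain $[0,1)^2$), this yields at most $k\,C^{n-1}$ admissible length-$n$ translation-periodic words, hence finitely many $n$-periodic orbits of $f$.

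I do not anticipate any genuine obstacle here: the corollary amounts essentially to bookkeeping once Proposition~\ref{pps:representation_map_continuity} is in hand, with the finite horizon hypothesis playing the decisive role in rendering both the number of reflections and the number of admissible symbolic transitions finite. The only mild care needed is in choosing the lifted representatives $a_0$ consistently, so that distinct periodic orbits of $f$ on $\mathcal{M}$ really do give rise to distinct admissible words in the count.
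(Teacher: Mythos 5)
Your proposal is correct and takes essentially the same route as the paper's proof: both reduce the flow period to a billiard-map period of size at most $\lceil T/\tau_{\mathrm{min}}\rceil$, encode periodic points as translation-periodic symbolic words, use the finite horizon together with the $\mathbb{Z}^2$-periodicity of the unfolding to bound the number of admissible successor symbols at each step (the paper's uniform bound $K$ on consecutive cell-label differences is your constant $C$), and conclude by the injectivity of $\imath$ from Proposition~\ref{pps:representation_map_continuity}. The only negligible difference is bookkeeping: your count $k\,C^{n-1}$ should also include the finitely many choices of the translation vector $(p,q)$ (equivalently of $a_n$), exactly as the paper's count of the finite words $(\rho_1,\cdots,\rho_q)$ implicitly does.
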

\begin{proof}
    Fix \( T>0 \). Fix a periodic point $(s,\varphi)$ whose orbit under the billiard flow has length at most $T$. Let $q>0$ be its period under the billiard map, $f^q(s,\varphi)=(s,\varphi)$. In particular, we have $q \leq q_0$, with \( q_0:=[\frac{T}{\tau_\mathrm{min}}]+1 \). Let \(\imath(s,\varphi)=:(\rho_k)_{k}\in \mathcal{A}^\mathbb{Z} \). 
    Since $f^q(s,\varphi)=(s,\varphi)$, we have $\rho_{q}=\rho_0+(\tilde i,\tilde j;0)$, for some $(\tilde i,\tilde j)\in \mathbb{Z}^2$, and 
    $$
    \rho_p=\rho_{p \text{ mod }q}+\left\lfloor \frac{p}{q}\right\rfloor(i_0,j_0;0),\quad \forall\, p \in \mathbb{Z}.
    $$
    Moreover, by the finite horizon assumption, and $\mathbb{Z}^2$-periodicity of the unfolding, for each step $p\in \mathbb{Z}$, there exists a uniform upper bound $K\geq 1$ on the difference $|i_{p+1}-i_p|+|j_{p+1}-j_p|\leq K$ between the labels $(i_p,j_p)$ and $(i_{p+1},j_{p+1})$ of the cells at steps $p$ and $p+1$. In particular, we have $|\tilde i|+|\tilde j|\leq K q$. We conclude that periodic points of period at most $T$ are completely encoded by finite words $(\rho_1,\cdots,\rho_q)$, $q\leq q_0$, with at most $Kk$ choices for each symbol $\rho_p$. Therefore, the number of (infinite) words $\imath(s,\varphi)$ associated to periodic points with period at most $T$ is finite. By the injectivity of $\imath$ shown in Proposition~\ref{pps:representation_map_continuity}, we deduce that the set of periodic points for the billiard map $f$ (and consequently, the set of periodic orbits) of period less than $T$ is finite. \qedhere
   
\end{proof}

Given an integer $m\geq 2$, we denote by \( \mathfrak{D}^m \) the space of all Sinai billiard tables in \( \T^2 \) with \( m \) scatterers and $C^r$ boundary, $r \geq 3$, that have finite horizon. 
Each table \( \mathcal{D} \) is defined by its boundary map \( \T_\mathcal{D} \to \T^2 \), allowing us to identify \( \mathfrak{D}^m \) with the space of embeddings \( C^r_{\text{emb}}(\T_\mathcal{D}, \T^2) \), endowed with the $C^r$ topology; in particular it is a Baire space. 
\par We want to show that for a generic choice of table in \( \mathfrak{D}^m \), the set of periodic orbits has simple length spectrum and contains no singular orbits. 

\begin{defi}
For each integer $n\geq 1$, we define \( \mathcal{R}_n \) as the subset of \( \mathfrak{D}^m \) consisting of tables without any singular periodic orbits of period less than or equal to \( n \).
\end{defi}

\begin{defi}
For each integer $n\geq 1$, we define \( \mathcal{S}_n \) as the subset of \( \mathfrak{D}^m \) consisting of tables such that no distinct periodic orbits of period less or equal to \( n \) have the same length.
\end{defi}

\begin{prop}\label{prop_good_tables}
For each integer $n \geq 1$, the set \( \mathcal{R}_n \cap \mathcal{S}_n \)  is open and dense in $\mathfrak{D}^m$. 
    Let us then define the set 
    \begin{equation*}
        \mathfrak{G} := \bigcap_n \mathcal{R}_n \cap \mathcal{S}_n,
    \end{equation*}
   which represents the set of ``good'' Sinai billiard tables of class $C^r$ with finite horizon, specifically, a table $\mathcal{D}$ is in $\mathfrak{G}$ if it has no grazing periodic orbits and has a simple length spectrum. Then, the set $\mathfrak{G}$ is residual in the $C^r$ topology. 
\end{prop}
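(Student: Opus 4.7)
The strategy is to prove that each $\mathcal{R}_n\cap\mathcal{S}_n$ is open and dense in the Baire space $\mathfrak{D}^m$, so that $\mathfrak{G}$ is residual by the Baire category theorem. The input from Corollary~\ref{cor:fin_card_peri} is crucial: for any table the set of periodic orbits of period $\leq n$ under the map $f$ is \emph{finite} (use $\tau_{\min}$ to convert combinatorial period to flow-length), so all conditions to verify reduce to statements about finitely many orbits.

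For \emph{openness}, fix $\mathcal{D}\in \mathcal{R}_n\cap \mathcal{S}_n$ with periodic orbits $\mathcal{O}_1,\dots,\mathcal{O}_N$ of period $\leq n$. Since $\mathcal{D}\in\mathcal{R}_n$, each $\mathcal{O}_j\subset\mathcal{M}^{\mathrm{reg}}$, where the billiard map is $C^{r-1}$, and the dispersing hyperbolic structure ensures that the relevant iterate $f^{q_j}$ has a nondegenerate fixed point along $\mathcal{O}_j$. The implicit function theorem then yields, for all $\mathcal{D}'$ in a small $C^r$-neighborhood $\mathcal{U}$ of $\mathcal{D}$, a unique continuation $\mathcal{O}_j(\mathcal{D}')$ depending smoothly on $\mathcal{D}'$. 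The non-grazing condition and the pairwise distinctness of lengths are open conditions on a finite collection of orbits, so they persist on $\mathcal{U}$. Finally, no \emph{new} periodic orbits of period $\leq n$ can appear on $\mathcal{U}$: any such orbit corresponds to an admissible finite word in the alphabet $\mathcal{A}$, and by the finite-horizon uniform cell-jump bound used in the proof of Corollary~\ref{cor:fin_card_peri} there are only finitely many admissible word types of length $\leq n$ to consider. Each type yields an isolated critical point of the length functional (by hyperbolicity), and upper semi-continuity of the critical set rules out bifurcations in $\mathcal{U}$, possibly shrinking $\mathcal{U}$.

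For \emph{density of $\mathcal{R}_n$}, given $\mathcal{D}$ with a grazing periodic orbit of period $\leq n$, pick a grazing reflection point $p_0$ on an obstacle $\Gamma_i$ and apply a small bump perturbation of the arc-length parameterisation $\gamma_i$ supported in a tiny neighborhood of $p_0$, pushing $\Gamma_i$ inward so that the tangency is destroyed. The orbit either disappears or continues as a non-grazing orbit; crucially, the openness argument above guarantees that one can choose the perturbation small enough so that no new periodic orbit of period $\leq n$ is created, nor any existing non-grazing orbit turned grazing. Iterating over the finitely many grazing orbits yields an arbitrarily $C^r$-close table in $\mathcal{R}_n$. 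For \emph{density of $\mathcal{S}_n$ within $\mathcal{R}_n$}, suppose two distinct periodic orbits $\mathcal{O}\neq \mathcal{O}'$ of period $\leq n$ have equal length. Since they are distinct finite subsets of the regular set whose reflection points on $\partial\mathcal{D}$ are distinct finite subsets of $\mathbb{T}_\mathcal{D}$, there exists a reflection point $p\in \mathcal{O}$ disjoint from every reflection point of $\mathcal{O}'$. A $C^r$-perturbation of $\partial\mathcal{D}$ supported in a small neighborhood of $p$ disjoint from $\mathcal{O}'$ leaves $\mathcal{O}'$ (and its length) unchanged while, by a first-variation computation for the billiard length functional, it alters the length of $\mathcal{O}$ nontrivially for a generic normal direction of perturbation. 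Handling the finitely many coinciding pairs successively, one obtains density.

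The main obstacle is the length-splitting step: one must verify that the first variation of the length of $\mathcal{O}$ under the localized perturbation at $p$ is truly nonzero (so its length can be decoupled from that of $\mathcal{O}'$), and that this transversality can be made uniform enough to resolve all length coincidences simultaneously by a single arbitrarily small perturbation. This rests on the explicit formula for $\partial_{\gamma_i} \tau$ in terms of the incidence angle $\varphi$ at $p$ (which is bounded away from $\pm\pi/2$ since $\mathcal{D}\in\mathcal{R}_n$), together with the transversality of the strong stable/unstable bundles near $\mathcal{O}$ used in the proof of Proposition~\ref{pps:representation_map_continuity}. Once this is settled, combining the three steps and applying Baire's theorem concludes the proof.
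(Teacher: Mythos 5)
Your overall strategy (Baire category, finiteness of period-$\leq n$ orbits from Corollary~\ref{cor:fin_card_peri}, persistence of regular orbits for openness, localized boundary perturbations for density) matches the paper's, but two steps contain genuine gaps. First, in the density of $\mathcal{R}_n$, you rule out the creation of new periodic orbits by invoking ``the openness argument above''; this is circular, since openness was established only at tables already in $\mathcal{R}_n$, whereas the table you are perturbing still has grazing orbits. A retraction at a grazing point \emph{can} create new periodic orbits: a symbolic word that previously carried no generalised orbit may acquire one after the perturbation. The paper confronts this head-on (Lemma~\ref{lem:remov_tangency}): it partitions the finitely many admissible words of length $\leq n$ into those realized by generalised orbits and the unrealized set $\mathcal{A}_{\mathrm{emp}}$, allows new orbits to appear, and iterates the perturbation; the process terminates because $\#\mathcal{A}_{\mathrm{emp}}<\infty$ and each step strictly shrinks it. Without such a terminating iteration (or an alternative quantitative argument), your density claim for $\mathcal{R}_n$ does not close.

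Second, in the density of $\mathcal{S}_n$ you assert that two distinct equal-length orbits $\mathcal{O}\neq\mathcal{O}'$ must have a reflection point $p\in\mathcal{O}$ not shared by $\mathcal{O}'$. Distinctness of orbits does \emph{not} imply distinctness of their sets of reflection points: two different orbits can traverse exactly the same collision points (in different orders, or with different multiplicities), and then no perturbation localized at a point of $\mathcal{O}$ leaves $\mathcal{O}'$ untouched. The paper's proof of Lemma~\ref{lem:dense_simple_n} treats precisely this case: if all reflection points are shared, the reflection \emph{angles} must differ, so the first-variation vectors $DP^\lambda(\sigma)$ and $DP^\lambda(\sigma')$ are distinct; a preliminary perturbation with $\lambda(s_0)=0$, $\lambda'(s_0)\neq 0$ (which moves collision points without moving $p$, producing a second-order length change $\Delta_\sigma(\psi)=D^2L(\sigma)(\psi,\psi)>0$ by the positive definiteness from Lemma~\ref{lem:quadratic-form}) separates the reflection points, after which your localized first-order argument applies. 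Incidentally, the issue you flag as the ``main obstacle''\textemdash nonvanishing of the first variation\textemdash is in fact the easy part, since $P^\lambda(\sigma)=2\sum_i\lambda(s_i)\cos\varphi_i$ with $\cos\varphi_i$ bounded away from zero on non-grazing orbits; the two gaps above are where the real work lies.
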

We defer the proof of Proposition~\ref{prop_good_tables} to Appendix~\ref{sec_periodicorbitsbilliard}, where  further properties of periodic orbits are investigated.

\section{Geodesic approximations}\label{sec:geodesic_approximations}
\subsection{Kourganoff surfaces}
Recall that \( \Psi \) is the billiard flow of the table \( \mathcal{D} \), and let \( \Omega \) denote its phase space.
It is the suspension space of the billiard map \( f \) with roof function \( \tau \).
We consider \( E= \mathbb{T}^2\times \R \) with coordinates \( (x,y,z) \) and a canonical global frame \( (\partial_x, \partial_y, \partial_z) \).
We write \( p\colon E \to \mathbb{T}^2 \) to denote the canonical projection on the first two coordinates.
\par Consider a smooth closed immersed surface \( K \) in \( E \) such that \( p(K) = \mathcal{D} \). 
In what follows, we abuse a bit the notation and write
\begin{align*}
    K^\circ &:= K \cap p^{-1}(\mathrm{int}(\mathcal{D})), \\
    B(K)&:= K \cap p^{-1}(\partial\mathcal{D}).
\end{align*}
Moreover, we assume \( K \) satisfies
\begin{enumerate}
    \item[(K1)]\label{cond_k_un} \emph{
        The vector \( \partial_z \) is not tangent to \( K \) at any point \( x \in K^\circ \), i.e. \( \partial_z \notin T_xK \)
        };
    \item[(K2)] \emph{
        For every point \( x \in B(K) \) the curve defined by the intersection of \( K \) and the affine plane \( \{x\} + (\mathbb{R}\partial_z \oplus T^\perp_xK) \) has non-vanishing curvature around \( x \)
        }.
\end{enumerate}
\par Note that each connected component of \( K^\circ \) is diffeomorphically mapped to \( \mathrm{int}(\mathcal{D}) \) under \( p \), since \( p(K^\circ) = \mathrm{int}(\mathcal{D}) \) and \( \mathrm{int}(\mathcal{D}) \) are connected.
Similarly, each connected component of \( B(K) \) is projected diffeomorphically onto the boundary of a single scatterer \( \Gamma_i \subset \partial\mathcal{D} \).
In particular, if \( p(x) = p(x') \in \partial \mathcal{D} \), then we can join them by a path \( \eta\colon (-\delta, \delta) \to S \) that lies entirely in a single connected component \( S \approx \mathcal{D} \) of \( K^\circ \).
Since \( \overline{S} \approx \mathcal{D} \) and the projection of \( \eta \) has a single endpoint, we conclude that \( x = x' \) (otherwise \( \overline{S} \) would have more boundary components than \( \mathcal{D} \)).
Thus, \( \#p^{-1}(x) = 1 \) for all \( x \in \partial \mathcal{D} \). 
We should therefore think of \( K \) as a surface obtained from \( \mathcal{D} \) by gluing different copies of \( \mathcal{D} \) at their respective boundary components. 
The number of copies is exactly the number of connected components of \( K^\circ \).
In what follows, it might be helpful to think of \( K \) as two copies of \( \mathcal{D} \) identified by the corresponding obstacles. We refer the reader to Figure~\ref{liftsurface_png} for a picture of the construction. 

\par For \( \epsilon \in (0,1] \), we let \( \alpha_\epsilon \) be the contraction given by \( (x,y,z) \mapsto (x, y, \epsilon z) \) and write \( g_\epsilon \) to denote the metric induced on \( K_\epsilon := \alpha_\epsilon(K) \) by the Euclidean metric.
Pulling back via \( \alpha_\epsilon \), we will consider \( \{g_\epsilon\} \) as a family of metrics on the surface \( K \). 
We denote by \( \mathbf{g}_\epsilon \) the geodesic flow of the surface \( (K, g_\epsilon) \).
Then \( p \) induces a smooth mapping
\begin{equation}\label{eq:projection_flow_equivalence}
\begin{split}
    \pi_K\colon T^1K^\circ &\to\Omega \\
    (x,v) &\mapsto \left(p(x), \frac{dp_x(v)}{\|dp_x(v)\|}\right)
\end{split}
\end{equation}
(note that~(K1) ensures we are not dividing by \( 0 \) with such a definition).
Finally, we denote by \( A_0 \) the subset of \( \R \times T^1K^\circ \) consisting of points \( (t; x,v) \) such that the orbit segment \( \{\Psi^s(\pi_K(x,v)); s \in [0,t]\} \) has endpoints in \( \mathrm{int}(\mathcal{D}) \) and does not contain grazing collisions. 
\(A_0\) is easily seen to be open, and since it contains the subset \(\R \times \pi_K^{-1}(\Omega^\mathrm{reg})\), it is also dense in \(\R \times T^1K^\circ\).
Define mappings \( F_\epsilon\colon A_0 \to \Omega \), \( \epsilon \in [0,1] \) given by
\begin{align*}
    F_0\colon (t; x,v) &\mapsto \Psi^t(\pi_K(x,v)), \text{ and} \\
    F_\epsilon\colon (t; x,v) &\mapsto \pi_K(\mathbf{g}^t_\epsilon(x,v)), \text{ for } \epsilon > 0.
\end{align*}
\begin{theorem}[Theorem 5 in \cite{kourganoff_anosov_2016}]\label{thm:kourganoff_approximation}
    The family of maps \( F_\epsilon \) converges uniformly \( F_0 \), as \( \epsilon \to 0 \), on every compact subset of \( A_0 \).
    Moreover, there is \( \epsilon_0 > 0 \) such that \( \mathbf{g}_\epsilon \) is an Anosov flow for every \( \epsilon < \epsilon_0 \).
\end{theorem}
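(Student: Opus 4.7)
The plan is to prove the two assertions separately, with the uniform convergence coming first and the Anosov property following as a consequence via invariant cone fields.

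For the convergence $F_\epsilon \to F_0$ on a fixed compact set $Q \subset A_0$, I would first note that any orbit segment $\{\Psi^s(\pi_K(x,v))\}_{s \in [0,t]}$ associated to $(t;x,v) \in Q$ experiences only finitely many collisions, all non-grazing and strictly interior to $\mathrm{int}(\mathcal{D})$ at the endpoints, with uniform lower bounds on the grazing angle and the distance from the endpoints to $B(K)$. I would then decompose the analysis of a $g_\epsilon$-geodesic starting at $(x,v)$ into two regimes. In the \emph{interior regime}, one stays a definite distance $\delta > 0$ from $B(K)$; in local graph coordinates over $\mathcal{D}$ the metric $g_\epsilon$ has the form $dx^2 + dy^2 + \epsilon^2\,(df)^2$ where $f$ parameterises $K$ as a graph, so $g_\epsilon \to (dx^2+dy^2)|_{\mathcal{D}}$ in $C^r$ on any $\delta$-collar complement, and standard ODE dependence of geodesics on the metric forces the projected geodesic to track the straight billiard segment uniformly. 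In the \emph{boundary regime}, condition (K2) provides a non-vanishing normal curvature of the $\R\partial_z$-slice of $K$; after rescaling by $\alpha_\epsilon$, this collar becomes a thin bent strip whose Gauss–Bonnet integral over a crossing geodesic tends, by a direct computation parametrising the strip in Fermi coordinates around a connected component of $B(K)$, to exactly the angle turn prescribed by Snell's law of specular reflection. The exclusion of grazing from $A_0$ is what makes this computation uniform, since the geodesic crosses the collar transversally with angle bounded away from $\pi/2$. Concatenating finitely many interior and boundary segments, and using continuity of reflection in the angle data, gives uniform convergence $F_\epsilon \to F_0$ on $Q$.

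For the Anosov conclusion I would import the hyperbolicity of $f$ via its invariant cone fields. The billiard flow $\Psi$ admits continuous unstable and stable cone fields $\mathcal{C}^u, \mathcal{C}^s$ on $\Omega^{\mathrm{reg}}$, coming from dispersing wave fronts and their time reverses, which are strictly invariant and uniformly expanded/contracted by $\Psi^t$ for $t \geq \tau_{\min}$, with quantitative constants governed by $\mathcal{B}_{\min}$ from~\eqref{eq:anosov_constats_billiard_table}. Pulling these cones back through $\pi_K$ and extending them smoothly across $\pi_K^{-1}(B(K))$ using condition (K2), I obtain continuous cone fields $\mathcal{C}^u_K, \mathcal{C}^s_K$ on $T^1K$. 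Applying the uniform convergence established above over a fixed time window $T \gg \tau_{\max}$, one shows that for $\epsilon < \epsilon_0$ small, $d\mathbf{g}_\epsilon^T$ maps $\mathcal{C}^u_K$ strictly into itself (resp. $d\mathbf{g}_\epsilon^{-T}$ for $\mathcal{C}^s_K$) and expands/contracts vectors by a definite factor $>1$. A standard cone-criterion argument (see e.g. the treatment in~\cite{chernov_chaotic_2006}) then yields a continuous invariant hyperbolic splitting $TT^1K = \R X_\epsilon \oplus E^u_\epsilon \oplus E^s_\epsilon$ for $\mathbf{g}_\epsilon$, i.e. the Anosov property.

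The main obstacle I anticipate lies in the boundary regime of the first step: the metric $g_\epsilon$ degenerates in the normal direction to $B(K)$ in the limit $\epsilon \to 0$, so a naive $C^0$-limit argument for the geodesic equation fails there. Controlling the transit through the collar requires re-coordinatising by a \emph{second} rescaling adapted to the bending scale (Fermi coordinates in which the strip has width of order $1$ and bounded curvature, so that the ODE comparison becomes genuinely uniform), and then passing back to the original coordinates. The uniformity of this rescaling over the compact set $Q$ is exactly where the hypothesis that $Q \subset A_0$ excludes grazings and endpoints on $B(K)$ becomes essential.
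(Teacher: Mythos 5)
The paper does not prove this statement at all: it is imported verbatim as Theorem~5 of Kourganoff~\cite{kourganoff_anosov_2016}, so there is no internal proof to compare against, and the real benchmark is Kourganoff's argument. Your sketch of the convergence part is broadly consistent with his strategy (interior ODE-comparison plus a rescaled analysis of the transit through the bent collar, with the non-grazing hypothesis built into $A_0$ supplying the uniformity), and your anticipated obstacle there is the right one.

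The genuine gap is in your Anosov step. You propose to pull back the billiard cone fields $\mathcal{C}^u,\mathcal{C}^s$ through $\pi_K$ and ``extend them smoothly across $\pi_K^{-1}(B(K))$ using condition (K2)'', then verify strict invariance for $d\mathbf{g}_\epsilon^T$ via the uniform convergence $F_\epsilon\to F_0$. This cannot work as stated, for two compounding reasons. First, the billiard cone fields exist only on $\Omega^{\mathrm{reg}}$ and do \emph{not} extend continuously across the grazing set: the derivative of the billiard map blows up as $\cos\varphi\to 0$, the singularity set $\mathcal{S}$ is dense, and there is no continuous hyperbolic cone structure on all of $\Omega$ to pull back. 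Second, and more fundamentally, the uniform convergence you established holds only on compact subsets of $A_0$, which by definition excludes exactly the orbit segments with grazing collisions \textemdash\ yet the geodesic flow $\mathbf{g}_\epsilon$ has genuine geodesics that enter the collar nearly tangentially or slide along it for times of order~$1$, and the Anosov property must be verified on \emph{those} orbits too. No statement about $F_\epsilon$ on $A_0$ controls them. This is precisely the hard core of Kourganoff's proof: he estimates Jacobi fields directly through the collar for \emph{all} entry angles, using the fact that (K2) forces the Gaussian curvature of $K_\epsilon$ near $B(K)$ to be strongly negative as $\epsilon\to 0$, so that transversal crossings act like dispersing reflections while near-tangential geodesics accumulate hyperbolicity from the integrated negative curvature along the collar. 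Without a quantitative treatment of this near-grazing regime, your cone-criterion argument establishes hyperbolicity only on a non-invariant open set and the conclusion does not follow.
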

In particular, up to rescaling, there is no loss of generality in assuming
\begin{itemize}
    \item[(K3)] \emph{
        the family \( \{\mathbf{g}_\epsilon\}_{\epsilon \in (0,1]} \) is a one-parameter family of Anosov flows in \( K \).
        }
\end{itemize}
We will denote by \( \cK(\mathcal{D}) \) the collection of all such surfaces:
\[
    \cK(\mathcal{D}) := \{K \subset E; p(K) = \mathcal{D} \text{ and } K \text{ satisfies (K1), (K2) and (K3)} \}.
\]

\begin{figure}[!h]
    \centering
    \includegraphics[width=.8\textwidth]{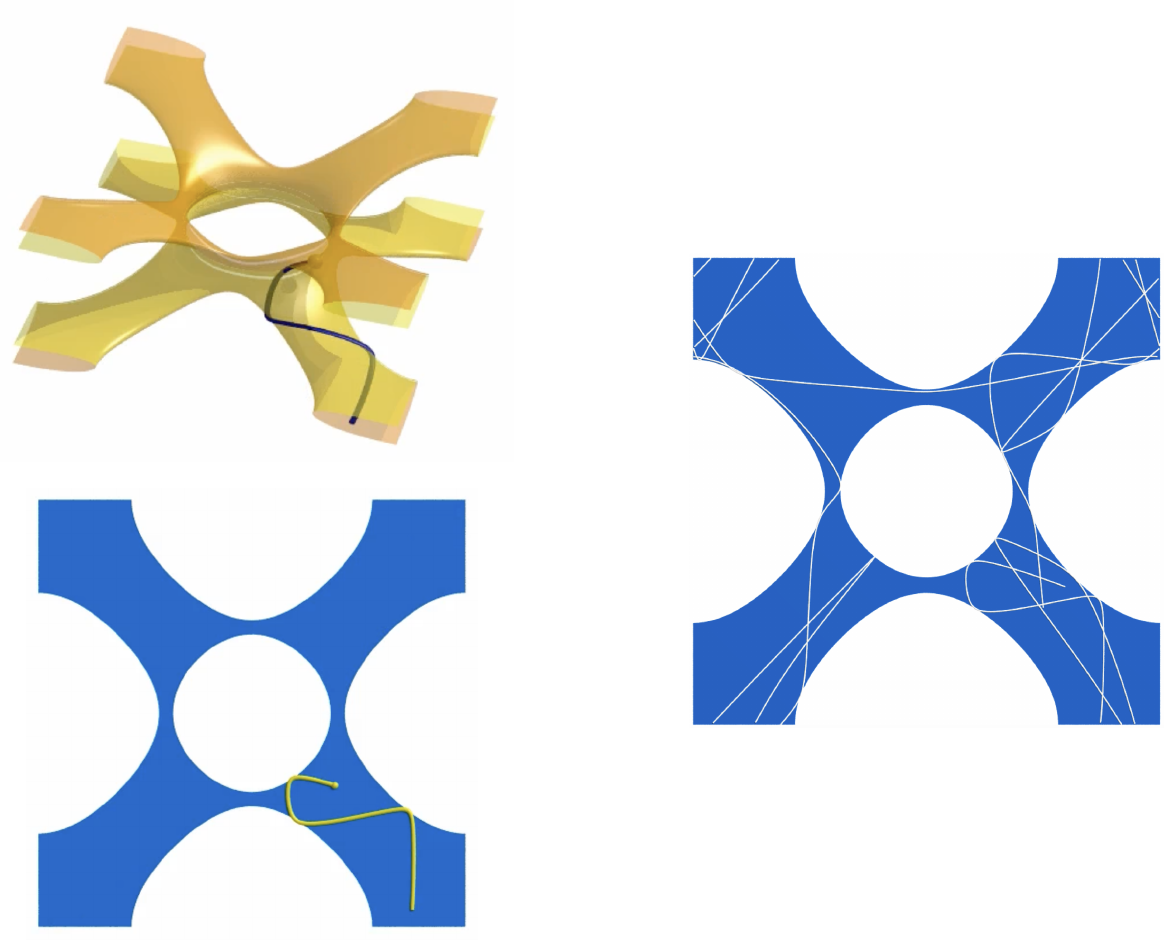}
    \caption{\textbf{Left:} A geodesic arc on some surface $K_\epsilon$ and its projection on the billiard table $\mathcal{D}$. \textbf{Right:} for $\epsilon>0$ small and $(t;x,v)\in A_0$, the projection $\{F_\epsilon(s;x,v)\}_{s\in [0,t]}$ closely shadows the billiard orbit segment $\{F_0(s;x,v)\}_{s\in [0,t]}$ (Credits for pictures: Mickaël Kourganoff, Jos Leys).} 
\end{figure}

\subsubsection{ Metric convergence in \( \cK(\mathcal{D}) \) } 
Fix a Kourganoff surface \( K \in \cK(\mathcal{D}) \).
Given a patch \(  \phi(u,v) = (x(u,v), y(u,v), z(u,v))  \) for \(  K  \), we consider the associated patch \(  \alpha_\epsilon \circ \phi = (x(u,v), y(u,v), \epsilon z(u,v))  \) for \(  K_\epsilon  \). 
Comparing the First Fundamental Forms \( I = E \, du^2 + 2F \, du \, dv + G \, dv^2 \) and \( I_\epsilon = E_\epsilon \, du^2 + 2F_\epsilon \, du \, dv + G_\epsilon \, dv^2 \) for \( K \) and \( K_\epsilon \) in these coordinates, we obtain 
\begin{equation}\label{eq:first_fund_forms_comparison}
\begin{split}
    E_\epsilon &= E + (\epsilon^2 - 1) (\partial_u z)^2, \\
    F_\epsilon &= F + (\epsilon^2 - 1) \partial_u z\partial_v z, \\
    G_\epsilon &= G + (\epsilon^2 - 1) (\partial_v z)^2. 
\end{split}
\end{equation}
Hence, for any \( \epsilon, \delta \in (0,1] \), the difference between the coefficients of the metrics \(g_\epsilon\) and \(g_\delta\), as well as the differences between their derivatives, are of order \( |\epsilon^2 - \delta^2| \).
It follows that the \(C^r\) norm of their difference, as a tensor field on \( K \), can be bounded by a term 
\begin{equation}\label{eq:bound_geom}
    \|g_\epsilon - g_{\delta}\|_{C^r(K)}     
    \leq  C|\epsilon^2 - \delta^2| \leq C,  
\end{equation}
where \( C \) is a constant depending only on \( K \) (it comes from integrating of the coordinate function \(z\) and its derivatives over the entire surface).
Hence, given a curve \( \gamma \), denoting by $l_\epsilon(\gamma)$ its length with respect to the metric $g_\epsilon$, the magnitude of length variation of $\gamma$ for two different metrics \( g_\epsilon \) and \( g_{\delta} \) is  
\begin{equation}\label{eq:length_estimate}
    \left| l_\epsilon(\gamma) - l_{\delta}(\gamma) \right| \leq \int_{\gamma} \left| \sqrt{I_\epsilon} - \sqrt{I_{\delta}} \right| 
    = O(|\epsilon -\delta|).
\end{equation}

A first consequence of the equation above is that a curve \( \gamma \)\footnote{
    Please remark that through this section we frequently abuse the terminology and refer to both the maps \( [a,b] \to K \) and their images as curves, letting the distinction be dictated by context. 
} is rectifiable (i.e., has finite length) with respect to \( g_\epsilon \) if and only if it is rectifiable with respect to the reference metric \( g \), thus the class of rectifiable curves is the same for all the metrics.
Given \( x,y \in K \), we will denote by \(  [x,y]_\epsilon  \) the minimising geodesic for \(  g_\epsilon  \) joining them.
Note that \( \{[x,y]_\epsilon\} \) is a collection of curves in \( K \) with endpoints \( x,y \), all of which are rectifiable with respect to every metric in the family \( \{g_\epsilon\} \).
Moreover, the distance functions \(  d_\epsilon  \) and \(  d_{\delta}  \) satisfy
\[
    d_\epsilon(x,y) := l_\epsilon([x,y]_\epsilon) \leq l_\epsilon([x,y]_{\delta}) = l_{\delta}([x,y]_{\delta}) + O(|\epsilon - {\delta}|) = d_{\delta}(x,y) + O(|\epsilon - {\delta}|),
\]
and therefore
\begin{equation}\label{eq:unif_conver_dist}
    |d_\epsilon(x,y) - d_{\delta}(x,y)| = O(|\epsilon - {\delta}|),
\end{equation}
for every \( x,y \in K \).
\par More generally, Equation \eqref{eq:bound_geom} implies that the Christoffel symbols of \( g_{\delta} \) converge uniformly to those of \( g_\epsilon \), and the corresponding geodesic vector fields on \( T(TK) \) can be made arbitrarily close in the reference metric \( g_1 \).
Rewriting the statement of the continuous dependence of ODE's on the parameters (c.f. \cite[Chapter 3.12]{walter2013ordinary}, for instance) in terms of the geodesic equations, we obtain the following
\begin{lemma}\label{lem:uniform_conv_geodesic_flows}
    Let \( \{X_n\} \) in \( T^1K \) be a Cauchy sequence with respect to the Sasaki metric \( G \) induced on \( T(TK) \) by \( g \).  
    Then, for every compact set \( J \subset \R \) and every \( \eta > 0 \), there are constants \( N = N(J, \eta) \in \N \) and \( \epsilon_0 = \epsilon_0(J, \eta) > 0 \) such that, for any \( \epsilon, \delta \) with \( |\epsilon-\delta| < \epsilon_0 \) and any \( n > N \), it holds
    \[
        \forall t \in J, \ \ d_G(\mathbf{g}_\epsilon^t(X_n), \mathbf{g}_{\delta}^t(X_n)) < \eta,
    \]
    where \( d_G \) is the metric induced by \( G \) in \( TK \).
\end{lemma}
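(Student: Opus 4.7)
The plan is to recognise $\mathbf{g}_\epsilon$ as the flow on $TK$ of its geodesic spray $Z_\epsilon$, and to invoke the classical continuous-dependence theorem for ODEs jointly in the parameter $\epsilon$ and in the initial condition. Since $K$ is a closed (compact, boundaryless) surface, $(T^1K, G)$ is a compact, complete metric space, so the Cauchy sequence $\{X_n\}$ converges to some $X_\infty \in T^1K$. In particular, for $n$ sufficiently large every $X_n$ lies in a fixed precompact neighbourhood $B \subset TK$ of $X_\infty$.

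I would then translate the estimate \eqref{eq:bound_geom} into a bound on the geodesic sprays. In any local chart on $K$, the components of $Z_\epsilon$ are polynomial in the fibre coordinates with coefficients rational in the entries of $g_\epsilon$ and polynomial in their first partial derivatives. Since $\|g_\epsilon - g_\delta\|_{C^r(K)} = O(|\epsilon^2 - \delta^2|)$, the Christoffel symbols of $g_\epsilon$ and $g_\delta$ differ on $B$ by $O(|\epsilon - \delta|)$ in $C^{r-1}$ (the fibre coordinates being bounded on the compact set $B$), and hence so do $Z_\epsilon$ and $Z_\delta$ in $C^1$.

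Finally, I would apply the standard Gronwall-type continuous-dependence estimate for ODEs (cf.~\cite[Chapter 3.12]{walter2013ordinary}): for a smoothly parameterised family of $C^1$ vector fields on a compact region, the time-$t$ flows depend continuously on the parameter, uniformly for $t$ in a compact set and for initial conditions in a compact set. Combined with the $C^1$-bound from the previous step, this yields
$$
\sup_{X \in B}\sup_{t \in J} d_G\bigl(\mathbf{g}_\epsilon^t(X), \mathbf{g}_\delta^t(X)\bigr) \leq C(J)\,|\epsilon - \delta|,
$$
for some constant $C(J) > 0$ depending only on $J$ (and on the $C^1$-size of the spray family on the compact invariant region swept out by $B$ under the flows for $t \in J$). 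Choosing $N = N(J,\eta)$ so that $X_n \in B$ for every $n > N$, and $\epsilon_0 = \epsilon_0(J,\eta) > 0$ with $C(J)\epsilon_0 < \eta$, the stated inequality follows.

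The proof is essentially a translation exercise, so there is no real obstacle. The only mild subtlety is that one needs the continuous dependence to be \emph{uniform} in the initial condition over the neighbourhood $B$ of $X_\infty$, rather than merely pointwise at $X_\infty$; but this is automatic from standard ODE theory once the $C^1$-bound on $Z_\epsilon - Z_\delta$ on $B$ is in hand.
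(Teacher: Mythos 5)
Your proposal follows essentially the same route as the paper: the paper likewise deduces the lemma by observing that the estimate \eqref{eq:bound_geom} forces the Christoffel symbols, and hence the geodesic sprays on \(T(TK)\), of \(g_\epsilon\) and \(g_\delta\) to be uniformly close, and then invokes the continuous dependence of ODE solutions on parameters from \cite[Chapter 3.12]{walter2013ordinary}. Your write-up simply makes explicit the compactness bookkeeping (convergence of the Cauchy sequence to \(X_\infty\), the choice of a precompact neighbourhood \(B\), and uniformity over \(B\) and \(t \in J\) via the Gronwall estimate) that the paper leaves implicit.
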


\begin{lemma}\label{lem:uniform_lower_bound_inj_radius}
    There is a uniform lower bound \( R_0 > 0 \) on the injective radius \( R_\epsilon \) of the metrics \( g_\epsilon \). 
\end{lemma}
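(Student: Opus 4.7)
The plan is to reduce the bound on the injectivity radius to a bound on the systole of $(K,g_\epsilon)$, and then derive the latter from the quadratic-form comparison $g_\epsilon \succeq g_0 := p^* g_{\mathrm{flat}}$ read off from equation \eqref{eq:first_fund_forms_comparison}, via a projection-and-contradiction argument.

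Concretely, the first step is to invoke (K3): since $\mathbf{g}_\epsilon$ is Anosov, the geodesic flow has no conjugate points, so the lifted exponential $\widetilde{\exp}_{\tilde x}\colon T_{\tilde x}K \to \widetilde K$ is a diffeomorphism onto the universal cover. The standard identification of the injectivity radius then yields
\[
R_\epsilon \;=\; \tfrac12 \inf_{\gamma \in \pi_1(K)\setminus\{e\}} \tau_\epsilon(\gamma) \;=\; \tfrac12 \operatorname{sys}(g_\epsilon),
\]
where $\tau_\epsilon(\gamma)$ is the translation length of $\gamma$ on $\widetilde K$, realised by the unique closed $g_\epsilon$-geodesic in the free homotopy class $[\gamma]$. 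Thus it suffices to lower-bound $\operatorname{sys}(g_\epsilon)$ uniformly in $\epsilon$. For this, observe that \eqref{eq:first_fund_forms_comparison} yields $g_\epsilon - g_0 = \epsilon^2\,\nabla z\,\nabla z^{\top}$, a rank-one positive semidefinite form on $TK$; hence $g_\epsilon \succeq g_0$ as quadratic forms, and $l_\epsilon(c) \geq l_{\mathrm{flat}}(p(c))$ for every rectifiable curve $c \subset K$.

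Next, argue by contradiction: suppose $\operatorname{sys}(g_{\epsilon_n}) \to 0$ along a sequence $\epsilon_n \in (0,1]$, realised by closed geodesics $c_n \subset K$. Then $p(c_n) \subset \overline{\mathcal{D}}$ has Euclidean length $\to 0$, hence Euclidean diameter $\to 0$; after extracting a subsequence, $p(c_n)$ Hausdorff-converges to some point $y_\infty \in \overline{\mathcal{D}}$. Distinguish two cases. If $y_\infty \in \operatorname{int}(\mathcal{D})$, then for large $n$ the projection $p(c_n)$ lies in a small Euclidean ball $B(y_\infty,\delta) \subset \operatorname{int}(\mathcal{D})$, whose preimage in $K$ is a disjoint union of topological disks (one per connected component of $K^\circ$); being connected, $c_n$ lies entirely in one such disk, hence is contractible in $K$. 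If $y_\infty \in \Gamma_i \subset \partial\mathcal{D}$, let $x_0 \in B(K)$ be the unique point with $p(x_0) = y_\infty$; condition (K2) ensures that near $x_0$ the surface $K$ is locally a smooth two-sheeted fold, so the preimage of $B(y_\infty,\delta) \cap \overline{\mathcal{D}}$ in $K$ splits into components each of which is simply connected (either an open half-disk in a sheet of $K^\circ$ not adjacent to $x_0$, or a closed topological disk obtained by gluing the two adjacent half-disks along an arc of $B(K)$); again $c_n$ is contractible in $K$. This contradicts the fact that manifolds without conjugate points admit no contractible closed geodesics (such a geodesic would lift to a closed geodesic in $\widetilde K$, incompatible with the injectivity of $\widetilde{\exp}_{\tilde x}$), yielding the desired uniform bound $R_0 > 0$.

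The most delicate step will be the topological description in the boundary case: conditions (K1)--(K2) must be used carefully to identify the local structure of $K$ near $B(K)$ as a smooth two-sheeted fold and to check that the relevant preimage decomposes into simply connected components. The remainder of the argument is largely routine once the comparison $g_\epsilon \succeq g_0$ and the no-conjugate-points property are in hand.
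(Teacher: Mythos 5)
Your proof is correct, and after the common first step it diverges genuinely from the paper's. Both arguments begin identically: the Anosov property (K3) gives absence of conjugate points, and Klingenberg's lemma reduces the injectivity radius to half the length of the shortest closed geodesic; both then use the comparison $l_\epsilon(c)\geq l_E(p(c))$, which you derive more transparently than the paper does, via $g_\epsilon-p^{*}g_{\mathrm{flat}}=\epsilon^{2}\,dz\otimes dz\succeq 0$ read off from \eqref{eq:first_fund_forms_comparison} (the paper only remarks that $\dot\sigma_\epsilon\notin\ker p$ by (K1)). The divergence is in how the systole is bounded below: the paper classifies the free homotopy class of the systolic geodesic among the generators of $\pi_1(K)$ (transverse to a great circle, homotopic to a scatterer $\Gamma_i$, or tangent to two distinct scatterers) and bounds the projected Euclidean length in each case, obtaining the explicit constant $\min\{l_i,2\tau_{\mathrm{min}}\}$ at the price of invoking the finite-horizon hypothesis; you instead argue by contradiction that shrinking systolic geodesics $c_n$ would project into arbitrarily small balls, whose preimages in $K$ decompose into simply connected pieces (disks over interior balls, and at $y_\infty\in\partial\mathcal{D}$ the two-sheeted fold glued along an arc of $B(K)$, which is again a disk), forcing $c_n$ to be contractible\textemdash impossible without conjugate points. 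Your fold analysis is legitimate: the paper itself establishes $\#p^{-1}(x)=1$ for $x\in\partial\mathcal{D}$, which together with (K2) gives exactly the local two-half-disks-glued-along-an-arc picture you need. The trade-off: your argument is softer and avoids both the finite-horizon hypothesis and the paper's somewhat loosely justified claim that the systole must lie in a generator class, but it is non-effective, whereas the paper's case analysis produces the explicit lower bound $R_0=\tfrac12\min\{l_i,2\tau_{\mathrm{min}}\}$, which is of the kind one might want to track quantitatively elsewhere in the approximation scheme.
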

\begin{proof}
    As the geodesic flows \( \mathbf{g}_\epsilon \) are all Anosov, none of the surfaces \( K_\epsilon \) have conjugated points (c.f. \cite{klingenberg_riemannian_1974}).
    Therefore, according to Klingenberg's lemma (cf. \cite[Lemma 1.8]{AbreschMeyer1997}) the injective radius of \( K_\epsilon \) is
    \[
        \mathrm{inj}(g_\epsilon) = \frac{1}{2}l_\epsilon(\sigma_\epsilon)
    \]
    where \( \sigma_\epsilon \) is the closed geodesic of minimal \( g_\epsilon \)-length in \( K \). 
    We claim that 
    \[
        \inf \{l_\epsilon(\sigma_\epsilon)\} \geq \min\{l_i, 2\tau_\mathrm{min}\} > 0,
    \] 
    where \( l_i \) is the length of the scatterer \( \Gamma_i \subset \mathcal{D} \).
    \par Since the length functional \( l_\epsilon \) is additive with respect to the juxtaposition of paths, the free homotopy class \( [\sigma_\epsilon] \in \mathcal{C}(K) := \mathrm{Conj}(\pi_1(K)) \) must be one of the generator loops, due to minimality.
    Now, the generators of \( \pi_1(K) \) are divided in the following three categories.
    \begin{itemize}
        \item[(i)] loops freely homotopic to great circles in one of the connected components \( S \approx \mathrm{int}(\mathcal{D}) \) of \( K^\circ \);
        \item[(ii)] loops freely homotopic to a connected component \(\Gamma_i\) of \( B(K) \);
        \item[(iii)] loops with a base point in one connected component of \( B(K)\) and meeting two or more distinct connected components of \( K^\circ\). 
    \end{itemize}
    Thus, the image \( \gamma_\epsilon := p(\sigma_\epsilon) \) must be either:
    \begin{itemize}
        \item[(i)] a closed curve transverse to a great circle;
        \item[(ii)] a curve homotopic to one of the obstacles \( \Gamma_i \subset \mathcal{D} \);
        \item[(iii)] a closed curve tangent to at least two distinct boundary components \( \Gamma_i \) and \( \Gamma_j \).
    \end{itemize}  
    In the first case, the length \( l_E(\gamma_\epsilon) \) (with respect to the flat metric on torus) is bounded below by the minimum length of the great circles.
    Remark that these lengths must be strictly larger then the minimum flight time \( \tau_\mathrm{min} \) of the table due to the finite horizon. 
    In case (ii), the length is bounded below by the minimum length\( \{l_i\} \) of the scatterers \( \Gamma_i \). 
    In the last case, \( l_E(\gamma_\epsilon) \geq 2\tau_\mathrm{min} \)
    (see Figure \ref{fig:projected_limiting_curve} below).
    \begin{figure}[!ht]
        \centering
        \includegraphics[width=.7\textwidth]{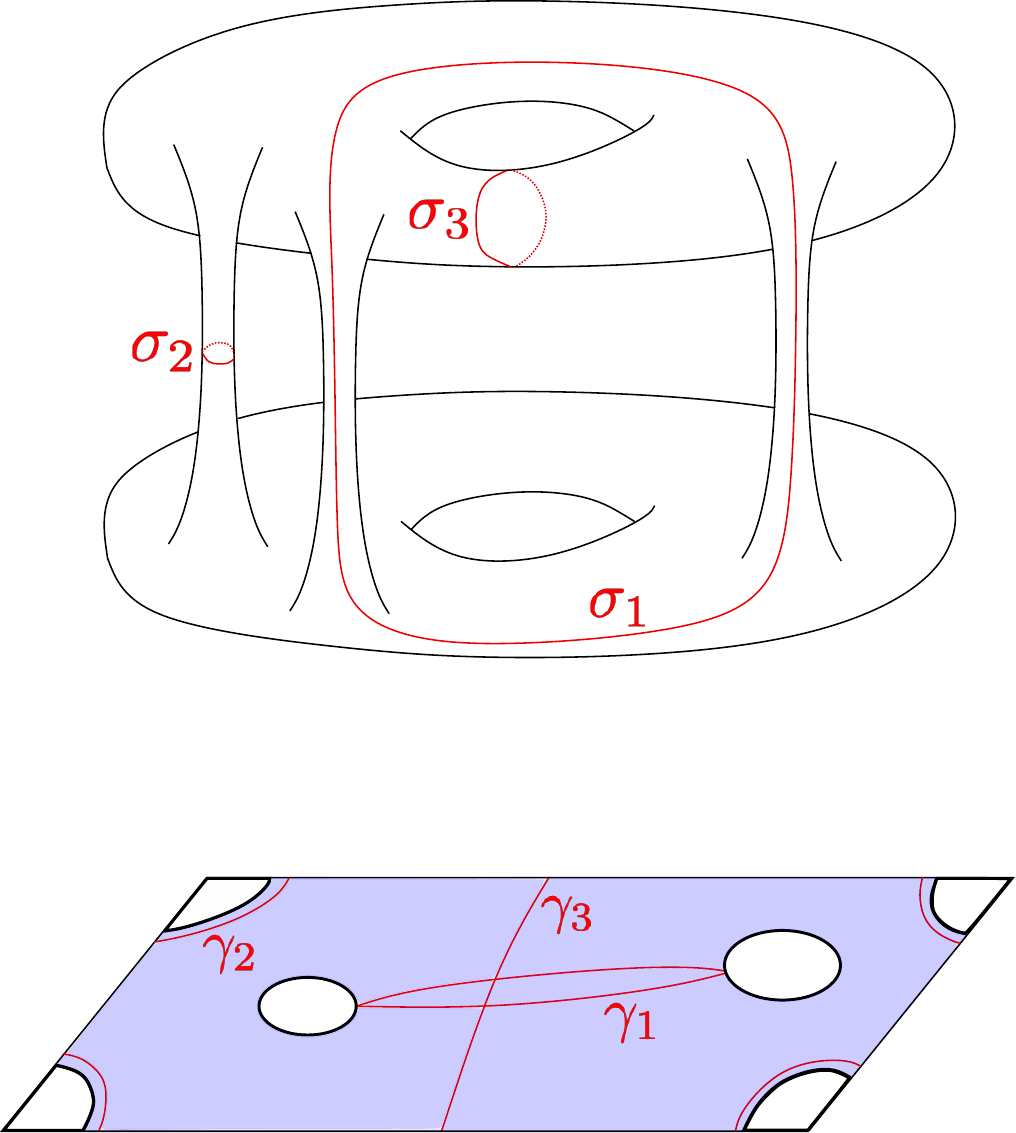}
        \caption{Some loops $\{\sigma_i\}_{i=1}^3$ on the surface $K$, and their projections $\{\gamma_i=p(\sigma_i)\}_{i=1}^3$ on the billiard table $\mathcal{D}$.}
        \label{fig:projected_limiting_curve}
    \end{figure}    
    \par Finally, by construction \( \partial_z \) is not tangent to \( K \) at any point, hence \( \dot\sigma_\epsilon \notin \R \partial_z = \ker p. \) 
    It follows that, for any \( \epsilon > 0 \),
    \[
        l_\epsilon(\sigma_\epsilon) \geq l_E(\gamma_\epsilon) \geq \min\{l_i, 2\tau_\mathrm{min}\} > 0,
    \]
    which concludes.    
\end{proof}
\begin{lemma}\label{lem:convergence_geodesics}
    For any \( x,y \in K^\circ \) sufficiently close (\( d_1(x,y) < R_0 \)), the geodesics \( [x,y]_\epsilon \) converge uniformly to a \( C^1 \) curve \( [x,y]_0 \) in \( K \).
\end{lemma}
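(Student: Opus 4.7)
The approach is a standard Arzelà–Ascoli compactness argument combined with continuous dependence of $g_\epsilon$-geodesics on $\epsilon$, handling the degeneration of $g_0$ along the fold $B(K)$. First, note that $g_\epsilon = g_0 + \epsilon^2 (dz|_K)^2 \leq g_1$ as symmetric bilinear forms on $TK$, so $L_\epsilon := d_\epsilon(x,y) \leq d_1(x,y) < R_0 \leq R_\epsilon$ (the last step by Lemma~\ref{lem:uniform_lower_bound_inj_radius}); thus $\gamma_\epsilon := [x,y]_\epsilon$ is the unique minimising $g_\epsilon$-geodesic, which I parametrise on $[0,1]$ with constant $g_\epsilon$-speed $L_\epsilon$. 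For every tangent vector $v$ to $K$ one has $|dp(v)|_{\mathrm{eucl}}^2 = \|v\|_\epsilon^2 - \epsilon^2 (dz(v))^2 \leq \|v\|_\epsilon^2$, so the projections $p \circ \gamma_\epsilon \colon [0,1] \to \mathbb{T}^2$ form a family of curves from $p(x)$ to $p(y)$ that is uniformly $L_\epsilon$-Lipschitz. By Arzelà–Ascoli a subsequence $p \circ \gamma_{\epsilon_n}$ converges uniformly to a Lipschitz curve $\beta \colon [0,1] \to \mathbb{T}^2$; since the $z$-coordinate of $\gamma_{\epsilon_n}$ is uniformly bounded ($K$ is compact) and the sheet of $K$ over $\mathrm{int}(\mathcal{D})$ is locally determined by continuity off $B(K)$, a diagonalisation yields a subsequential limit $\gamma_* \colon [0,1] \to K$ lifting $\beta$.

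Next I identify $\gamma_*$ and establish its $C^1$-regularity. Call $t_0 \in [0,1]$ \emph{regular} if $\gamma_*(t_0) \in K^\circ$. On the open set of regular times, the Christoffel symbols of $g_\epsilon$ converge smoothly to those of $g_0$ on a compact neighbourhood of $\gamma_*(t_0)$, and Lemma~\ref{lem:uniform_conv_geodesic_flows} (together with ODE continuity, see \cite[Chapter 3.12]{walter2013ordinary}) upgrades the convergence $\gamma_{\epsilon_n} \to \gamma_*$ to $C^1$ there. Each maximal regular sub-arc of $\gamma_*$ is therefore a $g_0$-geodesic, i.e.\ a Euclidean straight segment on a single sheet of $K$. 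Singular times—where $\gamma_*$ meets $B(K)$—are finitely many, since the finite-horizon bound $L_\epsilon < R_0$ caps the number of scatterer encounters in the projected curve. At a singular time $t_0$, $p \circ \gamma_*$ exhibits a billiard-type reflection at $p(\gamma_*(t_0)) \in \partial\mathcal{D}$; the smoothness of $K$ as an immersed surface, together with condition~(K2), geometrically encodes the reflection law on $\mathcal{D}$ and forces the equality $\dot\gamma_*(t_0^-) = \dot\gamma_*(t_0^+)$ in $T_{\gamma_*(t_0)}K$. Hence $\gamma_* \in C^1([0,1], K)$.

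Finally, by lower semicontinuity of the $g_0$-length and the estimate $l_0(\gamma_{\epsilon_n}) \leq L_{\epsilon_n} \to d_0(x,y)$, the limit $\gamma_*$ has minimal $g_0$-length, so $p \circ \gamma_*$ is a shortest billiard-type path in $\mathcal{D}$ from $p(x)$ to $p(y)$ with the prescribed sheet-change combinatorics. For $d_1(x,y) < R_0$ such a minimiser is unique (a consequence of strict convexity of the scatterers and of the shortness of the path), so $\gamma_*$ does not depend on the extracted subsequence; this gives the full uniform convergence $\gamma_\epsilon \to \gamma_* =: [x,y]_0$ as $\epsilon \to 0$. The main obstacle is the degeneration of $g_0$ on $B(K)$, where Lemma~\ref{lem:uniform_conv_geodesic_flows} does not apply: the crucial $C^1$-continuity across a fold crossing must be extracted from the smoothness of the ambient surface $K \subset E$ itself (via condition~(K2)) rather than from any limiting geodesic equation.
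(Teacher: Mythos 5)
There is a genuine gap, and it sits exactly at the point your own closing sentence flags. At a fold crossing you assert that the projected limit ``exhibits a billiard-type reflection'' and that condition~(K2) ``forces'' $\dot\gamma_*(t_0^-)=\dot\gamma_*(t_0^+)$, but no argument is given, and none is available by soft reasoning: the fact that limits of $g_\epsilon$-geodesics cross $B(K)$ with the specular reflection law downstairs is precisely the content of Kourganoff's approximation theorem (Theorem~\ref{thm:kourganoff_approximation}), whose proof requires a quantitative analysis of geodesics traversing the region of concentrated negative curvature near the fold; a priori a subsequential limit could refract at a different angle, or stick to $B(K)$. Smoothness of $K$ plus (K2) alone does not yield the matching of one-sided derivatives. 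The paper's proof is structured to avoid exactly this: it works inside a normal neighbourhood $U$ projecting into a collision-free ball $B_{\mathcal{D}}(x,\delta)$ with $\delta<\inf\tau$, so the limit is a single segment with no crossing at all; it then invokes Theorem~\ref{thm:kourganoff_approximation} to show that the geodesic with the limiting initial datum $(\widetilde x,\widetilde v)$ lands near $\widetilde y$ at time $T$, converts this into convergence of the initial velocities $\widetilde v_\epsilon\to\widetilde v$ via a Jacobi-field bound on $d(\exp_{\widetilde x}^{-1})$, and obtains uniform $C^1$ convergence of the whole family from Lemma~\ref{lem:uniform_conv_geodesic_flows} --- no subsequence extraction, no fold analysis, and no uniqueness of minimisers is needed. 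If you keep your global route, you must at minimum cite Theorem~\ref{thm:kourganoff_approximation} at each crossing instead of appealing to~(K2).

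Two further steps do not hold as written. First, your claim that the singular times are finitely many presupposes that $\gamma_*$ meets $B(K)$ only in isolated points; but geodesics of the limiting length structure may contain entire arcs of $B(K)$ (tangent entry, boundary arc, tangent exit --- the $d_\mathcal{L}$-geodesics of Figure~\ref{fig:types_geodesics}), and nothing in your Arzel\`a--Ascoli argument excludes a subsequential limit with such a degenerate arc, even under $d_1(x,y)<R_0$ (consider interior points on opposite sides near the end of an elongated scatterer). The lower bound $\tau_{\min}$ caps the number of \emph{transversal} collisions, not the occurrence of degenerate arcs. Second, the uniqueness of the length-minimiser ``with the prescribed sheet-change combinatorics,'' on which your passage from subsequential to full convergence entirely rests, is merely asserted; moreover, at this stage of the paper neither the metric space $(K,d_0)$ nor the gluing description is yet available --- Theorems~\ref{thm:d_0_is_a_metric} and~\ref{thm:K_0_as_gluing_metric_space} come \emph{after} this lemma and use it --- so reasoning with $d_0$-minimisers and the structure of shortest paths in $(\mathcal{D},d_\mathcal{L})$ here is circular. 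These are the obstructions that the paper's pointwise, velocity-based argument is designed to bypass.
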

\begin{proof}
    Let \( \widetilde x \in K^\circ, x = p(\widetilde x) \in \mathrm{int}(\mathcal{D}) \), and consider a neighbourhood \( U \) of \( \widetilde x \), which we choose sufficiently small so that \( \overline U \) is a normal neighbourhood for all of the metrics \( g_\epsilon \), and is moreover mapped under \( p \) into an open ball \( B := B_{\mathcal{D}}(x, \delta) \) of radius \( \delta < \inf\{\tau(x, p); p \in \mathbb{S}^1\}. \)
    In other words, the billiard trajectories starting at \( x \), in any possible direction, do not experience any collision at all inside \( B \).
    \par Moreover, for a fixed \( \widetilde y \in U \), distinct from \( \widetilde x \), we write \( l_\epsilon = d_\epsilon(\widetilde x, \widetilde y) \) and let
    \( \widetilde{v}_\epsilon \) be the velocity of the curve \( [\widetilde x,\widetilde y]_\epsilon \) at the starting point \( \widetilde x \).
    Finally, we set \( y = p(\widetilde y) \), let \( T = |x-y|, v = T^{-1}(x-y) \), and consider the geodesics \( \gamma_\epsilon \) in \( K_\epsilon \) with initial condition \( (\widetilde x,\widetilde v) \), where \( \pi_K(\widetilde x, \widetilde v) = (x, v) \).
    \par By construction, the point \( (T; \widetilde x,\widetilde v) \) lies in \( A_0 \), and Theorem \ref{thm:kourganoff_approximation} guarantees that 
    \( \pi_K(\mathbf{g}_\epsilon^t(\widetilde x,\widetilde v)) \) converges uniformly to \( \Psi_\epsilon^t(x, v) \) in the interval \( [0, T] \) as \( \epsilon \to 0 \). 
    Thus, for \( \epsilon \) sufficiently small we have \( \gamma_\epsilon(T) \) arbitrarily close to \( \widetilde y \). 
    Since the exponential map at \( \widetilde x \) is a diffeomorphism in \( U \), we can estimate the distance between \( \widetilde v_\epsilon \) and \( \widetilde v \) by
    \[
        \|\widetilde v_\epsilon - \widetilde v\|_\epsilon \leq \left\|d(\exp_{\widetilde x}^{-1})\rvert_{\overline U}\right\|_\epsilon d_\epsilon(\widetilde y, \gamma'_\epsilon(T)).
    \]
    Around \( \widetilde x \) the norm of the derivative is close to \( 1 \).
    Moreover, as pointed out in \cite{kourganoff_anosov_2016}, as the surface \( K \) flattens out, the curvature away from \( B(K) \) tends to \( 0 \), while in a neighbourhood of \( B(K) \) it becomes strictly negative.
    Therefore, away from \( \widetilde x \), for \( \epsilon \) sufficiently small, all the sectional curvatures of \( K_\epsilon \) on \( \overline U \) can be made arbitrarily close to \( 0 \), if need be. 
    Hence there is a constant \( 1/R^2 > 0 \) which bounds all the sectional curvatures, and we can estimate the derivative of \( \exp_x \)  in terms of the norms of Jacobi fields (using \cite[Theorem 11.2]{lee2019introduction}, for instance,), which are of order \( O(R\sin(R^{-1})) \). 
    We may then find a constant \( L \), independent of \( \epsilon \), and such that
    \[
        \|\widetilde v_\epsilon - \widetilde v\|_\epsilon \leq Ld_\epsilon(\widetilde y, \gamma_\epsilon(T)).
    \]
    Thus the family \( \{\widetilde v_\epsilon \} \) has a unique accumulation point at \( \widetilde{v} \).
    In particular, it now follows from Lemma \ref{lem:uniform_conv_geodesic_flows} that \( \{ [x,y]_\epsilon\} \) and their derivatives form an equicontinuous family of curves in \( T^1K \), and therefore converge to a well-defined curve \( [x,y]_0 \).
    What is more, \( [x,y]_0 \) is differentiable at \(x\), and its tangent space is spanned by the limit vector \(v\).
\end{proof}
Essentially the same argument shows that we may fix the tangent vectors \(v_\epsilon\) and let the endpoint \(y\) vary instead, yielding the following.
\begin{corollary}
    Given \((x,v) \in T^1K^\circ\), let \(\gamma_{\epsilon;v}\) denote the unique geodesic of \((K, g_\epsilon)\) with initial condition \((x,v)\).
    If \(U\) is any connected neighbourhood of \(x\) not intersecting \(B(K)\), then there is a unique differentiable curve
    \(\gamma_{0;v}\), starting at \(x\), and whose tangent space at \(x\) is spanned by \(v\).
\end{corollary}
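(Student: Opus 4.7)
The plan is to rerun the proof of Lemma~\ref{lem:convergence_geodesics} with the roles of the endpoint and the initial velocity exchanged: instead of fixing two close points $x, y \in K^\circ$ and letting the initial tangent $\widetilde{v}_\epsilon$ vary with $\epsilon$, I would fix $(x, v) \in T^1 K^\circ$ and let the arclength parameter $t$ be the floating variable.

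First I would apply Lemma~\ref{lem:uniform_conv_geodesic_flows} to the trivially Cauchy constant sequence $X_n \equiv (x, v)$. For every compact interval $J \subset \R$ and every $\eta > 0$, the lemma delivers $\epsilon_0 > 0$ such that
\[
    d_G\bigl(\mathbf{g}_\epsilon^t(x,v),\, \mathbf{g}_\delta^t(x,v)\bigr) < \eta
    \quad \text{for all } t \in J \text{ and all } |\epsilon - \delta| < \epsilon_0.
\]
Hence $\{\mathbf{g}_\epsilon^t(x,v)\}_\epsilon$ is Cauchy in the Sasaki distance uniformly in $t \in J$. Since $T^1K$ is compact (and hence complete in $d_G$), this net admits a continuous limit $t \mapsto X_0(t) \in T^1K$ on each such $J$. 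Writing $X_0(t) = (\gamma_{0;v}(t), w(t))$ produces a continuous curve $\gamma_{0;v}$ in $K$ with $\gamma_{0;v}(0) = x$, together with a continuous unit tangent field $w$ along it satisfying $w(0) = v$.

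Next I would upgrade this to $C^1$ convergence. Since the Sasaki projection $TK \to K$ is a Riemannian submersion, convergence of $\mathbf{g}_\epsilon^t(x,v)$ in $d_G$ entails both $\gamma_{\epsilon;v}(t) \to \gamma_{0;v}(t)$ and $\dot\gamma_{\epsilon;v}(t) \to w(t)$ uniformly on $J$; the uniform convergence of tangent vectors makes $\gamma_{0;v}$ differentiable with $\dot\gamma_{0;v} = w$, hence of class $C^1$, and in particular $\dot\gamma_{0;v}(0) = v$. Because $U$ is open and contains $x$, continuity of $\gamma_{0;v}$ provides $T > 0$ with $\gamma_{0;v}([0, T]) \subset U$; uniform convergence then forces $\gamma_{\epsilon;v}([0, T]) \subset U$ for every sufficiently small $\epsilon$. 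Uniqueness of $\gamma_{0;v}$ is automatic, as Cauchy limits in a metric space are unique.

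The one delicate point is the uniformity in $t$ on compact intervals — a feature inherited from the continuous dependence of ODEs on parameters already encoded in Lemma~\ref{lem:uniform_conv_geodesic_flows}. Without this uniformity, the time horizon on which $\gamma_{\epsilon;v}$ stays inside $U$ could collapse as $\epsilon \to 0$ and render the limit curve trivial.
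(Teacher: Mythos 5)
Your proposal is correct, and it is a leaner route than the one the paper takes. The paper disposes of this corollary in one line, by asserting that ``essentially the same argument'' as Lemma~\ref{lem:convergence_geodesics} applies with the tangent vector fixed and the endpoint varying; rerun literally, that argument invokes Kourganoff's approximation theorem (Theorem~\ref{thm:kourganoff_approximation}) together with the exponential-map/Jacobi-field estimates. But in the Lemma those two ingredients were needed only to show that the initial velocities \(\widetilde v_\epsilon\) of the minimising segments \([x,y]_\epsilon\) converge as \(\epsilon \to 0\); once the initial condition \((x,v)\) is frozen, that difficulty evaporates, and you correctly isolate Lemma~\ref{lem:uniform_conv_geodesic_flows} (applied to the constant sequence \(X_n \equiv (x,v)\)) as the only needed input. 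Uniform Cauchyness of \(\epsilon \mapsto \mathbf{g}^t_\epsilon(x,v)\) on compact time intervals, completeness in the Sasaki distance, and the classical theorem on uniform convergence of derivatives then deliver the \(C^1\) limit with \(\dot\gamma_{0;v}(0) = v\), which is exactly what the identification \(\cR \ni (x,v) \mapsto \gamma_{0;v}\) later in the paper relies on. What your route buys is independence from the billiard-flow shadowing; what the paper's route buys is the simultaneous identification of the projection \(p(\gamma_{0;v})\) with the corresponding billiard trajectory, which is used downstream.

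Two caveats, neither fatal. First, Lemma~\ref{lem:uniform_conv_geodesic_flows} rests on \(C^r\)-convergence of the Christoffel symbols, and this is uniform only on compact subsets of \(K^\circ\), since the tensors \(g_\epsilon\) degenerate along \(B(K)\) as \(\epsilon \to 0\); so the honest application is on an interval \([0,T]\) during which the orbits remain in a compact subset of \(U\) --- precisely the localisation you perform at the end, and the reason the hypothesis \(U \cap B(K) = \emptyset\) appears in the statement. You should run the Cauchy argument directly on such \([0,T]\) (shrinking \(T\) so that \(\gamma_{0;v}([0,T])\) lies in a compact subset of \(U\)) rather than on arbitrary compact \(J \subset \R\). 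Second, your reading of ``unique'' as uniqueness of the limit curve is the right one: uniqueness among all differentiable curves with the given \(1\)-jet would be vacuous, and uniqueness among \(d_0\)-geodesics through \((x,v)\) is not established at this stage by either your argument or the paper's (compare the remark after Theorem~\ref{thm:d_0_is_a_metric}, where non-uniqueness is only ruled in or out for initial conditions on \(B(K)\)).
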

\begin{theorem}\label{thm:d_0_is_a_metric}
    For any distinct points \( x, y \in K \), 
    \[
        d_0(x,y) := \lim_{\epsilon\to 0}l_\epsilon([x,y]_\epsilon) > 0.
    \] 
    Moreover, \( K_0 := (K, d_0)  \) is a complete geodesic length space. 
    The admissible paths are precisely the rectifiable curves with respect to the reference metric \( g \) and the length function is 
\begin{align*}
    l_0\colon \mathrm{Rec}(K, g) &\to \R \cup \{+\infty\} \\
    \gamma &\mapsto \lim_{\epsilon \to 0}l_\epsilon(\gamma).
\end{align*}
The geodesics\footnote{
    We mean geodesic in the sense of length spaces, i.e., a locally minimising path with respect to \( d_0 \).
    In general, the tensors \( g_\epsilon \) degenerate as \( \epsilon \to 0 \) and the metric \( d_0 \) is not associated to any Riemannian structure.
    }
of \( K_0 \) are exactly the uniform limits of geodesics of \( K_\epsilon \) in the \( C^1 \)-topology.
In particular, they are differentiable curves on \(K\).
\end{theorem}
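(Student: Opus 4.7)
The plan is to unfold the theorem into four linked assertions — existence of $d_0$ and $l_0$, positivity of $d_0$, length-space structure with geodesic characterisation, and completeness — each to be settled by the uniform metric control afforded by \eqref{eq:first_fund_forms_comparison}--\eqref{eq:length_estimate} together with Lemmas~\ref{lem:uniform_conv_geodesic_flows}--\ref{lem:convergence_geodesics}. First, well-definedness: by \eqref{eq:unif_conver_dist} the family $\{d_\epsilon(x,y)\}_{\epsilon \in (0,1]}$ is Cauchy as $\epsilon \to 0$ for every pair $x,y \in K$, so $d_0(x,y) \geq 0$ exists as a pointwise limit and inherits symmetry and the triangle inequality from each $d_\epsilon$. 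The same argument applied to \eqref{eq:length_estimate} yields $l_0$ on the class of rectifiable curves, which is independent of $\epsilon$ by \eqref{eq:length_estimate}.

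The main step is the positivity of $d_0(x,y)$ whenever $x \neq y$. The key observation is that \eqref{eq:first_fund_forms_comparison} can be rewritten in a patch as
\[
    I_\epsilon = \bigl(d(x\!\circ\!\phi)\bigr)^2 + \bigl(d(y\!\circ\!\phi)\bigr)^2 + \epsilon^2 \bigl(d(z\!\circ\!\phi)\bigr)^2,
\]
so that $l_\epsilon(\gamma) \geq l_E(p(\gamma))$ for every rectifiable curve $\gamma \subset K$ and every $\epsilon \in (0,1]$, with $l_E$ the Euclidean length on $\mathbb{T}^2$. If $p(x) \neq p(y)$, taking the infimum over $\gamma$ yields $d_\epsilon(x,y) \geq d_{\mathbb{T}^2}(p(x),p(y)) > 0$ uniformly in $\epsilon$, and this bound survives the limit. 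If instead $p(x) = p(y)$ with $x \neq y$, the injectivity of $p$ on $B(K)$ established in Section~\ref{sec:geodesic_approximations} forces $p(x) \in \mathrm{int}(\mathcal{D})$ and $x,y$ to lie on distinct sheets of $K^\circ$; any path between them must cross $B(K)$ and therefore projects to a curve of Euclidean length at least $2\, d_E(p(x), \partial\mathcal{D}) > 0$.

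For the length-space structure, the inequality $d_0(x,y) \leq \inf_\gamma l_0(\gamma)$ is immediate from $d_\epsilon(x,y) \leq l_\epsilon(\gamma)$ and \eqref{eq:unif_conver_dist}--\eqref{eq:length_estimate}. For the reverse, fix $x,y$ with $d_1(x,y) < R_0$ ($R_0$ from Lemma~\ref{lem:uniform_lower_bound_inj_radius}): Lemma~\ref{lem:convergence_geodesics} yields $[x,y]_\epsilon \to [x,y]_0$ in $C^1$, and combining this $C^1$ convergence with \eqref{eq:length_estimate} gives $l_\epsilon([x,y]_\epsilon) \to l_0([x,y]_0)$, whence $d_0(x,y) = l_0([x,y]_0)$; this identifies $[x,y]_0$ as a $d_0$-geodesic and proves the final assertion of the theorem. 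For arbitrary $x,y$, I chop a $d_1$-minimising arc into finitely many sub-arcs of $d_1$-diameter below $R_0/2$, apply the above on each, and concatenate. Completeness then comes for free: since $K$ is compact and $|d_\epsilon - d_0| = O(\epsilon)$, a standard diagonal argument shows that $d_0$ induces the manifold topology on $K$, so any $d_0$-Cauchy sequence is $d_1$-Cauchy and its $d_1$-limit coincides with its $d_0$-limit.

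The main obstacle is the length-space step: one must promote the limiting curves $[x,y]_0$ to genuine $d_0$-geodesics without losing the infimal property, and mere uniform convergence of $[x,y]_\epsilon$ would only give lower semicontinuity of length. The $C^1$ convergence supplied by Lemma~\ref{lem:convergence_geodesics} — itself resting on the uniform injectivity radius of Lemma~\ref{lem:uniform_lower_bound_inj_radius} — is exactly what makes the length functional continuous along the sequences $\{[x,y]_\epsilon\}$ and closes the argument.
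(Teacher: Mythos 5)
Your positivity argument is correct and in fact more direct than the paper's. From \eqref{eq:first_fund_forms_comparison} one indeed gets $g_\epsilon \geq p^*\!\left(dx^2+dy^2\right)$ pointwise, hence $l_\epsilon(\gamma) \geq l_E(p(\gamma))$ uniformly in $\epsilon$, and your two-case analysis is exhaustive: since $p$ is injective over $\partial\mathcal{D}$ and on each sheet, $p(x)=p(y)$ with $x \neq y$ forces interior points on distinct sheets, and any connecting path crosses $B(K)$, giving the uniform lower bound $2\,d_E(p(x),\partial\mathcal{D})$. The paper instead derives positivity from the $C^1$-convergence of Lemma~\ref{lem:convergence_geodesics} (obtaining $d_0(x,y)=|p(x)-p(y)|$ locally) plus a polygonal approximation when $[x,y]_\epsilon$ meets $B(K)$; your route avoids that approximation and is a genuine improvement on this portion. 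One small slip in the completeness paragraph: ``any $d_0$-Cauchy sequence is $d_1$-Cauchy'' goes the wrong way, since monotonicity of $I_\epsilon$ in $\epsilon$ gives $d_0 \leq d_\epsilon \leq d_1$; the correct statement is that $\mathrm{id}\colon (K,d_1)\to(K,d_0)$ is a continuous bijection from a compact space to a Hausdorff space (Hausdorff precisely because of your positivity step), hence a homeomorphism, so $(K,d_0)$ is compact and therefore complete.

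The genuine gap is the global length-space/geodesic step. Chopping a \emph{$d_1$-minimising} arc into sub-arcs of $d_1$-diameter below $R_0/2$ and concatenating the local curves $[x_i,x_{i+1}]_0$ produces a path $\sigma$ with $l_0(\sigma)=\sum_i d_0(x_i,x_{i+1})$, and the triangle inequality only gives $\sum_i d_0(x_i,x_{i+1}) \geq d_0(x,y)$: a $d_1$-geodesic is in no way $d_0$-optimal, so this sum can exceed $d_0(x,y)$ by a definite amount, the concatenation need not be even locally minimising at the junctions, and you obtain neither the identity $d_0(x,y)=\inf_\gamma l_0(\gamma)$ nor a $d_0$-geodesic between distant points. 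A repair within your own toolkit is to use the minimisers $[x,y]_\epsilon$ themselves: by \eqref{eq:length_estimate} and \eqref{eq:unif_conver_dist}, $l_0([x,y]_\epsilon) = l_\epsilon([x,y]_\epsilon)+O(\epsilon) = d_\epsilon(x,y)+O(\epsilon) \to d_0(x,y)$, which gives the length-space property, after which a shortest path is extracted by Arzel\`a--Ascoli and lower semicontinuity of $l_0$ (or by Hopf--Rinow for complete, locally compact length spaces); the paper instead gets ``complete length space'' in one stroke from the uniform convergence $d_\epsilon \to d_0$ and Gromov--Hausdorff stability \cite[Theorem 7.5.1]{burago2022course}. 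Relatedly, you never verify that $l_0=\lim_\epsilon l_\epsilon$ is the intrinsic length function of $d_0$ (the supremum of inscribed $d_0$-sums), which the paper handles via midpoint reconstruction \cite[Theorem 2.4.16]{burago2022course}, and your geodesic characterisation only treats limits of \emph{minimisers} $[x,y]_\epsilon$, whereas the paper also shows that the limit $\gamma=\lim_\epsilon\gamma_\epsilon$ of geodesics with a fixed initial condition is minimising between nearby points, by choosing $x_\epsilon,y_\epsilon$ on $\gamma_\epsilon$ realising $d_\epsilon$ and passing to the limit.
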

\begin{proof}
    Remark that \( d_0(x,y) \) is well defined for any points \(x, y \) due to Equation \eqref{eq:unif_conver_dist}. 
    The symmetry of \( d_0 \) and the triangle inequality follow straightforwardly from the definition, as well as the fact that \( d_0(x, x) = 0 \). 
    \par We begin by considering \( x \) in the interior of \( \mathcal{D} \).
    Fix \( R_0 \leq \inf_\epsilon\{\mathrm{inj}(g_\epsilon)\} \).
    Given \( y \neq x \), if \( l_\epsilon([x,y]_\epsilon) > R_0 \) for every \( \epsilon \) then \( d_0(x,y) > R_0 \).
    If not, then Lemma \ref{lem:convergence_geodesics} applies, and we get that \( [x,y]_\epsilon \) converges uniformly in the \( C^1 \) topology to a curve \( [x,y]_0 \) whose derivative at \( x \) is a vector \( v \) satisfying 
    \[
        dp_x(v) = \frac{p(x) - p(y)}{|p(x) - p(y)|}.
    \]
    Moreover, by considering \( \epsilon \) sufficiently small, the geodesics \( \gamma_\epsilon \) and \( [x, y]_\epsilon \) can be made arbitrarily close in the uniform \( C^1 \) topology on the interval \( [0, |p(x) - p(y)|]\). 
    Thus
    \[
        d_0(x,y) := \lim_{\epsilon\to 0}l_\epsilon([x,y]_\epsilon) = \lim_{\epsilon \to 0}l_\epsilon(\gamma_\epsilon) = |p(x) - p(y)| > 0.
    \]
    Finally, since every geodesic \(g_\epsilon\) is complete, we conclude that every minimising curve \( [x,y]_0 \) is uniquely extensible to small neighbourhoods of \(x\) and \(y\), as long as such neighbourhoods do not intersect \(B(K) \).
    \par In the general case, the intersection \( [x,y]_\epsilon \cap B(K) \) may be non-empty for infinitely many values of \( \epsilon \), but the curves \( ([x,y]_\epsilon) \) can be arbitrarily well approximated by polygonal curves lying entirely in \( K^\circ \), where the previous arguments apply to each sufficiently small segment.
    It follows that \( d_0(x, y) > 0 \) as well, which concludes the demonstration that \( K_0 = (K, d_0) \) is a metric space.
    It is also straightforward that any segment \( [x,y]_0 \) can be extended to the entire line.
    For endpoint points in \(B(K)\), the extension is not unique.
    \par Now, note that \( d_\epsilon \) converges uniformly to \( d_0 \) as a consequence to Equation \eqref{eq:unif_conver_dist}. 
    In particular, up to isometry, \( K_0 \) is the unique Gromov-Hausdorff limit of the complete length spaces \( K_\epsilon \), and therefore it is a complete length space on its own \cite[Theorem 7.5.1]{burago2022course}.
    Additionally, the length function \( l_0 \) of \( d_0 \) may be reconstructed from the middle points of distances (cf. \cite[Theorem 2.4.16.]{burago2022course}), but since those are the limits of middle points of the distances \( d_\epsilon \), it follows that
    \[
        l_0(\gamma) = \lim_{\epsilon \to 0}l_\epsilon(\gamma).
    \]
    Finally, let \( \gamma_\epsilon\) be a geodesic of \( K_\epsilon \) with initial condition \( (x, v) \in T^1K \).
    By Lemma \ref{lem:uniform_conv_geodesic_flows} there is a unique curve \( \gamma \) on \( K \) to which the family \( \gamma_\epsilon \) converges on every compact subset of \( \R. \)
    Given \(x_0 = \gamma(a_0), y_0 = \gamma(b_0)\) sufficiently close, there are sequences of points \( x_\epsilon = \gamma_\epsilon(a_\epsilon) \) and \( y_\epsilon = \gamma_\epsilon(b_\epsilon) \) with \( d_\epsilon(x_\epsilon, y_\epsilon) = l_\epsilon(\gamma_\epsilon\rvert_{[a_\epsilon, b_\epsilon]}) \) and such that
    \( x_\epsilon \to x_0, y_\epsilon \to y_0 \) as \( \epsilon \to 0 \).
    Hence
    \[
        |d_\epsilon(x_\epsilon, y_\epsilon) - d_\epsilon(x_0,y_0)| \leq d_\epsilon(x_0, x_\epsilon) + d_\epsilon(y_0, y_\epsilon) \to 0 \text{ as } \epsilon \to 0,
    \]
    and consequently 
    \[
        \lim_{\epsilon \to 0}l_\epsilon(\gamma_\epsilon\rvert_{[a_\epsilon, b_\epsilon]}) = \lim_{\epsilon \to 0}d_\epsilon(x_\epsilon, y_\epsilon) = d_0(x_0, y_0).
    \]
    But \(l_0(\gamma\rvert_{[a_0, b_0]}) = \lim_{\epsilon \to 0}l_\epsilon(\gamma_\epsilon\rvert_{[a_\epsilon, b_\epsilon]})\) by construction.
    In other words, the curve \( \gamma =\lim_{\epsilon \to 0}\gamma_\epsilon \) is a minimising curve between any two points in its image, which concludes.
\end{proof}

\begin{remark}
    The results above, in particular Theorem~\ref{thm:d_0_is_a_metric}, do not guarantee uniqueness of the geodesic  when the initial condition lies in \(T^1B(K)\).
    In such a situation, if \(v\) is transverse to \(B(K)\) (that is, if \(\pi_K(x,v)\) corresponds to a regular collision on \(\Omega\)), then Kourganoff's approximation theorem guarantees \(\gamma_{0,v}\) is unique.
    Otherwise, if \(v\) is tangent to \(B(K)\), then there are at least two geodesics in \(K_0\) starting at \(x\) with velocity \(v\). 
    One projects, on \(\Omega\), onto the billiard orbit whose trajectory in the table \(\mathcal{D}\) is the line \(t \mapsto x + tv\), and the other is the component of \(B(K)\) containing \(x\).
\end{remark}

\subsubsection{\( K_0 \) as a metric gluing}
\par There is yet another way one can construct \( K_0 \).
First, examining the proof of Lemma \ref{lem:convergence_geodesics}, we note that it was showed that, given a connected component \( S \) of \( K^\circ \), every \( x \in S \) has a neighbourhood \( U \) such that, for every \( y \in U \), the curve \( [x, y]_0 \) satisfies
    \[
        l_0([x, y]_0) = \lim_{\epsilon \to 0}l_\epsilon([x,y]_\epsilon) = |p(x) - p(y)|.
    \]
We can then restate the main ingredient of the proof as
\begin{lemma}\label{lem:d_0_is_distance_reestated}
    Let \( S \) be a connected component of \( K^\circ \) and \( |\cdot| \) the Euclidean distance in \( \mathcal{D} \). Then \( p\rvert_S\colon (S, d_0) \to (\mathcal{D}, |\cdot|) \) is a local isometry. 
\end{lemma}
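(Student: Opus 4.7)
My plan is to reduce the statement to a single clean length identity:
\[
l_0(\gamma) = l_E(p \circ \gamma), \qquad \gamma \in \mathrm{Rec}(K, g_1),
\]
where $l_E$ denotes Euclidean length on $\mathbb{T}^2$. Once this identity is available, the lemma follows from an elementary infimum computation together with the fact that $p|_S\colon S \to \mathrm{int}(\mathcal{D})$ is a global diffeomorphism.

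To establish the length identity, I reparameterise $\gamma\colon [0,L]\to K$ by $g_1$-arclength, so that $\|\dot\gamma(t)\|_{g_1}=1$ for a.e. $t$. Unpacking the definition of $g_\epsilon$ as the pullback via $\alpha_\epsilon$ of the ambient Euclidean metric, any tangent vector $v = a\partial_x + b\partial_y + c\partial_z \in T_xK \subset T_xE$ satisfies
\[
\|v\|_{g_\epsilon}^2 \,=\, a^2 + b^2 + \epsilon^2 c^2 \,\searrow\, a^2 + b^2 \,=\, \|dp(v)\|^2 \quad \text{as } \epsilon \to 0,
\]
with monotone convergence from above and the pointwise bound $\|\dot\gamma(t)\|_{g_\epsilon} \leq \|\dot\gamma(t)\|_{g_1} = 1$. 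Dominated convergence together with Theorem~\ref{thm:d_0_is_a_metric} then yield $l_0(\gamma) = \lim_{\epsilon \to 0} l_\epsilon(\gamma) = l_E(p \circ \gamma)$.

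Given $x \in S$, I set $U := (p|_S)^{-1}(B(p(x), r))$ for some $r < \mathrm{dist}(p(x), \partial\mathcal{D})$, so that $p|_U\colon U \to B(p(x), r)$ is a diffeomorphism onto a convex Euclidean ball lying in $\mathrm{int}(\mathcal{D})$. For $y, y' \in U$, the Euclidean segment $\sigma$ from $p(y)$ to $p(y')$ is contained in $B(p(x), r)$ by convexity, so its lift $\tilde\sigma := (p|_U)^{-1}(\sigma)$ is a rectifiable curve in $U$. The length identity gives $l_0(\tilde\sigma) = l_E(\sigma) = |p(y) - p(y')|$, whence $d_0(y, y') \leq |p(y) - p(y')|$. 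Conversely, for any rectifiable curve $\gamma$ on $K$ from $y$ to $y'$, the identity together with the Euclidean triangle inequality yields $l_0(\gamma) = l_E(p \circ \gamma) \geq |p(y) - p(y')|$; taking the infimum over such $\gamma$ produces the reverse inequality, hence $d_0(y, y') = |p(y) - p(y')|$, which is the desired local isometry.

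The only nontrivial step is the length identity itself: while the pointwise convergence $\|v\|_{g_\epsilon} \to \|dp(v)\|$ is immediate from the pullback formula for $g_\epsilon$, an $L^1$-dominated convergence argument is required to interchange limit and integration, which is precisely the role of the arclength reparameterisation. Everything else is essentially formal once one exploits that $p|_S$ is a diffeomorphism and that Euclidean balls inside $\mathrm{int}(\mathcal{D})$ are convex.
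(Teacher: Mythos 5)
Your proof is correct, but it takes a genuinely different route from the paper's. The paper obtains this lemma as a by-product of its geodesic approximation machinery: in the proofs of Lemma~\ref{lem:convergence_geodesics} and Theorem~\ref{thm:d_0_is_a_metric}, Kourganoff's uniform convergence theorem (Theorem~\ref{thm:kourganoff_approximation}) is used to show that the minimising geodesics \([x,y]_\epsilon\) converge in \(C^1\) to a curve projecting onto the straight billiard segment, with \(l_\epsilon([x,y]_\epsilon)\to |p(x)-p(y)|\)\textemdash an argument requiring normal neighbourhoods, exponential-map estimates and Jacobi-field bounds. You bypass the dynamics entirely: the pointwise degeneration \(\|v\|_{g_\epsilon}^2 = a^2+b^2+\epsilon^2c^2 \searrow \|dp(v)\|^2\) together with dominated convergence gives the length identity \(l_0(\gamma)=l_E(p\circ\gamma)\) for every rectifiable curve, and the two-sided bound then follows from convexity of small balls (lower bound: every competitor projects to a curve of Euclidean length at least \(|p(y)-p(y')|\); upper bound: lift the straight segment). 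This is more elementary and in fact proves more in one stroke\textemdash the global identity \(l_0 = l_E\circ p\) is exactly what the paper re-derives later in Lemma~\ref{lem:sheet_isometry} by polygonal approximation\textemdash whereas the paper's route additionally delivers the \(C^1\)-convergence of geodesics and their identification with billiard trajectories, which it needs elsewhere. Two minor points to tighten: choose \(r\) also smaller than half the systole of \(\mathbb{T}^2\), so that \(B(p(x),r)\) is an honest convex Euclidean ball and ``Euclidean distance'' is unambiguous on the torus; and note that your appeal to Theorem~\ref{thm:d_0_is_a_metric} (both for \(l_0=\lim_{\epsilon\to 0} l_\epsilon\) and for \(d_0\) being the length metric induced by \(l_0\)) is legitimate since the lemma follows that theorem in the paper, though you could avoid it altogether by running both bounds at fixed \(\epsilon>0\)\textemdash the pointwise inequality \(\|dp(v)\|\le\|v\|_{g_\epsilon}\) gives \(d_\epsilon(y,y')\ge |p(y)-p(y')|\), while \(d_\epsilon(y,y')\le l_\epsilon(\tilde\sigma)\to l_E(\sigma)\)\textemdash and then letting \(\epsilon\to 0\).
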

Naturally, the argument is highly dependent on the approximation by geodesic flows, which fails if the points whose distance we are measuring are eclipsed by a scatterer, or after the billiard trajectory experiences a grazing collision.
To remedy that, we consider a different metric on \( \mathcal{D} \), making it into a length space.
\begin{defi}[Length metric of \( \mathcal{D} \)]
    The table \( \mathcal{D} \) has a length structure \( \mathcal{L} \) consisting of all Euclidean rectifiable paths \( \gamma\colon I \to \R^2 \) whose image is completely contained in \( \mathcal{D} \) together with the usual Euclidean length \( l_E \) induced by \( |\cdot| \).
    We call the \emph{length metric} of \( \mathcal{D} \), denoted by \( d_\mathcal{L} \), the metric induced by \( \mathcal{L} \), i.e.,
    \[
        d_\mathcal{L}(x, y) = \inf\{l_E(\gamma); \gamma \in \mathcal{L}, \gamma(0) = x, \gamma(1) = y\}.  
    \]    
\end{defi}
The geodesics of \( d_\mathcal{L} \) are straight lines in \( \mathcal{D} \), segments of boundary, and paths consisting of concatenations of these two types: such curves are straight lines that meet a scatterer at a tangential angle, follow along the boundary for a while, and leave into a straight trajectory again, with another grazing angle (see Figure \ref{fig:types_geodesics}).
\begin{figure}[!h]
    \centering
    \includegraphics[width=.8\textwidth]{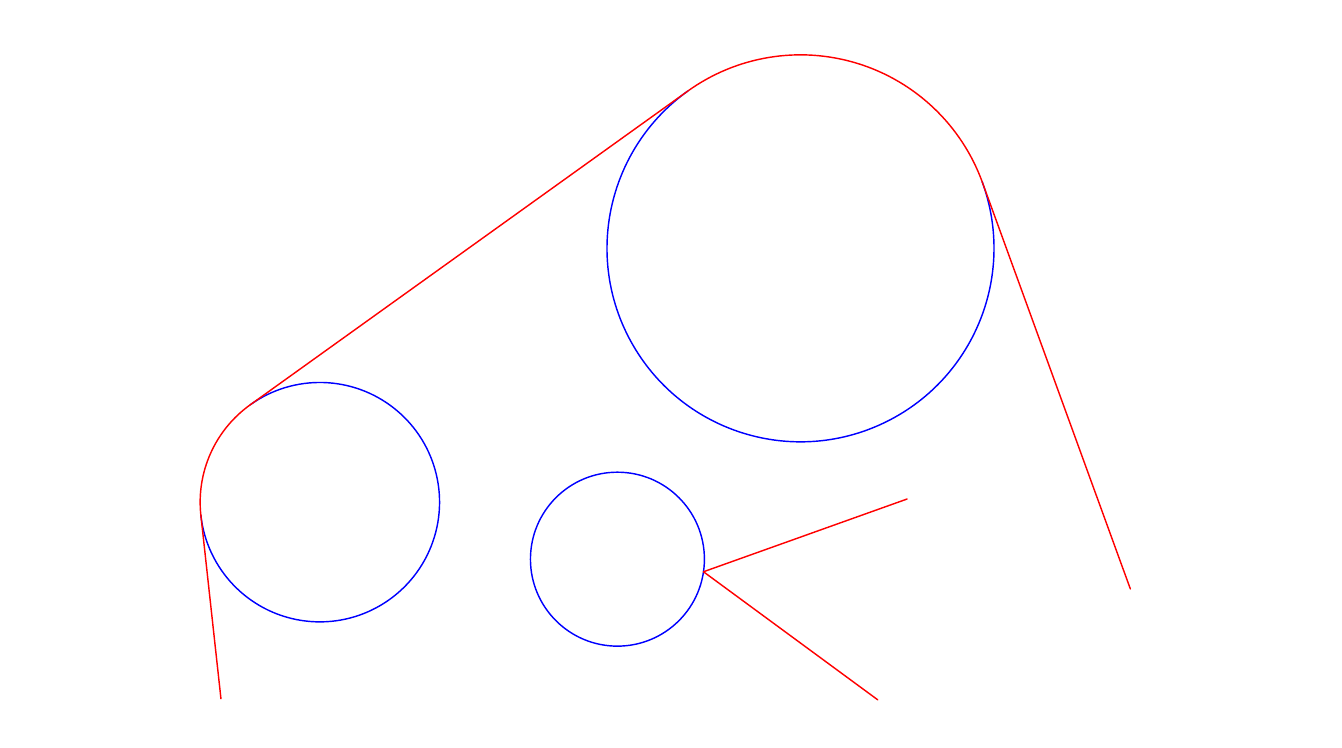}
    \caption{The longer curve following the obstacles is a \(d_\mathcal{L}\)-geodesic, while the billiard trajectory segment is merely piecewise geodesic.}
    \label{fig:types_geodesics}
\end{figure}
\begin{remark}
    Just as \(K_0\), the space \((\mathcal{D}, d_\mathcal{L})\) is not uniquely geodesic.
    If \(x \in \Gamma_i \subset \partial\mathcal{D}\) is a boundary point, and \(v\) is the tangent vector to \(\Gamma_i\) at \(x\), then there are two geodesics starting at \(x\) in the direction of \(v\): the straight line \(x + tv\) into the interior of the table, and the curve \(\Gamma_i\) itself.
    Similarly, the points of \(B(K)\) are exactly the ones where \(d_0\) fails to be uniquely geodesic.
\end{remark}
\begin{lemma}\label{lem:sheet_isometry}
    For every connected component \( S \subset K^\circ \), \( p\rvert_{\overline{S}}\colon (\overline S, d_0) \to (\mathcal{D}, d_\mathcal{L}) \) is an isometry.
\end{lemma}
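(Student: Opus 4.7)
The plan is to prove the lemma in three steps, using Lemma \ref{lem:d_0_is_distance_reestated} as the local backbone and promoting it to a global isometry.

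First, I would establish that $p|_{\overline{S}}\colon \overline{S}\to \mathcal{D}$ is a homeomorphism. By construction, $p|_S\colon S\to \mathrm{int}(\mathcal{D})$ is a diffeomorphism, and the discussion preceding property (K1) showed that $\#p^{-1}(x)=1$ for every $x\in\partial \mathcal{D}$, so $p|_{\overline{S}}$ is a continuous bijection; since both $\overline{S}$ and $\mathcal{D}$ are compact Hausdorff, it is a homeomorphism. Moreover, each connected component of $B(K)\cap\overline{S}$ is smoothly identified via $p$ with a scatterer $\Gamma_i$.

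Next, for the inequality $d_\mathcal{L}(p(x),p(y))\le d_0(x,y)$, I would project: if $\gamma\colon[0,1]\to\overline{S}$ is any rectifiable curve joining $x,y$, then $p\circ\gamma$ is an $\mathcal{L}$-admissible path in $\mathcal{D}$ from $p(x)$ to $p(y)$. Working in a local parametrisation of $K$ as in \eqref{eq:first_fund_forms_comparison} one sees $g_\epsilon(v,v)=|dp(v)|^2+(\epsilon^2-1)\langle v,\partial_z\rangle^2_{\mathrm{ref}}$-type term, so that $\sqrt{g_\epsilon(v,v)}\ge |dp(v)|$ for $\epsilon\le 1$ (after the rescaling absorbed into (K3)). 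Passing to the limit via Theorem \ref{thm:d_0_is_a_metric}, $l_0(\gamma)\ge l_E(p\circ\gamma)\ge d_\mathcal{L}(p(x),p(y))$. Taking the infimum over $\gamma$ yields the inequality.

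For the converse $d_0(x,y)\le d_\mathcal{L}(p(x),p(y))$, I would lift: any $\mathcal{L}$-admissible $\sigma\colon[0,1]\to\mathcal{D}$ joining $p(x)$ to $p(y)$ pulls back through the homeomorphism to $\widetilde{\sigma}:=(p|_{\overline{S}})^{-1}\circ\sigma$, a continuous curve in $\overline{S}$ from $x$ to $y$. It remains to show $l_0(\widetilde{\sigma})\le l_E(\sigma)$ for such lifts. Split $\sigma$ into countably many subarcs lying either entirely in $\mathrm{int}(\mathcal{D})$ or along a single boundary component $\Gamma_i$. On interior subarcs, Lemma \ref{lem:d_0_is_distance_reestated} (the local isometry $p|_S$) applies along a covering by small normal neighbourhoods and gives $l_0(\widetilde{\sigma}|_J)=l_E(\sigma|_J)$. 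On a subarc along $\Gamma_i$, the lift runs in a connected component of $B(K)\cap\overline{S}$; parametrising by arc-length one checks that $g_\epsilon(\dot{\widetilde{\sigma}},\dot{\widetilde{\sigma}})=|dp(\dot{\widetilde{\sigma}})|^2+\epsilon^2\dot z^2\to |dp(\dot{\widetilde{\sigma}})|^2=1$ as $\epsilon\to 0$, so by the formula for $l_0$ in Theorem \ref{thm:d_0_is_a_metric} (together with dominated convergence), $l_0(\widetilde{\sigma}|_J)=l_E(\sigma|_J)$ on such a subarc as well. Summing over the decomposition and taking the infimum over $\sigma$ concludes.

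The main obstacle is the length-preservation statement at the boundary pieces, where the limit metric $g_0$ degenerates in the vertical direction and no Riemannian structure is available. The crucial input is the explicit form of the degeneration coming from \eqref{eq:first_fund_forms_comparison} together with (K2), which guarantees that tangent vectors to $B(K)$ have the correct behaviour for $g_\epsilon$-lengths to converge to the Euclidean arc-length of their projection onto $\partial\mathcal{D}$, making the lifts of boundary-following geodesics of $d_\mathcal{L}$ into legitimate competitors realising the $d_0$-distance.
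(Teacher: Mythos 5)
Your argument is correct in substance, but it takes a different route through the boundary than the paper does. The paper's proof is shorter and runs as follows: \(p\) is Lipschitz, so images of rectifiable curves are \(\mathcal{L}\)-admissible; by Lemma~\ref{lem:d_0_is_distance_reestated}, \(p\) preserves lengths of curves mapped into \(\mathrm{int}(\mathcal{D})\); boundary-touching curves are then handled not by a direct computation on \(B(K)\) but by approximating every curve on \(\mathcal{D}\) in the \(C^1\) topology by polygonal curves in \(\mathrm{int}(\mathcal{D})\), from which length preservation follows in general; finally, length preservation makes \(p\rvert_{\overline S}\) non-expanding, and compactness of \(\overline S\) and \(\mathcal{D}\) upgrades this to a global isometry. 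You replace the interior-approximation step by an explicit analysis of the degenerating metric along \(B(K)\): the pointwise identity \(g_\epsilon(v,v)=|dp(v)|^2+\epsilon^2(dz(v))^2\) gives both the non-expanding direction \(\sqrt{g_\epsilon(v,v)}\ge|dp(v)|\) and, along lifts of boundary arcs, \(l_\epsilon\to l_E\), so you get the two distance inequalities directly rather than via a compactness principle. Your version is more self-contained and makes transparent exactly why boundary-following competitors realise the \(d_0\)-distance, which the paper's terse final step leaves implicit; the paper's polygonal approximation avoids any decomposition of curves at all.

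Two small repairs are needed in your write-up. First, an arbitrary rectifiable \(\mathcal{L}\)-path need not split into countably many subarcs each lying in \(\mathrm{int}(\mathcal{D})\) or along a single \(\Gamma_i\) (the contact set \(\sigma^{-1}(\partial\mathcal{D})\) can be a Cantor set); but for the inequality \(d_0\le d_\mathcal{L}\) it suffices to lift a \(d_\mathcal{L}\)-geodesic \([p(x),p(y)]_\mathcal{L}\), which the paper has already described as a finite concatenation of straight segments and boundary arcs, so your decomposition is legitimate on that subclass and the infimum is unaffected. Second, in the projection direction you quantify over curves \(\gamma\colon[0,1]\to\overline S\), whereas \(d_0(x,y)\) is an infimum over rectifiable curves in all of \(K\), which may leave \(\overline S\); this costs nothing, since the pointwise inequality \(\sqrt{g_\epsilon(v,v)}\ge |dp(v)|\) holds at every point of \(K\) and \(p\circ\gamma\) remains \(\mathcal{L}\)-admissible, so \(l_0(\gamma)\ge l_E(p\circ\gamma)\ge d_\mathcal{L}(p(x),p(y))\) for every competitor in \(K\).
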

\begin{proof}
    The projection \( p \) is Lipschitz with respect to metric \( g\) on \(K\) and the Euclidean norm \(|\cdot| \) on \(\mathbb{T}^2\), hence the image \( p(\gamma) \) of every rectifiable curve in \( K \) is rectifiable with respect to the Euclidean metric, and therefore \( p(\mathrm{Rec}(K)) \subset \mathcal{L} \).
    \par According to Lemma \ref{lem:d_0_is_distance_reestated} the map \( p \) is local isometry on \( S \), hence it preserves the lengths of every rectifiable curve \( \gamma \) mapped into \( \mathrm{int}(\mathcal{D}) \), that is, 
    \[ 
        l_0(\gamma) = l_E(p(\gamma)).
    \]
    But every curve on \( \mathcal{D} \) can be arbitrarily well approximated, in the \(C^1 \) topology, by a polygonal curve on \( \mathrm{int}(\mathcal{D}) \), and length preservation follows.
    \par From length preservation we now get that \( p\rvert_{\overline S} \) is a non-expanding map, that is
    \[
        d_0(x, y) \geq d_\mathcal{L}(p(x), p(y)), 
    \]
    but since \( \overline{S} \) and \( \mathcal{D} \) are compact, we conclude that \( p\rvert_{\overline S} \) is a global isometry, as wanted.     
\end{proof}
\begin{theorem}\label{thm:K_0_as_gluing_metric_space}
    Suppose \( K^\circ \) has \( k \) distinct connected components. Then the space \( K_0 \) is isometric to the metric quotient 
    \[
        ^{\textstyle \left(\displaystyle{\coprod_{i=1}^k}(\mathcal{D}, d_\mathcal{L})\right)}\!\!\!\Big/_{\!\!\textstyle\sim \ }
    \]
    where \( \sim \) is the equivalence relation identifying corresponding boundary components.
\end{theorem}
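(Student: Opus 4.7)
The plan is to exhibit a natural bijection $\Phi$ between the metric quotient and $K$ and then verify it is a global isometry by reducing everything to Lemma~\ref{lem:sheet_isometry} together with the fact that both spaces carry length metrics.

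Let $S_1,\dots,S_k$ denote the connected components of $K^\circ$, and for each $i$ let $\Phi_i := (p|_{\overline{S_i}})^{-1}$ be the inverse of the bijective isometry provided by Lemma~\ref{lem:sheet_isometry}. Writing $X$ for the metric quotient in the statement, I define $\Phi : X \to K$ on the image of the $i$-th copy of $(\mathcal{D}, d_\mathcal{L})$ by $\Phi_i$. This is well-defined on equivalence classes because, as noted in the discussion following conditions (K1)--(K2), for any $x \in \partial\mathcal{D}$ the preimage $p^{-1}(x)$ consists of a single point of $B(K)$, so $\Phi_i$ and $\Phi_j$ necessarily agree on every identified pair of boundary points. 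The map $\Phi$ is surjective since $K = \bigcup_i \overline{S_i}$, and injective because points lying in distinct interior components $S_i$ admit distinct preimages, while boundary points of different copies of $\mathcal{D}$ are identified in $X$ precisely when they share a common preimage in $B(K)$.

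Both $K_0$ (by Theorem~\ref{thm:d_0_is_a_metric}) and $(X, d_X)$ (by construction of the quotient length metric) are complete length spaces. Since in a length space distances are recovered as infima of rectifiable path lengths, a bijection between two length spaces is a global isometry as soon as it preserves the length of every rectifiable path in both directions. Given a rectifiable path $\gamma : [0,1] \to X$, the preimage of the interior of each sheet is open in $[0,1]$, so $[0,1]$ decomposes into a countable disjoint union of open intervals on which $\gamma$ lies in a single sheet, together with a closed residual set mapping into the identified boundary. On each open piece $I$, Lemma~\ref{lem:sheet_isometry} gives $l_0(\Phi \circ \gamma|_I) = l_E(\gamma|_I)$; summing over all pieces via the $\sigma$-additivity of total-variation length for rectifiable curves should yield $l_0(\Phi \circ \gamma) = l_X(\gamma)$. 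The reverse direction is entirely symmetric, applied to any rectifiable path $\eta : [0,1] \to K$ partitioned according to the connected components of $\eta^{-1}(K^\circ)$.

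The main obstacle will be making this decomposition fully rigorous for paths whose intersection with $B(K)$ is a complicated closed set (a priori an arbitrary closed subset of $[0,1]$, possibly of Cantor type): the total arc-length must be accounted for both on the open pieces lying in the interiors of individual sheets and on the residual closed portion mapping into $B(K)$. To handle the residual part, I will exploit the fact that the restriction $p|_{B(K)} : B(K) \to \partial\mathcal{D}$ is a bijective isometry for the induced path metrics\textemdash a consequence of applying Lemma~\ref{lem:sheet_isometry} to each $\overline{S_i}$ and restricting to its boundary. Combined with the $\sigma$-additivity of arc-length on rectifiable curves, this will produce the desired length preservation and establish that $\Phi$ is the claimed global isometry.
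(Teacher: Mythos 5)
Your proposal is correct, and it takes a genuinely different route from the paper. The paper proves the theorem by comparing the two distance functions directly: it writes the quotient semi-metric as $d_\sim(x,y)=\inf\{d_\mathcal{L}(x,m_1)+d_\mathcal{L}(m_2,y);\ m_1\sim m_2\}$ and splits into two cases according to whether the $d_\mathcal{L}$-geodesic between the projected points meets $\partial\mathcal{D}$; the second case requires a variational argument (critical points of the functional $L^i_{x,y}$, i.e.\ specular reflection points) together with Kourganoff's approximation theorem to identify $d_0(\widetilde x,\widetilde y)$ with the length of the reflected billiard path. You instead construct the natural bijection $\Phi$ sheet by sheet and reduce the whole theorem to preservation of lengths of rectifiable curves, invoking Theorem~\ref{thm:d_0_is_a_metric} to know that $d_0$ is the intrinsic metric of $l_0$. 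This buys you something real: the entire Case II analysis (reflection points, $\epsilon$-approximation) disappears, absorbed into the already-established fact that $K_0$ is a length space, and your argument works uniformly for any number of sheets and any gluing pattern, whereas the paper's simplified formula for $d_\sim$ (``one passage through the boundary'') is tailored to the two-sheet situation. The price is the curve-decomposition technicality you flag.

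On that technicality: as written, your treatment of the residual set is slightly circular. The total-variation measure $\mu_\gamma$ of a rectifiable curve satisfies $\mu_\gamma(C)=V(\gamma)-\sum_i\mu_\gamma(I_i)$ for the residual closed set $C$, so knowing that $\Phi$ preserves increments with both endpoints in $C$ does not by itself identify $\mu_\gamma(C)$ with $\mu_{\Phi\circ\gamma}(C)$ --- that identification is equivalent to $V(\gamma)=V(\Phi\circ\gamma)$, which is the very thing being proved. The clean fix avoids the measure entirely: compute $V$ as a supremum over partitions, and observe that any partition may be refined --- inserting, for each consecutive pair of sample points lying in distinct open sheets, the first exit time from the first sheet and the last entrance time into the second --- so that every consecutive pair either lies in a common closed copy or consists of two points on the seam. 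For pairs of the first kind you need that $d_X$ restricted to a single closed copy equals $d_\mathcal{L}$ (the $1$-Lipschitz ``collapse'' map from the quotient onto $(\mathcal{D},d_\mathcal{L})$ gives one inequality, a path inside the copy gives the other), after which Lemma~\ref{lem:sheet_isometry} matches it with $d_0$ on $\overline{S_i}$; for pairs of the second kind, both metrics restrict to $d_\mathcal{L}$ on the seam, since $B(K)\subset\overline{S_i}$ for every $i$ and Lemma~\ref{lem:sheet_isometry} applies there as well. Refinement only increases partition sums and the refined sums agree termwise, so $l_X(\gamma)=l_0(\Phi\circ\gamma)$ follows with no Cantor-set bookkeeping, and your length-space argument then closes the proof.
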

\begin{proof}
    We denote by \( \widehat d \) the metric on the disjoint union \( X :=\coprod_{i=1}^k(\mathcal{D}, d_\mathcal{L}) \).
    It is defined as
    \[
        \widehat d(x,y) := \begin{cases}
            d_\mathcal{L}(x,y), \text{ if } x, y \text{ belong to the same copy of } \mathcal{D}; \\
            \infty, \text{ otherwise}.
        \end{cases}
    \]
    In general, for an equivalence relation \( R \) on the disjoint union, the semi-metric \(d_R\) ``identifying'' the \( R \)-related points (by assigning distance zero to them) is given by
    \[
        d_R(x,y) := \inf\left\{\sum_{i=0}^q\widehat d(x_i, y_i);\  x_0 = x, y_q = y \text{ and } y_i R x_{i+1} \text{ for } i = 1, \cdots, q-1\right\},
    \]
    but it is clear that in our situation such a semi-metric has the equivalent, simpler description
    \[
        d_\sim(x,y) = \inf\{d_\mathcal{L}(x, m_1) + d_\mathcal{L}(m_2, y); \  m_1 \sim m_2\},
    \]
    since it is possible to change from one copy of the table to the other by ``passing'' through the boundary of \( \mathcal{D} \) only once. 
    We note moreover that \(d_\sim (x,y) = 0 \) if and only if \( x \sim y \), so the only pairs of distinct points \( (x, x') \) whose distance is zero are  those where \(x\) and \(x'\) correspond to the same boundary point. 
    In particular, since \(X\) is compact the quotients \(K \approx X \big/\!\!\sim\) and \( X\big/d_\sim \) are homeomorphic\footnote{
       For a semi-metric \(d\) on \(X\), we denote by \( X\big/d\) the set of equivalence classes of the relation \(d(x,y) = 0\). For a general relation \(R\) on \(X\), it is not always true that the quotients \(X \big/R\) and \(X\big/d_R\) are the same. Even when they coincide as sets, in general one can only guarantee that the topology on \(X\big/d_R\) is coarser, i.e., \(\mathrm{id}\colon X\big/R \to X\big/d_R\) is continuous \cite{burago2022course}.
    }. 
    \par We need to show that \( \mathrm{id}\colon (K,d_\sim) \to (K, d_0 ) \) is an isometry.
    Due to Lemma \ref{lem:sheet_isometry}, we already know that the metrics coincide for any two points in the same copy of \(\mathcal{D}\), so it is enough to consider \(\widetilde x,\widetilde y\) lying in distinct connected components of \(X\). 
    Let \(x, y\) be the corresponding projected points onto \(\mathcal{D}\), and \([x,y]_\mathcal{L}\) the geodesic of \( (\mathcal{D}, d_\mathcal{L}) \) joining them. 
    There are two cases to analyse.
    \par \emph{Case I: \([x,y]_\mathcal{L}\) meets the boundary \(\partial\mathcal{D}\).}
    In this situation we have \(d_\sim(\widetilde x,\widetilde y) = l_E([x,y]_\mathcal{L}) = d_\mathcal{L}(x,y)\).
    Indeed, given any point \( \widetilde m \) in the boundary \(\mathcal{D}\), we have
    \[
        d_\mathcal{L}(\widetilde x, \widetilde m) + d_\mathcal{L}(\widetilde m, \widetilde y) =  d_\mathcal{L}(x, m) + d_\mathcal{L}(m, y) \geq d_\mathcal{L}(x,y),   
    \]
    with equality holding exactly when \( m \) is a point in the segment \([x,y]_\mathcal{L}\).
    Thus the infimum \( d_\sim (\widetilde x, \widetilde y) = d_\mathcal{L}(x, y)\) is achieved thanks to the assumption that \([x,y]_\mathcal{L} \cap \partial\mathcal{D} \neq \emptyset\). 
    \par On the other hand, given any \( m \in x,y]_\mathcal{L} \cap \partial\mathcal{D} \), we may lift the geodesic segments \( [x, m]_\mathcal{L}\) and \( [m, y]_\mathcal{L}\) isometrically to geodesics \( [\widetilde x, m]_0\) and \( [m, \widetilde y]_0 \) of \(K_0\).
    Hence the path of minimal length \( [\widetilde x, \widetilde y]_0 \) has length at most \(l_0([\widetilde x, m]_0) + l_0([\widetilde x, m]_0) = d_\mathcal{L}(x, y) \).
    But the projection map is non-expanding, that is, \(d_\mathcal{L}(x,y) \leq d_0(\widetilde x, \widetilde y \), thus \( d_0(\widetilde x, \widetilde y) = d_\mathcal{L}(x, y) = d_\sim (\widetilde x, \widetilde y)\), as we wanted.
    \par \emph{Case II: \([x,y]_\mathcal{L} \cap \partial\mathcal{D} = \emptyset \).}
    If the intersection is empty, then the geodesic \([x,y]_\mathcal{L}\) is a straight line contained entirely in \( \mathrm{int}(\mathcal{D})\). 
    We adopt a functional approach. 
    As in Section \ref{ssec:periodic_orbits}, for each \( i = 1, 2, \cdots, m \), we consider functions \( \tau^i_x\colon \T_i \ni s \mapsto \|x - \gamma_i(s)\| \) and the length functional
    \begin{align*}
        L_{x,y}^i\colon \T_i &\to \R_+ \\
        s &\mapsto \tau^i_x(s) + \tau^i_y(s).
    \end{align*}
    Remark that, by construction, \(d_\sim(\widetilde x, \widetilde y) = \inf_s\{L_{x,y}^i(s)\}\).
    \par Now, the paths of minimal length between \(x\) and \(y\) with a single element in the scatterer \( \Gamma_i \) are the critical points of \( L_{x,y}^i \), which are exactly the elements \( m = \gamma_i \) where the entry and exit angles are the same.
    So the length minimisers are the billiard trajectories of the form \([x,m]_\mathcal{L} \ast [m, y]_\mathcal{L}\).
    Note that the collision is regular, as the grazing collisions have already been considered in Case I.
    Thus this segment of billiard trajectory is well approximated by the proper segments of geodesics \( \gamma_\epsilon \) of \( K_\epsilon \).
    Hence 
    \[
        d_0(\widetilde x, \widetilde y) = \lim_{\epsilon \to 0}l_\epsilon (\gamma_\epsilon) = [x,m]_\mathcal{L} + [m, y]_\mathcal{L} = d_\sim(\widetilde x, \widetilde y),
    \]
    which concludes.
\end{proof}
The ``seam'' of the above construction, i.e., the subset of \(K_0\) corresponding to the boundary points, where the identification happens, is exactly set \(B(K)\), as defined earlier. 
We end the section with two immediate corollaries.
\begin{corollary}\label{cor:isometry-preserves-boundary}
Let \(K_0 = \coprod_{i=1}^k (\mathcal{D}, d_\mathcal{L})/\!\!\sim\) and \(K'_0 = \coprod_{i=1}^k (\mathcal{D}', d'_{\mathcal{L}})/\!\!\sim\) be metric gluings of Sinai billiard tables. 
If \(F\colon (K_0, d_0) \to (K'_0, d'_0)\) is an isometry, then the tables \((\mathcal{D}, d_\mathcal{L})\) and \((\mathcal{D}', d'_\mathcal{L})\) are also isometric.
\end{corollary}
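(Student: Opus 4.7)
The strategy is to show that the isometry $F$ preserves the seam, i.e., $F(B(K))=B(K')$, and therefore restricts to a homeomorphism between the open complements that maps sheets to sheets; composing with the identifications from Lemma~\ref{lem:sheet_isometry} then produces the desired isometry between the two tables.

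To prove $F(B(K))=B(K')$, I characterize $B(K)$ intrinsically as the set of points of $(K_0,d_0)$ at which two distinct local geodesics emanate in the same initial direction, in the sense that $d_0(\gamma_1(t),\gamma_2(t))/t\to 0$ as $t\to 0^+$. At an interior point $x$ of a sheet $S$, Lemma~\ref{lem:d_0_is_distance_reestated} says $p|_S$ is a local isometry onto $(\mathcal{D},|\cdot|)$ near $x$, so every $d_0$-geodesic starting at $x$ is a straight Euclidean segment in $S$, uniquely determined by its initial direction, and no such branching can occur. On the other hand, at $x\in B(K)$ lying in the interior of a boundary arc $\Gamma_i$, let $v$ be the unit tangent vector to $\Gamma_i$ at $x$. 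Three distinct local $d_0$-geodesics leave $x$ with initial direction $v$: the arc $\Gamma_i$ itself (which is a $d_0$-geodesic because any chord between two nearby points of $\Gamma_i\subset\partial\mathcal{O}_i$ passes through the strongly convex scatterer $\mathcal{O}_i$ and is thus inadmissible in $\mathcal{D}$), together with the two tangent-line segments $t\mapsto x+tv$ lifted into each of the two sheets bordering $\Gamma_i$ (which remain in $\mathcal{D}$ by strong convexity, touching $\mathcal{O}_i$ only at $x$). These three curves pairwise separate only at second order near $x$, so they constitute a genuine geodesic branching at $x$. Hence $B(K)$ is an intrinsic invariant of $d_0$; since $F$ carries geodesics to geodesics while preserving the branching condition, we conclude $F(B(K))=B(K')$.

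Once this is established, the rest is routine. As an isometry $F$ is in particular a homeomorphism, hence maps the $k$ connected components of $K_0\setminus B(K)$ bijectively onto those of $K'_0\setminus B(K')$; by Theorem~\ref{thm:K_0_as_gluing_metric_space}, these components are precisely the $k$ open sheets of each space. Fix any sheet $S$ of $K_0$ and let $S':=F(S)$. Then $F|_{\overline{S}}$ yields by continuity an isometry $(\overline{S},d_0)\to(\overline{S'},d'_0)$, and composing with the isometries provided by Lemma~\ref{lem:sheet_isometry} gives the desired isometry of tables
$$
p'|_{\overline{S'}}\circ F|_{\overline{S}}\circ (p|_{\overline{S}})^{-1}\colon (\mathcal{D},d_\mathcal{L})\to(\mathcal{D}',d'_\mathcal{L}).
$$
The principal obstacle is the intrinsic characterization of $B(K)$ in terms of geodesic branching: on the $B(K)$-side one must verify that the three candidate geodesics are actually locally distinct, which relies on the strong convexity of the scatterer boundaries; on the sheet-interior side one must rule out any such branching, which relies on the local Euclidean structure furnished by Lemma~\ref{lem:d_0_is_distance_reestated}.
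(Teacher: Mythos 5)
Your proposal is correct and follows essentially the same route as the paper: the paper's proof likewise characterizes the seam \(B(K)\) intrinsically as the locus of geodesic bifurcation (a fact it records in the remark following the definition of \(d_\mathcal{L}\) and in a footnote), deduces \(F(B(K)) = B(K')\), and then observes that \(F\) carries the open sheets \textemdash{} each isometric to \((\mathcal{D}, d_\mathcal{L})\) by Lemma~\ref{lem:sheet_isometry} \textemdash{} onto the corresponding sheets of \(K_0'\). Your write-up merely supplies the verifications the paper leaves implicit, namely the use of strict convexity to exhibit the angle-zero branching at seam points and of Lemma~\ref{lem:d_0_is_distance_reestated} to exclude branching at sheet-interior points.
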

\begin{proof}
An isometry preserves distances and consequently must map points where geodesic bifurcation happens to points with the same property\footnote{
    Putting it other way, if for any direction there is a single geodesic emanating from \(x\), the same must be true for \(F(x)\).}.    
Since the points were geodesics bifurcate are exactly those in the seam, we conclude that \(F\)
must send the set \(B(K)\) exactly onto \(B(K')\).  
But \(K_0\setminus B(K)\) is the disjoint union of open ``sheets'', each isometric to \((\mathcal{D},d_\mathcal{L})\).
Since \(F\) carries sheets to sheets, it induces on each one a distance‐preserving bijection with the corresponding sheet in \(K'_0\), i.e.\ an isometry \(\mathcal{D}\to\mathcal{D}'\).  
\end{proof}
\par Recall that a metric space is called non-positively curved, or locally CAT(0), if every point has a neighbourhood on which every geodesic triangle satisfies the CAT(0) inequality.
\begin{corollary}
    \( (K, d_0) \) is a non-positively curved metric space.
    The universal covering  \( \tilde K \), equipped with the length metric induced by \(d_0\), is a contractible Hadamard space, that is, a contractible and complete (globally) CAT(0) geodesic space. 
\end{corollary}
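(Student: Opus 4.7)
The plan is to deduce both assertions from the smooth Riemannian approximations $(K, g_\epsilon)$ together with the metric convergence established earlier. By Kourganoff's curvature analysis (as recalled in the proof of Lemma~\ref{lem:convergence_geodesics}), for $\epsilon>0$ sufficiently small the Gaussian curvature of $(K, g_\epsilon)$ is non-positive everywhere, being strictly negative in a neighbourhood of $B(K)$ and arbitrarily close to $0$ away from it. Hence each such $(K, g_\epsilon)$ is a smooth Riemannian surface of non-positive sectional curvature, which is the standard example of a locally $\mathrm{CAT}(0)$ space.

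Next, let $\pi\colon \tilde K \to K$ denote the universal covering, and lift each $g_\epsilon$ to a Riemannian metric $\tilde g_\epsilon$ on $\tilde K$. By the Riemannian Cartan--Hadamard theorem, $(\tilde K, \tilde g_\epsilon)$ is a complete, simply connected, non-positively curved surface, hence a Hadamard manifold---in particular a (globally) $\mathrm{CAT}(0)$ geodesic space. Writing $\tilde d_\epsilon$ for the distance induced by $\tilde g_\epsilon$, and $\tilde d_0$ for the length metric on $\tilde K$ lifted from $d_0$, the estimate~\eqref{eq:unif_conver_dist} together with the fact that $\pi$ is a local isometry yields $\tilde d_\epsilon \to \tilde d_0$ locally uniformly on $\tilde K$ as $\epsilon \to 0$. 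Fixing a base point, this produces pointed Gromov--Hausdorff convergence $(\tilde K, \tilde d_\epsilon) \to (\tilde K, \tilde d_0)$. Since the $\mathrm{CAT}(0)$ four-point condition is preserved under pointed Gromov--Hausdorff limits, $(\tilde K, \tilde d_0)$ is itself $\mathrm{CAT}(0)$.

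Completeness of $\tilde d_0$ follows from Theorem~\ref{thm:d_0_is_a_metric} via the local isometry $\pi$, and simple connectedness holds by construction. Contractibility is then the standard geodesic retraction: fix $p \in \tilde K$ and set $H(x,t) := \gamma_x(1-t)$, where $\gamma_x$ is the unique $\tilde d_0$-geodesic from $p$ to $x$ (uniqueness being a feature of $\mathrm{CAT}(0)$ spaces). This shows that $\tilde K$ is a contractible Hadamard space. Finally, since $\pi$ is a local isometry onto $(K, d_0)$ and the deck transformation group acts by isometries, $(K, d_0)$ inherits the local $\mathrm{CAT}(0)$ property from its universal cover, proving the first assertion.

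The main technical point is the passage to the limit at the seam $B(K)$, where the tensors $\tilde g_\epsilon$ degenerate and the limit distance is no longer Riemannian (geodesic branching occurs precisely on the lift of $B(K)$). The robustness of the $\mathrm{CAT}(0)$ inequality under Gromov--Hausdorff convergence is however classical, and the uniform distance estimate~\eqref{eq:unif_conver_dist} is exactly what is needed to invoke it. As an alternative that bypasses the approximating flows altogether, one could apply Reshetnyak's gluing theorem directly to the description of Theorem~\ref{thm:K_0_as_gluing_metric_space}: each copy of $(\mathcal{D}, d_\mathcal{L})$ is locally $\mathrm{CAT}(0)$ (being a flat torus patch with strictly convex obstacles removed), and the arcs $\Gamma_i$ along which the sheets are identified are locally $d_\mathcal{L}$-geodesic, hence locally convex in each copy, so the gluing is locally $\mathrm{CAT}(0)$.
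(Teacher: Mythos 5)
Your main argument breaks at its very first step: nothing in the paper (and nothing in Kourganoff's work) asserts that \((K,g_\epsilon)\) has non-positive Gaussian curvature for small \(\epsilon\), and in general it does not. The remark you cite from the proof of Lemma~\ref{lem:convergence_geodesics} only says that away from \(B(K)\) the curvature tends to \(0\) as \(\epsilon\to 0\) \emph{with no control on its sign}. Indeed, the sign of the Gauss curvature is invariant under the vertical scaling \(\alpha_\epsilon\): over the interior each sheet of \(K\) is a graph \(z=f(x,y)\) by (K1), the Gauss curvature of a graph has the sign of \(\det D^2f\), and replacing \(f\) by \(\epsilon f\) multiplies \(\det D^2f\) by \(\epsilon^2\) without changing its sign (equivalently, \(\alpha_\epsilon\) is a linear isomorphism of \(E\), so elliptic points remain elliptic). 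Since \(f\) is non-constant on each sheet (it cannot be, by (K2)), a generic Kourganoff surface has regions of strictly positive curvature, and these persist for \emph{every} \(\epsilon>0\); the Anosov property in (K3) is perfectly compatible with such regions. Consequently \((K,g_\epsilon)\) need not be locally \(\mathrm{CAT}(0)\), \((\tilde K,\tilde g_\epsilon)\) need not be a Hadamard manifold, and the Gromov--Hausdorff step collapses: preservation of the four-point condition under GH limits requires the approximating spaces to satisfy a \emph{global} \(\mathrm{CAT}(\kappa_\epsilon)\) comparison with \(\kappa_\epsilon\to 0^+\). A sup-curvature bound \(\kappa_\epsilon>0\) only gives the local property, Cartan--Hadamard is unavailable for positive \(\kappa\), and upgrading to a global comparison (say via absence of conjugate points and an Alexander--Bishop-type theorem) is precisely the nontrivial content your argument omits.

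By contrast, the alternative you sketch in your final paragraph is correct and is in fact the paper's own proof: one first checks that \((\mathcal{D},d_\mathcal{L})\) is locally \(\mathrm{CAT}(0)\) (interior points have flat neighbourhoods; at boundary points one compares triangles directly, the case where one side runs along a scatterer being handled by an explicit comparison), one observes---as you rightly do---that the curves along which the two copies are glued are locally geodesic in \((\mathcal{D},d_\mathcal{L})\), which is the local convexity hypothesis needed to apply Reshetnyak's Gluing Theorem to the description of Theorem~\ref{thm:K_0_as_gluing_metric_space}, and one concludes with the Cartan--Hadamard theorem for complete, simply connected, locally \(\mathrm{CAT}(0)\) spaces to obtain the contractible Hadamard universal cover. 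Promote that paragraph to the main proof and discard the approximation argument.
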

\begin{proof}
    First, we observe that \( (\mathcal{D}, d_\mathcal{L}) \) is locally CAT(0).
    Indeed, if a point belong to the table's interior, then it has a neighbourhood isometric to an open subset of \( \R^2 \). 
    Otherwise, choose a neighbourhood sufficiently small so that it does not completely contain any scatterer.
    If a geodesic triangle \( \Delta \) in this neighbourhood does not contain any segment of \( B(K) \) as one of its sides, then the distance between any points is the same as the distance between the corresponding comparison points in \( \R^2 \).
    Otherwise, CAT(0) inequality is clearly satisfied (see Figure \ref{fig:comparison_triangles_table} below).
    \begin{figure}[!ht]
        \centering
        \includegraphics[width=.8\textwidth]{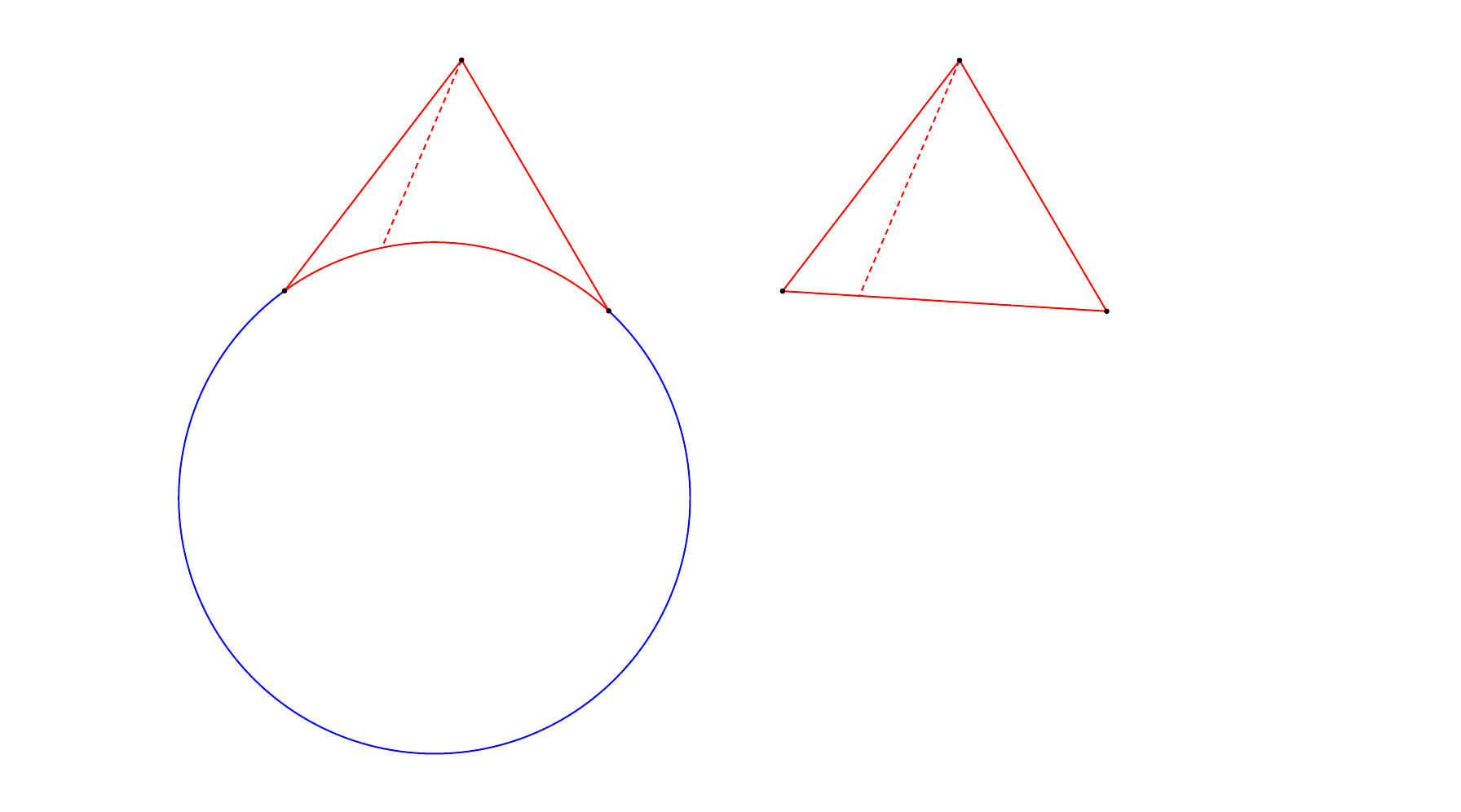}
        \caption{A geodesic triangle with a side in a scatterer.}        \label{fig:comparison_triangles_table}
    \end{figure}
    
    \par It then follows from iterated applications of Reshetnyak's Gluing Theorem \cite[Theorem 9.1.21]{burago2022course} that \( (K, d_0) \) is a non-positively curved metric space.
    Finally, applying the Cartan-Hadamard Theorem for locally CAT(0) spaces (cf. \cite[Theorem II.4.2]{bridson2011metric} gets us the assertion about the universal covering.
\end{proof}

\subsection{Coarse geometry of \( (K, d_0) \)}
    \label{section:coarse_geometry}
    \textbf{ 
    For simplicity, from now on we will assume that \( (K, d_0) \) consists of two copies of \( (\mathcal{D}, d_\mathcal{L}) \) glued along the boundary. 
    }
    
    Let \(\wK_0\) be the universal covering of \(K\).
    For simplicity, we denote the lift of the metric \(d_\epsilon\) to \(\wK_0\) by \(d_\epsilon\) as well, and the same for lift of the Riemannian tensor \(g_\epsilon\), when \(\epsilon > 0\).
    We write \( \wK_\epsilon \) to denote the space \((\wK, d_\epsilon)\).
    The canonical projection \(\wK_\epsilon \to K_\epsilon\) is then a smooth covering and a local isometry. 
    \par The space of (parameterised) geodesics of \(\wK_\epsilon\), \(\epsilon \in [0,1]\), is the subset \( \widehat{\cG}_\epsilon \subset C(\R, \wK)\) consisting of unit speed parameterisations\footnote{
        In a general metric space like \(\wK_0\), a parameterisation \(\gamma\) is unit-speed if \(d(\gamma(t), \gamma(s)) = |t-s|\) for every \(t,s \in \R\). 
    } 
    of geodesics \(\gamma\colon \R \to \wK_\epsilon\).
    The space of unparameterized geodesics is the quotient of \(\widehat{\cG}_\epsilon\) by the relation of reparameterisation.
    More specifically, \(\gamma_1 \sim \gamma_2\) if and only if there is \(T \in \R\) such that \(\gamma_1(t) = \gamma_2(t + T)\) for every \(t \in \R\), and we set
    \[
        \cG_\epsilon := \widehat{\cG}_\epsilon / \sim = \{ [\gamma]; d_\epsilon(\gamma(t), \gamma(s)) = |t-s| \text{ for every } t,s \in \R \},
    \] 
    equipped with the compact-open topology inherited from \( C(\R, \wK) \). 
    We remark that, in the Riemannian cases (i.e., for \(\epsilon > 0\)), there is a natural identification between \(\widehat{\cG}_\epsilon\) and \(T^1\wK_\epsilon\) given by
    \[
        \gamma \mapsto (\gamma(0), \dot\gamma(0)),
    \]
    and in this case the space \(\cG_\epsilon\) is simply the space of orbits of the geodesic flow acting on \(T^1\wK_\epsilon\).
    While there is no unit tangent bundle of \(\wK_0\) and therefore no such identification, we can still define an abstract geodesic flow on \(\wK_0\) as the \(\R\)-action on \(\widehat{\cG}_0\) by precomposition of translations:
    \begin{align*}
        \mathbf{g}_0\colon \widehat{\cG}_0 \times \R &\to \widehat{\cG}_0 \\
        (\gamma, t) &\mapsto \gamma(t + \cdot).
    \end{align*}
    \par For the Riemannian case it follows from the Anosov property of the geodesic flow on \(K_\epsilon\) that \(\wK_\epsilon\) is a Gromov-hyperbolic space (cf. \cite[Appendix B]{guillarmou_geodesic_2022} and \cite{knieper_new_2012}).
    Thus, every \(\wK_\epsilon\) has a well-defined boundary at infinity \(\bK{\epsilon}\), consisting of equivalence classes of geodesic rays which remain from a bounded distance of each other.
    Such space can be equipped with the cone topology, making the union \( \wK_\epsilon \cup \bK{\epsilon}\) into a compact space (see \cite[Chapter II.8]{bridson2011metric}. for more details).
    Moreover, since \(\wK_0\) is a proper metric space, it follows that \(\wK_\epsilon\) is a visibility space \cite[Lemma III.3.2]{bridson2011metric}. 
    What this means is that for any two distinct points \(\alpha, \omega \in \bK{\epsilon}\) there is a unique geodesic on \(\wK_\epsilon\) joining them, or, more globally, that we have a homeomorphism
    \[
        \cG_\epsilon \approx \bbK{\epsilon} := (\bK{\epsilon} \times \bK\epsilon) \setminus \mathrm{Diag}.
    \]
    In what follows, we will see that in the limit case \(\bK{0}\) has much of the same properties.
    \subsubsection{The space \(\cG_0\) and the abstract geodesic flow} Based on the relation between geodesics of \(\wK_0, \ K_0\) and \((\mathcal{D}, d_\mathcal{L})\), we classify geodesics in terms of their projections on the table \(\mathcal{D}\).
    First, we associate to a geodesic a set of collision points, which are the points at which the corresponding segment of trajectory on \(\mathcal{D}\) bounces at a scatterer.
    \begin{defi}[Collision point]
        For a geodesic \( \gamma \) in \( K_0 \),  the set 
        \[
            B(\gamma) := \partial(\gamma(\R) \cap B(K)) 
        \]
        is the set of \emph{collision points} of \(\gamma\).
    \end{defi}
    Recall that \(B(K)\) consists of closed curves on \(K\) whose image under the projection \(p\colon E \to \mathbb{T}^2\) are the scatterers on \(\mathcal{D}\).
    The intersection of a segment of \(\gamma\) with \(B(K)\) could be a single point \textemdash corresponding under \(p\) to a bouncing of the associated trajectory on \(\mathcal{D}\)\textemdash or an entire segment of a scatterer on the table, in which case the geodesic does not correspond to any billiard trajectory.
    This motivates the following terminology.
    \begin{defi}[Geodesic trichotomy]
        We will say a collision point \( x \in K_0\) is \emph{degenerate} if the connected component of \( \gamma(\R) \cap B(K) \) to which it belongs has positive length, i.e., is not an isolated point.
        \par A non-degenerate point is said to be \emph{regular} or \emph{grazing} if the corresponding collision of \( p(\gamma) \) is regular or grazing, respectively.
        \par Similarly, given a geodesic \(\tilde\gamma\) on \(\wK_0\), let \(\gamma\) be its projection on \(K_0\).
        Then the set \(B(\tilde\gamma)\) is defined as the pre-image of \(B(\gamma)\) under the canonical projection, and \(\partial B(\tilde\gamma)\) is the set of \emph{collision points} of \(\tilde\gamma\).
        A collision point of \(\tilde\gamma\) will be called regular, grazing or degenerate when its projection on \(K_0\) has the corresponding property. 
        \par We will say a geodesic is \emph{degenerate} if it contains a degenerate collision point.
        A non-degenerate geodesic is \emph{grazing} if it contains a grazing point. 
        Otherwise it is \emph{regular}.
        Let
        \[
            \greg := \{ [\gamma] \in \cG_0; \gamma \text{ is regular} \},
        \]
        and denote by \(\cG(\mathcal{D})\) the closure of \(\greg\) in \(\cG_0\) with respect to the compact-open topology.
        We say \(\cG(\mathcal{D})\) is the space of \emph{admissible} geodesics.
    \end{defi}
    The non-degenerate geodesics are the ones which project to honest billiard trajectories on \(\mathcal{D}\), while the degenerate geodesics project to curves which are tangent to a scatterer \(\Gamma_i\) for some positive time interval \([t, t+\epsilon]\).
    In particular, degenerate geodesics can not be the limit of regular ones, and therefore \(\cG(\mathcal{D})\) is exactly the set of non-degenerate geodesics on \(\wK_0\). 
    The following is immediate from the definitions just exposed.
    \begin{lemma}
        There is an homeomorphic correspondence \(\cG(\mathcal{D}) \to \Omega/\Psi \) between the space of admissible geodesics and the orbit space of the billiard flow.
    \end{lemma}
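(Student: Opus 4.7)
My plan is to exhibit $\Phi\colon \cG(\mathcal{D}) \to \Omega/\Psi$ directly as a composition of projections, and to verify that this yields a homeomorphism, with the main work concentrated at grazing geodesics.

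\textbf{Construction.} Given $[\tilde\gamma]\in\cG(\mathcal{D})$, I first project via the covering $q\colon \wK_0\to K_0$ to obtain a geodesic $\gamma$ in $K_0$. Invoking the gluing description provided by Theorem~\ref{thm:K_0_as_gluing_metric_space}, I then apply $p\colon K_0\to\mathcal{D}$ to $\gamma$. Since $\tilde\gamma$ is non-degenerate, the collision points along $\gamma$ are isolated: between successive collisions $\gamma$ stays in a single sheet and $p(\gamma)$ traces a Euclidean segment, while at a collision point $\gamma$ either crosses $B(K)$ transversally (producing on $\mathcal{D}$ a reflection with equal angles of incidence) or touches $B(K)$ tangentially (producing a grazing collision on $\mathcal{D}$). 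The resulting curve $t\mapsto (p(\gamma(t)),\omega(t))$, where $\omega(t)$ records the direction of $dp(\dot\gamma(t))$, is a billiard trajectory in $\Omega$; I set $\Phi([\tilde\gamma])$ to be its $\Psi$-orbit class.

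\textbf{Bijectivity and continuity.} Reparametrisation invariance is built into the definition of $\cG_0$, and deck translates of $\tilde\gamma$ descend to the same $\gamma$. The inverse is constructed by reversing the projections: given a billiard orbit in $\Omega/\Psi$, I lift an initial condition to $K_0$ (choice of sheet and corresponding tangent direction), extend uniquely to a geodesic by Theorem~\ref{thm:d_0_is_a_metric}, and then lift to $\wK_0$. The indeterminacies in these choices are absorbed in passing to the orbit-space target. Continuity of $\Phi$ follows from the fact that compact-open convergence of parameterised geodesics implies convergence of initial points and tangent directions, which by Lemma~\ref{lem:uniform_conv_geodesic_flows} and Theorem~\ref{thm:d_0_is_a_metric} yields $C^1$-control, hence convergence in $\Omega$; continuity of $\Phi^{-1}$ is essentially dual, relying on the same continuous dependence of geodesics on initial data in $K_0$, where the admissibility restriction precludes degenerate limits.

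\textbf{Main obstacle.} I expect the principal technical work to be at grazing geodesics. The billiard map on $\Omega$ is genuinely discontinuous there, and by the remark following Theorem~\ref{thm:d_0_is_a_metric} a tangent initial condition on $B(K)$ admits two geodesic extensions in $K_0$ — the line into the interior of the sheet and the boundary component itself. The admissibility hypothesis rules out the boundary-hugging extension (which is degenerate), but one must still verify that approximating a grazing geodesic by regular ones from either side of the singular stratum of $\Omega$ produces the same orbit class. This matches the definition of $\cG(\mathcal{D})$ as the closure of $\greg$, and confirming that $\Phi$ and $\Phi^{-1}$ extend continuously across this closure is the heart of the argument.
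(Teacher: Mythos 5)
Your construction of \(\Phi\) by projecting along \(\wK_0 \to K_0 \to \mathcal{D}\), and your continuity argument via \(C^1\)-control of geodesics, match the paper's intent; in fact the paper offers no proof at all, declaring the lemma ``immediate from the definitions just exposed'' once admissible geodesics have been identified with the non-degenerate ones, i.e.\ with those projecting to honest billiard trajectories. For well-definedness, surjectivity and continuity of \(\Phi\), your proposal is a faithful (indeed more detailed) expansion of that one-liner.

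The genuine gap is in the inverse. Your sentence ``the indeterminacies in these choices are absorbed in passing to the orbit-space target'' is backwards: those indeterminacies live in the \emph{source} \(\cG(\mathcal{D})\), which has not been quotiented, not in the target, which has. Concretely, \(\Phi\) as you define it is far from injective: (a) \(\cG(\mathcal{D})\) consists of geodesics of the universal cover, so all \(\pi_1(K)\)-translates of a lift project to the same class in \(\Omega/\Psi\) — you even note that deck translates descend to the same \(\gamma\), which is precisely this non-injectivity; (b) already in \(K_0\), every billiard trajectory has two lifts, one starting in \(\mathcal{D}_{\mathrm{up}}\) and one in \(\mathcal{D}_{\mathrm{down}}\), exchanged by the isometric sheet-swapping involution commuting with \(p\) (the paper itself manipulates both lifts \(\sigma_\mathrm{up}, \sigma_\mathrm{down}\) in Appendix~\ref{sec_billiards_cycles_lf}); and (c) at a grazing orbit, the two one-sided approximations you flag as the ``main obstacle'' converge to two \emph{distinct} admissible geodesics — the sheet-crossing continuation (limit of near-tangential reflections) and the sheet-preserving one (limit of trajectories missing the scatterer) — both lying in the closure of \(\greg\) and both with the same projection. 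So point (c) is harmless for the well-definedness and continuity of \(\Phi\), which is what you set out to check, but it is a third failure of injectivity rather than a removable singularity of \(\Phi^{-1}\). The statement is true only when read as the paper's loose ``correspondence'': a homeomorphism after quotienting \(\cG(\mathcal{D})\) by the \(\pi_1(K)\)-action and the sheet involution, with the attendant identifications at grazing orbits. The paper makes this gloss silently, but since your write-up asserts bijectivity of \(\Phi\) itself and constructs \(\Phi^{-1}\) pointwise, it cannot sidestep the issue: as written, \(\Phi^{-1}\) is not well defined.
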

    
    If we lift \(\cG(\mathcal{D})\) and consider the space \(\widehat{\cG}(\mathcal{D})\) of \emph{unparameterised admissible geodesics}, then the situation is as follows.
    In \(K_0\), branching of geodesics can occur only at grazing collisions: at points in \(T^1K^\circ\), the geodesics \(\gamma_{\epsilon;v}\) converge uniformly to a well-defined geodesic \(\gamma_{0;v}\). 
    \par Therefore, similar to the Anosov case, at least at the set \(\cR := \pi_K^{-1}(\Omega^\mathrm{reg}) \subset T^1K\) there is an identification with a subset of geodesics \(\widehat{\cG}_0\) given by
    \[
        \cR \ni (x,v) \mapsto \gamma_{0;v} \in \widehat{\cG}^{\mathrm{reg}} \subset \widehat{\cG}_0,
    \]
    where \(\gamma_{0;v}\) is the unique geodesic of \(\wK_0\) emanating from \(x\) ``in the direction \(v\)'', in the sense that it is uniquely defined by the limit
    \[
        \gamma_{0;v}(t) = \lim_{\epsilon \to 0}\gamma_{\epsilon;v}(t).
    \]
    What is more, using this representation, the restriction of the abstract geodesic flow on \(\widehat{\cG}_0\) to the set of regular geodesics has the more familiar form
    \begin{equation}\label{eq:geodesic_flow_K_0}
    \begin{split}
        \mathbf{g}_0^t\colon \cR &\to \cR \\
        (x,v) &\mapsto (\gamma_{0;v}(t), \dot\gamma_{0;v}(t)) =  \lim_{\epsilon \to 0}\left(\gamma_{\epsilon;v}(t), \dot\gamma_{\epsilon;v}(t)\right),    
    \end{split}
    \end{equation}
    which is well defined for \( t\in \R \).
    Finally, Kourganoff's theorem \cite{kourganoff_anosov_2016} implies that such flow maps are uniformly continuous on compact sets.
    In particular, the flow has a unique continuous extension to \(\overline{\cR} = T^1\wK\), which induces, in particular, a one-to-one identification \(\widehat{\cG}(\mathcal{D}) \approx T^1\wK\).
    We may then restate Kourganouff's result as 
    \begin{theorem}[Kourganoff]\label{thm:kourganoff_on_steroids}
        Let \( \mathcal{D} \) be a Sinai billiard table and \( K \in \cK(\mathcal{D}) \). 
        Then the abstract geodesic flow on \(\widehat{\cG}_0\) restricts to the space of admissible geodesics.
        Such a restriction induces a ``geodesic flow'' \( \mathbf{g}_0 \) on \( K \), which is conjugated to the billiard flow \( \Psi \) on \( \mathcal{D} \).
        More specifically,
        \[
            \pi_K \circ \mathbf{g}^t_0(x,v) = \Psi^t\circ\pi_K(x,v)
        \]
        for every \((x,v) \in T^1K\), where \(\pi_K\colon T^1K \to \Omega\) is a homeomorphism whose restriction to \(T^1K^\circ\) agrees with the smooth projection defined in \eqref{eq:projection_flow_equivalence}.
     \end{theorem}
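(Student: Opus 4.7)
The plan is to assemble the pieces already developed in this section---metric convergence, the characterisation of $K_0$ as a gluing space, the identification $\widehat{\cG}(\mathcal{D}) \approx T^1\wK$, and Kourganoff's approximation theorem---into the three assertions of the theorem: invariance of $\widehat{\cG}(\mathcal{D})$ under the abstract geodesic flow, existence and continuity of the induced map $\pi_K\colon T^1K \to \Omega$ as a homeomorphism, and the conjugation identity $\pi_K\circ \mathbf{g}_0^t=\Psi^t\circ\pi_K$.

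The first assertion is essentially immediate: the abstract geodesic flow acts by the reparameterisation $\gamma \mapsto \gamma(t+\cdot)$, and admissibility is a property of the image $\gamma(\R)$ (through the set $B(\gamma)$ of collision points), so $\widehat{\cG}(\mathcal{D})$ is invariant under precomposition with translations. Via the identification $\widehat{\cG}(\mathcal{D}) \approx T^1\wK$ recalled before the theorem, this produces a flow on $T^1\wK$, which descends to $T^1K$ via the covering $\wK \to K$ since deck transformations commute with reparameterisations. Next, I would extend $\pi_K$ from $T^1K^\circ$ to all of $T^1K$. On $T^1K^\circ$ it is already defined smoothly by \eqref{eq:projection_flow_equivalence}. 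For $(x,v)\in T^1K$ with $x\in B(K)$, I distinguish the case where $v$ is transverse to $B(K)$, mapping $(x,v)$ to the corresponding regular point of $\Omega$ via the angle determined by $dp_x(v)$, from the case where $v$ is tangent to $B(K)$, mapping $(x,v)$ to a grazing collision point. Continuity of this extension is provided by Lemma~\ref{lem:uniform_conv_geodesic_flows}: for any sequence $(x_n,v_n)\in T^1K^\circ$ converging to $(x,v)$ at the boundary, the uniform convergence of the geodesic flows $\mathbf{g}_\epsilon$ on compact time intervals forces $\pi_K(x_n,v_n)$ to converge to the prescribed value of $\pi_K(x,v)$ in $\Omega$.

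The conjugation identity then follows on the dense regular set $\cR = \pi_K^{-1}(\Omega^{\mathrm{reg}})$ directly from Theorem~\ref{thm:kourganoff_approximation}: for $(x,v)\in\cR$ and $t$ such that $(t;x,v)\in A_0$, on one hand $\mathbf{g}_0^t(x,v)=\lim_{\epsilon\to 0}(\gamma_{\epsilon;v}(t),\dot\gamma_{\epsilon;v}(t))$ by \eqref{eq:geodesic_flow_K_0}, while on the other $F_\epsilon(t;x,v)\to F_0(t;x,v)=\Psi^t(\pi_K(x,v))$ uniformly on compacta. Applying the smooth projection $\pi_K$ (continuous on $T^1K^\circ$) and exchanging limits yields the identity on $\cR$; by density of $\cR$ in $T^1K$ and continuity of both flows, it propagates to all of $T^1K$. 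Bijectivity of $\pi_K$ follows from bijectivity on $\cR$ (distinct regular billiard orbits correspond to distinct regular admissible geodesics) combined with the bijective correspondence between the boundary strata of $T^1K$ (transverse versus tangent to $B(K)$) and of $\Omega$ (regular versus grazing collisions). Bicontinuity of the inverse reduces, once again, to the uniform convergence statements from Lemma~\ref{lem:uniform_conv_geodesic_flows} applied to sequences of regular orbits approaching the collision set.

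The main obstacle is the careful matching of the two flows at the grazing locus, i.e., at $(x,v)\in T^1B(K)$ with $v$ tangent to $B(K)$. There, both the billiard flow $\Psi$ and the abstract geodesic flow on $\widehat{\cG}_0$ exhibit branching: a geodesic starting at $x$ tangent to $B(K)$ can either follow the boundary component (producing a \emph{degenerate} geodesic, excluded from $\widehat{\cG}(\mathcal{D})$) or continue in the straight direction (an admissible one), while a grazing billiard orbit is likewise a singular point of $\Psi$. To land inside $\widehat{\cG}(\mathcal{D})$ one has to systematically pick the non-degenerate continuation, and then verify that this choice agrees with the standard convention for grazing points in $\Omega$, so that the two sources of non-uniqueness are identified compatibly. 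Once this matching is fixed, all the remaining compatibility and continuity claims on the grazing stratum follow from approximation by regular orbits and the already established continuity of $\mathbf{g}_\epsilon$, $\Psi$, and $\pi_K$ on $\cR$.
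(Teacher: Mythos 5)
Your proposal is correct and follows essentially the same route as the paper, which proves this theorem not by a separate argument but through the discussion immediately preceding it: the flow is defined on the regular set \(\cR = \pi_K^{-1}(\Omega^{\mathrm{reg}})\) as the \(\epsilon \to 0\) limit \eqref{eq:geodesic_flow_K_0} of the Anosov flows \(\mathbf{g}_\epsilon\), Theorem~\ref{thm:kourganoff_approximation} gives uniform continuity on compact sets, and the unique continuous extension to \(\overline{\cR} = T^1\wK\) yields both the identification \(\widehat{\cG}(\mathcal{D}) \approx T^1\wK\) and the conjugacy with \(\Psi\). Your explicit stratum-by-stratum treatment of the grazing locus (selecting the non-degenerate continuation and matching it with the billiard convention at tangential collisions) is the same mechanism the paper encodes by defining \(\cG(\mathcal{D})\) as the closure of \(\greg\) and observing that degenerate geodesics cannot arise as limits of regular ones.
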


\subsubsection{The boundary \(\bK{0}\)}
    \par Being a Hadamard space, \( \wK_0 \) has a well-defined circle (boundary) at infinity \(\bK{0}\), defined as the set of equivalence classes of asymptotic rays.
    More explicitly, \(\gamma, \gamma'\colon [0, \infty) \to \wK_0\) are in the same class if \(\sup\{d_0(\gamma(t), \gamma'(t)); t \in [0, \infty)\} < \infty\).
    We denote the class of the ray \(\gamma\) by \(\gamma(\infty)\).
    More generally, if \(\gamma\colon \mathbb{R} \to \wK_0\) is a geodesic, we set its endpoints at infinity to be
    \begin{align*}
        \gamma(\infty) &:= \gamma^+(\infty),\\
        \gamma(-\infty) &:= \gamma^-(\infty),
    \end{align*}
     where \(\gamma^\pm: [0, \infty)\) is the geodesic ray defined by \( \gamma^\pm(t) = \gamma(\pm t). \)
    \par With the appropriate cone topology on \(\bK{0}\), the union \(\wK_0  \cup \bK{0} \) becomes a compactification of \(\wK_0\), homeomorphic to a disc \cite{bridson2011metric}.
    The fundamental group \(\pi_1(K)\) acts on \(\wK_0\) via isometries, and such an action extends continuously to \(\bK{0}\).
    \par Let \(\bbK{0} := \{ (x,y) \in \partial_\infty \tilde{K} \times \partial_\infty \tilde{K}; x \neq y\}\) denote the space of distinct boundary points. 
    There is a well-defined \(\pi_1\)-equivariant map
    \begin{align*}
        \partial\colon \cG_0 &\to \bbK{0}, \\
                    [\gamma] &\mapsto (\gamma(-\infty), \gamma(\infty)).
    \end{align*}       
    \begin{lemma}\label{lem:wK_is_visibility}
        \(\wK_0\) is a visibility space, that is, \(\partial\colon \cG_0 \to \bbK{0}\) is a homeomorphism.   
        Moreover, any two geodesics in \(\greg\) intersect at most once, and transversely.
    \end{lemma}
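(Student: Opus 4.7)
The plan is to establish visibility by leveraging the Hadamard ($\mathrm{CAT}(0)$) structure of $\wK_0$ together with the Gromov-hyperbolicity of the approximating Anosov spaces $\wK_\epsilon$, $\epsilon > 0$. For surjectivity of $\partial$, I would fix distinct boundary points $\xi,\eta \in \bK{0}$, pick representative rays $r_\xi, r_\eta$ from a basepoint $x_0 \in \wK_0$, and consider the geodesic segments $\sigma_n := [r_\xi(n), r_\eta(n)]_0$ (which exist and are unique by $\mathrm{CAT}(0)$). The key technical point is to establish a uniform bound $d_0(x_0, \sigma_n) \leq M$ independent of $n$; granted this, Arzelà--Ascoli applied to the equicontinuous family of unit-speed parameterisations produces, up to subsequence, a limiting geodesic line $\sigma_\infty$ whose endpoints at infinity are forced to be $\xi$ and $\eta$. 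To obtain the bound, I would transfer the analogous estimate from $\wK_\epsilon$ (which is Gromov-hyperbolic by the Anosov property, Theorem~\ref{thm:kourganoff_approximation}) to $\wK_0$ using the locally uniform convergence $d_\epsilon \to d_0$ from Equation~\eqref{eq:unif_conver_dist} and Lemma~\ref{lem:uniform_conv_geodesic_flows}.

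For injectivity, suppose two geodesic lines $\sigma,\sigma'$ share the same pair of endpoints at infinity. By $\mathrm{CAT}(0)$ convexity, $t \mapsto d_0(\sigma(t),\sigma'(t))$ is convex; combined with the asymptotic boundedness built into the definition of $\bK{0}$, it must be constant, forcing $\sigma$ and $\sigma'$ to co-bound a flat strip. Using the gluing description of Theorem~\ref{thm:K_0_as_gluing_metric_space}, I would rule this out: such a strip either lies in a single isometric copy of $(\mathcal{D}, d_\mathcal{L})$, which is incompatible with the finite-horizon condition forcing every biinfinite geodesic to meet $B(\wK)$, or it crosses $B(\wK)$, which is excluded by the strict convexity of the scatterers (any crossing would produce a strict convexity defect breaking flatness). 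Continuity of $\partial$ is immediate from the definitions of the cone and compact-open topologies, while continuity of $\partial^{-1}$ follows from the same uniform-bound compactness used in the surjectivity step, applied to converging sequences of endpoint pairs.

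For the final assertion on regular geodesics: two members of $\greg$ are $C^1$-limits on compact sets of geodesics in $\wK_\epsilon$, by Lemma~\ref{lem:convergence_geodesics} and Theorem~\ref{thm:kourganoff_on_steroids}. On each Hadamard surface $\wK_\epsilon$, two distinct geodesics meet at most once and do so transversely; since the collisions are regular (not grazing), the limiting tangent directions are necessarily distinct, so the transverse intersection is preserved in the limit. The main obstacle will be the uniform distance bound in the first step: the Gromov-hyperbolicity constants of $\wK_\epsilon$ may degenerate as $\epsilon \to 0$ since $g_\epsilon$ flattens, so a naive diagonal argument does not apply. I would circumvent this by working at a \emph{fixed} small $\epsilon > 0$, using the metric comparison $|d_\epsilon - d_0| = O(\epsilon)$ on compact sets from Equation~\eqref{eq:unif_conver_dist}, together with the finite-horizon-based recurrence of strictly convex collision events, to trap the segments $\sigma_n$ near the $\wK_\epsilon$-geodesic linking the limiting boundary points.
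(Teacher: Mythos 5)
Your overall architecture (Flat Strip Theorem for injectivity, strict convexity of scatterers to kill the strip) matches the heart of the paper's proof, but two of your steps have genuine gaps. The most serious is the final assertion. You derive ``at most one intersection, transversely'' by passing to the limit from the Hadamard surfaces \(\wK_\epsilon\), but neither property is stable under \(C^1\)-convergence on compacta: the intersection angles of \(\gamma_\epsilon, \gamma'_\epsilon\) can tend to \(0\) as \(\epsilon \to 0\) (your claim that regularity of the collisions forces the limiting tangent directions to be distinct is asserted, not proved), and intersections can be \emph{created} in the limit, so ``at most once at each \(\epsilon\)'' does not transfer. The paper argues intrinsically in \(\wK_0\) instead: branching of geodesics occurs only at grazing collisions, so a tangential intersection of two geodesics in \(\greg\) forces them to coincide (hence any intersection of distinct regular geodesics is transverse), and uniqueness of the intersection point follows from a billiard Jacobi field argument\textemdash a dispersing wavefront along a regular orbit has at most one focal point, so regular orbits have no conjugate points. (Alternatively, unique geodesics in the CAT(0) space \(\wK_0\) plus no branching at regular points would do.) Without some such intrinsic argument your last paragraph does not close.

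Two smaller issues. In the injectivity step, your dichotomy (strip inside one sheet vs.\ strip crossing \(B(\wK)\)) is not exhaustive\textemdash a geodesic can graze or run along \(B(\wK)\) without leaving its sheet\textemdash and ``every biinfinite geodesic meets \(B(\wK)\)'' is not by itself a contradiction in the single-sheet case. The mechanism that actually kills both cases, and which the paper uses directly, is dispersion: the parallel geodesics \(\R \times \{s\}\) project to a family of parallel trajectories which cannot all be grazing, and once one suffers a regular collision with a strictly convex scatterer, nearby parallels collide with the same scatterer and are dispersed, so they diverge, contradicting constant separation. Finally, in your surjectivity step, the additive bound \(|d_\epsilon - d_0| = O(\epsilon)\) comes from the length estimate \eqref{eq:length_estimate}, which scales with the length of the curve, so it cannot trap the arbitrarily long segments \(\sigma_n\); you would need the multiplicative (bi-Lipschitz) comparison \eqref{eq:inner_prod_constraints} together with the Morse--Klingenberg lemma in the \emph{fixed} hyperbolic space \(\wK_1\), exactly as in the proof of Lemma~\ref{lem:homeomorphic_boundaries}. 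Note also that the paper sidesteps this entirely: it quotes that \(\partial\) is a continuous surjective closed map (following~\cite{bankovic_marked-length-spectral_2017}) and reduces the lemma to injectivity, so your surjectivity construction, once repaired, is extra work rather than an error.
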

    \begin{proof}
        \(\partial\) is a continuous surjective closed mapping \cite{bankovic_marked-length-spectral_2017}, so it suffices to show it is injective. 
        Suppose, by contradiction, that is not the case.
        Then, according to the Flat Strip Theorem \cite{bridson2011metric}, the space \(\wK_0\) contains an isometric copy of \(\R \times [0,\epsilon]\).        
        \par Consider the family \( \{\R \times \{s\}\}_{s \in [0, \epsilon]} \) of parallel geodesics on \(\wK_0\).
        These geodesics project isometrically onto a continuous family of billiard trajectories \(\{\eta_s\}\) on \(\mathcal{D}\) (the projection is simply the composition of the canonical projection \(\wK_0  \to K_0\) with the mapping \( p\colon E \to \mathbb{T}^2 \)).
        Now, the trajectories \(\eta_s\) can not consist solely of grazing collisions.
        Thus, if \(\eta_s\) collides regularly with a scatterer \(\Gamma\), for \(s'\) sufficiently close to \(s\), the trajectory
        \(\eta_s\) collides with \(\Gamma\) as well.
        Since the trajectories are parallel and \(\Gamma\) is strictly convex, after the collision the trajectories are dispersed, and therefore grow apart from another, a contradiction. 
        \par Branching of geodesics in \(\cG\) can only occur at grazing collisions.
        Hence, given two distinct geodesics in \(\greg\), a tangential intersection between them would force them to be the same. Therefore any intersection of geodesics in \(\greg\) must be transverse.
        \par Finally, supposing they intersect at a point \(p \in \wK_0\), then the corresponding trajectories define a billiard Jacobi field \cite{wojtkowski_two_1994} starting at the corresponding point on the phase space \(\Omega\).
        Zeros of Jacobi fields are equivalent to focal points of the corresponding wavefronts. 
        Since a wavefront along a regular orbit in a Sinai billiard changes signs at most once, i.e., has at most one focal point,
        it follows that there are no conjugated points for regular orbits for Sinai billiards.
        In particular, the intersecting point must be unique.  
    \end{proof}
    As a consequence, we obtain
    \begin{corollary}
        \(\wK_0\) is a hyperbolic space. 
        In other words, there is \(\delta > 0\) such that every geodesic triangle on \(\wK_0\) has the property that each side is contained on the \(\delta\)-neighbourhood of the other two sides' union.
    \end{corollary}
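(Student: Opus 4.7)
The plan is to deduce Gromov hyperbolicity from the already-established \(\mathrm{CAT}(0)\) structure by invoking the Flat Plane Theorem, with the absence of flat planes supplied by the same dynamical obstruction used in the proof of Lemma~\ref{lem:wK_is_visibility}.

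First, I would collect the structural hypotheses needed. By the corollary concluding the previous subsection, \(\wK_0\) is a Hadamard space: complete, contractible, and globally \(\mathrm{CAT}(0)\). Since \(K_0\) is compact (as a finite metric gluing of copies of \(\mathcal{D}\)), its universal cover \(\wK_0\) is proper, and \(\pi_1(K)\) acts on \(\wK_0\) by deck transformations that are isometries for the lifted metric \(d_0\), properly discontinuously and cocompactly. In short, \(\pi_1(K)\) acts geometrically on the proper \(\mathrm{CAT}(0)\) space \(\wK_0\).

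Next, I would invoke the Flat Plane Theorem (see, e.g.,~\cite{bridson2011metric}): in this geometric setting, \(\wK_0\) is Gromov hyperbolic if and only if it contains no isometrically embedded copy of the Euclidean plane \(\mathbb{E}^2\). Ruling out such a flat plane is the main step, and the argument essentially reproduces the absence-of-flat-strip reasoning of Lemma~\ref{lem:wK_is_visibility}. A flat plane would in particular contain a flat strip \(\R \times [0,\varepsilon]\) for some \(\varepsilon > 0\). Projecting the family of parallel geodesics \(\{\R \times \{s\}\}_{s \in [0,\varepsilon]}\) through \(\wK_0 \to K_0 \xrightarrow{p} \mathbb{T}^2\) produces a continuous family of pairwise parallel billiard trajectories \(\{\eta_s\}\) on \(\mathcal{D}\). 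Such a family cannot be made up entirely of grazing collisions, so some \(\eta_s\) must collide regularly with a scatterer \(\Gamma\); by continuity, trajectories \(\eta_{s'}\) for \(s'\) close to \(s\) also hit \(\Gamma\), and its strict convexity forces their post-collision directions to diverge, contradicting parallelism.

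I expect the main subtle point to be the clean invocation of the Flat Plane Theorem outside the Riemannian world: one must verify that the proper and cocompact hypotheses really do apply to our limit gluing, which is straightforward here since \(K_0\) is compact. As an alternative route, one could bypass the Flat Plane Theorem and argue directly that a proper \(\mathrm{CAT}(0)\) space admitting a cocompact isometric action is Gromov hyperbolic as soon as it satisfies the visibility axiom, which has already been verified in Lemma~\ref{lem:wK_is_visibility}.
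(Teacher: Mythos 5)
Your proposal is correct and coincides with the paper's own argument: the corollary is proved there by a direct appeal to the Flat Plane Theorem (Proposition~III.H.1.4 and Theorem~III.H.1.5 of~\cite{bridson2011metric}), with the absence of flat planes (equivalently, the visibility property) already secured by Lemma~\ref{lem:wK_is_visibility}, whose dispersing-trajectories argument you reproduce. Your ``alternative route'' via visibility for a proper cocompact $\mathrm{CAT}(0)$ space is in fact precisely the cited Proposition~III.H.1.4, so it is not a genuinely different proof.
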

    \begin{proof}
        This is the content of the Flat Plane Theorem (cf. \cite[Proposition III.H 1.4 and Theorem III.H 1.5]{bridson2011metric}.
    \end{proof}
    \begin{lemma}\label{lem:homeomorphic_boundaries}
        There is a family \(\{\rho_\epsilon\}_{\epsilon \in [0,1]}\) of homeomorphisms 
        \[
            \rho_\epsilon\colon \bK{0} \to \bK{\epsilon}
        \]
        which is equivariant with respect to the isometric action of \(\pi_1(K)\) on \(\bK{\epsilon}\), i.e., 
        \[
            \rho_\epsilon(\alpha\cdot\gamma(\infty)) = \alpha\cdot\rho_\epsilon(\gamma(\infty)) \quad\forall \gamma(\infty) \in \bK{0}, \forall \alpha \in \pi_1(K),
        \]
        and satisfies
        \[
            \gamma(\infty) = \lim_{\epsilon \to 0}\rho_\epsilon(\gamma(\infty)),
        \]
        where the limit is taken on the space of curves \(C([0, \infty], K)\), equipped with the compact-open topology.
    \end{lemma}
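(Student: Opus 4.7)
The strategy is to transport the boundary along the identity map on $\wK$. All metrics $d_\epsilon$, $\epsilon \in [0,1]$, are defined on the same underlying set $\wK$, and $\pi_1(K)$ acts on $\wK$ by maps that are simultaneously isometries of every $d_\epsilon$ (the action is the deck action of the universal cover, which is independent of the chosen metric); moreover, this action is cocompact. First I would establish that for some constant $C>0$ depending only on $K$,
\[
    d_0(\tilde x, \tilde y) \;\le\; d_\epsilon(\tilde x, \tilde y) \;\le\; (1 + C\epsilon)\, d_0(\tilde x, \tilde y) + C\epsilon, \qquad \forall\, \tilde x, \tilde y \in \wK,\ \epsilon \in [0, 1].
\]
The lower bound follows from $g_\epsilon \ge g_0$; for the upper bound, one bounds the $d_\epsilon$-length of a $d_0$-geodesic segment by integrating the pointwise estimate $|\sqrt{I_\epsilon} - \sqrt{I_0}| = O(\epsilon)$ over sub-segments contained in a compact fundamental domain, and adding an $O(\epsilon)$ contribution at each crossing of $B(\wK)$ (using the finite-horizon assumption to bound the number of such crossings along a geodesic). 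Consequently, $\mathrm{id}\colon \wK_0 \to \wK_\epsilon$ is a $\pi_1(K)$-equivariant $(1 + C\epsilon, C\epsilon)$-quasi-isometry, whose constants tend to $(1,0)$ as $\epsilon \to 0$.

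\textbf{Construction and equivariance.} Each $\wK_\epsilon$ is Gromov hyperbolic (for $\epsilon > 0$ by the Anosov property of $\mathbf{g}_\epsilon$, for $\epsilon = 0$ by the corollary following Lemma~\ref{lem:wK_is_visibility}). Since quasi-isometries between Gromov hyperbolic spaces induce canonical homeomorphisms of their Gromov boundaries, the identity quasi-isometry above yields a homeomorphism $\rho_\epsilon\colon \bK{0} \to \bK{\epsilon}$, concretely sending the class of a sequence $(\tilde x_n) \subset \wK$ with $(\tilde x_n \mid \tilde x_m)_{\tilde x_0}^{d_0} \to \infty$ to the class of the same sequence measured with respect to $d_\epsilon$. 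Equivariance is automatic because the identity commutes with the deck action. For the convergence claim, fix $\xi \in \bK{0}$ and a $d_0$-geodesic ray $\gamma\colon [0, \infty) \to \wK$ with $\gamma(0) = \tilde x_0$ and $\gamma(\infty) = \xi$; let $\gamma_\epsilon$ denote the $d_\epsilon$-geodesic ray from $\tilde x_0$ with $\gamma_\epsilon(\infty) = \rho_\epsilon(\xi)$, and let $\sigma_\epsilon^n$ be the $d_\epsilon$-geodesic segment from $\tilde x_0$ to $\gamma(n)$. By Lemma~\ref{lem:convergence_geodesics}, $\sigma_\epsilon^n \to \gamma\rvert_{[0,n]}$ uniformly as $\epsilon \to 0$ for each fixed $n$, while by the $\mathrm{CAT}(0)$ stability of geodesic rays, $\sigma_\epsilon^n \to \gamma_\epsilon$ in the compact-open topology as $n \to \infty$ for each fixed $\epsilon > 0$. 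Combining these with the equicontinuity provided by Lemma~\ref{lem:uniform_conv_geodesic_flows}, a diagonal extraction yields $\gamma_\epsilon \to \gamma$ in $C([0, \infty), K)$.

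\textbf{Main obstacle.} The central difficulty is the quasi-isometry estimate near the seam $B(\wK)$. Across a component of $B(\wK)$, $g_\epsilon$ retains a non-trivial vertical profile of height $O(\epsilon)$, while $g_0$ forgets this entirely; so $d_\epsilon$ could a priori exceed $d_0$ near $B(\wK)$ by amounts not controlled linearly in $\epsilon$. One must show that each crossing of $B(\wK)$ contributes at most $O(\epsilon)$ to the $d_\epsilon$-length of a $d_0$-geodesic (this uses the smoothness of $K$ and Lemma~\ref{lem:sheet_isometry}), and that the number of crossings in a $d_0$-geodesic segment of length $L$ is at most $O(L)$ (which follows from the finite-horizon bound $\tau_{\min} > 0$ on the gap between consecutive crossings). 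Once this estimate is in place, the remainder of the argument proceeds through standard tools in the theory of Gromov-hyperbolic spaces.
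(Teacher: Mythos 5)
Your proposal is correct, but it takes a genuinely different route from the paper's proof \textemdash in fact, it is precisely the alternative the authors themselves flag in the remark immediately following their proof: notice that \(\mathrm{id}\colon \wK_0 \to \wK_\epsilon\) is a family of equivariant quasi-isometries and apply \cite[Theorem III.H.3.9]{bridson2011metric}. The paper instead constructs \(\rho_\epsilon\) dynamically: for \(\epsilon>0\), structural stability of Anosov flows provides H\"older orbit equivalences \(H_\epsilon\colon T^1\wK_\epsilon \to T^1\wK_1\), and \(\rho_\epsilon\) is defined by matching forward endpoints at infinity of \(\gamma\) and \(H_\epsilon\gamma\) (well-definedness coming from preservation of the stable foliation); for \(\epsilon=0\), the uniform comparison \(\frac{1}{\lambda}g_\epsilon < g_\delta < \lambda g_\epsilon\) makes every geodesic of \(\cG_\epsilon\) a \((\lambda,0)\)-quasi-geodesic of \(\wK_1\), and the Morse--Klingenberg lemma together with visibility of \(\wK_0\) (Lemma~\ref{lem:wK_is_visibility}) pins down \(\rho_0\) and its inverse. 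What the paper's longer construction buys is reuse: the equicontinuous families \(H_\epsilon, H_\epsilon^{-1}\) (Remark~\ref{rem:anosov_top_equivalences}) are the backbone of the proofs of Corollary~\ref{cor:dense_periodic_liftings} and Theorem~\ref{thm:conj_abst_flow}, so the lemma's proof doubles as infrastructure. What your route buys is brevity and robustness, and your accounting of the quasi-isometry estimate is the right one: the naive pointwise bound \(|\sqrt{I_\epsilon}-\sqrt{I_0}|=O(\epsilon^2)\) on the tensors degenerates where \(g_0\) is degenerate, i.e.\ at the fold \(B(\wK)\), so the \(O(\epsilon)\) cost per seam crossing together with the linear bound on the number of crossings (positive separation of consecutive collisions) is genuinely needed to get the additive-plus-multiplicative estimate. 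Two minor wording points, neither a gap: for \(\epsilon>0\) the surfaces \(\wK_\epsilon\) are not CAT(0) in general (they are Gromov hyperbolic without conjugate points, by the Anosov property), so the convergence \(\sigma_\epsilon^n \to \gamma_\epsilon\) should be justified by Arzel\`a--Ascoli plus Morse stability of quasi-geodesics \textemdash whose constants are uniform in \(\epsilon\) exactly because of the uniform bi-Lipschitz comparison with \(g_1\) \textemdash rather than by ``CAT(0) stability''; and in the final diagonal argument the cleanest identification of the limit is to note that any subsequential limit of \(\gamma_\epsilon\) is a \(d_0\)-geodesic ray from \(\tilde x_0\) at bounded distance from \(\gamma\) (by the uniform Morse constant), and such a ray is unique in the CAT(0) space \(\wK_0\), forcing the limit to be \(\gamma\).
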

    \begin{proof}
    First, take \(\epsilon > 0.\) 
    Then, there is a Hölder continuous homeomorphism \(H_\epsilon\colon T^1\wK_\epsilon \to T^1\wK_1\) which semiconjugates the geodesic flows \cite{ghys_flots_1984}, namely, an orbit-equivalence. 
    In other words, \(H_\epsilon\) is an orbit equivalence: given a geodesic \(\gamma \in \cG_\epsilon\), there is a unique geodesic \(H_\epsilon\gamma \in \cG_1\) corresponding to \(\gamma\), and such correspondence is equivariant with respect to the natural action of \(\pi_1(K)\) on the space of geodesics, induced by deck transformations.
    \par The geodesic ray \(\gamma^+ \colon [0,\infty) \to K_\epsilon\) defines a unique boundary point \(\gamma(\infty)\). 
    We define the homeomorphism $\rho_\epsilon^{-1}$ by mapping the forward point at infinity of \(\gamma\) to the forward point at infinity of $H_\epsilon\gamma$:
    \begin{align*}
        \rho_\epsilon^{-1}\colon \bK{\epsilon} &\to \bK{1},\\
                \gamma(\infty) &\mapsto (H_\epsilon\gamma)^+(\infty).
    \end{align*}    
    This extension is well-defined because \(\gamma'\) is another geodesic ray in the class \(\gamma(\infty)\) if and only if \((\gamma',\dot\gamma')\) is a section of strong stable bundle \(E^{ss}_\gamma\) of \(g_\epsilon\) along \(\gamma\).
    Since \(H_\epsilon\) is Hölder, it preserves the invariant foliations, hence \(H_\epsilon\gamma'(\infty) = H_\epsilon\gamma(\infty)\) (that is to say, points in the geodesic \(H_\epsilon\gamma'\) belong to the stable foliations of \(H_\epsilon\gamma\) as well).
    \par As for the continuity of the map \(\rho_\epsilon^{-1}\) on the boundary, we note that the cone topology on \(\bK{\epsilon}\) coincides with the compact-open topology on the space of geodesic rays~\cite[Chapter II.8]{bridson2011metric}. 
    Moreover, if \(\sigma\) is another geodesic sufficiently close to \(\gamma\) along a compact subset of \([0,\infty)\), then \(H_\epsilon\sigma\) will be close to \(H_\epsilon\gamma\) along a corresponding compact, implying that such application is continuous in the compact-open topology of the space of geodesics (and, in particular, geodesic rays) as we wanted.
    \par Reversing the roles of the indices \(\epsilon\) and \(1\) and applying the exact same construction provides a continuous inverse \(\rho_\epsilon\), which is the family we seek. 

    \par For the case \(\epsilon = 0\), we recall that the Equation \eqref{eq:bound_geom} gives a uniform bound for the tensor norms
    \[
        \|g_\epsilon - g_\delta\|_{C^r(K)} < C, 
    \]
    which means, in particular, that
    \begin{equation}\label{eq:inner_prod_constraints}
        \frac{1}{\lambda}g_\epsilon < g_\delta < \lambda g_\epsilon
    \end{equation}   
    for every \(\epsilon, \delta > 0\) and for \(\lambda:= 1+C\).
    As the projections \( \wK_\epsilon \to K_\epsilon\) are local isometries, it follows that the inequalities in \eqref{eq:inner_prod_constraints} are also true for the universal coverings \(\wK_\epsilon\).
    Consequently, for every \(\epsilon > 0\), any geodesic in \(\cG_\epsilon\) is a quasi-geodesic\footnote{
        More specifically, \(\gamma_\epsilon\) is a \((\lambda, 0)\)-quasi-geodesic shadowing \(\gamma_1\)
    } on \(\wK_1\). 
    More explicitly, what we have are inequalities of the form
    \[
        \frac{1}{\lambda}|t - s| \leq d_1(\gamma_\epsilon(t), \gamma_\epsilon(s)) \leq \lambda|t - s|
    \]
    for any geodesic \(\gamma_\epsilon\colon \mathbb{R} \to \wK_\epsilon\) in the space \(\cG_\epsilon\), and any \(t,s \in \R\).
    \par We may now apply Morse-Klingenberg lemma (cf. \cite[Theorem III.1.7]{bridson2011metric}) that we can find a constant \(R = R(g_1, \lambda) > 0\), such that any geodesic in \(\cG_\epsilon\), \(\epsilon \in (0, 1]\) (seen as a curve in \(\wK_1\)) is in a \(R\)-neighbourhood of a geodesic in \(\cG_1\).
    In other words, given \(\gamma_\epsilon \in \cG_\epsilon\), there is \(\gamma_1 \in \cG_1\) such that
    \[
        \gamma_\epsilon(\mathbb{R}) \subset B_1(\gamma_1; R) := \coprod_{t \in \mathbb{R}}\{x \in \wK_1; d_1(x, \gamma_1(t)) < R\}.
    \]
    What is more, since the geodesic flow is \(\wK_1\) hyperbolic, there is at most one \(g_1\)-geodesic which such property, since no two geodesics could stay at finite distance from one another for all time.
    \par In particular, any geodesic \(\gamma\) in \(\cG_0\), being a uniform limit of sequences \(\gamma_n \in \cG_{\epsilon_n}\) on compacta, is necessarily also a quasi-geodesic of \(\wK_1\) laying in the \(R\)-neighbourhood of some \(\gamma_1 \in \cG_1\).
    \par Now, a quasi-geodesic ray \(c\colon [0, \infty) \to \wK_1\) defines a unique point at infinity \(\xi(c) \in \bK{1}\) \cite[Lemma III.H.3.1]{bridson2011metric}.
    We can then define a continuous bijection
        \begin{align*}
            \rho_0\colon \bK{0} &\to \bK{1} \\
            \gamma(\infty) &\mapsto \xi(\gamma^+).
        \end{align*}
    To define the inverse of this map, we note that, given \(\gamma(\infty) \in \bK{1}\), there is a unique geodesic \(\gamma_0 \in \cG_0\) obtained as the limit 
    \[
        \gamma_0 = \lim_{\epsilon \to 0}H_\epsilon^{-1}\gamma.
    \]
    In fact, the family \(\overline{\{H_\epsilon^{-1}(\gamma)\}}\) lies in the compact set
    \[
        \coprod_{\epsilon \in [0,1]}\cG_\epsilon \approx \coprod_\epsilon \bK{\epsilon}\times\bK{\epsilon},
    \]
    so that it contains all its accumulation points.
    If \(\gamma_0, \gamma_0'\) are two such accumulation points, then by construction they must lie at a finite \(g_1\)-distance of at most \(2R\) from one another, since they are limits of sequences of curves with such properties. 
    Now, from \eqref{eq:bound_geom} we conclude that they are also at bounded \(d_0\) distance from one another, and, consequently, define the same points at the boundary \(\bK{0}\):
    \[
    \begin{split}
        \gamma_0(\infty) &= \gamma_0'(\infty), \\
        \gamma_0(-\infty) &= \gamma_0'(-\infty).
    \end{split}
    \]
    Since, by Lemma \ref{lem:wK_is_visibility}, \(\wK_0\) is a visibility space, it follows that \(\gamma_0 = \gamma_0'\).
    Then the mapping
    \[
        \rho_0^{-1}\colon \gamma(\infty) \mapsto  \gamma_0(\infty) \in \bK{0}
    \]
    is a continuous inverse to \(\rho_0\), which concludes.
    \end{proof}

    \begin{remark}\label{rem:anosov_top_equivalences}
        The topological (or orbit) equivalences \(H_{\epsilon}\) are essentially unique, due to the Anosov Shadowing Theorem (more accurately, they are \emph{transversally} unique, cf. \cite[Theorem 5.4.1]{fisher2019hyperbolic}).
        The choice of homeomorphism does not change the equivalence between orbits, just the reparameterisation function \(\tau\) appearing in the relation
        \[
            H_\epsilon \mathbf{g}_\epsilon^t(z) = \mathbf{g}_1^{\tau(z, t)}(H_\epsilon(z)).
        \]
        One possibility is to choose \(H_\epsilon\) is the following construction, from \cite[Appendix B, Theorem B.1]{guillarmou_geodesic_2022}.
        Given \(v \in T^1\wK_\epsilon\), we write 
        \[
            (v^-_\epsilon, v^+_\epsilon) = (\gamma_{\epsilon; v}(-\infty), \gamma_{\epsilon; v}(\infty)) \in \bbK{}. 
        \]
        The homeomorphism \(H_\epsilon\colon T^1\wK_\epsilon \to T^1\wK_1\) is then obtained by setting \(H_\epsilon(v) = w\), where \(w \in T^1\wK_1\) is the unique vector such that
        \[
        (v^-_\epsilon, v^+_\epsilon) = (w^-_1, w^+_1) \quad \text{and} \quad B^1_{v^+_\epsilon}(\pi(v), \pi(w)) = 0.        
        \]
        Here \(\pi\colon T^1\wK \to \wK\) is the canonical projection and 
        \[
            B^1_{v^+_\epsilon}(\pi(v), \pi(w)) = \lim_{t \to \infty} d_1(\gamma_{1;v}(0), \gamma_{\epsilon;v}(t))-d_1(\gamma_{1;w}(0), \gamma_{\epsilon;v}(t))
        \]
        is the Busemann function of the boundary point \(v^+_\epsilon\) with respect to the metric \(g_1\).
        \par With such choice, the family \(\{H_\epsilon^{-1}\}\) becomes an uniformly equicontinuous deformation of the \(g_1\)-geodesics on \(\wK\).
        Indeed, if that were not the case there would be \(\eta> 0\) and sequences \((v_n, u_n) \in T^1K_1, \epsilon_n \to 0\) such that
        \[
            d_1(v_n, u_n) < \frac{1}{n} \quad \text{and} \quad d_\epsilon(H_\epsilon^{-1}(v_n), H_\epsilon^{-1}(u_n)) > \epsilon_0.
        \]
        The common limit \(\lim_n v_n = \lim_nu_n = z \in T^1K_1\) defines a unique geodesic \(\gamma \in \widehat{\cG}_0\).
        Moreover, by construction, the geodesics defined by the initial conditions \(H_\epsilon^{-1}(v_n)\) and \(  H_\epsilon^{-1}(u_n)\) have the same endpoints at infinity than \(v_n\) and \(u_n\), and must therefore both converge to \(\gamma\) on every compact subset as \(\epsilon \to 0\), a contradiction.
    \end{remark}
    \begin{remark}
        A simpler--but considerably less informative--proof of Lemma \ref{lem:homeomorphic_boundaries} would be just noticing that \[\mathrm{id}\colon \wK_0 \to \wK_\epsilon
        \]
        is a family of quasi-isometries, and apply \cite[Theorem III.H.3.9]{bridson2011metric}.
    \end{remark}
    \begin{corollary}\label{cor:dense_periodic_liftings}
        The boundary points \(\gamma(\infty)\) corresponding to lifts \(\gamma\) of periodic geodesics on \(K_0\) for a dense set on \(\bK{0}\).
        Moreover, the set of periodic points\footnote{
            i.e., lifts of periodic geodesics from \(K_0\).
        } 
        in the space of geodesics \(\widehat{\cG}_0\) is dense, and the abstract geodesic flow \(\mathbf{g}_0\colon \widehat{\cG}_0 \to \widehat{\cG}_0\) is transitive \(\mod{\pi_1(K)}\).
        In other words, the flow \(\mathbf{g}\) is weakly hyperbolic, in the sense of Ballmann \cite{ballmann1995lectures}.
    \end{corollary}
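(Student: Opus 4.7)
The plan is to bootstrap from the Anosov setting: for $\epsilon \in (0,\epsilon_0)$ the geodesic flow $\mathbf{g}_\epsilon$ on the compact surface $K_\epsilon$ is Anosov, hence has dense periodic orbits and is topologically transitive. I will transfer each of these properties across the $\pi_1(K)$-equivariant homeomorphisms $\rho_\epsilon\colon \bK{0} \to \bK{\epsilon}$ of Lemma~\ref{lem:homeomorphic_boundaries}, combined with the visibility property of $\wK_0$ established in Lemma~\ref{lem:wK_is_visibility}.

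The key preliminary step is to show that every nontrivial $\alpha \in \pi_1(K)$ admits an axis on $\wK_0$. For $\epsilon > 0$, the Anosov hypothesis forces $\alpha$ to act on $\wK_\epsilon$ as an axial isometry with exactly two fixed points $\alpha^\pm_\epsilon \in \bK{\epsilon}$. By equivariance of $\rho_\epsilon$, the points $\alpha^\pm_0 := \rho_\epsilon^{-1}(\alpha^\pm_\epsilon) \in \bK{0}$ are fixed by $\alpha$. Since $\wK_0$ is a visibility space, there is a unique geodesic joining $\alpha^-_0$ to $\alpha^+_0$; uniqueness together with the $\alpha$-invariance of its endpoint set forces this geodesic to be $\alpha$-invariant, and since $\pi_1(K)$ acts freely on $\wK_0$, the isometry $\alpha$ must translate along it, so its projection is a closed geodesic on $K_0$.

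Density then follows immediately: for Anosov geodesic flows it is classical that pairs of fixed points of nontrivial deck transformations are dense in $\bbK{\epsilon}$, and $\rho_\epsilon \times \rho_\epsilon$ transports this density to $\bbK{0}$. The homeomorphism $\partial\colon \cG_0 \to \bbK{0}$ then yields density of periodic geodesics in $\cG_0$, and the upgrade to $\widehat{\cG}_0$ is routine: the $\mathbb{R}$-quotient $\pi\colon \widehat{\cG}_0 \to \cG_0$ is open, so any compact-open neighborhood $V$ of a chosen $\gamma \in \widehat{\cG}_0$ projects to an open set in $\cG_0$ meeting some periodic class $[A_\alpha]$, whence a suitable time translation of $A_\alpha$ lies in $V$; being a lift of a closed geodesic, it qualifies as a periodic point in the sense of the footnote.

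For transitivity modulo $\pi_1(K)$, the same transfer argument, now starting from topological transitivity of the Anosov flow $\mathbf{g}_\epsilon$ (which gives a dense $\pi_1(K)$-orbit on $\bbK{\epsilon}$), provides a pair $(p,q) \in \bbK{0}$ with dense diagonal $\pi_1(K)$-orbit. Let $\gamma \in \widehat{\cG}_0$ parameterize the corresponding geodesic. Given any target $\sigma \in \widehat{\cG}_0$ with endpoints $(\sigma^-, \sigma^+)$ and any compact-open neighborhood $V \ni \sigma$, select $\alpha \in \pi_1(K)$ so that $(\alpha p, \alpha q)$ approximates $(\sigma^-, \sigma^+)$ in $\bbK{0}$; by continuity of $\partial^{-1}$, the geodesic $\alpha \cdot \gamma$ is close to $\sigma$ in $\cG_0$, and a suitable time translation $t$ places $\alpha \cdot \gamma(t+\cdot)$ inside $V$. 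I expect the main subtlety to be the axis construction of the second paragraph: producing the axis of $\alpha$ inside the non-Riemannian CAT(0) space $\wK_0$ requires circumventing the absence of Anosov machinery there, which I handle through equivariance of $\rho_\epsilon$ and visibility rather than any intrinsic dynamical argument on $\wK_0$.
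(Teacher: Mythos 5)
Your proposal is correct, but it reaches the conclusion by a genuinely different route than the paper. The paper's proof stays entirely within its limiting philosophy: it notes that the orbit equivalences \(H_\epsilon\) carry periodic orbits to periodic orbits, that lifted periodic orbits of the Anosov flow \(\mathbf{g}_1\) are dense in \(\bK{1}\), and that every periodic geodesic of \(\wK_0\) arises as a limit \(\lim_{\epsilon\to 0}H_\epsilon\gamma\) of periodic orbits upstairs (tacitly leaning on the convergence machinery behind Lemma~\ref{lem:unique_minimising_geodesic}); transitivity is then dispatched in one sentence, ``equivariance guarantees such property passes to the limit case.'' You instead work at a \emph{single} fixed \(\epsilon\) and never take \(\epsilon\to 0\): you transfer the fixed-point pairs \(\alpha^\pm_\epsilon\) of each deck transformation through the equivariant homeomorphism \(\rho_\epsilon\) of Lemma~\ref{lem:homeomorphic_boundaries}, then reconstruct the axis of \(\alpha\) intrinsically in \(\wK_0\) from the visibility property of Lemma~\ref{lem:wK_is_visibility} together with freeness of the deck action (the unique geodesic joining two \(\alpha\)-fixed boundary points is \(\alpha\)-invariant, and a fixed-point-free isometry preserving both endpoints must translate along it). Density is then the image of a dense set under the homeomorphism \(\rho_\epsilon^{-1}\times\rho_\epsilon^{-1}\), and your transitivity argument, via a pair \((p,q)\in\bbK{0}\) with dense diagonal \(\pi_1(K)\)-orbit and continuity of \(\partial^{-1}\), is actually more detailed than the paper's. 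What your approach buys is independence from the convergence of periodic orbits as \(\epsilon\to 0\), which the paper asserts tersely; it substitutes soft CAT(0)/boundary theory (axial isometries, visibility) for the limiting argument. What the paper's route buys is brevity and the auxiliary fact, reused later in Theorem~\ref{thm:conj_abst_flow}, that the limits \(\lim_{\epsilon\to 0}H_\epsilon\gamma\) of periodic orbits are precisely the periodic geodesics of \(\wK_0\). Two small points to tighten in your write-up: when passing from density of endpoint pairs in \(\bbK{0}\) to density of forward endpoints in \(\bK{0}\), you should invoke openness of the coordinate projection \(\bbK{0}\to\bK{0}\); and the step ``close in \(\cG_0\) implies close in \(\widehat{\cG}_0\) after a time translation'' uses properness of \(\wK_0\) and Arzel\`a--Ascoli to upgrade unparameterized convergence to uniform convergence on compacta, which deserves a word.
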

    \begin{proof}
        Each \(H_\epsilon\) is a flow equivalence, hence it maps periodic orbit onto periodic orbit.
        Since the geodesic flow \(\mathbf{g}_1\) is Anosov, its set of lifted periodic orbits is dense in \(\bK{1}\).
        As every periodic orbit on \(\wK_0\) is obtained as a limit \(\lim_{\epsilon \to 0}H_\epsilon\gamma\) for some periodic orbit \(\gamma\) on \(\wK_1\), and density on \(\bK{0}\) follows.
        Consequently, the periodic points of \(\cG_0 \approx \bbK{0}\), which are simply the points \((\gamma(-\infty), \gamma(\infty))\) for periodic \(\gamma\), are also dense.
        \par Finally, each geodesic flow \(\mathbf{g}_\epsilon\) is Anosov, and therefore transitive\(\!\mod{\pi_1(K)}\) on \(K_\epsilon\): any two open non-empty subsets \(U, V\) in \(\cG_\epsilon\) there is \(t \in \mathbb{R}\) and \(\alpha \in \pi_1(K)\) such that 
        \[
            \mathbf{g}_\epsilon^t(U) \cap \alpha(V) \neq \emptyset.
        \]
        Equivariance guarantees such property passes to the limit case.    
        \end{proof}

    \subsubsection{The Liouville current on \(\bK{0}\)}
    We fix a family of homeomorphisms \(H_\epsilon\colon \cG_0 \to \cG_\epsilon\) as in the last section, and identify all the boundaries \(\bK\epsilon\) with \(\bK{0} \approx \mathbb{S}^1\).  
    Given \(\gamma_0 \in \cG_0\), we write \(\gamma_\epsilon := H_\epsilon \gamma_0\), and may assume, without loss of generality, that \(\gamma_\epsilon(\infty) = \gamma(\infty) \in \mathbb{S}^1\) for all \(\epsilon \in [0,1]\). 
    In particular, \(\gamma\) is a quasi-geodesic on every \(\wK_\epsilon\) and thus define a unique point at infinity in \(\bK{\epsilon}\), which, under our former identifications, is simply \(\gamma(\infty)\).
    
    \par For \(\epsilon \in [0,1]\) and \(x, y, z, w \in \wK_\epsilon\), write 
    \[
        E_{\epsilon}(x, y, z, w) := d_\epsilon(x, y) + d_\epsilon(z, w) - d_\epsilon(x, w) - d_\epsilon(z, y).
    \]
    Following \cite{otal_sur_1992, bourdon_sur_1996}, given four distinct points \(\zeta_1, \zeta_2, \zeta_3, \zeta_4 \in \bK{0}\), we define their cross-ratio as
    \[
        \crat{0}(\zeta_1, \zeta_2, \zeta_3, \zeta_4) := \lim_{(x,y,z,w) \to (\zeta_1, \zeta_2, \zeta_3, \zeta_4)} e^{\frac{1}{2}E_0(x, y, z, w)}.
    \]
    This quantity does not depend on the sequences approaching the given points at the boundary, hence we can rewrite it as
    \begin{equation}\label{eq:cr_rat_def}
        \crat{\epsilon}(\zeta_1, \zeta_2, \zeta_3, \zeta_4) = \lim_{t \to \infty} e^{\frac{1}{2}E_0(\sigma_1(t), \sigma_1(-t), \gamma_3(t), \gamma_4(t))}
    \end{equation}
    where \(\gamma_i \in \cG_0\) are geodesics such that \(\gamma_i(\infty) = \zeta_i\), for \(i = 1,2,3,4\).
    Moreover, since \(d_0(x,y) := \lim_{\epsilon \to 0}d_\epsilon(x,y)\), we get
    \begin{align*}
        \lim_{t \to \infty}E_0(\gamma_1(t), \gamma_2(t), \gamma_3(t), \gamma_4(t)) &= \lim_{t \to \infty}\lim_{\epsilon \to 0}E_\epsilon(\gamma_1(t), \gamma_2(t), \gamma_3(t), \gamma_4(t)) \\
        &= \lim_{\epsilon \to 0}\lim_{t \to \infty}E_\epsilon(\gamma_1(t), \gamma_2(t), \gamma_3(t), \gamma_4(t))        
    \end{align*}
    where the limit operators commute due to the uniform convergence of \(d_\epsilon \to d_0\) on compact sets.
    Consequently, we have
     \begin{equation}\label{eq:cr_rat_lim}
        \crat{0}(\zeta_1, \zeta_2, \zeta_3, \zeta_4)
        = \lim_{\epsilon \to 0} \crat{\epsilon}(\zeta_1, \zeta_2, \zeta_3, \zeta_4),
    \end{equation}
    where \(\crat{\epsilon}\) is defined as in Equation \eqref{eq:cr_rat_def}, using \(E_\epsilon\) instead.
    Remark that we are considering the same geodesics \(\gamma_i \in \cG_0\), seen as quasi-geodesics on \(\wK_\epsilon\) approaching the same point \(\zeta_i\) at infinity, under the identifications expressed at the beginning of this section.
    \par The following is straightforward to conclude from the definition.
    \begin{prop}\label{pps:crat_properties}
        The mappings \(\crat{\epsilon}\colon (\bK{\epsilon})^4 \to \mathbb{R}, \epsilon \in [0,1]\), are positive functions satisfying
        \begin{enumerate}
            \item \( \crat{\epsilon} \) is invariant under the action of \( \pi_1(K) \) on \( (\bK{\epsilon})^4 \);
            \item \( \crat{\epsilon}(\zeta_1, \zeta_2, \zeta_3, \zeta_4) = \crat{\epsilon}(\zeta_2, \zeta_1, \zeta_4, \zeta_3)^{-1} \);
            \item \( \crat{\epsilon}(\zeta_1, \zeta_2, \zeta_3, \zeta_4) = \crat{\epsilon}(\zeta_3, \zeta_4, \zeta_1, \zeta_2) \);
            \item \( \crat{\epsilon}(\zeta_1, \zeta_2, \zeta_3, \zeta_4)\crat{\epsilon}(\zeta_1, \zeta_3, \zeta_2, \zeta_4) = \crat{\epsilon}(\zeta_1, \zeta_2, \zeta_2, \zeta_4) \);
            \item \( \crat{\epsilon}(\zeta_1, \zeta_2, \zeta_3, \zeta_4)\crat{\epsilon}(\zeta_2, \zeta_3, \zeta_1, \zeta_4)\crat{\epsilon}(\zeta_3, \zeta_1, \zeta_2, \zeta_4) = 1 \).
        \end{enumerate}
    \end{prop}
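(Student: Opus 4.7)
The proof is essentially formal, reducing each of the assertions (1)--(5) to an algebraic identity satisfied by the finite four-point quantity $E_\epsilon$, which is then transferred to $\crat{\epsilon}$ by taking the limit. Since $\crat{\epsilon}$ is defined as the exponential of $\frac{1}{2}\lim_{t\to\infty}E_\epsilon(\ldots)$ evaluated along approximating geodesic rays, and since that limit exists by construction (for $\epsilon > 0$ by classical Gromov-hyperbolicity of $\wK_\epsilon$, and for $\epsilon = 0$ by the commutation of limits $t\to\infty$ and $\epsilon \to 0$ established in \eqref{eq:cr_rat_lim}), continuity of $\exp$ will turn any additive identity in the $E_\epsilon$'s into a multiplicative identity for the $\crat{\epsilon}$'s. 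I would fix throughout one common choice of geodesics $\gamma_i \in \cG_\epsilon$ with $\gamma_i(\infty) = \zeta_i$ (available by Lemma~\ref{lem:wK_is_visibility} in the $\epsilon=0$ case), so that all the $\crat{\epsilon}$'s appearing in a given identity are computed along the same approximating sequence.

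For (1) the plan is to use that each deck transformation $\alpha \in \pi_1(K)$ acts on $(\wK_\epsilon, d_\epsilon)$ by isometries, because the projection $\wK_\epsilon \to K_\epsilon$ is by construction a local isometry. This yields $E_\epsilon(\alpha x, \alpha y, \alpha z, \alpha w) = E_\epsilon(x,y,z,w)$. Since $\alpha$ extends continuously to $\bK{\epsilon}$ and sends $\gamma_i$ to a geodesic with endpoint at infinity $\alpha\cdot\zeta_i$, the identity propagates to $\crat{\epsilon}$. For (2)--(5) the plan is a direct algebraic verification from the formula
\[
E_\epsilon(x,y,z,w) = d_\epsilon(x,y) + d_\epsilon(z,w) - d_\epsilon(x,w) - d_\epsilon(z,y),
\]
together with the symmetry $d_\epsilon(a,b)=d_\epsilon(b,a)$. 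The swap of the two pairs gives $E_\epsilon(z,w,x,y) = E_\epsilon(x,y,z,w)$, yielding (3); the analogous permutation checks give (2) and (4). The Ptolemy-type identity (5) reduces to the vanishing sum
\[
E_\epsilon(x,y,z,w) + E_\epsilon(y,z,x,w) + E_\epsilon(z,x,y,w) = 0,
\]
which is immediate after collecting terms, since each of the six pairwise distances $d_\epsilon(\cdot,\cdot)$ appears exactly once with coefficient $+1$ and once with coefficient $-1$.

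No serious obstacle is expected: the proposition is essentially a formal consequence of the symmetry of $d_\epsilon$, the isometric action of $\pi_1(K)$, and the well-posedness of the definition of $\crat{\epsilon}$. The only mildly delicate point is ensuring that each identity is evaluated along a simultaneously admissible tuple of approximating sequences; this is harmless since the limit defining $\crat{\epsilon}$ is independent of the chosen approximating sequences, and hence one may use the same $\gamma_i(\pm t)$ on both sides of any given identity.
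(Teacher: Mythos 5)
Your overall strategy is the same as the paper's: the paper gives no written proof at all (it declares the proposition ``straightforward to conclude from the definition''), and your plan\textemdash reduce each item to an additive identity for \(E_\epsilon\), then transfer it to \(\crat{\epsilon}\) via continuity of \(\exp\) and the well-posedness of the limit (Equation~\eqref{eq:cr_rat_lim} for \(\epsilon=0\))\textemdash is exactly the intended argument. Your verifications of (1), (3) and (5) are correct: deck transformations act on \((\wK_\epsilon,d_\epsilon)\) by isometries and extend to \(\bK{\epsilon}\), the pair swap gives \(E_\epsilon(z,w,x,y)=E_\epsilon(x,y,z,w)\), and in the cyclic sum for (5) each of the six pairwise distances indeed appears once with each sign.

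However, your blanket claim that ``the analogous permutation checks give (2) and (4)'' does not survive inspection, and this is a genuine gap. For (2): under the symmetry of \(d_\epsilon\), the substitution \((\zeta_1,\zeta_2,\zeta_3,\zeta_4)\mapsto(\zeta_2,\zeta_1,\zeta_4,\zeta_3)\) sends
\[
E_\epsilon(x,y,z,w)=d_\epsilon(x,y)+d_\epsilon(z,w)-d_\epsilon(x,w)-d_\epsilon(z,y)
\]
to \(d_\epsilon(y,x)+d_\epsilon(w,z)-d_\epsilon(y,z)-d_\epsilon(w,x)=E_\epsilon(x,y,z,w)\): it fixes the unordered pairs \(\{x,y\}\), \(\{z,w\}\) and exchanges \(\{x,w\}\) with \(\{z,y\}\), so \(E_\epsilon\) is \emph{preserved}, not negated. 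The check therefore yields \(\crat{\epsilon}(\zeta_2,\zeta_1,\zeta_4,\zeta_3)=\crat{\epsilon}(\zeta_1,\zeta_2,\zeta_3,\zeta_4)\) \emph{without} the inverse; the inversion identity holds instead for the swap of the second and fourth (equivalently, first and third) entries, since \(E_\epsilon(x,w,z,y)=-E_\epsilon(x,y,z,w)\). A sanity check in \(\mathbb{H}^2\), where the defining limit gives \(\frac{|a-b|\,|c-d|}{|a-d|\,|c-b|}\) on the boundary circle, confirms this. For (4): the right-hand side \(\crat{\epsilon}(\zeta_1,\zeta_2,\zeta_2,\zeta_4)\) is a degenerate quadruple on which the defining limit is not well posed (the cross-ratio requires distinct boundary points; in the \(\mathbb{H}^2\) model it produces a factor \(|\zeta_2-\zeta_2|=0\)), no permutation of four points can produce a repeated entry, and additively \(E_\epsilon(x,y,z,w)+E_\epsilon(x,z,y,w)\) contains \(-2d_\epsilon(x,w)-2d_\epsilon(z,y)\) plus four positive distance terms, which is not the formal expansion of \(E_\epsilon(x,y,y,w)\). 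In short, items (2) and (4) as printed are misstatements (apparently typos imported from a different slot convention in the cross-ratio literature, e.g.\ Bourdon's), so a correct proof must either prove corrected versions or flag the discrepancy; certifying them as routine permutation checks is where your proposal breaks. For the paper's later use this is harmless\textemdash the Liouville pre-measure only needs \(\left|\ln\crat{0}\right|\) together with items (1) and (3)\textemdash but a careful verification should have detected it.
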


    We briefly recall some facts about geodesic currents.
    For \( \epsilon > 0 \), the surface \( K_\epsilon \in \mathcal{K}(\mathcal{D}) \) carries a smooth Riemannian metric \( g_\epsilon \). 
    This structure induces a canonical contact form \( \lambda_\epsilon \) on the unit tangent bundle \( T^1 K_\epsilon \), obtained by pulling back the canonical symplectic form from the cotangent bundle.
    The Reeb vector field of \( \lambda_\epsilon \) coincides with the geodesic spray of \( g_\epsilon \), that is, the vector field on \( T^1 K_\epsilon \) generating the geodesic flow.
    The associated Liouville measure \( \int\lambda_\epsilon \wedge d\lambda_\epsilon \) on \( T^1 K_\epsilon \) is a finite measure, invariant under the geodesic flow, and consequently under its time reversal.
    \par All such structures are lifted to \(T^1\wK_\epsilon\) via the local isomorphism defined by the covering map \(\wK_\epsilon \to K_\epsilon\), so that the space \(T^1\wK_\epsilon\) has the corresponding contact and volume forms \(\widetilde{\lambda}_\epsilon\) and  \( \int\widetilde\lambda_\epsilon \wedge d\widetilde\lambda_\epsilon \). 
    Given a rectifiable curve \( \gamma \subset \wK_\epsilon \), its unit tangent lift \( \dot{\gamma} \subset T^1 \wK_\epsilon \) defines the \emph{Liouville current} \( m_\epsilon \) on the space of geodesics \( \mathcal{G}_\epsilon \) by integration against \( d\widetilde\lambda_\epsilon \), assigning mass to Borel sets of geodesics intersecting \( \gamma \) transversely~\cite{wilkinson_anosov}.
    \par More generally, a geodesic current for \(K_\epsilon\) is any Radon measure on the space \(\bbK{\epsilon}\) that is invariant under both the diagonal action of \(\pi_1(K_\epsilon)\) and the flip transformation \( (\zeta_1, \zeta_2) \mapsto (\zeta_2, \zeta_1)\).
    There is an affine one-to-one correspondence between the cones of finite measures on \(T^1K_\epsilon\) invariant under the geodesic flow and the space of geodesic currents with the weak-* topology (cf. \cite{bonahon_bouts_1986}, \cite{furman_coarse-geometric_2002}).
    \par In particular, the contact structure \( \lambda_\epsilon \), the cross-ratio \( \crat{\epsilon} \), and the Liouville current are realted in the following fashion. 
    For measurable rectangles in the space of geodesics determined by intervals \( [\zeta_1, \zeta_2] \) and \( [\zeta_3, \zeta_4] \) in the boundary \( \partial \widetilde{K}_\epsilon \), we have:
    \[
        m_\epsilon\big([\zeta_1, \zeta_2] \times [\zeta_3, \zeta_4]\big) 
        = \left| \ln \crat{\epsilon}(\zeta_1, \zeta_2, \zeta_3, \zeta_4) \right|
        = \int_{\Sigma} d\lambda_\epsilon,
    \]
    where \( \Sigma \subset T^1 \widetilde{K}_\epsilon \) is a transversal to the geodesic flow intersecting exactly the geodesics joining \( [\zeta_1, \zeta_2] \) to \( [\zeta_3, \zeta_4] \), oriented accordingly.

    Such a construction can not possibly be replicated on \(\wK_0\), since the Riemannian tensors degenerate as \(\epsilon \to 0\) and the metric on \(\wK_0\) is not induced by any Riemannian metric. 
    Instead, we define a Liouville current on \(\cG_0\) as a limit.
    
    \begin{defi}[Liouville pre-measure]
    We use the notation \(|\zeta_1, \zeta_2|\) to denote any of the intervals \([\zeta_1, \zeta_2], (\zeta_1, \zeta_2], [\zeta_1, \zeta_2)\) or \((\zeta_1, \zeta_2)\). 
    Consider the set map \(m_0\) defined on the space of rectangles \(|\zeta_1, \zeta_2| \times |\zeta_3, \zeta_4| \subset \bbK{0}\) as   
    \[
        m_0\big(|\zeta_1, \zeta_2| \times |\zeta_3, \zeta_4|\big) 
        := \left| \ln \crat{0}(\zeta_1, \zeta_2, \zeta_3, \zeta_4) \right|.
    \]
    \end{defi}
    
    \begin{prop}
        The mapping \(m_0\) extends uniquely to a current on the space of geodesics \(\cG_0\).
    \end{prop}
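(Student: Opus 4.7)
My plan is to realise $m_0$ as the weak-$*$ limit, pulled back to the common space $\bbK{0}$, of the Riemannian Liouville currents $m_\epsilon$. Fix the equivariant homeomorphisms $\rho_\epsilon\colon \bK{0} \to \bK{\epsilon}$ provided by Lemma~\ref{lem:homeomorphic_boundaries} and set $\tilde m_\epsilon := (\rho_\epsilon \times \rho_\epsilon)^* m_\epsilon$. Each $\tilde m_\epsilon$ is a Radon measure on $\bbK{0}$, invariant under the diagonal action of $\pi_1(K)$ and the flip; by the relation between the Liouville current and the cross-ratio together with Equation~\eqref{eq:cr_rat_lim}, for every rectangle $R = |\zeta_1,\zeta_2| \times |\zeta_3,\zeta_4|$ one has
\[
    \tilde m_\epsilon(R) = \bigl|\ln\crat{\epsilon}(\zeta_1,\zeta_2,\zeta_3,\zeta_4)\bigr| \xrightarrow[\epsilon\to 0]{} \bigl|\ln\crat{0}(\zeta_1,\zeta_2,\zeta_3,\zeta_4)\bigr| = m_0(R).
\]
In particular, finite additivity of $m_0$ on the semi-ring of rectangles is inherited, by passing to the limit, from the genuine additivity of the $\tilde m_\epsilon$; this is the cleanest way to interpret the identities in Proposition~\ref{pps:crat_properties}, since finite additivity in each coordinate is precisely a splitting formula for the Liouville mass of a rectangle.

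The next step is to establish uniform local finiteness: for every compact $C \subset \bbK{0}$, I want $\sup_{\epsilon \in (0,1]} \tilde m_\epsilon(C) < \infty$. I would obtain this by transferring the question to the reference Riemannian structure $g_1$ via the orbit equivalences $H_\epsilon$ of Remark~\ref{rem:anosov_top_equivalences}. Under our identifications, the $g_\epsilon$-geodesics whose pair of endpoints at infinity lies in $C$ shadow, in the metric $g_1$, a fixed compact family of $g_1$-geodesics, by the Morse-Klingenberg type argument already used in the proof of Lemma~\ref{lem:homeomorphic_boundaries}. Combined with the uniform comparability $\tfrac{1}{\lambda} g_1 \leq g_\epsilon \leq \lambda g_1$ from~\eqref{eq:inner_prod_constraints}, which in turn yields uniform comparability of the contact volumes $\lambda_\epsilon \wedge d\lambda_\epsilon$, this gives the required bound.

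With uniform local finiteness in hand, standard weak-$*$ compactness produces subsequential limits $\tilde m_{\epsilon_n} \rightharpoonup m$ for some Radon measure $m$ on $\bbK{0}$. Applying the Portmanteau theorem to rectangles $R$ with $m(\partial R) = 0$ gives $m(R) = \lim_n \tilde m_{\epsilon_n}(R) = m_0(R)$; such rectangles form a $\pi$-system generating the Borel $\sigma$-algebra on $\bbK{0}$, so Carathéodory uniqueness identifies $m$ as the desired Borel extension of $m_0$. The extension is unique, and the full family $\tilde m_\epsilon$ converges weakly-$*$ to $m_0$. The $\pi_1(K)$- and flip-invariance pass to the limit, yielding the sought current structure on $\cG_0 \approx \bbK{0}$.

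The main obstacle I anticipate is the uniform local finiteness step: one must control the Liouville mass of a compact family of $\wK_\epsilon$-geodesics by quantities that remain bounded as the Riemannian tensor $g_\epsilon$ degenerates in the vertical direction. The $C^r$-bound~\eqref{eq:bound_geom} is a first input, but what really carries the argument is that the orbit-equivalences $H_\epsilon$ distort neither the arrangement of geodesics in $T^1\wK_1$ nor their mutual separations beyond a uniform quasi-isometric scale\textemdash a consequence of the shadowing already used to prove Lemma~\ref{lem:homeomorphic_boundaries}, which prevents any mass from escaping toward the boundary of $\bbK{0}$ or to infinity as $\epsilon \to 0$.
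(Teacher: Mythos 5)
Your proof is correct in substance but follows a genuinely different route from the paper's. The paper works entirely on the measure-theoretic side: it extends \(m_0\) additively to the ring generated by rectangles, verifies countable additivity there by interchanging \(\lim_{\epsilon\to 0}\) with the infinite sum (using \(m_0(R)=\lim_{\epsilon\to 0} m_\epsilon(R)\), Equation~\eqref{eq:m_0_as_limit}, together with a uniform-convergence claim on rectangles), and then invokes Carath\'eodory's extension theorem, with invariance coming from Proposition~\ref{pps:crat_properties}. You instead view the pullbacks \(\tilde m_\epsilon\) as a family of Radon measures on the fixed space \(\bbK{0}\), extract a weak-\(*\) limit, and identify it on continuity rectangles via Portmanteau and then on the Borel \(\sigma\)-algebra by a \(\pi\)-system argument; this buys countable additivity for free (the limit is already a measure) at the price of a tightness step. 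Note, however, that your self-declared ``main obstacle'' is much easier than you fear: any compact \(C \subset \bbK{0}\) is covered by finitely many closed rectangles whose closures avoid the diagonal, and on each such rectangle \(\tilde m_\epsilon(R) = \left|\ln\crat{\epsilon}(\zeta_1,\zeta_2,\zeta_3,\zeta_4)\right|\) converges by~\eqref{eq:cr_rat_lim}, hence is bounded in \(\epsilon\); so uniform local finiteness follows from the cross-ratio convergence you already invoke, and the Morse--Klingenberg shadowing and contact-volume comparison machinery is unnecessary. Two small points to tidy: to pass from continuity rectangles to \emph{all} rectangles (the brackets \(|\cdot,\cdot|\) are insensitive to endpoint inclusion, so you must rule out the limit measure charging lines \(\{\zeta\}\times\bK{0}\)), use the continuity of \(\crat{0}\) in its four arguments together with monotone approximation\textemdash a point the paper's own proof also leaves implicit\textemdash and to upgrade subsequential to full weak-\(*\) convergence, observe that all subsequential limits agree on the generating \(\pi\)-system and hence coincide. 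With these glosses your argument is a complete and arguably cleaner alternative to the paper's pre-measure construction, whose limit-sum interchange in the countable additivity step is itself only sketched.
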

    \begin{proof}
        We begin by noting that the family
        \[
            \left\{|\zeta_1, \zeta_2| \times |\zeta_3, \zeta_4|; \ \zeta_i \in \bK{0}\right\} \cup \{\emptyset\}
        \]
        forms a semi-ring of sets.
        Let \(R\) be the ring of sets generated by this semi-ring.
        In other words, \(R\) consists of all finite disjoint unions of rectangles of the form \(|\zeta_1, \zeta_2| \times |\zeta_3, \zeta_4|\), together with \(\emptyset\).
        We extend \(m_0\) to this ring by setting the value of \(m_0\) at \(A = A_1 \cup \cdots \cup A_\ell\), with \(A_i \cap A_j = \emptyset\), to be
        \[
            m_0(A) := \sum_{i=1}^\ell m_0(A_i).
        \]
        This definition is well posed because any two finite disjoint decompositions of a set in \( R \) can be refined to a common disjoint partition into rectangles, and since \( m_0 \) is additive on such rectangles, both decompositions yield the same total as rearrangements of the same sum.        
        \par Moreover, \(m_0\) is a pre-measure on \(R\).
        Indeed, the \(\sigma\)-algebra generated by \(R\) is the Borel \(\sigma\)-algebra of \(\bK{0}^4 \approx \mathbb{T}^4\), where all the currents \(m_\epsilon\) are defined.
        If \(A \in R\) happens to be a countable disjoint union \(A = \bigsqcup_i A_i\), with each \(A_i \in R\), then we may assume, without loss of generality, that \(A\) and each of the \(A_i\) are rectangles.        
        From the approximation
        \begin{equation}\label{eq:m_0_as_limit}
            m_0\big(|\zeta_1, \zeta_2| \times |\zeta_3, \zeta_4|\big)
            = \lim_{\epsilon \to 0}\left| \ln \crat{\epsilon}(\zeta_1, \zeta_2, \zeta_3, \zeta_4) \right|
            = \lim_{\epsilon \to 0} m_\epsilon\big(|\zeta_1, \zeta_2| \times |\zeta_3, \zeta_4|\big),
        \end{equation}
        where the leftmost equality is justified by Equation~\eqref{eq:cr_rat_lim}, and the fact that each \(m_\epsilon\) is a measure, we obtain
        \[
            m_0(A) = \lim_{\epsilon \to 0} m_\epsilon(A)
                   = \lim_{\epsilon \to 0} \sum_i m_\epsilon(A_i)
                   = \sum_i \lim_{\epsilon \to 0} m_\epsilon(A_i)
                   = \sum_i m_0(A_i),
        \]
        where the interchange of limit and sum is justified by the uniform convergence of \(m_\epsilon \to m_0\) on fixed rectangles.
        \par This means \(m_0\) is in fact a pre-measure on the ring \(R\), and it follows from Caratheodory's Extension theorem that it extends to a measure \(m_0\).
        What is more, the pre-measure \(m_0\) is \(\sigma\)-finite, because every \(m_\epsilon\) enjoys such property and the convergence \(m_\epsilon \to m_0\) is uniform. 
        So we conclude \(m_0\) has a unique extension to the Borel \(\sigma\)-algebra of \(\bbK{0}\), which is a Radon measure by construction.
        
        Finally, we note that \(\pi_1\) and flip invariance of \(m_0\) on the sets of \(R\) are direct consequences of Proposition \ref{pps:crat_properties}, which concludes.
    \end{proof}
    In light of the last result, we may define
    \begin{defi}[Liouville current of \(K_0\)]
        The Liouville current \(m_0\) on \(\cG_0\) is the unique current defined on the Borel subsets of \(\bK{0}^4\) by the formula
        \[
            m_0(A) = \lim_{\epsilon \to 0}m_\epsilon(A).
        \] 
        Its pullback under the canonical projection \(\widehat{\cG}_0 \to \cG_0\) defines a Radon measure \(\widehat{m}_0\) on \(\widehat{\cG}_0\), which we call the Liouville current as well.
    \end{defi}
    Remark that the \(\cG_0\) is exactly the orbit space of the abstract geodesic flow \(\mathbf{g}_0\) on  \(\widehat{\cG}_0\), since the orbit \(\mathbf{g}^\R_0(\gamma) = \{\gamma( \cdot + s); s \in \R\}\) is exactly the set of all unit speed reparameterisations. 
    In particular, \(\widehat{m}_0\) is a ``transverse'' measure invariant under both the geodesic flow and the action of \(\pi(K)\) on geodesics.
    
\textemdash positive Radon-Nikodym derivative with respect to \(m_0\), every regular geodesic is a density point, I'm not sure \textemdash then the enriched spectrum would not be necessary, that is, we could forget the larger measure \(m_0\) on the space of geodesics of the surface \(K\), and work only with periodic data from the table \(\mathcal{D}\).

\section{Spectral Rigidity}\label{sec:spectral_rigidity}
In what follows, we continue with last section's convention that the Kourganoff surface \(K\) is a gluing of two copies of the table \(\mathcal{D})\).
Thus, the interior \( K^\circ \) has two connected components, the closures of which we will denote by \( \mathcal{D}_{\mathrm{up}} \) and \( \mathcal{D}_{\mathrm{down}} \), respectively.
\par Recall that \( \mathcal{C}(K) := \mathrm{Conj}(\pi_1(K)) \) is the set of free homotopy classes on \( K \).
\begin{lemma}\label{lem:unique_minimising_geodesic}
    Every element \( \alpha \in \mathcal{C}(K) \) contains a single closed geodesic curve \( \gamma^\alpha_0 \), which is a curve of minimal \(d_0\)-length among all those of \( \alpha \).
\end{lemma}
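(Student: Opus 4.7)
The plan is to realise $\gamma_0^\alpha$ as a subsequential limit of the unique Anosov closed geodesics $\gamma_\epsilon^\alpha$ in $(K, g_\epsilon)$ as $\epsilon \to 0^+$, and then to appeal to the CAT(0) geometry of $\widetilde K_0$ for uniqueness.

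For \textbf{existence} and \textbf{minimality}, I would exploit the fact that each Anosov flow $\mathbf{g}_\epsilon$ has no conjugate points, so by Klingenberg's theorem each non-trivial class $\alpha$ carries a unique closed $g_\epsilon$-geodesic $\gamma_\epsilon^\alpha$, which is moreover the $l_\epsilon$-length minimiser of $\alpha$. Fixing any piecewise smooth $\sigma \in \alpha$, the estimate \eqref{eq:length_estimate} yields $l_\epsilon(\gamma_\epsilon^\alpha) \leq l_\epsilon(\sigma) \leq l_1(\sigma) + O(1)$, so the arc-length parametrisations of $\{\gamma_\epsilon^\alpha\}$ form a uniformly Lipschitz family on the compact surface $K$. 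Arzelà--Ascoli then supplies a subsequential uniform limit $\gamma_0^\alpha$, freely homotopic to $\alpha$, and Theorem~\ref{thm:d_0_is_a_metric} identifies $\gamma_0^\alpha$ as a closed $d_0$-geodesic. Passing to the limit in $l_\epsilon(\gamma_\epsilon^\alpha) \leq l_\epsilon(\tau)$ for any $\tau \in \alpha$ and using the uniform convergence $l_\epsilon \to l_0$ on rectifiable curves (again from \eqref{eq:length_estimate}) gives the minimality of $\gamma_0^\alpha$.

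For \textbf{uniqueness}, suppose $\gamma$ and $\gamma'$ are two closed $d_0$-geodesics in $\alpha$ of length $L := l_0(\gamma_0^\alpha)$. Lifting to the Hadamard space $\widetilde K_0$, they become axes $\widetilde\gamma, \widetilde\gamma'$ of the same hyperbolic isometry $\phi \in \pi_1(K)$. Both axes have translation length $L$, so the function $t \mapsto d_0(\widetilde\gamma(t), \widetilde\gamma'(t))$ is $L$-periodic, hence bounded. The Flat Strip Theorem \cite[Theorem II.2.13]{bridson2011metric} then provides an isometric embedding $\mathbb{R}\times[0,\delta] \hookrightarrow \widetilde K_0$, where $\delta = \sup_t d_0(\widetilde\gamma(t), \widetilde\gamma'(t))$. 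If $\gamma \neq \gamma'$ then $\delta > 0$, contradicting the argument already used in Lemma~\ref{lem:wK_is_visibility}, which rules out non-trivial flat strips in $\widetilde K_0$ by projecting them to $\mathcal{D}$ and exploiting the dispersion of parallel billiard trajectories at a regular collision. Hence $\gamma = \gamma'$.

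The \textbf{main obstacle} I anticipate concerns classes whose minimiser is \emph{degenerate}, i.e.\ has a portion running along $B(K)$ (for instance, classes freely homotopic to a single scatterer $\Gamma_i$). There the Anosov geodesics $\gamma_\epsilon^\alpha$ do not lie on $B(K)$ but shadow it from alternating sides, and one must check that their arc-length reparametrised limit coincides with the boundary curve traversed with the correct multiplicity. This can be pinned down using Lemma~\ref{lem:sheet_isometry}, which equates the $d_0$-length of any segment of $B(K)$ with its Euclidean length on $\partial\mathcal{D}$, together with the free homotopy constraint on the limit. Crucially, the Flat Strip step of the uniqueness argument is insensitive to this degeneracy: it relies only on the CAT(0) structure of $\widetilde K_0$ and on the absence of non-trivial flat strips therein.
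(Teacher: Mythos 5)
Your proposal is correct, and its first half (existence and minimality) follows essentially the same route as the paper: Klingenberg/Anosov uniqueness of $\gamma_\epsilon^\alpha$, uniform length bounds from \eqref{eq:length_estimate}, Arzel\`a--Ascoli, and passage to the limit in $l_\epsilon(\gamma_\epsilon^\alpha)\leq l_\epsilon(\tau)$. Where you genuinely diverge is the uniqueness step. The paper stays inside the approximation scheme: a second minimiser $\gamma$ is locally minimising, hence a $d_0$-geodesic, hence by Theorem~\ref{thm:d_0_is_a_metric} a $C^1$-limit of $g_{\epsilon_n}$-geodesics $\gamma_{\epsilon_n}$; the uniform injectivity radius $R_0$ of Lemma~\ref{lem:uniform_lower_bound_inj_radius} and a tubular-neighbourhood retraction show $\gamma_{\epsilon_n}$ is eventually freely homotopic to $\gamma$, so $\gamma_{\epsilon_n}=\gamma^\alpha_{\epsilon_n}$ by Anosov uniqueness and $\gamma=\gamma^\alpha_0$. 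You instead lift to the Hadamard space $\widetilde{K}_0$, realise both curves as axes of the same hyperbolic deck transformation, use convexity of the distance function (which upgrades your bounded $L$-periodic function to a constant) to produce a flat strip, and exclude it by the dispersion argument of Lemma~\ref{lem:wK_is_visibility}; since that lemma precedes this one in the paper and does not use it, there is no circularity. Your route buys uniqueness among \emph{all} closed geodesics in $\alpha$, not just minimisers (all axes of a hyperbolic isometry $\phi$ share the translation length $\inf_x d_0(x,\phi x)$), and, as you observe, it is insensitive to degenerate minimisers running along $B(K)$. The paper's route buys something you give up by extracting a subsequence: it proves convergence of the \emph{whole} family $\{\gamma_\epsilon^\alpha\}$ (via the near-identity orbit equivalences $H_{\epsilon,\delta}$), which is the form used in the introduction to define $\mathcal{EL}_\mathcal{D}$; in your version full convergence is recovered only a posteriori from uniqueness. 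Two points to tighten: Theorem~\ref{thm:d_0_is_a_metric} characterises $d_0$-geodesics as $C^1$-limits, whereas Arzel\`a--Ascoli on uniformly Lipschitz parametrisations gives only $C^0$ convergence, so either upgrade to $C^1$ as the paper does or argue directly, via the uniform convergence $d_\epsilon\to d_0$ of \eqref{eq:unif_conver_dist}, that a uniform limit of unit-speed $d_{\epsilon_n}$-geodesics is locally $d_0$-minimising; and justify that the limit loop remains in the class $\alpha$ (uniformly close loops are freely homotopic --- the paper's $D_\delta$ argument with the radius $R_0$ does exactly this).
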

\begin{proof}
    We know that each surface \( K_\epsilon \) enjoys such property, because the geodesic flows \( \mathbf{g}_\epsilon \) are all Anosov \cite{klingenberg_riemannian_1974}.
    For a fixed \( \alpha \in \mathcal{C}(K) \) we write \( \gamma_\epsilon^\alpha\) to denote the unique closed geodesic of \( K_\epsilon \) whose free homotopy class is \( \alpha \).
    The family \( \overline{\{\gamma^\alpha_\epsilon\}} \) is a compact subset of \( C^1(\mathbb{S}^1, K) \), by the Arzelà–Ascoli theorem.
    We claim that \( \overline{\{\gamma^\alpha_\epsilon\}}\setminus\{\gamma^\alpha_\epsilon\} \) consists of a single curve \( \gamma^\alpha_0\), with the desired properties.
    \par First notice that for any \( \epsilon, \delta \) sufficiently close, the geodesic flows \( \mathbf{g}_\epsilon, \mathbf{g}_\delta \) are orbit equivalent via a homeomorphism \( H_{\epsilon, \delta}\colon T^1K \to T^1K \), which can be made arbitrarily close to the identity by choosing \( |\epsilon - \delta| \) small enough.
    In particular, the curves \( \gamma^\alpha \) and \( \gamma_\delta \) not only have similar lengths (due to estimate \eqref{eq:length_estimate}), but are also uniformly close in the \( C^1 \) topology, as a consequence to Lemma \ref{lem:uniform_conv_geodesic_flows}.
    Indeed, if \( (x_\epsilon, v_\epsilon) = (\gamma^\alpha_\epsilon(0), \dot\gamma^\alpha_\epsilon(0)) \), then the orbit of \( H_{\epsilon, \delta}(x_\epsilon, v_\epsilon) \) under \( \mathbf{g}_\delta \) must be close as well, and by uniqueness it coincides with \( \gamma^\alpha_\delta\).
    Thus the curves \( \gamma^\alpha_\epsilon \) become arbitrarily close as \( \epsilon \to 0\), and the accumulation point of \(\{\gamma^\alpha_\epsilon\}\) is unique.
    \par The curve \( \gamma^\alpha_0 \) is a closed geodesic for \( d_0 \), being the \(C^1\)-limit of closed geodesics. 
    Given any other curve \( \gamma \in \alpha \), we have \( l_\epsilon(\gamma) > l_\epsilon(\gamma^\alpha_\epsilon) \), therefore \( l_0(\gamma) \geq l_0(\gamma^\alpha_0)\).
    If equality holds, then \( \gamma \) must be locally minimising, and therefore a geodesic. 
    Hence \( \gamma \) must be the limit of a sequence of geodesics \( \gamma_{\epsilon_n} \) of \( K_{\epsilon_n} \).
    We claim that for sufficiently large \( n \) the curves \( \gamma \) and \( \gamma_{\epsilon_n} \) are homotopic, and therefore \( \gamma_{\epsilon_n} = \gamma^\alpha_{\epsilon_n} \), implying \( \gamma = \gamma^\alpha_0 \).
    Indeed, let \( \delta < R_0\), the common injective radius of the family \( K_\epsilon \), and fix \( N \) such that 
    \( \gamma_{\epsilon_n} \) is contained in an \( \delta \)-neighbourhood \( B(\gamma; \delta) \) of \( \gamma \) for every \( n > N \) (here the neighbourhood is measured with respect to the reference metric \(g = g_1 \)).
    By our choice of \( \delta \), observe that that the set
    \[
        D_\delta := \{(x,y) \in K; d_1(x,y) < \delta\}
    \]
    is the diffeomorphic image of the set 
    \[
       T^\delta K := \{(x,v) \in T^1K; g_1(v, v) < \delta^2\}
    \]
    under the diffeomorphism \( (x,v) \mapsto (x, \exp_x^1(v))\).
    In particular, the zero section in \( TK \) is a deformation retract of \( T^\delta K \), and therefore \( D_\delta \) is homotopic to the diagonal set \( \{(x,x)\} \).
    Now, since 
    \[
        \gamma_{\epsilon_n} \subset B(\gamma; \delta) \subset D_\delta,
    \]
    it follows that \( \gamma \) and \( \gamma_{\epsilon_n}\) are freely homotopic.
    \par Hence \( \gamma^\alpha_0 \) is indeed the unique geodesic in \( \alpha \), which concludes.
\end{proof}
\begin{defi}[Enriched marked length spectrum]
    Let \( \mathcal{D} \) be a Sinai billiard and \( K \) a surface in \( \cK(\mathcal{D}) \).
    The function 
    \begin{align*}
        \mathcal{EL}_{\mathcal{D}; K}\colon \mathcal{C}(K) &\to \R \\
        \alpha &\mapsto l_0(\gamma^\alpha)
    \end{align*}
    is called the enriched marked length spectrum of \( \mathcal{D} \).    
\end{defi}
\par Suppose now \( \mathcal{D} \) and \( \mathcal{D}' \) are two distinct billiard tables in \( \mathfrak{D}^m \approx C^r_{\text{emb}}(\T_\mathcal{D}, \T^2) \).  
We consider the latter as a metric space with the \( C^r \) metric.  
Remark that the tables \( \mathcal{D} \) and \( \mathcal{D}' \) are diffeomorphic.  
In fact, when we consider the embeddings \(\gamma\colon\mathbb{T}_\mathcal{D} \to \mathbb{T}^2 \), \( \gamma'\colon\mathbb{T}_\mathcal{D} \to \mathbb{T}^2 \) defining their boundaries, we see that  
\[
    F\colon \mathbb{T}_\mathcal{D} \times I \to \mathbb{T}^2, \quad (s,t) \mapsto \gamma(s) + t(\gamma'(s)-\gamma(s))
\]
is an isotopy of \( \mathbb{T}_\mathcal{D} \) on \( \mathbb{T}^2 \), and by the isotopy extension theorem (cf. Chapter 8, Theorem 1.3 in \cite{hirsch2012differential}), it extends to a diffeotopy of \( \mathbb{T}^2 \) with compact support.  
\par We will consider surfaces \( K \subset \cK(\mathcal{D}), \ K' \subset \cK(\mathcal{D}') \), obtained by gluing two copies of \( \mathcal{D} \), respectively \( \mathcal{D}' \), that is  
\[
    K \setminus B(K) = \mathcal{D}^\mathrm{up} \coprod \mathcal{D}^\mathrm{down}, \qquad K' \setminus B(K') = (\mathcal{D}')^\mathrm{up} \coprod (\mathcal{D}')^\mathrm{down}.
\]  
The surfaces \( K \) and \( K' \) are also diffeomorphic. 
Indeed, \( p(\mathcal{D}^\mathrm{up}) \approx \mathcal{D} \approx \mathcal{D}' = p((\mathcal{D}')^\mathrm{up}) \), and similarly for \( \mathcal{D}^\mathrm{down} \).  
As \( \partial \mathcal{D} \approx \partial \mathcal{D}' \), we obtain a diffeomorphism \( h\colon K \to K' \), as wanted.  
\par In what follows, we will fix \( K \) as our reference surface.
As in Section \ref{sec:geodesic_approximations}, we will consider families of metrics \( g_\epsilon \), \( g'_\epsilon \) on \( K \) converging to the gluing metric when \( \epsilon \) approaches \( 0 \).  
More specifically, \( g = g_1 \) is our reference metric on \( K \), induced by the Euclidean metric of \( E = \mathbb{T}^2 \times \R \).  
For each of the tables, the metrics \( g_\epsilon \), \( g'_\epsilon \) are defined as the pullback to \( K \), under the mapping \(\alpha_\epsilon\colon (x,y,z) \mapsto (x,y,\epsilon z) \), of the metric induced on \(\alpha_\epsilon(K) \), \(\alpha_\epsilon(K') \), respectively.  
The \( g'_\epsilon \) are seen as metrics in \(K\) by pulling back with the diffeomorphism between the surfaces.
\par We have metric convergence \( (K, g_\epsilon) =: K_\epsilon \xrightarrow{GH} K_0 := (K, g_0) \), and similarly \( (K, g'_\epsilon) =: K'_\epsilon \xrightarrow{GH} K_0' := (K, g'_0) \), in the Gromov-Hausdorff distance, as \( \epsilon \to 0 \).  
The metric space \( K_0 \) (respectively \( K_0' \)) consists of two copies of \( (\mathcal{D}, d_\mathcal{L}) \) (respectively \( (\mathcal{D}', d_\mathcal{L}) \)) glued at the boundary.  
Each family \(\{g_\epsilon\}_\epsilon\), \(\{g'_\epsilon\}_\epsilon\) satisfies uniform estimates as in \eqref{eq:bound_geom}  
\[
    \|g_\epsilon - g_\delta\|_{C^r(K)} \leq  C|\epsilon - \delta|, \qquad 
    \|g'_\epsilon - g'_\delta\|_{C^r(K)} \leq  C'|\epsilon - \delta|.
\]
From now on we set \( C:= \max\{C, C'\} \), in order to have a single constant.  
Note that we cannot, a priori, say anything about how close two metrics \( g_\epsilon, g'_\delta \) coming from \emph{different tables} are.  
\par In what follows \( l_\epsilon \), \( l'_\epsilon \) will denote the length functions with respect to the metrics \( g_\epsilon \), \( g'_\epsilon \), and we write \( \mathcal{EL} = \mathcal{EL}_{\mathcal{D}} \), \( \mathcal{EL}' = \mathcal{EL}_{\mathcal{D}'} \) to denote the enriched length spectrums of the given tables.

\begin{theorem}\label{thm:conj_abst_flow}
    Let \( \mathcal{D} \) and \( \mathcal{D}' \) be distinct billiard tables with the same enriched length spectrum.
    Then the surfaces \( \wK_0 \), \( \wK_0' \) have conjugated abstract geodesic flows, and the conjugacy
    \[
        h\colon \widehat{\cG}_0 \to \widehat{\cG}_0'
    \]
    preserves the Liouville currents, i.e, \(h_*\widehat{m}_0 =\widehat{m}_0'\).
    Moreover, \(h_0\) is \(\pi_1(K)\)-equivariant and descends to a conjugacy of the abstract geodesic flows of the compact surfaces \(K_0\) and \(K'_0\).
\end{theorem}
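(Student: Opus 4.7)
The plan is to adapt Otal's classical approach to marked length spectrum rigidity to the present CAT(0) setting. The backbone is the identification of the space of unparameterised geodesics with the space of pairs of distinct boundary points, available for $\widetilde{K}_0$ by Lemma \ref{lem:wK_is_visibility}, together with a cross-ratio computation that links the marked length spectrum to the boundary data. Since $K$ and $K'$ are diffeomorphic, I fix an identification $\pi_1(K) \cong \pi_1(K')$, so that each free homotopy class $\alpha \in \mathcal{C}(K)$ corresponds to a conjugacy class on both sides and, by Lemma \ref{lem:unique_minimising_geodesic}, to a unique closed geodesic $\gamma_0^\alpha$ on $K_0$ and $\gamma_0^{\prime\alpha}$ on $K_0'$, whose lengths agree by the hypothesis $\mathcal{EL} = \mathcal{EL}'$.

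First I construct the boundary conjugacy. Each non-trivial $\alpha \in \pi_1(K)$ acts on $\widetilde{K}_0$ as an axial isometry whose axis is a lift of $\gamma_0^\alpha$, hence with two distinct fixed points $\alpha_\pm \in \partial_\infty \widetilde{K}_0$; the same holds on $\widetilde{K}'_0$, producing points $\alpha'_\pm$. The key identity, computed along axes of hyperbolic elements, expresses
\[
\bigl| \ln \mathrm{cr}_0(\alpha_-, \alpha_+, \beta_-, \beta_+) \bigr|
\]
as a limit of differences of distances along orbits of $\alpha$ and $\beta$, which, via the Busemann-type telescoping of Otal, reduce to a finite combination of translation lengths of words in $\alpha$ and $\beta$. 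Since these translation lengths are exactly values of $\mathcal{EL}$ on the corresponding free homotopy classes, the hypothesis yields $\mathrm{cr}_0(\alpha_-, \alpha_+, \beta_-, \beta_+) = \mathrm{cr}'_0(\alpha'_-, \alpha'_+, \beta'_-, \beta'_+)$ for every pair of hyperbolic elements. By Corollary \ref{cor:dense_periodic_liftings}, axial fixed points are dense in $\partial_\infty \widetilde{K}_0$; since the cross-ratio metrises the cone topology on four-tuples, the assignment $\alpha_\pm \mapsto \alpha'_\pm$ extends uniquely by continuity to a $\pi_1(K)$-equivariant homeomorphism $\Phi \colon \partial_\infty \widetilde{K}_0 \to \partial_\infty \widetilde{K}'_0$ preserving the cross-ratio.

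Next I transfer $\Phi$ to the spaces of geodesics. Using the homeomorphism $\partial \colon \mathcal{G}_0 \to (\partial_\infty \widetilde{K}_0)^2 \setminus \mathrm{Diag}$ and its primed analogue from Lemma \ref{lem:wK_is_visibility}, the map $\Phi \times \Phi$ descends to a homeomorphism $\bar h \colon \mathcal{G}_0 \to \mathcal{G}'_0$. Because the Liouville currents are defined on rectangles by $m_0 = |\ln \mathrm{cr}_0|$ and $m_0' = |\ln \mathrm{cr}'_0|$ and are uniquely extended from this semi-ring, cross-ratio preservation automatically gives $\bar h_* m_0 = m'_0$. To promote $\bar h$ to a flow conjugacy $h \colon \widehat{\mathcal{G}}_0 \to \widehat{\mathcal{G}}'_0$ of parameterised geodesics, I fix base points $\tilde x_0 \in \widetilde{K}_0$ and $\tilde x'_0 \in \widetilde{K}'_0$, and for each $\gamma \in \widehat{\mathcal{G}}_0$ I reparameterise $\bar h(\gamma)$ so that, at time $t$, the Busemann function of the forward endpoint of $\bar h(\gamma)$ based at $\tilde x'_0$ takes the same value as the corresponding Busemann function for $\gamma$ based at $\tilde x_0$; this is the CAT(0) analogue of the recipe in Remark \ref{rem:anosov_top_equivalences}. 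The resulting $h$ commutes with the abstract geodesic flows by construction; its $\pi_1(K)$-equivariance follows from that of $\Phi$ and the equivariance of the Busemann cocycle, so $h$ descends to the compact quotients $K_0$ and $K'_0$. Finally, flow-invariance of $\widehat m_0$ and $\widehat m'_0$, combined with $\bar h_* m_0 = m'_0$, upgrades to $h_* \widehat m_0 = \widehat m'_0$.

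The main obstacle is establishing the cross-ratio identity from the marked length spectrum in the CAT(0)—rather than smoothly negatively curved—setting: in the classical case one exploits Riemannian hyperbolicity to bound remainders in the Busemann expansions along axes, whereas here axes of elements of $\pi_1(K)$ may cross the seam $B(\widetilde{K})$ and the abstract geodesic flow is only weakly hyperbolic. I plan to circumvent this difficulty by running Otal's expansion on each approximating Anosov surface $\widetilde{K}_\epsilon$, where the formula applies directly and expresses $\mathrm{cr}_\epsilon$ as a combination of translation lengths $l_\epsilon$ of specific group elements; the uniform convergence $l_\epsilon \to l_0$ together with the limit formula \eqref{eq:cr_rat_lim} then transfer the identity to $\mathrm{cr}_0$, bypassing any direct computation in the limiting space.
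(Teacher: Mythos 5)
Your proposal is architecturally sound and genuinely different from the paper's proof: you follow the classical Otal--Bourdon route (marked lengths $\Rightarrow$ cross-ratio on quadruples of axial fixed points $\Rightarrow$ equivariant boundary map $\Rightarrow$ current preservation $\Rightarrow$ flow conjugacy via Busemann normalization), whereas the paper proceeds flow-first: it builds $h$ as a limit of orbit equivalences $h_\epsilon = H'_\epsilon \circ h_1 \circ H_\epsilon^{-1}$ between the approximating Anosov flows, uses the quantified closeness of the two spectra at height $\epsilon$ together with a recurrence argument to prove uniform continuity of the limit on the orbit of a dense geodesic, and only afterwards deduces cross-ratio (hence current) preservation by approximating arbitrary quadruples of boundary points by endpoints of periodic geodesics (Corollary~\ref{cor:dense_periodic_liftings}). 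Your route has precedents (it is essentially how Bankovi\'c--Leininger~\cite{bankovic_marked-length-spectral_2017} treat flat metrics), but as written it has two genuine gaps, and both occur exactly where the degenerate limit makes the classical argument nontrivial.

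First, the transfer of the translation-length identity. The hypothesis $\mathcal{EL}=\mathcal{EL}'$ gives $l_0 = l'_0$ but no relation between $l_\epsilon$ and $l'_\epsilon$ at any fixed $\epsilon>0$, so running Otal's expansion at height $\epsilon$ produces cross-ratios $\mathrm{cr}_\epsilon$ and $\mathrm{cr}'_\epsilon$ that you cannot compare directly; to conclude via \eqref{eq:cr_rat_lim} you must interchange $\lim_{\epsilon\to 0}$ with the $n\to\infty$ limit hidden in the identity, which is asymptotic over words such as $\alpha^n\beta^n$ rather than a ``finite combination'' of translation lengths as your text suggests. This interchange is not free: by the analogue of \eqref{eq:length_estimate}, the error $|l_\epsilon(\gamma^\alpha)-l_0(\gamma^\alpha)|$ is only $O(T\epsilon)$ on classes of length at most $T$, and the words $\alpha^n\beta^n$ have length growing linearly in $n$, so the error blows up unless you prove the $n$-limit is uniform in $\epsilon$ (plausible via the uniform Morse constant the paper extracts from the $(\lambda,0)$-quasi-geodesic comparison with $g_1$ in Lemma~\ref{lem:homeomorphic_boundaries}, but it must be proved) --- or else run the telescoping directly in $\widetilde K_0$, which your final paragraph explicitly declines to do. Note that the paper's periodic-approximation step faces the same tension and resolves it by a quantified $\delta$-closeness of the spectra at height $\epsilon$ on orbits of bounded length.

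Second, the upgrade from boundary map to conjugacy. The Busemann normalization does make $h$ commute with the abstract flows, but $\pi_1(K)$-equivariance is not formal: unwinding your definition, it amounts to the cocycle identity $B_\xi(\tilde x_0, \alpha^{-1}\tilde x_0) = B'_{\Phi\xi}(\tilde x'_0, \alpha^{-1}\tilde x'_0)$ for all $\alpha\in\pi_1(K)$ and $\xi\in\partial_\infty\widetilde K_0$, which does not follow from equivariance of $\Phi$ alone. In negative curvature, the statement ``a cross-ratio-preserving boundary map induces a time-preserving conjugacy'' is a theorem of Otal~\cite{otal_sur_1992} and Bourdon~\cite{bourdon_sur_1996} whose proofs use CAT($-1$) geometry; in the present CAT(0) gluing, where Busemann functions restricted to the flat sheets are affine and the hyperbolicity is only coarse, you would need to re-prove it, an undertaking comparable in difficulty to the paper's direct limit construction. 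Relatedly, your continuity extension of $\Phi$ from the dense set of axial fixed points rests on the unproved assertion that $\mathrm{cr}_0$ ``metrises the cone topology'' on four-tuples --- available from Bourdon in CAT($-1$), but not established for the limit space $\widetilde K_0$ anywhere in the paper. Until these two points are supplied, the proposal is a plan with the right skeleton rather than a proof.
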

\begin{proof}
    The condition \( \mathcal{EL} = \mathcal{EL}' \) implies that for every conjugacy class \( \alpha \in \mathcal{C}(K) \) we have
    \[
       \lim_{\epsilon \to 0} \left(l_\epsilon(\gamma^\alpha)-l'_\epsilon(\gamma^\alpha)\right) = 0.
    \]
    In particular, the functions \( l_\epsilon - l'_\epsilon \) are converging uniformly to zero on compact subsets.  
    Indeed, given \(T > 0\), for every \(\alpha \in \mathcal{C}(K)\) such that \(l_\epsilon(\gamma^\alpha) < T\) we have 
    \[
    \begin{split}
        l_\epsilon(\gamma^\alpha) - l_\delta(\gamma^\alpha)| &< CT|\epsilon - \delta|; \\
        |l'_{\epsilon'}(\gamma^\alpha) - l'_{\delta'}(\gamma^\alpha)| &< CT|\epsilon' - \delta'|, \\
    \end{split}
    \]
    and thus, given \(\eta >0\), we may find \( \epsilon_0 >0\) such that
    \[
        |l_\epsilon(\gamma^\alpha) - l_0(\gamma^\alpha)| + |l'_{\epsilon'}(\gamma^\alpha) - l'_0(\gamma^\alpha)| < \frac{\eta}{2},\quad \forall \epsilon, \epsilon' <  \epsilon_0.
    \]
    As, by hypothesis, \(l_0(\gamma^\alpha) = l'_0(\gamma^\alpha)\), we have
    \[
        |l_\epsilon(\gamma^\alpha) - l'_\epsilon(\gamma^\alpha)| < \eta, \ \ \forall \alpha \in \mathcal{C}(K); l_\epsilon(\gamma^\alpha) < T.
    \]
    Consequently, 
    \[
        \frac{l_\epsilon(\gamma^\alpha)}{l_\epsilon'(\gamma^\alpha)} = 1 + O\left(\frac{|\epsilon-\epsilon'|}{l_\epsilon(\gamma^\alpha)}\right) \to 1.
    \]

\par At each \(\epsilon>0\), the metrics \(g_\epsilon, g_\epsilon'\) define topologically  equivalent (or orbit equivalent) Anosov geodesic flows.
We consider a family of homeomorphisms \(h_\epsilon\colon \widehat{\cG}_\epsilon \to \widehat{\cG}_\epsilon'\) between the phase spaces constructed in the following way. 
For \(\epsilon > 0\) we have canonical identifications \(\widehat{\cG}_\epsilon \approx T^1\wK_\epsilon\). 
Start with a Hölder homeomorphism \(h_1\colon T^1\wK_1 \to T^1\wK'_1\) semi-conjugating the geodesic flows.
In particular, such map is uniformly continuous.
Then, we consider families of equicontinuous equivalences \(H_\epsilon\colon T^1\wK_1 \to T^1\wK_\epsilon \) and \(H'_\epsilon\colon T^1\wK'_1 \to T^1\wK'_\epsilon \), such as the ones constructed in Remark \ref{rem:anosov_top_equivalences}, for instance.
Let
\[
    h_\epsilon := H'_\epsilon \circ h_1 \circ H^{-1}_\epsilon.
\]
As \(h_1\) is uniformly continuous, we get an equicontinuous family of semi-conjugacies.
Note that such family is equivariant with respect to the canonical \(\pi_1(K)\) actions via isomorphisms.
The family of inverses \(\{h_\epsilon^{-1}\}\) has the same properties.
\par We define \(h_0\) as the ``limiting homeomorphism'', in the following sense.
Let \(\gamma\) be a lifting of a dense geodesic in \(K_0\). 
Choose a reference point \(x_0 := \gamma(0)\).
For \(\epsilon > 0\), we let \(\gamma_\epsilon\) be the unique geodesic in \(\widehat{\cG}_\epsilon\) with same endpoints at infinity as \(\gamma\).
Then \(\gamma_\epsilon \to \gamma\) on compact sets as \(\epsilon \to 0\).
In particular, \(x_\epsilon := \gamma_\epsilon(0) \xrightarrow{\epsilon \to 0} x_0\).
We write 
\[
    v_\epsilon := (x_\epsilon, \dot\gamma_\epsilon(0)) \in T^1\wK_\epsilon \quad \text{and} \quad \gamma' := \lim_{\epsilon \to 0}h_\epsilon(v_\epsilon) \in \widehat{\cG}_0.
\]
Note that the projection of each \(\gamma_\epsilon\) is dense on \(K_\epsilon\), and therefore the same is true for each \(h_\epsilon(v_\epsilon)\) and for the limit \(\gamma'\).
Finally, set \(h_0\) on the \(\pi_1(K)\)-orbit \(\cO(\gamma) := \{\alpha\cdot \gamma; \alpha \in \pi_1(K)\}\) by letting
\[
    h_0(\alpha\cdot\gamma)(t) := \alpha\cdot\gamma' (t). 
\]
\(\cO(\gamma)\) is a saturated (hence invariant) subset under the geodesic flow, and it is clear by definition that \(h_0\) is a flow conjugacy on \(\cO(\gamma)\):
\[
    h_0(\mathbf{g}_0^s(\alpha\cdot\gamma)) = h_0(\alpha\cdot\gamma(\cdot + s)) = \alpha\cdot\gamma'(\cdot+ s) = \mathbf{g}'^s_0(h_0(\alpha\cdot\gamma)).
\]
We claim that \(h_0\) is uniformly continuous, and therefore has a unique extension \(h\colon \widehat{\cG}_0 \to \widehat{\cG}_0'\), which is the desired conjugacy.
To that end, it is sufficient to show that \(h_0\) preserves the ``returning times'' of \(\gamma\).
\par We argue by approximation using the geodesic flows on \(T^1K_\epsilon\), which is better suited to the use of recurrence arguments due its compactness.
Note that all flows and homeomorphisms defined above are \(\pi_1(K)\)-equivariant and descend well to the quotient.
\par Since the geodesic flows are Anosov, for a given \(\delta>0\) we may find positive \(T = T(\delta)\) and \(\epsilon_0\) such that the set of \(g_{\epsilon_0}\)-periodic points of period less than \(T\) is \(\delta\)-dense on \(T^1K_{\epsilon_0}\), i.e., it intersects every ball of radius \(\delta\).
In terms of the compact-open topology on the space of geodesics, this translates to saying that for every geodesic \(\sigma\) and every \(t < T\) there is a periodic orbit \(\rho\) such that
\[
   \sup_{0 \leq s \leq t} d_\epsilon(\sigma(s), \rho(s)) < \delta.
\]
By the equicontinuity of the orbit equivalences $\{h_\epsilon\}_{\epsilon < \epsilon_0}$ between the geodesic flows $\{\mathbf{g}_\epsilon\}_{\epsilon < \epsilon_0}$, enlarging \(\delta\) a bit, and choosing a smaller \(\epsilon_0\) if necessary, we may assume that \(g_{\epsilon}\)-periodic points of period less than \(T\) are \(\delta\)-dense on \(T^1K_{\epsilon}\) for every \(\epsilon < \epsilon_0\).
Moreover, we have \(T \xrightarrow{\delta\to 0} \infty\).
\par Now, equicontinuity of \(h_\epsilon^{-1}\) means there is an \(\eta = \eta(\delta)\) such that the set of \(g'_\epsilon\)-periodic points is \(\eta\)-dense on \(T^1K'_{\epsilon}\) for every \(\epsilon < \epsilon_0\).
We have \(\eta \xrightarrow{\delta \to 0} 0\).
Suppose, for some \(t < T\), and \(\epsilon\) small, we have 
\begin{equation}\label{eq:returning_times}
    d_\epsilon(x_\epsilon, \mathbf{g}^t_\epsilon(x_\epsilon)) < \delta. 
\end{equation}
Then, there is a \(g_\epsilon\)-periodic point \(p_\epsilon\) of length \(l_\epsilon(p_\epsilon) \approx t < T\) such that \(d_\epsilon(x_\epsilon, p_\epsilon) < \delta\), and consequently, for \(y_\epsilon := h_\epsilon(x_\epsilon)\) and \(q_\epsilon := h_\epsilon(p_\epsilon)\),
\[
    d'_\epsilon(y_\epsilon, \mathbf{g}^t_\epsilon(y_\epsilon)) \leq d'_\epsilon(y_\epsilon, q_\epsilon) + d'_\epsilon(q_\epsilon, \mathbf{g}^t_\epsilon(y_\epsilon)) \leq 2\eta + |l_\epsilon(p_\epsilon) - l'_\epsilon(q_\epsilon)|.
\]
Now, \(|l_\epsilon(p_\epsilon) - l'_\epsilon(q_\epsilon)| \xrightarrow{\epsilon \to 0} 0\), hence we have 
\[
    d_\epsilon(x_\epsilon, \mathbf{g}^t_\epsilon(x_\epsilon)) < \delta \implies d'_\epsilon(y_\epsilon, \mathbf{g}^t_\epsilon(y_\epsilon)) \leq \eta' = \eta'(\delta),
\]
for all \(t < T\) and for some constant \(\eta'= 2\eta + O(\epsilon)\).
In terms of lifts to \(\wK_\epsilon\), we have a unique point \(z_0\) from fibre of \(\mathbf{g}^t_\epsilon(x_\epsilon)\) in a \(\delta\)-neighbourhood of the reference point \(x_0\), which means \(z_0 = \sigma(t)\) for a geodesic \(\sigma\) obtained as the lifting of \(\pi(\gamma_\epsilon)\) with a different starting point.
But this means exactly that \(\sigma = \alpha\cdot\gamma_\epsilon\) for some \(\alpha \in \pi_1(K)\), and we recover uniform continuity at height \(\epsilon\) in the original setting.
\par Finally, if \(d_0(x_0, \mathbf{g}_0^t(x_0)) < \delta\), then the estimate \eqref{eq:returning_times} is satisfied for every \(\epsilon\) sufficiently small.
We may then let \(\epsilon \to 0\) to get uniform continuity of \(h_0\), as wanted.
Since \(\pi(\gamma)\) is a dense geodesic on \(K_0\), it follows that for every \(\sigma \in \widehat{\cG}_0\) there are sequences \(t_n \in \R, \alpha_n \in \pi_1(K)\) such that \(\alpha_n\cdot\mathbf{g}_0^{t_n}\gamma \to \sigma\), and therefore \(\overline{\cO(\gamma)} = \widehat{\cG}_0\).
Similarly for the image \(\cO(\gamma') = h_0(\cO(\gamma))\).
Hence \(h_0\) extends uniquely to a flow conjugacy \(h_0\colon \widehat{\cG}_0 \to \widehat{\cG'}_0\), as wanted.

\par We are left to show that \(h\) preserves the currents.
Since the currents are given in terms of cross-ratios, as in Equation \eqref{eq:cr_rat_def}, it is sufficient to show equality between the cross ratios at \(\cG_0\) of corresponding quadruples of points at infinity.

\par Corollary~\ref{cor:dense_periodic_liftings} guarantees the set of lifts of periodic geodesics of \(K_\epsilon\) is dense in \(\cG_\epsilon\).
Thus, any rectangle \(|\zeta_1, \zeta_2| \times |\zeta_3, \zeta_4| \subset \bbK{\epsilon}\) may be arbitrarily well approximated by rectangles whose boundary is the union of segments of periodic orbits.
We can use this to get uniform bounds on the deviation between the cross ratios \(\crat{\epsilon}\) and \(\crat{\epsilon}'\), in the following way.
\par Recall that we may write \(\crat{\epsilon}(\zeta_1, \zeta_2, \zeta_3, \zeta_4) = \lim_{t \to \infty} e^{\frac{1}{2}E_\epsilon\overline{\sigma}(t)}\), where
\[
    E_\epsilon\overline{\sigma}(t) = d_\epsilon(\sigma_1(t), \sigma_1(-t)) + d_\epsilon(\sigma_2(t), \sigma_2(-t)) - d_\epsilon(\sigma_3(t), \sigma_3(-t)) - d_\epsilon(\sigma_4(t), \sigma_4(-t)),
\]
and the \(\sigma_i\) are the geodesics representing pairs at infinity as follows:
\[
    \begin{split}
    \sigma_1 &= (\xi_1, \xi_2); \\
    \sigma_2 &= (\xi_3, \xi_4); \\
    \sigma_3 &= (\xi_1, \xi_4); \\
    \sigma_4 &= (\xi_3, \xi_2).
    \end{split}
\]
Now, because of the density of periodic orbits and the continuity of \(h_\epsilon\) in the compact-open topology, given \(\eta > 0\) we may choose periodic orbits \(\gamma^n_i \in \cG_\epsilon\) such that
\[
    \min\left\{\sup_{t \in [-n, n]}\{d_\epsilon(\sigma_i(t), \gamma^n_i(t))\}, \sup_{t \in [-n, n]}\{d'_\epsilon(h_\epsilon\sigma_i(t), h_\epsilon\gamma^n_i(t))\}\right\} < \frac{\eta}{4}.
\]
For a quadruple \(\overline\sigma = (\sigma_1, \sigma_2, \sigma_3, \sigma_4)\), we write \(E'_\epsilon\overline{\sigma}\) to denote the corresponding cross-ratio of the geodesics \(h_\epsilon\sigma_i \in \cG_\epsilon'\), with respect to \(d_\epsilon'\).
Thus, we have, for any \(t \in [-n, n]\)
\begin{align*}
    |E_\epsilon\overline{\sigma}(t) - E'_\epsilon\overline{\sigma}(t)| &\leq  |E_\epsilon\overline{\sigma}(t) - E_\epsilon\overline{\gamma}(t)| + |E_\epsilon\overline{\gamma}(t) - E'_\epsilon(\overline{\gamma}(t)| + |E'_\epsilon\overline{\gamma}(t) - E'_\epsilon\overline{\sigma}(t)|\\
    &\leq \eta + 4\delta + \eta,
\end{align*}
where we are using that the spectra are \(\delta\)-close at height \(\epsilon\) in order to bound the middle term in the right hand side.
As \(\eta\) is arbitrary, we conclude that 
\[
    \sup_{t \in [-n,n]}|E_\epsilon\overline{\sigma}(t) - E'_\epsilon\overline{\sigma}(t)| \leq 4\delta.
\]
Letting \(n \to \infty\) and using that both \(E_\epsilon\overline{\sigma}\) and \(E_\epsilon'\overline{\sigma}\) are convergent (in particular, bounded) sequences gives us 
\[
    |\crat{\epsilon}(\zeta_1, \zeta_2, \zeta_3, \zeta_4) - 
    \crat{\epsilon}'(h_\epsilon\zeta_1, h_\epsilon\zeta_2, h_\epsilon\zeta_3, h_\epsilon\zeta_4)| \leq L(\epsilon)\delta, 
\]
where \(L(\epsilon)\) is a constant obtained by applying the mean value theorem to the difference \(e^{\frac{1}{2}E_\epsilon\overline{\sigma}(t)} - e^{\frac{1}{2}E'_\epsilon\overline{\sigma}(t)}\), and depends, a priori, on \(\epsilon\).
Now, on the other hand, \(E_\epsilon\overline{\sigma} \to E_0\overline{\sigma}\), and similarly for \(E'_\epsilon\overline{\sigma}\). 
Hence, the sequences are also bounded in \(\epsilon\), which means we can find a uniform constant \(L\) such that
\[
    |\crat{\epsilon}(\zeta_1, \zeta_2, \zeta_3, \zeta_4) - 
    \crat{\epsilon}'(h_\epsilon\zeta_1, h_\epsilon\zeta_2, h_\epsilon\zeta_3, h_\epsilon\zeta_4)| \leq L\delta, \quad \forall \epsilon \in (0, \epsilon_0).
\]
Finally, letting \(\epsilon \to 0\) we may choose \(\delta\) arbitrarily small, and therefore
\[
    \crat{0}(\zeta_1, \zeta_2, \zeta_3, \zeta_4) - 
    \crat{\epsilon}'(h\zeta_1, h\zeta_2, h\zeta_3, h\zeta_4),
\]
which concludes.
\end{proof}

\subsection{Average angle dissipation}
    We recall that, in a CAT(0) space \((X, d)\), the angle \(\angle_x(\gamma, \gamma') \in [0, \pi)\) between two geodesics \(\gamma, \gamma'\colon [0,1] \to X\) intersecting at the point \(x = \gamma(0) = \gamma'(0)\) is given by
    \[
        \angle_x(\gamma, \gamma') = \lim_{t\to 0} 2\arcsin{\left(\frac{1}{2t}d(\gamma(t), \gamma'(t))\right)}.
    \]
    In particular, for any geodesic triangle \(\triangle(x,y,z)\) the following standard comparison estimate holds:
    \begin{equation}\label{eq:GB_estimate_subs}   
        \angle_x(y,z) + \angle_y(z,x) + \angle_z(x,y) \leq \pi.
    \end{equation}
    When \((X,d)\) is a Riemannian manifold with the Riemannian metric, this coincides with the Riemannian angle (cf. \cite[Corollary II.1A.7]{bridson2011metric}).
    Specialising to the case our case, we get that angles in \(\wK_0\) and \(K_0\) are limits of corresponding angles in \(K_\epsilon\):
    \begin{equation*}
    \begin{split}
        \angle_x(\gamma, \gamma') &:= \lim_{t\to 0} 2\arcsin{\left(\frac{1}{2t}d_0(\gamma(t), \gamma'(t))\right)} \\
                                    & = \lim_{t\to 0} \lim_{\epsilon \to 0}2\arcsin{\left(\frac{1}{2t}d_\epsilon(\gamma(t), \gamma'(t))\right)} \\
                                    & = \lim_{\epsilon \to 0} \lim_{t\to 0} 2\arcsin{\left(\frac{1}{2t}d_\epsilon(\gamma(t), \gamma'(t))\right)} \\
                                    & = \lim_{\epsilon \to 0}\arccos{g_\epsilon(\dot\gamma(0), \dot\gamma'(0))},
    \end{split}
    \end{equation*}
    where the limit operators commute due to uniform convergence on compacts.
    \begin{remark}
        Note that angles are always well defined on \(\wK_0\), even though they do not specify the geodesic, at least not at degenerate points.
        In fact, two geodesics bifurcate at a point \(x\) if and only if their angle at \(x\) is zero, and we may characterise the boundary set \(B(\wK)\) exactly as the locus of points where geodesic bifurcation happens, that it, points from which emanates a pair of geodesics with angle zero.
    \end{remark}
    \par Equipped with such angles, we may define coordinate charts on \(\widehat{\cG}_0\) as follows.
    Given a reference geodesic \(\sigma \in \widehat{\cG}_0\), let \(\mathcal{U}_\sigma\) denote the set of all \(\gamma \in \widehat{\cG}_0\) intersecting \(\sigma\) transversally.
    Such intersection point is necessarily unique, and we set
    \begin{align*} 
        \varphi_\sigma\colon \mathcal{U}_\sigma &\to \R \times [0, \pi) \times \R\\
        \gamma &\mapsto (t(\gamma), \theta(\gamma), s(\gamma)),
    \end{align*}
    where
    \begin{itemize}
        \item[(i)] \(t = t(\gamma)\) and \(s = s(\gamma)\) are the unique real numbers such that \(\sigma(-t) = \gamma(-s)\);
        \item[(ii)] \(\theta = \theta(\gamma):= \angle_{\sigma(-s)}(\sigma, \gamma)\) is the angle between \(\gamma\) and \(\sigma\) at the intersection point.
    \end{itemize}
    Note that \(\mathcal{U}_\sigma\) is an open set on \(\widehat{\cG}_0\), since transversality is an open condition. 
    If \(\sigma\) happens to be a degenerate geodesic on \(\wK_0\), then none the bifurcating geodesics are in \(\mathcal{U}_\sigma\), since the intersection at bifurcation points is necessarily tangent.
    Hence, \(\varphi_\sigma\) is a well-defined  continuous mapping in the compact-open topology of \(\widehat{\cG}_0\), though it need not be a surjection onto \(\R \times [0, \pi)\).

    \par The coordinates above can be given to any of the geodesic spaces \(\widehat{\cG}_\epsilon\). 
    It is well-known (cf.~\cite{otal_spectre_1990, wilkinson_anosov, bankovic_marked-length-spectral_2017}, among others), that in the Riemannian case the Liouville form \(\lambda\) is expressed in such coordinates as
    \[
        \lambda_{\epsilon}(t,\theta,s) = \frac{1}{2}\sin\theta\ d\theta \ dt \ ds.
    \]
    In particular, given a geodesic segment \([x,y]_\epsilon\) in \(\wK_{\epsilon}\), let \(A_\epsilon(x,y) = \{\gamma \in \widehat{\cG}_\epsilon; \gamma(\R) \pitchfork [x,y]_\epsilon\}\).
    Then Crofton's formula states:
    \[
        \widehat{m}_\epsilon(A_\epsilon(x,y)) = \int_{A_\epsilon(x,y)} d\lambda_\epsilon = l([x,y]_\epsilon) = d_\epsilon(x,y).
    \]
    In the limit there is not, \textit{a priori}, a Liouville form on \(\wK_0\), but using the correspondence between Liouville forms and Liouville currents for the Anosov flows, we get that \(\widehat{m}_0(A_0(x,y)) = d_0(x,y)\).
    In terms of geodesic segments \(\sigma\colon [a,b] \to \wK_0\), this maybe be stated as the following Crofton formula:
    \begin{equation}\label{eq:crofton_height_0}
        \widehat{m}_0(\{\gamma \in \widehat{\cG}_0; \gamma(\R) \pitchfork \sigma([a,b])\}) = b-a = l_0(\sigma).
    \end{equation}

    \par There is an action \(\mathbb{S}^1 \curvearrowright \widehat{\cG}_0\) given as follows.
    A geodesic \(\gamma\colon \R \to \wK_0\) is a differentiable curve, and \(v := \dot\gamma(0)\) is a unit vector. 
    For any \(\theta \in \mathbb{S}^1\), we denote by \(\theta\cdot v \in T^1\wK \) the positive rotation of \(v\) by \(\theta\), and let \(\theta\) act on \(\gamma\) by setting
    \[
        \theta\cdot\gamma := \lim_{\epsilon \to 0}\gamma_{(\epsilon, \theta\cdot v)} \ ,
    \]
    where \(\gamma_{(\epsilon, \theta\cdot v)}\) denotes the unique geodesic on \(\wK_\epsilon\) with initial condition \(\theta\cdot v\).
    It is clear from this construction that 
    \[
        \angle_{\gamma(0)}(\gamma, \theta\cdot\gamma) = \theta.
    \]
    Now suppose \(\mathcal{D}, \mathcal{D}'\) have the same enriched spectra and let \(h\colon\widehat{\cG}_0 \to \widehat{\cG}_0'\) be the \(\widehat{m}_0\)-preserving conjugacy of geodesic flows. The map
    \(h\) takes intersecting geodesics into intersecting geodesics, and the point of intersection is unique since \(\wK_0'\) has no conjugated points.
    Thus, we have a well-defined angle function \(\vartheta\colon (\gamma,\theta) \mapsto \angle(h\gamma, h(\theta\cdot\gamma))\), where the angle is measured at the unique intersection point.  
    This is easily checked to be continuous in both coordinates.
    Following Otal, we define a mapping \(\Theta\colon [0, \pi] \to [0, \pi]\) which measures the average angle distortion imposed by the conjugacy \(h\), by setting
    \[
        \Theta(\theta) := \frac{1}{\widehat{m}_0(\widehat{\cG}_0)}\int_{\widehat{\cG}_0}\vartheta(\gamma, \theta) d\widehat{m}_0(\gamma) = \int_{\widehat{\cG}_0}\vartheta(\gamma, \theta) d\mu(\gamma)
    \]
    where \(\mu\) is the probability measure obtained by normalising \(\widehat{m}_0\)\footnote{
        We could define \(\Theta\) on the entire circle \(\mathbb{S}^1\), but the flip invariance of the map \(h\) means \(\Theta(-\theta) = \Theta(\theta)\), so we lose nothing by considering the restriction to the half circle.
    }.

    We have the following result.
    \begin{lemma}\label{lem:results_wilkinson_adapted}
        The function \(\Theta\) satisfies
        \begin{itemize}
            \item[(i)] \textbf{Symmetry}: \(\Theta(\pi - \theta) = \pi - \Theta(\theta)\) for all \(\theta \in [0,\pi]\), and consequently \(\Theta(\pi/2) = \pi/2\).
            \item[(ii)] \textbf{Superadditivity}: For all \(\theta_1, \theta_2 \in [0, \pi]\) with \(\theta_1 + \theta_2 \leq \pi\):
            \[
                \Theta(\theta_1 + \theta_2) \geq \Theta(\theta_1) + \Theta(\theta_2).
            \]
            \item[(iii)] For all \(a \in [0, \pi]\):
            \begin{equation}\label{equation_grand_theta}
                \int_0^a \frac{\sin \theta}{\sin \Theta(\theta)}  d\theta \leq \Theta(a).
            \end{equation}
        \end{itemize}
    \end{lemma}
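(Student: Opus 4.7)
I adapt Otal's classical argument \cite{otal_spectre_1990}, as presented in Wilkinson \cite{wilkinson_anosov} and Banković--Leininger \cite{bankovic_marked-length-spectral_2017}, to our $\mathrm{CAT}(0)$ setting. The general strategy is to run each computation at finite $\epsilon > 0$, where $\wK_\epsilon$ is smooth Riemannian, and transfer the result to $\wK_0$ via the uniform convergence of the Liouville currents \eqref{eq:m_0_as_limit}. A key recurring tool is the fact that the fiber rotation $\gamma \mapsto \alpha \cdot \gamma$ preserves the Liouville measure $\mu$: in Jacobi coordinates, $dx \wedge dy \wedge d\omega$ is invariant under $\omega \mapsto \omega + \alpha$, and this property passes to the limit $\epsilon \to 0$.

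\textbf{Part (i).} Flip invariance of the Liouville currents (Proposition \ref{pps:crat_properties}(ii)), together with Theorem \ref{thm:conj_abst_flow}, forces $h$ to commute with the orientation reversal $\sigma \colon \gamma \mapsto \bar\gamma$. Since $\sigma(\gamma) = \pi \cdot \gamma$ in the $\mathbb{S}^1$-action notation, one checks $(\pi - \theta) \cdot \gamma = \sigma\bigl((-\theta) \cdot \gamma\bigr)$. Because reversing the orientation of one of two intersecting parametrised geodesics replaces their angle by its supplement, this yields the pointwise identity $\vartheta(\gamma, \pi - \theta) = \pi - \vartheta(\gamma, -\theta)$. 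By the symmetry of angles in their two arguments, $\vartheta(\gamma, -\theta) = \vartheta((-\theta) \cdot \gamma, \theta)$; invariance of $\mu$ under fiber rotation then gives $\Theta(-\theta) = \Theta(\theta)$, and hence $\Theta(\pi - \theta) = \pi - \Theta(\theta)$, with $\Theta(\pi/2) = \pi/2$ as a special case.

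\textbf{Part (ii).} Fix $\theta_1, \theta_2 \geq 0$ with $\theta_1 + \theta_2 \leq \pi$. For a regular geodesic $\gamma$, the three geodesics $\gamma$, $\theta_1 \cdot \gamma$, $(\theta_1 + \theta_2) \cdot \gamma$ all meet at $\gamma(0) \in \wK_0$; by Lemma \ref{lem:wK_is_visibility}, their $h$-images pairwise intersect transversally in $\wK_0'$ and form a (generically non-degenerate) geodesic triangle $\Delta(\gamma)$. A direct case analysis of orientations identifies the three interior angles as $\vartheta(\gamma, \theta_1)$, $\vartheta(\theta_1 \cdot \gamma, \theta_2)$, and $\pi - \vartheta(\gamma, \theta_1 + \theta_2)$. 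The $\mathrm{CAT}(0)$ estimate \eqref{eq:GB_estimate_subs} applied to $\Delta(\gamma)$ then gives the pointwise inequality
\[
\vartheta(\gamma, \theta_1) + \vartheta(\theta_1 \cdot \gamma, \theta_2) \;\leq\; \vartheta(\gamma, \theta_1 + \theta_2).
\]
Integrating against $\mu$, and applying fiber-rotation invariance to the middle term (via the substitution $\gamma' = \theta_1 \cdot \gamma$), yields $\Theta(\theta_1) + \Theta(\theta_2) \leq \Theta(\theta_1 + \theta_2)$.

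\textbf{Part (iii) and main obstacle.} Superadditivity and $\Theta(0) = 0$ imply that $\Theta$ is non-decreasing and absolutely continuous on $[0,\pi]$, so $\Theta(a) = \int_0^a \Theta'(\theta)\, d\theta$, and it suffices to establish the pointwise bound $\Theta'(\theta) \geq \sin\theta / \sin\Theta(\theta)$ a.e. To this end, I would employ a Crofton-type computation: for a reference geodesic $\gamma$ and a point $x \in \gamma$, an infinitesimal pencil of geodesics through $x$ at angles in $[\theta, \theta + d\theta]$ from $\dot\gamma(x)$ has Liouville mass $\sin\theta\, d\theta$ per unit transversal length. Under $h$, this pencil becomes a family of geodesics crossing $h(\gamma)$ at varying points $y(\theta) \in h(\gamma)$ with angles $\vartheta(\gamma, \theta)$, whose Liouville mass is $\sin(\vartheta(\gamma, \theta))\, |dy/d\theta|\, d\theta$. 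Liouville preservation (Theorem \ref{thm:conj_abst_flow}) equates the two; averaging over $\gamma$ against $\mu$, using the Crofton formula \eqref{eq:crofton_height_0} to control $|dy/d\theta|$, and then invoking Jensen's inequality for the convex function $x \mapsto 1/\sin x$ on $(0, \pi)$ yields the desired bound. The main obstacle is the absence of a smooth Liouville form on $\wK_0$: the infinitesimal pencil computation must be carried out on $\wK_\epsilon$ for $\epsilon > 0$, where the formula $d\lambda_\epsilon = \frac{1}{2}\sin\theta\, d\theta\, dt\, ds$ holds classically in the polar coordinates on $T^1 \wK_\epsilon$, and then transferred to the limit using the uniform convergence of currents $m_\epsilon \to m_0$ on rectangles \eqref{eq:m_0_as_limit}.
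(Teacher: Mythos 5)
Your proposal takes essentially the same route as the paper: parts (i) and (ii) reproduce Wilkinson's Proposition 4.3 with the CAT(0) angle-sum estimate \eqref{eq:GB_estimate_subs} standing in for Gauss--Bonnet, which is exactly the substitution the paper makes, and your part (iii) is the Otal--Wilkinson Crofton/Jensen computation run at level \(\epsilon>0\) and transferred through the convergence of currents \eqref{eq:m_0_as_limit}, which is precisely the paper's ``alternative'' implementation of steps (a)--(c) of \cite[Proposition 4.5]{wilkinson_anosov}.

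Two points in your part (iii) need attention, though both are repairable. First, absolute continuity of \(\Theta\) does not follow from superadditivity and \(\Theta(0)=0\); those hypotheses only give monotonicity. Luckily you never need the equality \(\Theta(a)=\int_0^a\Theta'\): for a monotone function, Lebesgue's theorem already gives \(\int_0^a\Theta'(\theta)\,d\theta\le\Theta(a)\), which is the only direction your reduction uses, so you should state it that way. Second, your reduction to the pointwise bound \(\Theta'(\theta)\ge\sin\theta/\sin\Theta(\theta)\) hides a wrinkle: the pencil-mass identity, after averaging, yields something of the shape \(\sin\theta=\int \sin\bigl(\vartheta(\gamma,\theta)\bigr)\,\partial_\theta\vartheta(\gamma,\theta)\,d\mu(\gamma)\), so Jensen for \(\csc\) then applies naturally with respect to the \emph{tilted} measure \(\sin\vartheta\,\partial_\theta\vartheta\,d\mu/\sin\theta\), whose \(\vartheta\)-average need not equal \(\Theta(\theta)\); the classical argument sidesteps this by keeping the inequality in integrated form, first establishing \(\int_0^a\sin\theta\,\mathbb{E}_\mu\bigl[\csc\vartheta(\cdot,\theta)\bigr]\,d\theta\le\Theta(a)\) and only then applying Jensen against \(\mu\) itself. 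Relatedly, controlling your crossing-point density \(|dy/d\theta|\) over long stretches of \(h\gamma\) is exactly where the paper invokes ergodicity and Birkhoff's theorem together with the time-preserving property of the conjugacy (its step (b)), an ingredient your sketch omits: the Crofton formula \eqref{eq:crofton_height_0} constrains that density only on average, and you must say how the temporal average along a generic geodesic is identified with the \(\mu\)-average. With those two repairs, your \(\epsilon\)-level computation and the limiting device \((h_\epsilon)_*\widehat m_\epsilon-\widehat m_\epsilon'\to 0\) coincide with the paper's argument.
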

    \begin{proof}
        Items (i) and (ii) can be deduced exactly as in \cite[Proposition 4.3]{wilkinson_anosov}.
        Remark that, while no Gauss-Bonnet theorem can be used in \(\wK_0'\), the crucial part of the argument is the actual estimate on the sum of the angles, for which Inequality \eqref{eq:GB_estimate_subs} can be used to the same effect.
        \par As for item (iii), we can follow the steps outlined in~\cite[Proposition 4.5]{wilkinson_anosov}, which are as follows:
        \begin{enumerate}[label=\alph*.]
            \item\label{pointt_un} apply Jensen’s Inequality when averaging against the Liouville current, leveraging the strict convexity of the function $\csc=\frac{1}{\sin}$;
            \item\label{pointt_deux} use the ergodicity of the geodesic flows with respect to the Liouville measure to replace spatial averages with temporal averages, as guaranteed by Birkhoff’s Ergodic Theorem;
            \item\label{pointt_trois} rely on the preservation of the Liouville current under a time-preserving conjugacy between Anosov geodesic flows on surfaces. 
        \end{enumerate} 
        While step~\ref{pointt_un} is quite general, steps~\ref{pointt_deux}-\ref{pointt_trois} can be adapted to our context. Specifically, we could use the ergodicity of billiard flows in Sinai billiards with horizon to establish the ergodicity of $\mathbf{g}_0$ and $\mathbf{g}_0'$ for the respective measures $\mu$ and $\mu'$ --viewing $K$, resp. $K'$ as two copies of $\mathcal{D}$, resp. $\mathcal{D}'$ glued along their boundaries-- and then apply Theorem~\ref{thm:conj_abst_flow} which shows that the conjugacy $h$ preserves the Liouville currents. 

        Alternatively, we can derive approximate results for the approximating geodesic flows \(\mathbf{g}_\epsilon\) and \(\mathbf{g}_\epsilon'\) and take the limit as $\epsilon\to 0$. Indeed, the ergodicity of these flows with respect to their Liouville measures $\widehat{m}_\epsilon$, $\widehat{m}_\epsilon'$ follows from the Anosov property. Although the orbit equivalence $h_\epsilon$ between \(\mathbf{g}_\epsilon\) and \(\mathbf{g}_\epsilon'\) does not exactly preserve the Liouville currents,  Theorem~\ref{thm:conj_abst_flow} ensures that $\lim_{\epsilon\to 0}(h_\epsilon)_* \widehat{m}_\epsilon-\widehat{m}_\epsilon'=0$. Thus, the change of variables in the proof of~\cite[Proposition 4.5]{wilkinson_anosov} introduces an error term that vanishes as $\epsilon\to 0$, which allows us to recover~\eqref{equation_grand_theta} in the limit. 
    \end{proof}

Using Lemma \ref{lem:results_wilkinson_adapted} above, we may conclude that \(h\) preserves the angles between geodesics. 
If \(\wK_0\) were negatively curved, we could use the fact that the sum of the angles on a geodesic triangle is always strictly smaller than \(\pi\), except if the triangle degenerates into a single point. 
This is exactly how Otal originally argued in \cite{otal_spectre_1990}.
In our case, however, the space is only CAT(0), so in principle there's nothing to prevent the existence of triangles with sum of intern angles exactly equal to \(\pi\).
In fact, if \(h\) is ``close to the identity'', a triple of geodesics intersecting at an interior point \(x \in \wK^\circ\) would be mapped into three geodesics of \(\wK_0'\) whose intersection determines a geodesic triangle \(\triangle\) also inside \((\wK')^\circ\).
Since the negative curvature of the limit space \(\wK_0\) is concentrated at the boundary set \(B(\wK)\), it would follow that \(\triangle\) is a flat triangle, and the sum of its intern angles would be equal to \(\pi\).
\par This indicates that, in order to use angle preservation to derive some sort of isometry, we must look at triangles on a neighbourhood of the boundary set.
This is how we proceed to conclude the proof.

\begin{lemma}
    The conjugacy \(h\colon\widehat{\cG}_0 \to \widehat{\cG}_0'\) is angle preserving. 
    In other words, $\vartheta(\gamma, \theta) = \theta$, for all $\gamma \in \widehat{\cG}_0$.
\end{lemma}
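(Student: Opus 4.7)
The plan follows the two-stage approach of~\cite{otal_spectre_1990}, adapted to the CAT(0) setting. First, I would use Lemma~\ref{lem:results_wilkinson_adapted} to show that the averaged distortion $\Theta$ is the identity on $[0,\pi]$. Then I would upgrade this averaged identity to the pointwise statement $\vartheta(\gamma,\theta) = \theta$ for every $\gamma \in \widehat{\cG}_0$.

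\textbf{Stage 1 (averaged identity).} Superadditivity (ii) with the immediate fact $\Theta(0) = 0$ (taking $\theta_1 = 0$ in (ii)) implies that $\Theta$ is non-decreasing, while the symmetry (i) gives $\Theta(\pi) = \pi$ and continuity. Evaluating (iii) at $a = \pi$ provides $\int_0^\pi \tfrac{\sin t}{\sin \Theta(t)}\,dt \leq \pi$. By Cauchy--Schwarz,
\[
    \pi^2 \;\leq\; \left(\int_0^\pi \frac{\sin t}{\sin \Theta(t)}\,dt\right)\left(\int_0^\pi \frac{\sin \Theta(t)}{\sin t}\,dt\right) \;\leq\; \pi \int_0^\pi \frac{\sin \Theta(t)}{\sin t}\,dt,
\]
so $\int_0^\pi \tfrac{\sin \Theta(t)}{\sin t}\,dt \geq \pi$. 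Applying the whole construction to the inverse conjugacy $h^{-1}\colon \widehat{\cG}_0' \to \widehat{\cG}_0$ yields a function $\Theta^{\ast}$ satisfying (i)--(iii), and a change of variables in the $\Theta^{\ast}$-analogue of (iii) furnishes the reverse bound $\int_0^\pi \tfrac{\sin \Theta(t)}{\sin t}\,dt \leq \pi$. Equality in Cauchy--Schwarz then forces $\sin \Theta(t) = \sin t$ almost everywhere, so $\Theta(t) \in \{t, \pi - t\}$ a.e.; continuity, monotonicity and $\Theta(\pi/2) = \pi/2$ rule out the second branch and pin down $\Theta = \mathrm{id}_{[0,\pi]}$.

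\textbf{Stage 2 (pointwise identity).} With $\Theta = \mathrm{id}$ established, the inequality (iii) in Lemma~\ref{lem:results_wilkinson_adapted} must be saturated for every $a$. Tracing through the proof of (iii) in~\cite{wilkinson_anosov}, which rests on Jensen's inequality applied to the strictly convex function $\csc$ against the probability measure $\mu$ on $\widehat{\cG}_0$, the equality case forces the integrand $\gamma \mapsto \vartheta(\gamma, \theta)$ to be $\mu$-almost everywhere constant, for each fixed $\theta$. This constant must equal $\Theta(\theta) = \theta$. Since the Liouville current has full support on $\widehat{\cG}_0$ (by Corollary~\ref{cor:dense_periodic_liftings} and positivity of $m_0$) and $\vartheta(\cdot, \theta)$ is continuous (as the composition of the homeomorphism $h$ with the continuous CAT(0) angle function on the transverse locus), the identity $\vartheta(\gamma, \theta) = \theta$ then extends to every $\gamma \in \widehat{\cG}_0$.

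\textbf{Main obstacle.} The delicate part is Stage 2. In strictly negative curvature, Otal deduces $\vartheta(\gamma, \theta) \geq \theta$ pointwise from the strict Gauss--Bonnet inequality, so averaged equality automatically propagates to pointwise equality. In our setting this pointwise inequality can fail: as noted in the discussion preceding the lemma, triangles with angle sum exactly $\pi$ do occur throughout the flat interior of $K_0$. The strategy above sidesteps this obstruction by extracting pointwise information from the strict convexity of $\csc$ rather than from any pointwise angle comparison. An alternative route, foreshadowed in the same discussion, is to argue geometrically by considering triangles whose vertices lie in a neighbourhood of $B(\wK)$, where the negative curvature is concentrated and the CAT(0) inequality becomes strict; combined with density of admissible geodesics meeting such regions and continuity of $\vartheta$, this would likewise exclude pointwise deviation from the identity.
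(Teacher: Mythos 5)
Your Stage 2 is sound and is essentially the paper's own conclusion seen from a slightly different angle: the paper deduces pointwise equality from the averaged identity $\Theta=\mathrm{id}$ together with the pointwise superadditivity $\vartheta(\gamma,\theta_1+\theta_2)\ge\vartheta(\gamma,\theta_1)+\vartheta(\gamma,\theta_2)$ (a sign-definite integrand with zero mean is a.e.\ zero), while you extract it from the equality case of Jensen's inequality inside the proof of item~(iii) of Lemma~\ref{lem:results_wilkinson_adapted}; both routes work, with the small caveat that saturation of~(iii) for all $a$ gives equality in the Jensen step only for almost every $\theta$, after which continuity of $\vartheta$ in both variables (and full support of the current) does the rest, exactly as you indicate.

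The genuine gap is in Stage 1, in the sentence producing the reverse bound $\int_0^\pi \sin\Theta(t)/\sin t\,dt\le\pi$ from the inverse conjugacy. The averaged distortion of $h^{-1}$ is $\Theta^\ast(\psi)=\int\vartheta^\ast(\sigma',\psi)\,d\mu'(\sigma')$, and even granting the pointwise relation $\vartheta^\ast(h\gamma,\cdot)=\vartheta(\gamma,\cdot)^{-1}$ (itself delicate: $h(\theta\cdot\gamma)$ is not a rotation of $h\gamma$ at the basepoint $h\gamma(0)$, it merely crosses $h\gamma$ at some other point, so one must pass to unparameterised geodesics and invoke flow-invariance of $\mu'$), averaging does not commute with functional inversion: $\Theta^\ast=\Theta^{-1}$ holds only if $\vartheta(\cdot,\theta)$ is already a.e.\ constant, which is what the lemma is trying to prove. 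Moreover, even if $\Theta^\ast=\Theta^{-1}$ were known, substituting $\theta=\Theta(t)$ into the $\Theta^\ast$-analogue of~(iii) yields $\int_0^\pi \frac{\sin\Theta(t)}{\sin t}\,\Theta'(t)\,dt\le\pi$, with a Jacobian factor $\Theta'(t)$ (plus an absolute-continuity hypothesis) that you cannot discard; so the equality case of Cauchy--Schwarz is never reached. The paper closes Stage 1 without any recourse to $h^{-1}$: set $a:=\sup\{t\in(0,\pi]\,;\,\Theta(\theta)<\theta\ \forall\theta\in(0,t)\}$; then $a\le\pi/2$ (since $\pi/2$ is a fixed point by symmetry) and $\Theta(a)=a$ by continuity, while on $[0,a]\subset[0,\pi/2]$ the monotonicity of $\sin$ converts $\Theta(\theta)<\theta$ into $\sin\theta\csc(\Theta(\theta))>1$, so~(iii) gives $a<\int_0^a\sin\theta\csc(\Theta(\theta))\,d\theta\le\Theta(a)=a$, forcing $a=0$; this, combined with $\Theta\le\mathrm{id}$ (cf.\ \cite[Proposition~4.4]{wilkinson_anosov}) and the symmetry~(i), pins down $\Theta=\mathrm{id}$. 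Replacing your Cauchy--Schwarz/inversion step by an argument of this kind, which uses only (i)--(iii), repairs the proof; note that some use of~(iii) beyond superadditivity and symmetry is unavoidable, since functions such as $\Theta(\theta)=\theta-c\sin 2\theta$ for small $c>0$ satisfy (i) and (ii) but are not the identity.
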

\begin{proof}
    Let
    \[
        a := \sup\{t \in (0, \pi]; \Theta(\theta) < \theta, \quad \forall \theta \in (0, t)\}.
    \]
    Note that \(a \leq \pi/2\) since the later is a fixed point (Lemma \ref{lem:results_wilkinson_adapted} (i)), and also that \(\Theta(a) = a\) necessarily, due to continuity.
    Lemma \ref{lem:results_wilkinson_adapted} (iii) implies
    \[
        \int_0^a\sin{\theta}\csc{(\Theta(\theta))}d\theta \leq \Theta(a) = a.
    \]
    Now, for \(\theta \in [0, a] \subset [0, \pi/2]\) the condition \(\Theta(\theta) < \theta\) means \(\sin{\theta}\csc{(\Theta(\theta))} > 1\), so that we have
    \[
        a < \int_0^a\sin{\theta}\csc{(\Theta(\theta))}d\theta \leq  a,
    \]
    which is a contradiction as soon as \(a \neq 0\).
    Thus \(\Theta(\theta) \leq \theta\) in its entire domain (cf. \cite[Proposition 4.4]{wilkinson_anosov}), and symmetry of \(\Theta\) implies equality. 
    Since \(\Theta(\theta) = \theta = \mathbb{E}_\mu[\vartheta(\cdot, \theta)]\) and \(\vartheta(\gamma, \theta) \geq \vartheta(\gamma, \theta_1) + \vartheta(\gamma, \theta_2)\) for \(\theta = \theta_1 + \theta_2\), equality holds pointwise almost everywhere. 
    Continuity extends this to all \(\gamma\), and we get the desired angle preservation.
\end{proof}
\begin{theorem}[Enriched spectrum rigidity]
    Suppose \( \mathcal{EL} = \mathcal{EL}' \).
    Then there is an isometry \( \phi\colon \T^2 \to \T^2 \) such that \( \mathcal{D}' = \phi \circ \mathcal{D} \).
    Consequently, the induced map
    \begin{align*}
        \Phi\colon \Omega &\to \Omega' \\
        (x, \omega) &\mapsto (\phi(x), \omega)
    \end{align*}
    is a flow conjugacy between billiard flows.
\end{theorem}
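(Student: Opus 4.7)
The plan is to use the angle-preserving conjugacy $h\colon \widehat{\cG}_0 \to \widehat{\cG}_0'$ from the preceding lemma to build an isometry $\Phi\colon \wK_0 \to \wK_0'$, descend it via $\pi_1(K)$-equivariance to an isometry $K_0 \to K_0'$, and then extract from it an isometry of the billiard tables $\mathcal{D}$ and $\mathcal{D}'$ that extends to $\mathbb{T}^2$.

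\textbf{Construction and isometry property of $\Phi$.}
For $\tilde x \in \wK_0$, choose two regular geodesics $\gamma_1, \gamma_2 \in \greg$ meeting transversely at $\tilde x$. By angle preservation the images $h(\gamma_1)$ and $h(\gamma_2)$ meet transversely in $\wK_0'$, and visibility of $\wK_0'$ (Lemma~\ref{lem:wK_is_visibility}) yields a unique intersection point which we set to be $\Phi(\tilde x)$. For well-definedness, take a third geodesic $\gamma_3$ through $\tilde x$ and argue that $h(\gamma_3)$ must pass through $\Phi(\tilde x)$: otherwise the three images would bound a non-degenerate geodesic triangle whose interior angles, by angle preservation, are the pairwise angles of the $\gamma_i$ at $\tilde x$, summing to $\pi$ and hence saturating the CAT(0) inequality \eqref{eq:GB_estimate_subs}. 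Equality can hold only in a flat region, i.e.\ in the complement of $B(\wK')$, so by choosing the configuration so that $h(\gamma_3)$ crosses the boundary locus (possible thanks to density of such geodesics), one reaches a contradiction and concurrence follows. Continuity of $h$ and of transverse intersection then propagate $\Phi$ to a continuous map on all of $\wK_0$. By construction $h$ carries $[\tilde x, \tilde y]_0$ to the geodesic segment joining $\Phi(\tilde x)$ to $\Phi(\tilde y)$, and induces a bijection between geodesics crossing these two segments transversely. Since $h_*\widehat{m}_0 = \widehat{m}_0'$ by Theorem~\ref{thm:conj_abst_flow}, Crofton's formula \eqref{eq:crofton_height_0} applied on both sides gives
\[
    d_0(\tilde x, \tilde y) = \widehat{m}_0(\{\gamma \pitchfork [\tilde x, \tilde y]_0\}) = \widehat{m}_0'(\{\gamma' \pitchfork [\Phi(\tilde x), \Phi(\tilde y)]_0'\}) = d_0'(\Phi(\tilde x), \Phi(\tilde y)),
\]
so $\Phi$ is an isometry.

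\textbf{Descent and extension to $\mathbb{T}^2$.}
The $\pi_1(K)$-equivariance of $h$ (Theorem~\ref{thm:conj_abst_flow}) transfers to $\Phi$, producing a quotient isometry $\phi_K\colon K_0 \to K_0'$. Corollary~\ref{cor:isometry-preserves-boundary} then furnishes an isometry $\psi\colon (\mathcal{D}, d_\mathcal{L}) \to (\mathcal{D}', d_\mathcal{L})$. On $\mathrm{int}(\mathcal{D})$ the length metric coincides locally with the Euclidean one, so $\psi$ restricts to a local Euclidean isometry of a connected open region, hence extends uniquely to a global affine isometry of $\mathbb{R}^2$. The $\mathbb{Z}^2$-equivariance inherited from the deck transformations of $\mathbb{T}^2$ inside $\pi_1(K)$ then allows descent to an isometry $\phi\colon \mathbb{T}^2 \to \mathbb{T}^2$ with $\phi(\mathcal{D}) = \mathcal{D}'$. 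The induced map $\Phi\colon \Omega \to \Omega'$ on phase spaces is a flow conjugacy by construction.

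\textbf{Main obstacle.}
The subtle point is the well-definedness of $\Phi$. Otal's original argument in strictly negative curvature relies on the strict angle-sum inequality for geodesic triangles, while in the CAT(0) setting equality may occur along flat triangles lying in the complement of the boundary locus. The resolution exploits that the negative curvature of $\wK_0'$ concentrates on $B(\wK')$, forcing any putatively flat triangle to avoid this locus. The delicate part is ensuring, by a density and continuity argument, that concurrence can be verified at every $\tilde x \in \wK_0$\textemdash including those for which a generic choice of auxiliary geodesics has images missing $B(\wK')$.
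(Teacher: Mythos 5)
There is a genuine gap, and it sits exactly at the point you flag as the ``main obstacle'': the well-definedness of \(\Phi\) at interior points \(\tilde x \in \wK^\circ\) is not established, and your proposed repair does not work. Angle preservation only tells you that a putative non-degenerate triangle formed by \(h(\gamma_1), h(\gamma_2), h(\gamma_3)\) has angle sum \(\pi\); in a CAT(0) space this is not a contradiction\textemdash by the equality case of \eqref{eq:GB_estimate_subs} it merely forces the triangle to bound a flat region, and since the negative curvature of \(\wK_0'\) is concentrated on \(B(\wK')\), flat triangles exist in abundance inside the sheets \((\wK')^\circ\). (The paper makes precisely this point just before the angle-preservation lemma: this is why Otal's concurrence argument breaks down in this setting.) Your fix\textemdash choosing \(\gamma_3\) so that \(h(\gamma_3)\) crosses the boundary locus\textemdash does not close the gap: flatness is only needed on the convex hull of the \emph{small non-concurrence triangle}, whose location in \(\wK_0'\) is dictated by \(h\) and cannot be steered by your choice of auxiliary geodesic; \(h(\gamma_3)\) may perfectly well cross \(B(\wK')\) far away while the triangle itself lies entirely in a flat sheet. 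Since the Crofton computation presupposes that \(h\) carries the geodesic through \(\tilde x, \tilde y\) to the geodesic through \(\Phi(\tilde x), \Phi(\tilde y)\)\textemdash i.e.\ presupposes concurrence\textemdash the isometry claim inherits the same gap.

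The paper's proof avoids defining \(\Phi\) at interior points altogether. Concurrence is proved \emph{only} at points \(x \in B(\wK)\), where one of the three geodesics can be taken to be the lift \(\gamma_0\) of a scatterer component: since \(h\gamma_0\) is again a ``boundary'' geodesic and the scatterers of \(\mathcal{D}'\) are strictly convex, the angle at any putative vertex between \(h\gamma_0\) and a straight billiard segment is \emph{strictly} smaller than the Euclidean comparison angle, so a non-degenerate image triangle would have angle sum strictly less than \(\pi\)\textemdash a genuine contradiction. This yields a bijection \(\widetilde h_0\colon B(\wK) \to B(\wK')\); its isometry property is then derived not from Crofton but from the fact that \(h\) is a \emph{time-preserving} flow conjugacy (it preserves collision times along geodesics, hence lengths of the segments between consecutive collisions), and the map is extended first along periodic geodesics via \(\widetilde h_0(\gamma(t)) := h\gamma(t)\), and finally to all of \(\wK\) by density of \(\mathrm{Per}\), completeness, and \(\pi_1(K)\)-equivariance, before invoking Corollary~\ref{cor:isometry-preserves-boundary}. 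To salvage your route you would either have to restrict the triangle construction to boundary points (thereby recovering the paper's argument, and making explicit the strict convexity comparison your sketch omits), or supply an independent argument for interior concurrence, which the angle data alone cannot give. Your final step (local Euclidean isometry on \(\mathrm{int}(\mathcal{D})\), affine extension in \(\mathbb{R}^2\), \(\mathbb{Z}^2\)-equivariant descent to \(\mathbb{T}^2\)) is fine and in fact slightly more detailed than the paper's terse conclusion.
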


\begin{proof}
Let \(h\colon\widehat{\cG}_0 \to \widehat{\cG}_0'\) be as in Theorem \ref{thm:conj_abst_flow}.
It suffices to show that it is induced by an isometry of the torus.
\par By the angle preservation, we conclude immediately that \(h\) must preserve the set of admissible geodesics. 
In fact, if \(\gamma \notin \widehat{\cG}(\mathcal{D})\), then there is a degenerate point \(x \in \gamma(\R)\). 
Let \(\sigma\) be a geodesic though \(x\) such that \(\angle_x(\sigma, \gamma) = 0\). 
Then the angle between \(h\sigma\) and \(h\gamma\) at their respective degenerate point of intersection is also \(0\), by angle preservation, and therefore \(h\gamma \notin \widehat{\cG}(\mathcal{D}')\).
\par Now, let \(x \in B(\wK)\), and let \(\Gamma\) be the connected component of \(B(K) \subset K_0\) containing the projection of \(x\) under the canonical projection. Note that 
\(\Gamma\) is the gluing of two copies of a scatterer from \(\mathcal{D}\) and is itself a closed geodesic.
We consider its lift \(\gamma_0\) to \(\wK_0\) such that \(\gamma_0(0) = x\). 
Let \(\gamma_1\) and \(\gamma_2\) be two distinct geodesics emanating from \(x\), which we may choose to be regular and such that \(\gamma_1\) is between \(\gamma_0\) and \(\gamma_2\).
Then by construction
\[
    \angle_x(\gamma_0, \gamma_1) + \angle_x(\gamma_1, \gamma_2) + \angle_x(\gamma_2, -\gamma_0) = \pi.
\]
By construction, $h$ is a limit of orbit equivalences between the geodesic approximations $\mathbf{g}_\epsilon$ and $\mathbf{g}_\epsilon'$, hence it maps the geodesic $\gamma_0$ associated to the ``boundary component'' $\Gamma$ to a geodesic $h\gamma_0$ associated to the corresponding ``boundary component'' $\Gamma'$ of $\mathcal{D}'$.   
Now, suppose that the geodesics \(h\gamma_i\) intersect forming a non-degenerate triangle, and let \(p \in \wK_0'\) be the intersection point of \(h\gamma_0\) and \(h\gamma_1\).
Since the canonical projections \(\wK_0' \to K_0' \to \mathcal{D}'\) are local isometries, and the angle is a local construction, we see that the angle \(\angle_p(h\gamma_0, h\gamma_1)\) is the same as the angle between the billiard trajectory corresponding to \(h\gamma_1\) and the boundary component of \(\partial\mathcal{D}'\) containing the projection of \(p\).
But the billiard trajectory is a line segment while the scatterer is a convex curve, and it is clear that the angle they form is strictly less than the angle in the corresponding comparison triangle in \(\R^2\).
The same holds for the angle \(\angle(h\gamma_0, h\gamma_2)\), implying that
\[
    \angle(h\gamma_0, h\gamma_1) + \angle(h\gamma_1, h\gamma_2) + \angle(h\gamma_2, -h\gamma_0) < \pi,
\]
a contradiction.
Therefore \(h\) induces a bijection 
\[
    \widetilde h_0 \colon B(\wK) \to B(\wK').
\]
    \begin{figure}[!ht]
        \centering
        \includegraphics[width=1.1\textwidth]{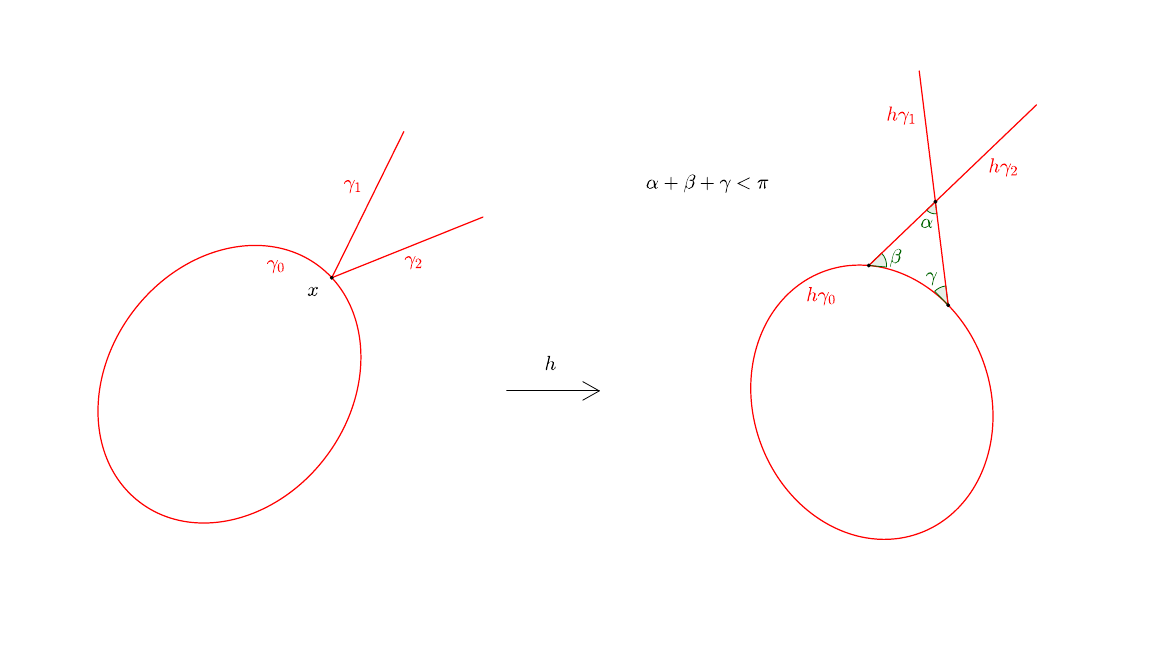}
        \caption{Three admissible geodesics 
intersecting at a boundary point are mapped into a triangle of admissible geodesics,
whose sum of the intern angles is smaller than $\pi$, contradicting angle preservation.}
        \label{fig:geodesic_triangles_table}
    \end{figure}
We claim that it is, in fact, an isometry.
Indeed, the connected components of \(B(\wK)\) are geodesics so if \(x, y\) belong to the same connected component, by continuity so do \(\widetilde h_0(x), \widetilde h_0(y)\), and since \(h\) is a flow conjugacy, it follows that \(d_0(x, y) = d'_0(\widetilde h_0(x), \widetilde h_0(y))\).
\par On the other hand, if \(x, y \in B(\wK)\) are in distinct components, let \(\gamma\) be a geodesic such that
\[
    \gamma(0) = x \quad \text{ and } \quad \gamma(1) = y.
\]
Now, \(\gamma([0,1])\) projects to a finite segment of billiard trajectory on \(\mathcal{D}\), and as such it has only finitely many collision points.
This means there are at most finitely many times 
\[
    0 = t_0 < t_1 < \cdots < t_k = 1
\]
such that \(x_i = \gamma(t_i) \in B(\wK)\). 
Since \(h\) is a flow conjugacy preserving the boundary points, we conclude \(h\gamma(t_1) \in B(\wK')\), and moreover, that \(t_1\) is the first collision time of \(h\gamma\) after \(t_0\), as there are no other times \(t \in [t_0, t_1]\) such that \(\gamma(t) \in B(\wK).\)
Iterating this argument, we conclude that \(h\gamma\) has exactly \(k\) collision times, the same ones as \(\gamma\).
In fact, due to our construction we must have
\[
    h\gamma(t_i) = \tilde{h}_0(x_i), \ \ \text{for } i \in \{0, \cdots, k\},
\]
and 
\[
    d'_0(\widetilde h_0(x), \widetilde h_0(y)) = \sum_{i = 1}^kl_0(h\gamma\rvert_{[t_{i-1}, t_i]}) = \sum_{i = 1}^kl_0(\gamma\rvert_{[t_{i-1}, t_i]}) = d_0(x, y).
\]
\par All that is left to do now is to extend \(\widetilde{h}_0\) to an isometry \(\widetilde{h}\colon (\wK, d_0) \to (\wK', d'_0)\). 
In order to do that, consider the periodic geodesics of \((\wK, d_0)\), i.e., geodesics projecting to periodic orbits of the flow. 
Given such a periodic geodesic \(\gamma\), take \(x_0 = \gamma(t_0) \in \gamma(\R) \cap B(\wK)\), which exists because the table \(\mathcal{D}\) has finite horizon.
What is more, there is \(t_1 \in [\tau_{\mathrm{min}}, \tau_{\mathrm{max}}]\) such that \(x_1:= \gamma(t_1) \in B(\wK)\), and is minimal with respect to this property. 
In other words, \(t_1\) is the first collision time of the geodesic \(\gamma\) which is strictly larger than \(t_0\).
Just as we argued above, the first collision time of \(h\gamma\) after \(t_0\) must be \(t_1\) as well.
\par We extend \(\widetilde{h}_0\) to the segment \(\gamma([t_0, t_1]\) by setting 
\[
    \widetilde{h}_0(\gamma(t)) := h\gamma(t) \ \ \text{for all } t \in [t_0, t_1],
\]
and similarly for the subsequent (countably many) collision times \(t_2 < t_3 < \cdots \).
This extension is clearly isometric along the periodic rays, and an argument completely analogue to the one used to show \(\widetilde{h}_0\) preserves the distance between points in different components of \(B(\wK)\) shows that distance between points in distinct periodic geodesics is also preserved.
\par We have then an isometry
\[
    \widetilde{h}_0\colon B(\wK) \cup \mathrm{Per} \to B(\wK') \cup \mathrm{Per'}.
\]
Now, \(B(\wK) \cup \mathrm{Per}\) is left invariant under the action of \(\pi_1(K)\), and 
\(\mathrm{Per}\) is dense in the universal covering \(\wK\).
As \(\wK\) is complete, we conclude that there is a unique \(\pi_1(K)\)-invariant isometric extension
\[
    \widetilde h \colon \overline{B(\wK) \cup \mathrm{Per}} = \wK \to \wK'= \overline{B(\wK') \cup \mathrm{Per}'},
\]
which then descends to an isometry \(h\colon K_0 \to K_0'\). 
By Corollary \ref{cor:isometry-preserves-boundary} the tables \(\mathcal{D}\) and \(\mathcal{D}'\) are isometric, which concludes.\qedhere

\end{proof}

\appendix

\section{Generic properties of periodic orbits of Sinai billiards}\label{sec_periodicorbitsbilliard}

\subsection{Length functionals and periodic orbits}
We use the notation introduced in Section~\ref{sec_symbolics_prelim}. We let \( \gamma_{(0,0)}:=\pi^{-1}\circ \gamma \), and for each integer vector \( (i,j)\in \Z^2 \), we let \( \gamma_{(i,j)}:=\gamma_{(0,0)}+(i,j) \). 
Denote by
\begin{align*}
    \tau_{(i,j)} \colon \T_\mathcal{D}\times \T_\mathcal{D} &\to \R_+ \\ 
    (s,s')& \mapsto \tau_{(i,j)}(s,s'):=\|\gamma_{(0,0)}(s)-\gamma_{(i,j)}(s')\|
\end{align*}
the Euclidean distance between the points of \( \partial \widetilde{\mathcal{D}} \) with respective parameters \( s,s' \), in the respective cells \( (0,0) \) and \( (i,j) \). 
 Recall that for any \( (s,s')\in \mathbb{T}_\mathcal{D} \times \mathbb{T}_\mathcal{D} \), we have
\begin{equation}\label{eq:first_gen_fct}
    \partial_1 \tau_{(i,j)}(s,s')=-\sin \varphi_{(i,j)},\quad \partial_2 \tau_{(i,j)}(s,s')=\sin \varphi_{(i,j)}',
\end{equation}
where \( \varphi_{(i,j)} \) and \( \varphi_{(i,j)}' \) are the respective angles that the line segment between \( \gamma_{(0,0)}(s) \) and \( \gamma_{(i,j)}(s') \) makes with respect to the normal vector at these points. 
Moreover,
\begin{equation}\label{eq:der_seconde_gen} 
\begin{array}{ll}
    &\partial_{11} \tau_{(i,j)}(s,s')=\mathcal{K}(s)\cos \varphi_{(i,j)}+\frac{\cos^2 \varphi_{(i,j)}}{\tau_{(i,j)}(s,s')},\\
    &\partial_{12} \tau_{(i,j)}(s,s')=\frac{\cos \varphi_{(i,j)}\cos\varphi_{(i,j)}'}{\tau_{(i,j)}(s,s')},\\
    &\partial_{22} \tau_{(i,j)}(s,s')=\mathcal{K}(s')\cos \varphi_{(i,j)}'+\frac{\cos^2 \varphi_{(i,j)}'}{\tau_{(i,j)}(s,s')},
\end{array} 
\end{equation}
where $\mathcal{K}(s)$, $\mathcal{K}(s')$ are the respective curvatures at the points with parameters $s$ and $s'$. 
Let \( \sigma=(\bar s_{0},\cdots,\bar s_{q-1}) \) denote the collision points of a periodic orbit of (least) period \( q \geq 2 \), with \( \bar s_k\in \T_{\rho_k} \), \( \rho_k\in \{1,\cdots,m\} \), for \( k\in \{0,\cdots,q-1\} \). We let \( \rho(\sigma):=(\rho_k)_{k=0,\cdots,q-1} \). 
Let us consider the lift to \( \widetilde{\mathcal{D}} \) of this orbit starting in the cell \( (0,0) \), and denote by  \( \{(\tilde i_k,\tilde j_k)\}_{k=0,\cdots,q} \) the labels of the cells where the first \( (q+1) \) bounces happen, with \( (\tilde i_0,\tilde j_0)=(0,0) \).
For \( k\in \{1,\cdots,q\} \), set \( I_k:=(\tilde i_k,\tilde j_k)-(\tilde i_{k-1},\tilde j_{k-1}) \). We denote \( I(\sigma):=(I_k)_{k=1,\cdots,q} \).
As we assume that the table has finite horizon, there is a constant \( M > 0 \) such that \( I_k\in [-M,M]^{2} \), for any \( k\in \{1,\cdots,q\} \). 

The following result is essentially~\cite[Lemma A.2]{de2023marked}; since the context is slightly different, we reproduce its proof for the reader's convenience.
\begin{lemma}\label{lem:quadratic-form}
    Let \( \sigma=(\bar s_{0},\cdots,\bar s_{q-1}) \) denote the collision points of a periodic orbit of (least) period \( q \geq 2 \).
    With the above notations, let \( \rho:=\rho(\sigma)=(\rho_k)_{k=0,\cdots,q-1} \), and  \( I=I(\sigma):=(I_k)_{k=1,\cdots,q} \).
    We define the length functional
    \begin{equation*}
        L_\rho^I\colon
        \left\{
        \begin{array}{rcl}
            \T_{\rho_0}\times \cdots \times \T_{\rho_{q-1}} &\to& \R\\
            (s_{0},\cdots,s_{q-1}) &\mapsto& \sum_{k = 1}^{q-1}\tau_{I_k}(s_{k-1},s_{k})+\tau_{I_q}(s_{q-1},s_0)
        \end{array}
        \right..
    \end{equation*} 
    Then \( \sigma \) is a critical point of \( L_\rho^I \), and it holds:
    \begin{align*}
        L_{\rho}^I(s_{0},\cdots,s_{q}) - 
        L_{\rho}^I(\bar s_{0},\cdots,\bar s_{q}) &= 
        Q_{\rho}^I(s_{0}-\bar s_{0},\cdots, s_{q}-\bar s_{q}) + R_{\rho}^I,
    \end{align*}
    where \( Q_{\rho}^I \) is a positive definite quadratic form and \( R_{\rho}^I \) is a remainder term that satisfies the estimate:
    \begin{align*}
        R_{\rho}^I = O(\|(s_{0}-\bar s_{0},\cdots, s_{q}-\bar s_{q})\|^{3}).
    \end{align*}
\end{lemma}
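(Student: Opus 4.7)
My plan is to prove all three claims — critical point, the explicit quadratic part, and the cubic remainder — by a second-order Taylor expansion of $L_\rho^I$ at $\sigma=(\bar s_0,\ldots,\bar s_{q-1})$, exploiting the generating-function identities~\eqref{eq:first_gen_fct}--\eqref{eq:der_seconde_gen}. Throughout, I will work with cyclic indexing, identifying $s_q$ with $s_0$ so that the closure term $\tau_{I_q}(s_{q-1},s_0)$ fits the same pattern as the others; I will also use that $\sigma$ is non-grazing, so that $|\varphi|,|\varphi'|<\pi/2$ at each collision.

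\textbf{Critical point and Taylor expansion.} From~\eqref{eq:first_gen_fct}, the partial derivative of $L_\rho^I$ with respect to $s_k$ equals $\sin\varphi'_{I_k}(s_{k-1},s_k)-\sin\varphi_{I_{k+1}}(s_k,s_{k+1})$. At a genuine billiard periodic orbit, the reflection law at the collision point $\bar s_k$ forces the angle of incidence $\varphi'_{I_k}(\bar s_{k-1},\bar s_k)$ to coincide with the angle of reflection $\varphi_{I_{k+1}}(\bar s_k,\bar s_{k+1})$; hence $\nabla L_\rho^I(\bar s)=0$. Since $\partial\mathcal{D}$ is of class $C^r$ with $r\geq 3$, $L_\rho^I$ is $C^3$ near $\bar s$, and Taylor's theorem yields the decomposition with $Q_\rho^I(v):=\frac{1}{2} D^2 L_\rho^I(\bar s)(v,v)$ and remainder $R_\rho^I=O(\|v\|^3)$. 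It remains to establish positive definiteness of $Q_\rho^I$.

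\textbf{Structure of the Hessian.} Writing $L_\rho^I(s)=\sum_{k=1}^q\tau_{I_k}(s_{k-1},s_k)$, the Hessian at $\bar s$ decomposes as $\sum_{k=1}^q\widehat H_k$, where $\widehat H_k$ is the $q\times q$ matrix obtained by embedding the $2\times 2$ Hessian $H_k$ of $\tau_{I_k}$ at $(\bar s_{k-1},\bar s_k)$ along the coordinates $(v_{k-1},v_k)$ (and zero elsewhere). Using~\eqref{eq:der_seconde_gen}, a direct computation in which the crossed term $\cos^2\varphi\cos^2\varphi'/\tau_{I_k}^2$ arising from $\partial_{11}\tau_{I_k}\cdot\partial_{22}\tau_{I_k}$ is exactly cancelled by $(\partial_{12}\tau_{I_k})^2$ yields
\begin{equation*}
\det H_k \;=\; \mathcal{K}(\bar s_{k-1})\mathcal{K}(\bar s_k)\cos\varphi\cos\varphi' \;+\; \frac{\cos\varphi\cos\varphi'}{\tau_{I_k}(\bar s_{k-1},\bar s_k)}\bigl(\mathcal{K}(\bar s_{k-1})\cos\varphi+\mathcal{K}(\bar s_k)\cos\varphi'\bigr).
\end{equation*}
Strict dispersiveness $\mathcal{K}>0$, positivity of $\tau_{I_k}$, and the non-grazing hypothesis together make this determinant and the trace of $H_k$ strictly positive, so each $H_k$ is a positive definite $2\times 2$ form.

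\textbf{From $2\times 2$ blocks to positivity of $Q_\rho^I$.} Each embedded block $\widehat H_k$ is positive semi-definite on $\R^q$. Given $v\in\R^q\setminus\{0\}$, pick $j$ with $v_j\neq 0$; then $\widehat H_j$ applied to $v$ equals $H_j(v_{j-1},v_j)$, which is strictly positive, since $(v_{j-1},v_j)\neq 0$ and $H_j$ is positive definite. All other contributions being non-negative, this gives $Q_\rho^I(v)>0$, which proves positive definiteness. The main obstacle, and the only genuinely geometric input, is the explicit cancellation yielding the positive expression for $\det H_k$ above: without it, positive definiteness of the individual blocks could fail, and the dispersing hypothesis $\mathcal{K}>0$ is what ultimately guarantees the surviving terms are all positive.
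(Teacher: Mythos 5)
Your proof is correct, and the key step differs genuinely from the paper's. Both arguments establish criticality from~\eqref{eq:first_gen_fct} via the reflection law and reduce the lemma to positive definiteness of the Hessian via Taylor's theorem (the $C^3$ regularity and $O(\|v\|^3)$ remainder are as you say). For positivity, however, the paper keeps the Hessian assembled as a tridiagonal matrix, writes out its diagonal and off-diagonal entries explicitly, and invokes Sylvester's criterion, with the positivity of the leading principal minors $f_k$ outsourced to a recursive determinant estimate proved in~\cite{de2023marked} (Lemma A.3 there). You instead decompose $D^2L^I_\rho(\bar s)=\sum_{k=1}^q \widehat H_k$ into embedded $2\times 2$ blocks, prove each block positive definite via the trace and the determinant cancellation coming from~\eqref{eq:der_seconde_gen}, and conclude by the elementary observation that a nonzero $v$ makes at least one block strictly positive while all others are nonnegative; with cyclic indexing this is valid, including the degenerate-looking case $q=2$ where both blocks act on the same pair of coordinates. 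Your route is more self-contained (it avoids the citation to the external minor computation) and arguably more transparent about where dispersiveness enters; the paper's route yields the explicit tridiagonal entry formulas, which is what the recursive minor argument is built on. Two small remarks: in your determinant formula the correct grouping is
\begin{equation*}
\det H_k \;=\; \mathcal{K}(\bar s_{k-1})\mathcal{K}(\bar s_k)\cos\varphi\cos\varphi' \;+\; \frac{\cos\varphi\cos\varphi'}{\tau_{I_k}}\bigl(\mathcal{K}(\bar s_{k-1})\cos\varphi'+\mathcal{K}(\bar s_k)\cos\varphi\bigr),
\end{equation*}
i.e.\ the curvatures pair with the opposite endpoint's cosine; you swapped them, but since every factor is positive under your hypotheses this is cosmetic. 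Second, your explicit non-grazing assumption ($|\varphi|,|\varphi'|<\pi/2$) is indeed needed — the cosines appear multiplicatively in every entry, and a grazing collision would degenerate the form — and the paper's proof silently requires it as well, so flagging it is a point in your favour rather than a deviation.
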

\begin{proof} 
    Let us abbreviate \( L=L_{\rho}^I \). 
    The fact that \( \sigma=(\bar s_0,\cdots,\bar s_{p-1}) \) is a critical point for \( L \) among configurations \( (s_0,\cdots,s_{p-1}) \) follows from \eqref{eq:first_gen_fct}, as \( \sigma \) is a closed billiard orbit. 
    Hence the lemma follows from Taylor's formula,	provided that we show that the Hessian of \( L \) at \( (\bar s_{0},\cdots,\bar s_{q}) \) is positive definite.	
    It is immediate from the definition of \( L \) that \( \partial_{ij}L = 0 \) if \( |i-j| > 1 \); in other terms, the Hessian of \( L \) is a tridiagonal (symmetric) matrix
    \begin{equation*}
    D^2L = 
    \begin{bmatrix}
        \partial_{00}L & \partial_{01}L & 0 & \cdots & 0 \\
        \partial_{10}L & \partial_{11}L & \partial_{12}L & \cdots & 0 \\
        0 & \partial_{21}L & \partial_{22}L & \cdots & 0 \\
        \vdots & \vdots & \vdots & \ddots & \vdots \\
        0 & 0 & 0 & \cdots & \partial_{q-1,q-1}L 
    \end{bmatrix}.
    \end{equation*} 
    For notational convenience, let \( \tau_{k} = h(\bar{s}_{k},\bar s_{k+1}) \); let \( \bar\varphi_{k}\in[-\frac \pi 2,\frac \pi 2] \) denote the angle formed by the outgoing trajectory at the \( k \)-th collision point and the unit normal vector	to the domain at \( \bar s_{k} \); finally, let \( \mathcal{K}_{k} \) denote the reciprocal of the radius of curvature at the point \( \bar s_{k} \).
    Recall that, by convention, \( \mathcal{K}_{k} > 0 \) for any \( k \). 	
    By \eqref{eq:der_seconde_gen}, the diagonal terms are given by:
    \begin{align*}
        \partial_{00} L &= \frac1{\tau_{0}}\cos^{2}\bar\varphi_{0}+\mathcal{K}_{0}\cos\bar\varphi_{0}\\
        \partial_{jj} L &= \left[\frac1{\tau_{j-1}}+\frac1{\tau_{j}}\right]\cos^{2}\bar\varphi_{j}+2\mathcal{K}_{j}\cos\bar\varphi_{j}\text{ for } 0 < j < q\\
        \partial_{qq} L &= \frac1{\tau_{q-1}}\cos^{2}\bar\varphi_{q}+\mathcal{K}_{q}\cos\bar\varphi_{q}, \intertext{while the off-diagonal terms are given by:}
        \partial_{jj+1}L &= \frac1{\tau_{j}}\cos\bar\varphi_{j}\cos\bar\varphi_{j+1}.
    \end{align*}
    Using the above expressions it is simple to prove the following lemma.
    \begin{lemma}\label{lem:positive-definite}
        For \( 0 \leq k \leq q \), let \( f_{k} \) denote the determinant of the \( (k+1)\times (k+1) \) top-left minor of \( \partial_{ij}L \).
        Then \( f_{k} > 0 \) for all \( 0 \leq k\leq q \).
    \end{lemma}
    \begin{proof}[Proof of Lemma~\ref{lem:positive-definite}]
   See the proof of~\cite[Lemma A.3]{de2023marked}. 
\end{proof}
    By Sylvester's criterion, the above lemma implies that all eigenvalues of \( \partial_{ij}L \) are positive, which completes the proof of our result.
\end{proof}

\begin{defi}
    The spectrum of the table \( \mathcal{D} \) is the metric space \( (\Spec{\mathcal{D}}, d_H) \), where \( \Spec{\mathcal{D}} \) is the set of all periodic orbits of the billiard flow of \( \mathcal{D} \), and \( d_H \) is the Hausdorff metric on the phase space \( \Omega \).    
\end{defi} 
It follows from Corollary~\ref{cor:fin_card_peri} that \( \Spec{\mathcal{D}} \) is a discrete totally bounded space.

\par 
    Remark that a critical point of some length functional \( L \) as in Lemma~\ref{lem:quadratic-form} is not necessarily a periodic orbit, as it may consist of a non-physical trajectory, that is, one that passes through one the obstacle. 

Let us introduce the following sets of orbits:
\begin{defi}
    Given a word $\omega = (\omega_0, \cdots, \omega_n) \in \mathcal{A}^{n+1}$, the cylinder \( C(\omega; n) \) is the set of points \( x \in \mathcal{M} \) whose lifted orbits have the first \( n+1 \) bounces in the scatterers \( B_{\omega_1}, \cdots, B_{\omega_n} \), in this order.
\end{defi}

\begin{defi}
    A \emph{generalised periodic orbit} of \( \mathcal{D} \) is the projection onto \( \mathcal{M} \) of a critical point of one the functionals \( L \). 
    A generalised orbit that is not a periodic orbit of the billiard \( \mathcal{D} \) is called a \emph{ghost} or \emph{non-physical}. 
\end{defi}

\begin{theorem}[Theorem 2.2 in \cite{bunimovich_variational_1995}]\label{thm:bunimovich_periodic} 
    Let \(  \omega = (\omega_0, \cdots, \omega_n)\in \mathcal{A}^{n+1} \) be a word in \( \Sigma_{\mathcal{M}} \) such that 
    \begin{align*}
        \pi(B_{\omega_i}) &\neq \pi(B_{\omega_{i+1}}), \text{ for } i = 0, \cdots, n-1; \\
        \pi(B_{\omega_n}) &= \pi(B_{\omega_0}).
    \end{align*}
    Then there is at most one generalised periodic orbit in the cylinder \( C(\omega; n) \).
\end{theorem}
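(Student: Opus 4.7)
The plan is to reduce the uniqueness statement to the strict convexity of the length functional $L_\rho^I$ introduced in Lemma~\ref{lem:quadratic-form}, by observing that a generalised periodic orbit in $C(\omega;n)$ is, by definition, a critical point of $L_\rho^I$ with $\rho = (\pi(B_{\omega_0}),\dots,\pi(B_{\omega_{n-1}}))$ and $I$ the sequence of cell-displacements determined by $\omega$. Since consecutive scatterers in $\omega$ project to different scatterers on $\mathbb{T}^2$, every chord appearing in $L_\rho^I$ has strictly positive length, so $L_\rho^I$ is smooth on the full product of parameter circles. Suppose for contradiction that we have two distinct generalised periodic orbits $\bar s=(\bar s_0,\dots,\bar s_{n-1})$ and $\bar s'=(\bar s_0',\dots,\bar s_{n-1}')$ in $C(\omega;n)$; we aim to build a path from $\bar s$ to $\bar s'$ along which $L_\rho^I$ has strictly positive Hessian, which will contradict the vanishing of $\nabla L_\rho^I$ at both endpoints.

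\smallskip

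First, I would lift everything to $\widetilde{\mathcal D}$, where the word $\omega$ prescribes a unique sequence of scatterers $B_{\omega_0},\dots,B_{\omega_{n-1}}$ visited in order (since we fix a representative of the cylinder). Parametrising each $\partial B_{\omega_k}$ by its arc-length coordinate, both $\bar s$ and $\bar s'$ lie inside a common product $\prod_k J_k$ of short arcs $J_k\subset \partial B_{\omega_k}$, by the standing fact (Proposition~\ref{pps:representation_map_continuity} and Corollary~\ref{cor:fin_card_peri}) that itineraries of length $n$ confine positions to bounded subsets. This gives a natural linear path $\bar s_t := (1-t)\bar s + t\bar s'$, $t\in[0,1]$, inside $\prod_k J_k$.

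\smallskip

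Next, I would extend the Hessian computation of Lemma~\ref{lem:quadratic-form} from critical points to arbitrary configurations in $\prod_k J_k$. Formulas~\eqref{eq:first_gen_fct}–\eqref{eq:der_seconde_gen} hold pointwise on the domain where the chords have positive length, and the matrix of second partials of $L_\rho^I$ is tridiagonal with the same structural form as at critical points. The key ingredient is that for every configuration in $\prod_k J_k$, the chords exit their respective scatterers on the physical side, so that $\cos\varphi_k,\cos\varphi_k'>0$ at both endpoints. Under this sign condition, Lemma~\ref{lem:positive-definite} applies verbatim and yields positive definiteness of the Hessian of $L_\rho^I$ at every point of $\prod_k J_k$. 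Then $\phi(t):=L_\rho^I(\bar s_t)$ satisfies $\phi''(t)>0$ for all $t\in[0,1]$, so $\phi'$ is strictly increasing, contradicting $\phi'(0)=\phi'(1)=0$.

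\smallskip

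\emph{Main obstacle.} The delicate point is justifying that the linear segment $\bar s_t$ remains inside the region where $\cos\varphi_k,\cos\varphi_k'>0$ along every chord. For endpoints one has this by the very fact that $\bar s,\bar s'$ are critical points producing legitimate (possibly ghost) generalised orbits. Propagating the condition along the whole segment amounts to showing that, given the a priori confinement of $\bar s$ and $\bar s'$ to the short arcs $J_k$, no chord can rotate past the tangent direction as $t$ varies. This should follow from the strong convexity of the scatterers combined with a sharper confinement estimate on $J_k$ (depending on curvature and minimum chord length), but it is exactly here that one needs to invoke the structural consequences of the hypothesis $\pi(B_{\omega_i})\neq \pi(B_{\omega_{i+1}})$ and the lifted nature of $\omega$; once this geometric lemma is in place, the convexity argument closes the proof.
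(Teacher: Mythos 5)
The paper does not actually prove this statement: it is imported verbatim as Theorem~2.2 of Bunimovich~\cite{bunimovich_variational_1995}, so there is no internal argument to compare yours against, and your proposal must stand on its own. As it stands, it has a genuine gap --- located exactly where you flag it --- and moreover the auxiliary step you treat as a standing fact is not available. First, the confinement step: you place the two critical configurations \(\bar s,\bar s'\) in a common product of short arcs \(\prod_k J_k\) by appealing to Proposition~\ref{pps:representation_map_continuity} and Corollary~\ref{cor:fin_card_peri}, but neither applies. The H\"older estimate in the proposition is derived from the exponential growth of dispersing wave fronts along \emph{genuine} billiard trajectories whose bi-infinite codes agree on a window of length \(2n+3\); a ghost critical point traverses the interiors of obstacles and carries no billiard dynamics, so the wavefront argument says nothing about it, and the corollary is a finiteness statement for physical periodic orbits only. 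For a fixed word \(\omega\), two ghost critical configurations could a priori sit on far-apart arcs of their scatterers, so there are no ``short arcs'' \(J_k\) to interpolate inside.

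Second, and more seriously, the positive definiteness of the Hessian along the linear segment \(\bar s_t\) is \emph{equivalent} to the transversality condition \(\cos\varphi_k>0,\ \cos\varphi_k'>0\) at every interpolated configuration, and this cannot be a technical refinement: since \(\pi(B_{\omega_i})\neq\pi(B_{\omega_{i+1}})\) makes \(L^I_\rho\) smooth on the whole compact product \(\mathbb{T}_{\rho_0}\times\cdots\times\mathbb{T}_{\rho_{q-1}}\), Morse theory forces critical points of positive index (a maximum, at the very least), i.e.\ configurations where the Hessian is \emph{not} positive definite; these are precisely configurations with some \(\cos\varphi_k<0\), where a chord exits through the body of a scatterer. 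So the physical region \(\{\text{all }\cos\varphi_k\geq 0\}\) is a proper subset of the domain, and your argument needs the segment joining \(\bar s\) to \(\bar s'\) to stay inside it --- but the simultaneous visibility conditions for \(q\) chords do not obviously cut out a set that is convex in the arc-length coordinates, and you offer no proof. (Even at the endpoints strictness can fail: a ghost with a grazing link has \(\cos\varphi_k=0\), where the tridiagonal determinants of Lemma~\ref{lem:positive-definite} degenerate.) Note finally that for \emph{genuine} periodic orbits uniqueness already follows from the injectivity of \(\imath\) in Proposition~\ref{pps:representation_map_continuity}; the entire content of Bunimovich's theorem is the ghost case, which is exactly where your argument is missing its key lemma. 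Until that geometric lemma is supplied, the contradiction between \(\phi'(0)=\phi'(1)=0\) and \(\phi''>0\) is not available, and the proof is incomplete.
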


\par On the other hand, one periodic orbit in \( \Spec{\mathcal{D}} \) is associated to several different words \( \omega \), corresponding to distinct liftings.
Even when we restrict ourselves to liftings starting at the \( (0,0) \) cell, distinct starting points in the same orbit may lead to distinct words. 
We will therefore need a choice of point in each of the periodic orbits of \( \Spec{\mathcal{D}} \).
\par Given \( O \in \Spec{\mathcal{D}} \), we can see it as a union of line segments in the square \( [0,1)^2. \) 
The finite set \( (\cup_i B_i) \cap O \) correspond to all the collision points of the orbit \( O \).
It is an ordered set, with the lexicographical ordering inherited from \( [0,1)^2 \times [-\pi/2, \pi/2] \). 
Set
\[
    \zeta(O) \coloneqq \min \{O \cap (\cup_i B_i)\} \in \mathcal{M}.
\]
\begin{defi}
    The \emph{type} of the periodic orbit \( O \in \Spec{\mathcal{D}} \) is the unique sequence \( [O] \coloneqq \iota(\zeta(O)) \in \Sigma_M \).
\end{defi}
\subsection{Good tables}\label{sec:good_tables}
Recall that we denote by \( \mathfrak{D}^m \) the space  of all Sinai billiard tables in \( \T^2 \) with \( m \) scatterers and $C^r$ boundary, $r \geq 3$.  
\par We want to show that for a generic choice of table in \( \mathfrak{D}^m \), the set of periodic orbits has simple length spectrum and contains no singular orbits. 
\newline
\subsubsection{Table perturbations and periodic orbits}
We begin by analysing how perturbations on the scatterers affect periodic orbits.

\begin{defi}
    Given \( p = \gamma(s_0) \in \partial\mathcal{D} \), we say a table \( \mathcal{D}' \in \mathfrak{D}^m \) is a localised perturbation of \( \mathcal{D} \) at \( p \) if there is a neighbourhood \( U \) of \( s_0 \in \T_\mathcal{D} \) such that the defining maps of \( \mathcal{D} \) and \( \mathcal{D}' \) coincide outside \( U \). 
\end{defi} 
Intuitively, a small perturbation around a single collision point of a periodic orbit should generate a new periodic orbit close to the original one. 
In order to see that this is in fact what happens, we investigate how a localised perturbation at a point \( p=\gamma(s_0) \) alters the length functional of the table.
\par We begin by describing the perturbation in terms of a $C^r$ function \( \lambda \) defined on \( \mathbb{T}_{\mathcal{D}} \).
For \( \epsilon >0 \), consider the \( \epsilon \)-perturbation
\begin{equation}\label{eq:defor_descrip}
    \gamma^\lambda(s) = \gamma(s) + \epsilon\lambda(s)n_s,
\end{equation}
where \( n_s \) is the normal vector to \( \gamma(s) \) at \( s \).
For sufficiently small \( \epsilon \), the perturbation \( \gamma^\lambda \) remains strictly convex and the resulting billiard table is still a Sinai billiard.
The perturbation \( \gamma^\lambda \) is a localised perturbation at \( p = \gamma(s_0) \) exactly when \( \lambda \) is supported in a small neighbourhood around \( s_0 \).
\par As before, we look at liftings of such curves to the Euclidean plane and consider mappings
\begin{equation*}
    \tau^\lambda_{(i,j)}(s,s') := \|\gamma^\lambda_{(0,0)}(s)-\gamma^\lambda_{(i,j)}(s')\|,
\end{equation*}
whose first order approximation is 
\begin{equation*}
    \tau^\lambda_{(i,j)}(s,s') = \tau_{(i,j)}(s,s') + \epsilon(\lambda(s)\cos\varphi_{(i,j)}(s,s') +\lambda(s')\cos\varphi'_{(i,j)}(s,s')) + O(\epsilon^{2}).
\end{equation*}

Now, we suppose \( \sigma = (s_0, \cdots, s_{q-1}) \) corresponds to a sequence of bouncing points of the table \( \mathcal{D} \).
We want to study the relation between critical points for the length functionals of the original and perturbed tables.
For notational convenience, we drop the indices \( (i,j) \).
Then the length functional \( L^\lambda \) can be written in terms of the original length functional \( L \) as
\begin{align*}
    L^\lambda(\sigma) &= \sum_{i=0}^{q-1}\tau^\lambda(s_i, s_{i+1}) \\
    &= \sum_{i=0}^{q-1}h(s_i, s_{i+1}) + \epsilon\sum_{i=0}^{q-1}(\lambda(s_i)\cos\varphi(s_i, s_{i+1}) + \lambda(s_{i+1})\cos\varphi'(s_i,s_{i+1}) + O(\epsilon^2),
\end{align*}
or, more concisely, as
\begin{equation}\label{eq:pert_len_func}
    L^\lambda(\sigma) = L(\sigma) + \epsilon P^\lambda(\sigma) + O(\epsilon^2).   
\end{equation}
If \( \sigma \) is a periodic orbit of the unperturbed table \( \mathcal{D} \), then consecutive angles \( \varphi \) and \( \varphi' \) have the same cosine, and hence \( P^\lambda \) becomes
\begin{equation}\label{eq:descrip_Plamb_per_point}
    P^\lambda(\sigma) = 2\sum_{i=0}^{q-1}\lambda(s_i)\cos\varphi(s_i, s_{i+1}),
\end{equation}
with the usual convention that \( s_q = s_0 \). 
Writing \( \sigma^\lambda := (s_0^\lambda, \cdots, s^\lambda_{q-1}) \) to denote the generalised periodic orbit of the perturbed table with same type as \( \sigma \), we have 
\begin{equation*}
    s_i^\lambda = s_i + \epsilon\psi_i + O(\epsilon^2).
\end{equation*}
Thus, \( \sigma^\lambda = \sigma + \epsilon\psi \) satisfies
\begin{equation}\label{eq:per_orb_perturb}
\begin{split}
    0 &= DL^\lambda(\sigma^\lambda) \\
    &= DL(\sigma^\lambda) + \epsilon DP^\lambda(\sigma^\lambda) + O(\epsilon^2) \\
    &= DL(\sigma) + \epsilon D^2L(\sigma)\psi + \epsilon DP^\lambda(\sigma) + O(\epsilon^2) \\
    &= \epsilon D^2L(\sigma)\psi + \epsilon DP^\lambda(\sigma) + O(\epsilon^2),
\end{split}
\end{equation}
where the last equality comes from the fact that \( \sigma \) is a critical point for \( L \).
Thus, the first order change \( \psi \) we see in the collision points for a critical point \( \sigma^\lambda \) is the solution of the linear system
\begin{equation}\label{eq:1st_order_posi}
    D^2L(\sigma)\psi = -DP^\lambda(\sigma).
\end{equation}
We already know that the Hessian of \( L \) is a non-degenerate positive definite matrix, so for a non-positive solution \( \psi \) to \eqref{eq:1st_order_posi} to exist it is sufficient that the term \( -DP^\lambda(\sigma) \) is non-zero.
We conclude that, unless \( \sigma \) is a critical point of \( P^\lambda \), the perturbed flow will have a new generalised periodic orbit arbitrarily close to the original one.
In particular, for a generic choice of perturbations, such perturbed orbits always exist. 
Naturally, if \( \sigma \) is a critical point of \( P^\lambda \), then \( \psi = 0 \) is the unique solution of \eqref{eq:1st_order_posi} and \( \sigma \) remains a generalised periodic orbit after the perturbation.
\begin{defi}
    Given a point \( p = \gamma(s_0) \) in the obstacle \( \Gamma_i \subset \partial\mathcal{D} \), we say a localised perturbation is a \emph{retraction of \( \mathcal{O}_i \) at \( p \)} if it is of the form
    \begin{equation*}
        \widetilde\gamma(s) = \gamma(s) - \epsilon\Psi(s)n_s,
    \end{equation*}
    where \( \epsilon >0 \) and \( \Psi \) is a positive bump function supported at a small interval around \( s_0 \), with \( \Psi(s_0) = 1. \)
\end{defi}
\begin{remark}\label{rem:pertubed_periodic_orbits}
    If a periodic orbit \( \sigma \) has a grazing collision at \( p \), then any retraction of the obstacle tangent to the orbit at \( p \), not matter how small, removes the tangency without changing the original orbit, i.e. the perturbed orbit \( \sigma^\lambda \) has the same trace, but without the grazing collision at \( p \).
    \par On the other hand, a generic perturbation of \( \sigma \) at a regular collision point will generate a non-singular generalised orbit. 
    Indeed, the law of reflection means that a change in the angle of one of the collisions will propagate through all the orbit, so that, generically, a change of the angle and position of one the vertices will either kill the periodic orbit \( \sigma \) by pushing it into an obstacle (i.e., the perturbed orbit \( \sigma^\lambda \) is a ghost orbit) or it will move the orbit away from the obstacles it previously touched tangentially (that is, \( \sigma^\lambda \) is regular). 
\end{remark}
\subsubsection{Killing off singular orbits} As suggested in Remark \ref{rem:pertubed_periodic_orbits}, localised perturbations can ``destroy'' singular periodic orbits by turning them into regular or ghost ones.
We claim this procedure can be applied, in a controlled way, to remove any finite number of singular orbits without producing new ones.
\begin{lemma}\label{lem:fin_hor_persists}
    If \( \mathcal{D} \) has finite horizon, so does every sufficiently close localised perturbation of \( \mathcal{D}. \)   
\end{lemma}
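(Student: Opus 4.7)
The plan is to show that finite horizon is in fact an open condition in the $C^0$ topology on the space of tables, which is strictly stronger than what is needed for localised perturbations in $C^r$. The key geometric reformulation is the following: $\mathcal{D}$ has finite horizon if and only if there exists $T>0$ such that every straight line segment contained in the lifted table $\widetilde{\mathcal{D}} \subset \mathbb{R}^2$ has length at most $T$. Indeed, the free-flight time $\tau(x)$ is exactly the length of the straight chord starting at a boundary point, and any segment inside $\widetilde{\mathcal{D}}$ extends to a chord whose length exceeds it. Set $T_0 := \sup\{\mathrm{length}(\ell) : \ell \subset \widetilde{\mathcal{D}} \text{ is a line segment}\}$, which is finite by the finite horizon assumption.

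I then argue by contradiction: assume there exist perturbations $\mathcal{D}_n \to \mathcal{D}$ in $C^r$, together with line segments $\ell_n \subset \widetilde{\mathcal{D}_n}$ of length $T_0 + 1$. Each $\ell_n$ is parametrised by its midpoint $x_n \in \mathbb{R}^2$ and a unit direction $v_n \in \mathbb{S}^1$. Using the $\mathbb{Z}^2$-periodicity of every $\widetilde{\mathcal{D}_n}$, translate $x_n$ into the fundamental cell $[0,1]^2$. By compactness of $[0,1]^2 \times \mathbb{S}^1$, extract a subsequence with $(x_n, v_n) \to (x^*, v^*)$; then the segments $\ell_n$ converge in Hausdorff distance to the segment $\ell^*$ of length $T_0 + 1$ centred at $x^*$ with direction $v^*$.

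The crux is to show that $\ell^* \subset \widetilde{\mathcal{D}}$, as this contradicts the definition of $T_0$. Suppose toward a contradiction that $\ell^*$ meets the interior of some scatterer $\mathcal{O}_i$ (or one of its $\mathbb{Z}^2$-translates) at a point $p$. Choose a small open ball $B(p,r)$ contained in $\mathrm{int}(\mathcal{O}_i)$ and crossed transversely by $\ell^*$. Since the boundary parametrisation of $\mathcal{O}_i^n$ converges to that of $\mathcal{O}_i$ in $C^r \subset C^0$, the obstacles $\mathcal{O}_i^n$ converge to $\mathcal{O}_i$ in Hausdorff distance; in particular $B(p, r/2) \subset \mathrm{int}(\mathcal{O}_i^n)$ for $n$ large. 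Because $\ell_n \to \ell^*$ in Hausdorff distance, for $n$ large enough $\ell_n$ must also enter $B(p,r/2)$, contradicting $\ell_n \subset \widetilde{\mathcal{D}_n}$. Hence $\ell^*$ avoids the interior of every scatterer, so $\ell^* \subset \widetilde{\mathcal{D}}$, contradicting $T_0 < T_0 + 1$.

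The argument is largely soft compactness, and I do not anticipate serious obstacles; the only subtlety worth stressing is ruling out a pathological limit where $\ell^*$ is tangent to some scatterer of $\mathcal{D}$. This is harmless: since $\widetilde{\mathcal{D}}$ is closed, tangential contact keeps $\ell^*$ in $\widetilde{\mathcal{D}}$, and only crossings through \emph{open} obstacles matter — those are precisely what the Hausdorff convergence of obstacle interiors forbids to appear in the limit.
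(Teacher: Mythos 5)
Your compactness scheme is genuinely different from the paper's proof (which argues directly that a collision-free orbit created by a localised perturbation would force a closed line tangent to a single scatterer at a single point, and then contradicts finite horizon of the \emph{original} table by sliding that line parallel to itself), and most of your machinery is sound: translating midpoints into the fundamental cell, Hausdorff convergence of the segments, and the stability of obstacle interiors under \(C^0\)-perturbation are all fine. But there is a genuine gap at the very first step, and it sits exactly where you declared the problem ``harmless'': the claimed equivalence between finite horizon and \(T_0<\infty\). Finite horizon, as defined in the paper, says the free-flight time \(\tau\) is bounded, and \(\tau\) is the length of a chord between \emph{consecutive} collisions --- where grazing collisions count as collisions, since \(\mathcal{S}_0=\partial\mathcal{M}\) belongs to the phase space. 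A straight segment contained in \(\widetilde{\mathcal{D}}\) may pass through tangency points in its interior and thus concatenate many free flights, so your assertion that ``any segment inside \(\widetilde{\mathcal{D}}\) extends to a chord whose length exceeds it'' fails precisely at grazing alignments. Concretely, a closed line on the torus touched tangentially by scatterers alternately from above and from below has bounded \(\tau\) (every trajectory transversal to it is stopped within bounded length), yet the line itself lies entirely in the closed table; for such a table \(T_0=\infty\), and your contradiction with a segment of length \(T_0+1\) never gets off the ground.

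This is not a removable technicality within your scheme: grazing configurations are exactly the mechanism by which a small localised perturbation can open an infinite corridor (retract one tangent scatterer near its tangency, and parallel lines at sufficiently small height become collision-free), so any proof of the lemma must confront them head-on. That is what the paper's argument is built around: it reduces to a line whose only collision is a single grazing one inside the perturbed region, and rules it out by observing that parallel lines on the side opposite the tangent obstacle would already have been collision-free in the unperturbed table, contradicting finite horizon. To repair your proof you would need to establish (or adopt as the meaning of finite horizon) that \(\widetilde{\mathcal{D}}\) contains no complete line, i.e.\ \(T_0<\infty\); granted that, your argument is correct and in fact yields a stronger conclusion than the paper's, namely \(C^0\)-openness of finite horizon rather than mere stability under localised perturbations. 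As written, however, the reduction in your first paragraph does not follow from boundedness of \(\tau\), and your closing remark treats tangency only in the limit step --- where it is indeed harmless --- not in the definition of \(T_0\), where it is the crux.
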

\begin{proof}
    Let \( p \) be a collision point in \( \partial\mathcal{D} \), and let \( L \) be the image of the orbit of \( p \) in the torus.
    If \( p \) is regular, then, for sufficiently small perturbations localised at \( p \), the corresponding perturbed orbit \( L' \) is still regular and bounces at the perturbed obstacle \( \partial\mathcal{D}'_i \).
    Thus, if a perturbation around \( p \) were to create a periodic orbit with no collisions at all, then the collision at \( p \) must be grazing.
    It must also be the only collision on the orbit \( L \); otherwise, a localised perturbation would not eliminate all collisions.
    Hence, the line \( L \) is closed, tangent to the obstacle \( \partial\mathcal{D}_i \) at \( p \), and a positive distance away from all the other curves in \( \partial\mathcal{D} \).
    In particular, this means the obstacle \( \partial\mathcal{D}_i \) lies entirely on one side of \( L \).
    Now, on the side of \( L \) opposite the obstacle, any line parallel to \( L \) and sufficiently close to it would not meet any of the obstacles \( \partial\mathcal{D}_j \), which contradicts our finite horizon hypothesis.
    It follows that localised perturbations of finite horizon tables can not create tables of unbounded horizon, as we wanted.    
\end{proof}
\par Given \( n \in \N \), there exists a positive constant \( M \), depending on \( n \) and on the table \( \mathcal{D} \) (more specifically, on the maximum free-flight time \( \tau_\mathrm{max}) \) with the following property: for every periodic point \( p \in \mathcal{M} \) of order less or equal to \( n \), the fundamental trace of its lift to \( \widetilde{\mathcal{D}} \) is completely contained in \( [-M, M]^2 \).
    \begin{figure}[!ht]
        \centering
        \includegraphics[width=.8\textwidth]{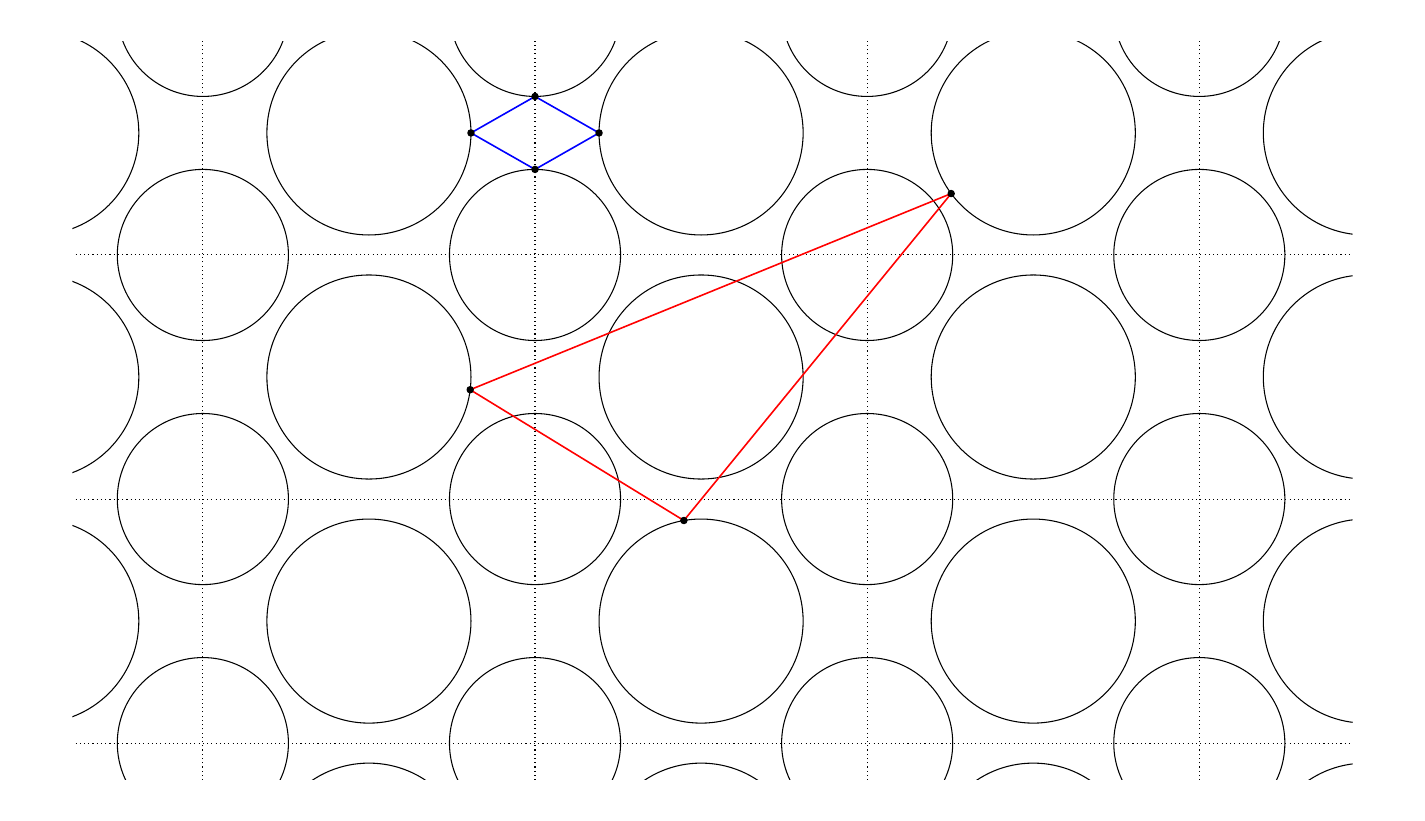}
        \caption{Physical and non-physical trajectories on the lifted table.}
        \label{fig:lifted_table_per}
    \end{figure}
\par We consider the set of all possible symbolic sequences of period \( n \) orbits:
\[
    \Sigma_n \coloneqq \left\lbrace (a_0, \cdots, a_{n-1}) \;\middle|\;
    \begin{tabular}{@{}l@{}}
        \( a_i \in [-M, M] \times \{1, \cdots, k\} \subset \mathbb{A}; \) \\ 
        \( \pi(B_{a_i}) \neq \pi(B_{a_{i+1}}) \text{ for } i = 0, \cdots, n-2 \); \\
        \( \pi(B_{a_{n-1}}) = \pi(B_{a_0}) \).
    \end{tabular}
    \right\rbrace.
\]
\par Now, according to Theorem \ref{thm:bunimovich_periodic}, to each element of the finite set \( \Sigma_n \) corresponds at most one periodic orbit, which can be non-physical.
Whether or not a sequence has an associated periodic trajectory depends on the length functional of the table.
The important point here is that localised perturbations may change the length functional slightly, but they do not create or destroy scatterers, and hence do not change the set \( \Sigma_n \).
Moreover, as a consequence of Equation \eqref{eq:1st_order_posi}, ghost and regular trajectories are stable under perturbations, whereas singular orbits can become either regular or non-physical.
Either way, the sequence associated to a generalised orbit will still be associated to the corresponding perturbed orbit.
 \begin{lemma}\label{lem:remov_tangency}
     Let \( p \in \partial\mathcal{D} \) and \( n \in \N \). 
     Then, for any sufficiently small neighbourhood \( U \subset \partial\mathcal{D} \) of \( p \), there exists localised perturbations of \( \mathcal{D} \) at \( p \) with no singular points of order \( n \) or less inside \( U \). 
 \end{lemma}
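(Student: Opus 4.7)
The plan is to combine the finiteness of periodic orbits of period at most $n$ (Corollary~\ref{cor:fin_card_peri}) with the perturbation analysis leading to Equation~\eqref{eq:1st_order_posi} and Remark~\ref{rem:pertubed_periodic_orbits}.

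First, by Corollary~\ref{cor:fin_card_peri}, only finitely many periodic orbits have period at most $n$, and hence the set $S \subset \partial\mathcal{D}$ of all their grazing collision points is finite. By shrinking $U$ if necessary, I arrange $S \cap U \subseteq \{p\}$. If $p \notin S$ then $S \cap U = \emptyset$ and the identity perturbation already satisfies the conclusion, so I focus on the case $p \in S$; let $\sigma_1, \ldots, \sigma_\ell$ be the (finitely many) periodic orbits of period at most $n$ grazing at $p$. Each such orbit has velocity along the tangent line to $\partial\mathcal{D}$ at $p$, so the outward normal $n_{s_0}$ is orthogonal to every velocity vector of $\sigma_j$ at $p$.

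I then introduce a retraction localized at $p$, namely $\gamma^\epsilon(s) = \gamma(s) - \epsilon \Psi(s)\, n_s$, with $\Psi \geq 0$ a bump function supported in $\gamma^{-1}(U)$ and satisfying $\Psi(s_0) = 1$ at the parameter $s_0$ of $p$. By Remark~\ref{rem:pertubed_periodic_orbits}, because this retraction is tangent to each $\sigma_j$ at $p$, it eliminates the grazing collision at $p$ without altering the trace of $\sigma_j$: the perturbed trajectory is preserved but acquires one fewer (phantom) collision, yielding a shorter periodic orbit $\sigma_j^\epsilon$ with the same trace.

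It remains to check that, for $\epsilon > 0$ small, no periodic orbit of the perturbed table has any grazing collision in $U$. I would treat this in three points. \emph{(i)} Periodic orbits of $\mathcal{D}$ of period $\leq n$ with no grazing in $U$ persist under the perturbation, by the implicit function theorem applied to the length functional whose Hessian is positive definite (Lemma~\ref{lem:quadratic-form}); by continuity, any of their collisions in $U$ remains regular, since being regular (i.e.\ $|\varphi| < \pi/2$) is an open condition. \emph{(ii)} Each $\sigma_j^\epsilon$ inherits the remaining grazing collisions of $\sigma_j$, which lie in $S \setminus \{p\}$ and hence outside $U$; continuity keeps them outside $U$ for $\epsilon$ small. \emph{(iii)} Some previously ghost generalised orbits of period at most $n$ may become physical after the retraction, producing new periodic orbits; since the symbolic sequences indexing generalised orbits of period $\leq n$ form a finite set (thanks to the finite-horizon bound on cell jumps), and for each such sequence the condition ``the generalised orbit becomes physical and grazes somewhere in $U$'' is a closed codimension-$\geq 1$ condition on the perturbation (the grazing equation $\cos\varphi = 0$ is a nontrivial real-analytic constraint, nontrivial precisely because retractions do alter the relevant angles to first order via~\eqref{eq:pert_len_func}), a Baire/transversality argument within the family of admissible retractions allows me to choose $\epsilon$ (and, if needed, to slightly deform $\Psi$) so that none of these finitely many conditions is met.

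The main obstacle is point \emph{(iii)}: rigorously ensuring that ghost orbits turning physical do not carry grazings into $U$. Handling this cleanly relies on the richness of the family of retractions together with the smooth dependence of the generalised orbit on the perturbation, both of which follow from the non-degeneracy of the Hessian established in Lemma~\ref{lem:quadratic-form} and from the explicit first-order formulas~\eqref{eq:pert_len_func}--\eqref{eq:1st_order_posi} that link the perturbation to the angular displacement at each collision.
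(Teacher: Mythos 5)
Your steps (i)--(ii) and the use of a trace-preserving retraction at \(p\) match the paper's argument, but there is a genuine gap at your step (iii), and it is precisely the point where the paper's proof does something different. You try to handle ghosts-turned-physical in one shot by a genericity/transversality argument, asserting that ``the generalised orbit becomes physical and grazes somewhere in \(U\)'' is a nontrivial \emph{real-analytic} constraint of codimension at least one. But the boundaries here are only \(C^r\), \(r \geq 3\), so no analyticity is available, and nowhere-density of the bad set of retractions is exactly what would need proof. The first-order formulas \eqref{eq:pert_len_func}--\eqref{eq:1st_order_posi} do not supply it: they control the displacement of the critical point of the length functional (the collision positions), not the tangency condition \(\cos\varphi = 0\) of a newly physical orbit measured against a boundary that is itself moving inside \(U\). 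Worse, the natural fix for a new grazing inside \(U\) --- a further small retraction at the new grazing point --- can itself turn additional ghost orbits physical, so your sketch faces a potential infinite regress that the Baire-type language does not close. You flag this yourself as ``the main obstacle,'' and as written it remains open.

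The paper's proof avoids the transversality question entirely with a finiteness-based iteration. It partitions the finite set \(\mathcal{A} \subset \bigcup_{i=2}^n \Sigma_i\) of relevant symbolic words into \(\mathcal{A}_{\mathrm{emp}}\) (words carrying no generalised orbit) and \(\mathcal{A}_{\mathrm{orb}}\) (words carrying one, physical or ghost), and observes that small localised perturbations never shrink \(\mathcal{A}_{\mathrm{orb}}\): regular and ghost orbits persist by the non-degeneracy of the Hessian (your point (i)), and by Theorem~\ref{thm:bunimovich_periodic} each word carries at most one generalised orbit. If the chosen retraction creates a new periodic orbit with a bad collision, one simply repeats the procedure with the new orbits and a smaller neighbourhood \(\widetilde U \subset U\); each repetition moves at least one word irreversibly from \(\mathcal{A}_{\mathrm{emp}}\) to \(\mathcal{A}_{\mathrm{orb}}\), and since \(\#\mathcal{A}_{\mathrm{emp}} < \infty\) the process terminates after finitely many steps. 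Replacing your step (iii) with this shrink-and-repeat argument closes the gap without any genericity claim. (Two minor further points: the perturbed orbit \(\sigma_j^\epsilon\) in your construction has the same trace and the same length --- it is the symbolic word, not the orbit, that gets shorter --- and the lemma as used in Lemma~\ref{lem:dense_nonsingular_n} also requires choosing the retraction so that singular orbits with a \emph{regular} collision at \(p\) become regular or non-physical, a case your proposal does not treat.)
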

 \begin{proof}
    As there are finitely many periodic orbits of order \( n \), the result is straightforward for any point not lying on a periodic orbit.
    Assume \( p \in \Gamma_a, a \in \mathbb{A} \), is a vertex of a periodic trajectory.
    Let 
    \[
        \mathcal{A} \subset \bigcup_{i=2}^n\Sigma_i
    \]
    be the set of all sequences of \( n \) or less elements containing \( a \).
    We further partition \( \mathcal{A} \) into the sets \( \mathcal{A}_{\mathrm{emp}} \), of sequences corresponding to no periodic trajectory; and its complement \( \mathcal{A}_\mathrm{orb}:= \mathcal{A}\setminus \mathcal{A}_{\mathrm{emp}} \), of sequences corresponding to periodic trajectories (physical or not).
    \par As there are finitely many closed orbits of order less than \( n \), there exists a neighbourhood \( U \) of \( p \) which does not contain the vertices of any periodic orbits of order \( n \) or less other than ones reflecting exactly at \( p \).
    Now, the generalised orbits reflecting at \( p \) will all persist under perturbation localised at \( U \), provided it is sufficiently small.    
    In other words, small localised perturbations do not decrease the size of the set \( \mathcal{A}_\mathrm{orb} \).
    They could however, create a new periodic orbit \( O \) associated to one the sequences previously in \( \mathcal{A}_\mathrm{emp} \).
    Since, however, such set is finite, there are only finitely many perturbations which could possible create such news orbits.
    \par Choose the perturbation to be a retraction \( \widetilde\gamma \) of \( \Gamma_a \) at \( p \) with support \( J \) such that \( \widetilde\gamma(J) \subset U \). 
    If any of the orbits containing \( p \) were grazing at \( p \), the grazing collision does not happen any more after the perturbation.
    The regular and ghost orbits reflecting at \( p \) will persist after the perturbation, provided it is small enough.
    Moreover, we can choose our perturbation in such a way that any singular orbit having a regular collision at \( p \) is either regular or non-physical after perturbation.
    Finally, if the perturbation creates any new periodic orbits, that is, if it increases the size of \( \mathcal{A}_\mathrm{orb} \), then we repeat the procedure considering the new orbits and a suitable new neighbourhood \( \widetilde U \subset U \).
    Since \( \#\mathcal{A}_\mathrm{emp} < \infty \), the process terminates after finitely many steps.
\end{proof}
Recall that \( \mathcal{R}_n \) is the subset of \( \mathfrak{D}^m \) consisting of tables without any singular periodic orbits of period less or equal to \( n \). 
\begin{lemma}\label{lem:dense_nonsingular_n}
For every \( n\geq 2 \), the set \( \mathcal{R}_n \) is open and dense in \( \mathfrak{D}^m \). 
\end{lemma}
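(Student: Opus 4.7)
I will treat openness and density separately, each resting on the structural results already established for generalised periodic orbits.

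\emph{Openness.} Fix $\mathcal{D}_0\in\mathcal{R}_n$. By Lemma~\ref{lem:fin_hor_persists} and the uniform free-flight bound, on a small $C^r$-neighbourhood $\mathcal{V}$ of $\mathcal{D}_0$ the set $\Sigma_n$ of admissible symbolic sequences of length $\leq n$ satisfying the no-repetition hypothesis of Theorem~\ref{thm:bunimovich_periodic} is uniformly finite. For each such $\omega$, Theorem~\ref{thm:bunimovich_periodic} yields at most one generalised periodic orbit $\sigma_\omega(\mathcal{D})$, realised as a critical point of the length functional $L_\omega^{\mathcal{D}}$. Lemma~\ref{lem:quadratic-form} shows that this critical point is non-degenerate (positive definite Hessian), so the implicit function theorem applied to $dL_\omega^{\mathcal{D}}=0$ exhibits $\sigma_\omega(\mathcal{D})$ as a $C^{r-1}$-function of $\mathcal{D}\in\mathcal{V}$. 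In particular the collision angles at each bounce of $\sigma_\omega(\mathcal{D})$ and the distances from its connecting segments to the obstacles depend continuously on $\mathcal{D}$. At $\mathcal{D}_0$ every such orbit is either (i) physical with all collision angles strictly inside $(-\pi/2,\pi/2)$, or (ii) a ghost orbit crossing the interior of some obstacle at strictly positive depth; both are open conditions and therefore persist after possibly shrinking $\mathcal{V}$, so $\mathcal{V}\subset\mathcal{R}_n$.

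\emph{Density.} Fix $\mathcal{D}_0\in\mathfrak{D}^m$ and $\varepsilon>0$. By Corollary~\ref{cor:fin_card_peri} only finitely many periodic orbits of period $\leq n$ exist in $\mathcal{D}_0$; let $\{p_1,\dots,p_N\}\subset\partial\mathcal{D}_0$ be the (finite) collection of grazing collision points they present. Choose pairwise disjoint neighbourhoods $U_i\ni p_i$ so small that Lemma~\ref{lem:remov_tangency} applies at each $p_i$ and produces a localised perturbation $\delta_i$, supported in $U_i$, that removes every singular periodic orbit of order $\leq n$ with a collision point inside $U_i$. Because the $U_i$ are disjoint, the composite perturbation $\mathcal{D}_1:=\mathcal{D}_0+\sum_i\delta_i$ simultaneously inherits this property on every $U_i$; taking the $\delta_i$ small enough guarantees $\|\mathcal{D}_1-\mathcal{D}_0\|_{C^r}<\varepsilon$ and, by Lemma~\ref{lem:fin_hor_persists}, preserves the finite horizon. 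Suppose $\mathcal{D}_1$ carried a singular periodic orbit of period $\leq n$ with all grazing collisions in $\partial\mathcal{D}_1\setminus\bigcup_i U_i=\partial\mathcal{D}_0\setminus\bigcup_i U_i$: by the continuous dependence of $\sigma_\omega(\mathcal{D})$ on $\mathcal{D}$ established in the openness step, the corresponding orbit at $\mathcal{D}_0$ would then also be singular with grazings outside $\{p_1,\dots,p_N\}$, contradicting the choice of the $p_i$. Therefore $\mathcal{D}_1\in\mathcal{R}_n$.

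\emph{Key obstacle.} The principal difficulty is the non-locality of periodic orbits: a localised perturbation near one grazing point can, in principle, bend a previously regular orbit into a new tangency far away. The resolution combines (a) the local content of Lemma~\ref{lem:remov_tangency}, which rules out new singular orbits inside each $U_i$, with (b) the continuous dependence of critical points of $L_\omega^{\mathcal{D}}$ on $\mathcal{D}$ provided by Lemma~\ref{lem:quadratic-form} and the implicit function theorem, which rules out new singular orbits outside $\bigcup_i U_i$ as long as the $\delta_i$ are chosen sufficiently small to keep non-grazing angles bounded strictly away from $\pm\pi/2$. Once these two ingredients are in place, the finiteness of the relevant set of symbolic sequences terminates the argument with a single simultaneous perturbation.
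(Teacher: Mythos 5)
Your overall strategy (finiteness of the relevant symbolic data plus localised retractions via Lemma~\ref{lem:remov_tangency}) is the paper's, but your density argument has a genuine gap exactly where the structural instability of Sinai billiards bites. Lemma~\ref{lem:remov_tangency} is a statement about a single localised perturbation of a given table: its conclusion (no singular points of order $\leq n$ inside $U$) is obtained through an internal iteration --- shrinking the neighbourhood and re-perturbing whenever the retraction promotes a word from $\mathcal{A}_{\mathrm{emp}}$ to $\mathcal{A}_{\mathrm{orb}}$, i.e., whenever a ghost orbit becomes physical. Since periodic orbits are global objects, applying $\delta_1,\dots,\delta_N$ simultaneously does not automatically ``inherit'' the conclusion of the lemma on each $U_i$: a ghost of $\mathcal{D}_0$ crossing an obstacle at small depth inside $U_i$ can, under the composite table $\mathcal{D}_0+\sum_j\delta_j$, become a physical orbit grazing inside $U_i$, and the choice of $\delta_i$ made for the table $\mathcal{D}_0$ perturbed at $p_i$ alone gives no control over this. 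The paper avoids precisely this by applying Lemma~\ref{lem:remov_tangency} \emph{sequentially} (perturb at $p_1$, record that the singular set of the new table is the old one minus the orbits reflecting at $p_1$, check via Lemma~\ref{lem:fin_hor_persists} that the hypotheses are re-established, then perturb the new table at $p_2$, and so on), wrapped in an induction on $n$ with a separate base case $n=2$.

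Your mechanism for excluding new singular orbits outside $\bigcup_i U_i$ also does not close. Continuous dependence of $\sigma_\omega(\mathcal{D})$ on $\mathcal{D}$ only shows that a grazing collision of $\sigma_\omega(\mathcal{D}_1)$ corresponds to a \emph{nearly} grazing collision of $\sigma_\omega(\mathcal{D}_0)$, not a grazing one, so the asserted contradiction with the choice of $\{p_1,\dots,p_N\}$ is a non sequitur. (For the finitely many orbits already present at $\mathcal{D}_0$ this is repairable by a quantitative angle margin --- pick the perturbation small compared to the minimal gap of all collision angles from $\pm\pi/2$ --- but that fix says nothing about orbits \emph{created} by the perturbation, which is the real danger.) In addition, your implicit function theorem step requires a non-degenerate Hessian, and Lemma~\ref{lem:quadratic-form} proves positive definiteness only at physical periodic orbits; at a ghost critical point with a grazing vertex angle ($\cos\bar\varphi_j=0$) an entire row of $D^2L$ vanishes, so the continuation $\sigma_\omega(\mathcal{D})$ need not be defined. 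The same issue touches your openness paragraph, though there it is patchable by a compactness argument (a limit of singular physical orbits of period $\leq n$ in tables $\mathcal{D}_k\to\mathcal{D}_0$ is a singular physical orbit of $\mathcal{D}_0$, contradicting $\mathcal{D}_0\in\mathcal{R}_n$). For density, the missing idea is the paper's iterative bookkeeping of the finite set of unrealised words $\mathcal{A}_{\mathrm{emp}}$, which is what tames orbit creation and lets the process terminate.
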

\begin{proof}
    As there are only finitely many periodic orbits of order less than \( n \), and regular orbits are persistent under sufficiently small perturbations, it follows that any table close enough to a table in \( \mathcal{R}_n \) is also in \( \mathcal{R}_n \).
    We need only show that \( \mathcal{R}_n \) is dense.
    Let \( \mathcal{D} \in \mathfrak{D}^m \).
    Using induction on the order of periodic orbits, we construct an element of \( \mathcal{R}_n \) arbitrarily close to \( \mathcal{D} \). \\
    \par\noindent\emph{The base case \( n=2 \)}. In this somewhat degenerate situation where there are only two obstacles \( \Gamma_1 \) and \( \Gamma_2 \), the only possible singular periodic orbit have to two regular collisions at one scatterer, say \( \Gamma_1 \), and one grazing collision at the point \( p \in \Gamma_2 \).
    Note that the two regular collisions at points \( x, y \in \Gamma_1 \) must be normal collisions, and that convexity of \( \Gamma_1 \) does not allow for the existence of sequences \( x_i, y_i \in \Gamma_1 \) such that the line segments \( [x_i,y_i] \) are realised as billiard trajectories while simultaneously converging to \( [x,y] \) as \( i \to \infty \). 
    Thus, any sufficiently small retraction of \( \mathcal{O}_2 \) at \( p \) would immediately remove the tangency while preserving the orbit. \\
    \par\noindent\emph{The general case}. Assume, by induction, that the result holds for \( \mathcal{R}_{n-1} \).
    We lift our initial table \( \mathcal{D} \) and consider the set \( \Sigma_n \) of all possible encodings of periodic orbits of order \( n \).
    Since there only finitely many periodic orbits, the collection of all reflection points of singular orbits is also finite, and we can choose a minimal finite sub-collection \( \{p_1, \cdots, p_k\} \) containing at least one reflection point of each singular orbit. 
    \par Starting with \( p_1 \), we can use Lemma \ref{lem:remov_tangency} to find an arbitrarily small perturbation \( \mathcal{D}' \) of \( \mathcal{D} \) around \( p_1 \) which removes all the singular orbits of order \( n \) reflecting at \( p_1 \), without creating any new ones.
    Moreover, since the perturbation is localised away from singular orbits not reflecting at \( p_1 \), such orbits comprise the full set of singular orbits of order \( n \) of \( \mathcal{D}' \).
    \par Now, according to Lemma \ref{lem:fin_hor_persists}, \( \mathcal{D}' \) still has finite horizon, so that we can once again repeat the construction done to \( \mathcal{D} \) to encode its periodic orbits using the set \( \mathfrak{D}_n \). 
    Furthermore, the set \( \{p_2, \cdots, p_k\} \) is a minimal sub-collection of reflection points representing all the singular orbits of \( \mathcal{D}' \), and we can repeat the procedure for \( p_2 \).
    \par After finitely many steps we obtain a perturbed table, which we still call \( \mathcal{D}' \), all of whose periodic orbits of order \( n \) are regular.
    In particular, since regular orbits are persistent under perturbation, any table sufficiently close to \( \mathcal{D}' \) also has this property.
    It follows from the induction hypothesis that \( \mathcal{D}' \) can be approximated by a table with no singular orbits of order less than or equal to \( n \).
    As such approximations can be made arbitrarily close, it follows that there are tables in \( \mathcal{R}_n \) arbitrarily close to \( \mathcal{D} \), as we wanted.
\end{proof}

Recall that \( \mathcal{S}_n \) is the subset of \( \mathfrak{D}^m \) consisting of tables such that no distinct periodic orbits of period less or equal to \( n \) have the same length.

\begin{lemma}\label{lem:dense_simple_n}
For every \( n \geq 2 \), the set \( \mathcal{S}_n \) is open and dense in \( \mathfrak{D}^m \). 
\end{lemma}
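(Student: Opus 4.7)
My plan is to use Lemma~\ref{lem:dense_nonsingular_n} to restrict attention to tables already in $\mathcal{R}_n$, where every periodic orbit of period at most $n$ is regular and the perturbative machinery of~\eqref{eq:pert_len_func}--\eqref{eq:descrip_Plamb_per_point} is available. Since $\mathcal{R}_n$ is open and dense, it will suffice to establish that $\mathcal{R}_n\cap\mathcal{S}_n$ is open and dense in $\mathfrak{D}^m$; this is also precisely what the subsequent Proposition~\ref{prop_good_tables} requires.

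For openness, I would fix $\mathcal{D}\in\mathcal{R}_n\cap\mathcal{S}_n$ and exploit the finiteness from Corollary~\ref{cor:fin_card_peri}: only finitely many periodic orbits of period $\leq n$ exist, and each is regular. Lemma~\ref{lem:quadratic-form} gives positive definiteness of the Hessian of every relevant length functional $L_\rho^I$ at the corresponding critical configuration, so the implicit function theorem produces, for each periodic orbit $\sigma$ of $\mathcal{D}$, a unique smooth continuation $\sigma(\mathcal{D}')$ as a regular periodic orbit of any nearby $\mathcal{D}'\in\mathcal{R}_n$, with length depending continuously on $\mathcal{D}'$. Only finitely many pairwise length differences must stay nonzero, and each is a continuous function of $\mathcal{D}'$, so the property is $C^r$-stable.

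For density, given $\mathcal{D}\in\mathfrak{D}^m$, I would first approximate it by $\mathcal{D}_0\in\mathcal{R}_n$ (via Lemma~\ref{lem:dense_nonsingular_n}) and then seek a small normal perturbation $\gamma\mapsto\gamma+\epsilon\lambda\, n$ of $\mathcal{D}_0$ landing in $\mathcal{S}_n$. Enumerating the finitely many periodic orbits of period $\leq n$ as $\sigma_1,\ldots,\sigma_N$, formula~\eqref{eq:descrip_Plamb_per_point} gives
\[
    l(\sigma_k^\lambda) - l(\sigma_k) = 2\epsilon\sum_{i}\lambda(s_i^{(k)})\cos\varphi_i^{(k)} + O(\epsilon^2).
\]
For each pair $k\ne l$, the linear functional $\Phi_{kl}\colon \lambda\mapsto P^\lambda(\sigma_k)-P^\lambda(\sigma_l)$ on $C^r(\mathbb{T}_{\mathcal{D}_0})$ is continuous, so once I know it is nonzero, the set $\{\lambda:\Phi_{kl}(\lambda)\ne 0\}$ is open and dense. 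Intersecting over the finitely many pairs would furnish an open dense set of admissible $\lambda$ that separate all lengths to first order, and for $\epsilon$ small the $O(\epsilon^2)$ terms cannot restore any coincidence. Openness of $\mathcal{R}_n$ keeps the resulting $\mathcal{D}_\lambda$ in $\mathcal{R}_n\cap\mathcal{S}_n$ and arbitrarily close to $\mathcal{D}$.

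The main obstacle is therefore verifying that each $\Phi_{kl}$ is a nontrivial functional. Writing $P^\lambda(\sigma_k)=2\int\lambda\,d\mu_k$ for the atomic measure $\mu_k=\sum_i\cos\varphi_i^{(k)}\,\delta_{s_i^{(k)}}$ on $\mathbb{T}_{\mathcal{D}_0}$, triviality of $\Phi_{kl}$ would mean $\mu_k=\mu_l$. Since the weights $\cos\varphi_i^{(k)}$ are strictly positive for regular orbits, this forces matching supports and matching total cosine weights at every collision point; combined with the uniqueness statement of Theorem~\ref{thm:bunimovich_periodic}, this is a non-generic algebraic coincidence on $\mathcal{D}_0$. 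I would therefore prefix the density argument with one further small perturbation eliminating any such coincidences in the finite list of pairs, which is possible because each coincidence cuts out a nowhere dense subvariety of $\mathcal{R}_n$. Finally, a technical check\textemdash handled by choosing $\lambda$ with small support and invoking Lemma~\ref{lem:fin_hor_persists} together with persistence of the finite symbol set $\Sigma_n$\textemdash ensures that no spurious new periodic orbits of period $\leq n$ arise.
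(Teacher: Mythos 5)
Your openness argument matches the paper's (theirs is terser: finitely many orbits, lengths vary continuously, so a positive minimal gap persists), but your density mechanism is genuinely different from the paper's, and the difference is exactly where your proof has a gap. You try to separate lengths at \emph{first} order: the change in \(l(\sigma_k)\) is \(\epsilon P^\lambda(\sigma_k)+O(\epsilon^2)\), and you need the functional \(\Phi_{kl}(\lambda)=P^\lambda(\sigma_k)-P^\lambda(\sigma_l)\) to be nontrivial, i.e.\ the atomic measures \(\mu_k=\sum_i\cos\varphi_i^{(k)}\delta_{s_i^{(k)}}\) and \(\mu_l\) to differ. When the two orbits have a collision point not shared by the other, this is immediate (and is essentially the easy half of the paper's argument). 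But in the degenerate case \(\mu_k=\mu_l\) \textemdash distinct orbits through the same collision points with matching cosine weights, which Theorem~\ref{thm:bunimovich_periodic} does \emph{not} exclude, since the two orbits carry different symbolic words \textemdash your proposal simply asserts that this "cuts out a nowhere dense subvariety of \(\mathcal{R}_n\)" removable by "one further small perturbation". That assertion is a genericity claim of precisely the same nature as the lemma being proved, and you give no argument for it; as written, the proof is circular at its only hard point. Note also that no choice of \(\lambda\) can fix this at first order: if \(\mu_k=\mu_l\), every first-order variation moves both lengths identically, so your "open dense set of admissible \(\lambda\)" is empty for such a pair.

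The paper closes exactly this hole with a \emph{second-order} device that your first-order framework lacks. It perturbs with \(\lambda(s_0)=0\), \(\lambda'(s_0)\neq 0\) (condition \eqref{eq:cond_lamb}): the first-order term \(P^\lambda(\sigma)\) from \eqref{eq:descrip_Plamb_per_point} then vanishes, and the length change reduces to the definite quadratic quantity \(D^2L(\sigma)(\psi,\psi)\neq 0\), where \(\psi\) solves \eqref{eq:1st_order_posi} and is nonzero because \(DP^\lambda(\sigma)\neq 0\) (here positive definiteness of the Hessian, Lemma~\ref{lem:quadratic-form}, is used for invertibility, not just for the implicit function theorem). If \(p\) lies on one orbit only, this changes that orbit's length while leaving the other untouched. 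In the shared-collision-point case, the paper observes that distinct orbits through the same points must have different reflection angles, hence \(DP^\lambda(\sigma)\neq DP^\lambda(\sigma')\), so the induced shifts \(\psi\neq\psi'\) move the collision points of the two orbits apart; after this preliminary perturbation the supports differ and the localized argument applies. To repair your proof you would need to import this maneuver (or an equivalent one) to handle the \(\mu_k=\mu_l\) pairs; the rest of your skeleton \textemdash reduction to \(\mathcal{R}_n\) so that all \(\cos\varphi_i^{(k)}>0\), the finite enumeration, the \(O(\epsilon^2)\) control, and the finite-horizon and symbol-set persistence via Lemma~\ref{lem:fin_hor_persists} \textemdash is sound.
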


\begin{proof}
    From the equality in \eqref{eq:per_orb_perturb} we can see that the first order variation of perimeter from a perturbation of the form \( \gamma^\lambda(s) = \gamma(s) + \epsilon\lambda(s)n_s \) is
    \begin{equation}\label{eq:1st_ord_length}
        \Delta_\sigma(\psi) = P^\lambda(\sigma) + DP^\lambda(\sigma)(\psi),
    \end{equation}
    where \( \sigma = (s_1, \cdots, s_n) \) are the reflection points of the original periodic orbit and the corresponding periodic orbit for the perturbation reflects at points \( s_i^\lambda = s_i + \epsilon\psi_i \).
    In particular, arbitrarily small perturbations induce arbitrarily small changes in length.
    \par Given a table \( \mathcal{D} \) in \( \mathcal{S}_n \), since there only finitely many periodic orbits of order \( n \) or less, we have \( \lvert L(\sigma) - L(\sigma')\rvert > \eta > 0 \) for any two periodic orbits \( \sigma, \sigma' \).
    Any perturbation of \( \mathcal{D} \) small enough not to alter the length of such orbits by more than \( \eta/2 \) would still be an element of \( \mathcal{S}_n \), so \( \mathcal{S}_n \) is open. 
    \par As for density, let \( U_0 \) be an open set in \( \mathfrak{D}^m \), and \( \mathcal{D} \) a table in \( U \).
    Due to the existence of the minimum free-flight time for any table in \( \mathfrak{D}^m \), it is sufficient to show that arbitrarily close to \( \mathcal{D} \) there is a table where no two periodic orbits of the same order have the same length.
    If \( \mathcal{D} \) itself has such a property there is nothing to show, so assume \( O, O' \) are two distinct closed trajectories in \( \mathcal{D} \) with same length.
    Suppose first that \( O \) has a reflection point \( p \) which is not a collision point of \( O' \).
    We choose the perturbation \( \lambda \) localised around \( p \) so that it changes the tangent vector of the obstacle at \( p \), but does not move \( p \) itself. 
    More explicitly, \( \lambda \) satisfies
    \begin{equation}
    \begin{cases}\label{eq:cond_lamb}
        \lambda(s_0) = 0, \\
        \lambda'(s_0) \neq 0.
    \end{cases}
    \end{equation}
    Applying such conditions to Equations \eqref{eq:descrip_Plamb_per_point} and \eqref{eq:1st_ord_length}, and using Equation \eqref{eq:1st_order_posi}, we obtain that
    \begin{equation*}
        \Delta_\sigma(\psi) = D^2L(\sigma)(\psi,\psi) > 0.
    \end{equation*}
    Thus, the increase in length of the perturbed orbit \( \widetilde O \) is non-zero, and now the two orbits have distinct lengths. 
    As this process is localised at \( p \), it does not change the perimeter of any other orbit not reflecting at \( p \).
    Thus, the only situation where such a procedure could fail is when \( p \) is a collision point of more than one orbit (in particular, if the orbits \( O, O' \) share all their reflection points).
    If this happens, the orbits cannot have the same reflection angles, so the corresponding vectors \( DP^\lambda(\sigma) \) and \( DP^\lambda(\sigma') \) are distinct.
    In particular, after a preliminary perturbation of the type \eqref{eq:cond_lamb}, the corresponding orbits now have different reflection points, and the procedure above can be applied.
    \par Finally, as all the perturbations involved can be made arbitrarily small, it follows that \( \mathcal{D} \) can be approximated arbitrarily well by a table in \( \mathcal{S}_n \), and this set is therefore dense.    
\end{proof}

\section{Billiard cycles and the enriched length functional}\label{sec_billiards_cycles_lf}
\par As seen in Section \ref{sec:geodesic_approximations}, every Kourganouff surface \(K\), when compressed on its third coordinate, converges to a metric space \(K_0\), whose geometry is determined by the table \(\mathcal{D}\).
This suggests that the enriched marked length spectrum, while depending heavily on the limiting surface \(K_0\), is in fact an object intrinsic to the table \( \mathcal{D} \).
\par In what follows, we describe \( \mathcal{EL}_{\mathcal{D}; K} \) in terms of lengths on the table only.
We begin with the following definition.
\begin{defi}[Billiard cycle]
    A closed curve \( \sigma\colon I \to \mathcal{D} \) is a \emph{billiard cycle} if there is a finite set \( t_0 < t_1 < \cdots < t_{k-1} \) in \( I \) such that
    \begin{itemize}
        \item[(i)] \( \sigma(t_i) \in \partial\mathcal{D} \) for \( 0 \leq i \leq k \);
        \item[(ii)] the segment \( \sigma\rvert_{[t_i, t_{i+1}]} \) is a geodesic of \( d_\mathcal{L} \) for every \( i = 0, \cdots, k, \) where \( t_k = t_0 \);
        \item[(iii)] at each point \( \sigma(t_i) \) the curve undergoes a specular reflection, that is:
        \[
            \dot\sigma_+(t_i) = \dot\sigma_-(t_i) - 2\langle \dot\sigma_-(t_i), n\rangle n,
        \]
        where 
        \( \dot\sigma_s(t_i) = \lim_{t \to t_i^s}\frac{1}{t}(\sigma(t) - \sigma(t_1))\), for \( s\in \{+ , -\} \),
        are the side limits of \(\sigma\) at \(t_i\), \(n\) is the normal to \( \partial\mathcal{D} \) at \(\sigma(t_i)\) and \( \langle \cdot, \cdot \rangle\) is the usual Euclidean dot product.
    \end{itemize}     
\end{defi}
It is clear that every closed orbit of the billiard flow is also a billiard cycle, as well as every closed geodesic of the length metric \( d_\mathcal{L} \).  

\begin{theorem}\label{thm:char_billiard_cycles_1}
    There is a surjection, induced by \( p \), between the set \( \mathcal{C}(K) \) and the collection of all billiard cycles.
\end{theorem}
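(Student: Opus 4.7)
The natural candidate for the surjection is
\[
    \Phi\colon \mathcal{C}(K) \to \{\text{billiard cycles}\},\quad \alpha \mapsto p\circ \gamma_0^\alpha,
\]
where $\gamma_0^\alpha$ denotes the unique closed geodesic in the class $\alpha$ provided by Lemma~\ref{lem:unique_minimising_geodesic}. The proof splits naturally into two tasks: (i) verifying that $\Phi$ takes values in the set of billiard cycles, and (ii) constructing, for every billiard cycle, a class $\alpha$ in its preimage.

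For (i), I plan to decompose a closed geodesic $\gamma := \gamma_0^\alpha$ according to its interaction with $B(K)$. On each open interval disjoint from $B(K)$, the curve lies in one of the two sheets $\mathcal{D}_{\mathrm{up}}$ or $\mathcal{D}_{\mathrm{down}}$, and Lemma~\ref{lem:sheet_isometry} identifies the induced metric on each sheet with $(\mathcal{D}, d_\mathcal{L})$. Consequently $p\circ \gamma$ restricted to such an interval is a $d_\mathcal{L}$-geodesic, i.e.\ either a straight segment in $\mathrm{int}(\mathcal{D})$ or a boundary arc. It remains to verify the specular reflection law at each transverse crossing with $B(K)$. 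For this I intend to invoke Theorem~\ref{thm:kourganoff_approximation} together with the $C^1$-convergence given by Lemma~\ref{lem:convergence_geodesics}: the approximating smooth geodesics $\gamma_\epsilon$ on $K_\epsilon$ have well-defined tangent vectors at the collision point, and in the limit the symmetric gluing of the two sheets forces the projected velocity to flip across the normal vector $n_s$, yielding specular reflection. Grazing crossings are handled by Kourganoff's theorem applied on either side of the collision, while maximal subintervals of $\gamma$ contained in $B(K)$ project isometrically to boundary arcs of $\partial\mathcal{D}$, which are $d_\mathcal{L}$-geodesics. Concatenating these pieces shows that $p\circ\gamma$ satisfies the three conditions defining a billiard cycle.

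For (ii), given a billiard cycle $\sigma$ with ordered collision points $\sigma(t_0),\dots,\sigma(t_{k-1})$, the plan is to build a closed geodesic of $K_0$ projecting onto it. Starting from an interior collision-free point, I will lift the initial geodesic segment of $\sigma$ isometrically into, say, $\mathcal{D}_{\mathrm{up}}$ using Lemma~\ref{lem:sheet_isometry}; at each subsequent transverse collision I will switch sheets, and along each boundary-following arc I will lift to the corresponding component of $B(K)$. The resulting piecewise $d_0$-geodesic curve $\tilde\sigma$ closes up after a single traversal if the number of transverse crossings is even, and after two traversals otherwise, since $p\colon K \to \mathcal{D}$ is a branched double cover (one-to-one over $\partial\mathcal{D}$, two-to-one over $\mathrm{int}(\mathcal{D})$). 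Its free homotopy class then provides the desired $\alpha$ with $\Phi(\alpha)=\sigma$ (up to doubling of the parametrisation when $k$ is odd).

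The principal difficulty lies in verifying that the piecewise-constructed curve $\tilde\sigma$ is a genuine geodesic of the CAT(0) space $(K,d_0)$. This requires a careful local analysis at each of the three collision types — regular transverse reflections, grazing collisions, and boundary-hugging subsegments — and amounts to checking local minimality of length at each concatenation point using the CAT(0) comparison inequality combined with the local isometry $p\rvert_{\overline{S}}\colon (\overline S,d_0)\to(\mathcal{D},d_\mathcal{L})$. The parity subtlety in the surjectivity step is a secondary issue resolved by the branched double-cover structure, while some bookkeeping is needed to reconcile the convention that a billiard cycle is a parametrised closed curve with the fact that odd-collision cycles are covered by $\gamma_0^\alpha$ only after doubling.
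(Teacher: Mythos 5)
Your proposal follows essentially the same route as the paper: the forward direction decomposes \(\gamma_0^\alpha\) along \(B(K)\) and uses the sheet isometries of Lemma~\ref{lem:sheet_isometry} together with Kourganoff's approximation to obtain the reflection law, while the surjectivity step is the paper's alternating-sheet lift with the odd-parity case repaired by a double traversal (the paper's \(\sigma_{\mathrm{up}}\ast\sigma_{\mathrm{down}}^{-1}\)), the local minimality at specular junctions being exactly the criterion already established in Theorem~\ref{thm:K_0_as_gluing_metric_space}. Your only deviation---starting the lift at an interior collision-free point, so that for an odd number of transverse collisions the curve literally fails to close after one traversal---is a harmless bookkeeping variant of the paper's version, which starts at a collision point (where the lift is unique, so the curve always closes) and instead detects the parity obstruction as a same-sheet, hence non-minimising, reflection at the base point.
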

\begin{proof}
    \par We begin by showing that, for any \( \alpha \in \mathcal{C}(K) \), the geodesic \( \gamma^\alpha_0 \) projects to a billiard cycle.
    We consider an arc-length parametrisation \( \gamma^\alpha_0\colon [0, l_\alpha] \to K_0 \).
    We need to construct a partition \( \{t_0 < \cdots < t_{k-1}\} \) of \( [0, l_\alpha] \) satisfying properties (i), (ii) and (iii).
    \par First, suppose \( \gamma^\alpha_0 \subset B(\gamma^\alpha_0) \).
    In this case, the curve \( \gamma^\alpha_0 \) coincides with one the obstacles \( \Gamma_i \), the trace of which is already a geodesic of \( d_\mathcal{L} \).
    We simply take \( t_0 = 0 \) and it follows that \( p(\gamma^\alpha_0) \) has the desired properties.
    \par Otherwise, the set \( \gamma^\alpha_0\setminus B(\gamma^\alpha_0) = K^\circ\cap\gamma^\alpha_0 \) is a non-empty open subspace of \( \gamma^\alpha_0\).
    Hence there is a finite collection of open subintervals \(I_j \subset [0, l_\alpha] \) such that the decomposition of  \( K^\circ\cap\gamma^\alpha_0 \) into connected components is
    \[
        K^\circ\cap\gamma^\alpha_0 = \coprod_{j=0}^{q}\gamma^\alpha_0(I_j)
    \]
    We assume our intervals are labelled in order, such that \( \sup I_j < \inf I_{j+1} \).
    Let \( t_j := \sup I_j \). 
    By construction \( \gamma^\alpha_0(t_j) \) is a collision point and, for any \( t \in I_j \), the triple \( (t_j-t; p(\gamma^\alpha_0(t))) \) is an element of \( A_0 \).
    It follows that each \( \gamma^\alpha_0\rvert_{I_j} \) projects to a straight line on \( \mathcal{D} \).
    Moreover, if the collision at \(t_j\) is non-degenerate, then \(\gamma^\alpha_0(t_j)\) is an isolated element of \(B(\gamma^\alpha_0)\), hence \(\inf I_{j+1} = \sup I_j = t_j\).
    \par Thus, at a non-degenerate collision \( \gamma^\alpha_0(t_j) \) the set
    \[
        p(\gamma^\alpha_0\rvert_{I_j})\cup p(\gamma^\alpha_0\rvert_{I_{j+1}})
    \]
    is the union of two geodesic segments in \( (\mathcal{D}, d_\mathcal{L}) \) with a common end point. If the collision at \(t_j\) is grazing, then \( p(\gamma^\alpha_0(t)\rvert_{I_j \cup I_{j+1}}) \) is actually a straight line. 
    On the other hand, if it is regular, then the geodesic \( \gamma^\alpha_0 \) is still conjugated to the geodesic flow after the collision, i.e., \(\inf I_{j+1} = t_j \) and \( (t_{j+1}-t; p(\gamma^\alpha_0(t))) \in A_0 \) for any \( t\in I_j \).
    In particular, \( \gamma^\alpha_0(t)\rvert_{I_j \cup I_{j+1}} \) projects to a billiard trajectory.
    In any case, we see that when the collision point at \(t_j\) is non-degenerate, the trajectory respect the desired specular reflection law at \( p(\gamma^\alpha_0(t)\rvert_{I_j \cup I_{j+1}}) \) at \(t_j\).
    \par If, on the other hand, the collision point at \(t_j\) is degenerate, then \(s_{j}:=\inf I_{j+1} > t_j \) and \(p(\gamma^\alpha_0\rvert_{[t_j, s_j]})\) is a segment of the boundary \( \partial \mathcal{D} \). 
    Now, we notice that in this case \(\gamma^\alpha_0(I_j \cup [t_j, s_j] \sup I_{j+1}\) is entirely contained in the closure of a connected component of \( K^\circ \), hence  \(p\) maps it into a geodesic of \( (\mathcal{D}, d_\mathcal{L}). \)   
    \par So we proceed as follows: the first element of our partition is \( t_0 \). 
    If \( t_1 \) is a regular collision point, we add it to the partition.
    If not, we look at the next regular collision point \( t_j \). 
    The segments between regular collisions points are unions of straight lines and boundary segments, and such unions are geodesics because they are images under \( p \) of segments of \( \gamma^\alpha_0 \). 
    As regular collision points obey the specular reflection law, the collection of all of them forms the desired partition.  
    This finishes the first part of the proof.
    \par Conversely, we show that every billiard cycle of \( \mathcal{D} \) is the image under \( p\) of a geodesic \( \gamma^\alpha_0 \).
    Let \( \sigma \) be a billiard cycle, and \( x_i = \sigma(t_i), \ i = 0, 1, \cdots, k-1, k \) its regular collision points.
    As before, we let \( t_k = t_0 \).
    We lift \( \sigma \) to a curve \( \sigma_\mathrm{up} \) in \( K_0 \) in the following way.
    For each \( x_i \in \partial\mathcal{D}\) there is a single point \( \widetilde x_i \in B(K) \) which projects to \( x_i \), since \( p\rvert_{B(K)} \) is a diffeomorphism.
    We lift the first geodesic segment \( \sigma\rvert_{[t_0, t_1]} \) to the sheet \( \mathcal{D}_\mathrm{up} \).
    This can be done in a unique way, since \( p\rvert_{\mathcal{D}_\mathrm{up}}: (\mathcal{D}_\mathrm{up}, d_0) \to (\mathcal{D}, d_\mathcal{L}) \) is an isometry.
    Now, in the same way, we lift the second geodesic segment \( \sigma\rvert_{[t_1, t_2]} \), this time to the sheet \( \mathcal{D}_\mathrm{down} \).
    As we showed in Theorem \ref{thm:K_0_as_gluing_metric_space}, if geodesic segments from different sheets are joined a collision point, then the curve obtained is minimising exactly when the collision is specular.
    Hence the lifting of \( \sigma\rvert_{[t_0, t_2]} \) thus defined is a geodesic segment of \( K_0 \).
    \par If we proceed this way, we get a closed curve \( \sigma_\mathrm{up} \) in \(K_0 \) which is everywhere minimising, except maybe at \( t_0 \).
    Indeed, if the lifts \( \sigma\rvert_{[t_0, t_1]} \) and \( \sigma\rvert_{[t_0, t_2]} \) are in the same sheet\footnote{
        this happens exactly when we have an even number \( k \) of regular collision times \( t_i \), or equivalently an odd number of collision points \( x_i \)
    },
    then \( \sigma\rvert_{[t_k, t_1]} \) is not minimising around \( t_0 \):
    convexity of the scatterers means that for any two points \( \sigma(t_0-\epsilon) \) and \( \sigma(t_0 + \epsilon) \) the line
    \[
        r(t) = \sigma(t_0-\epsilon) + t(\sigma(t_0+\epsilon) + \sigma(t_0-\epsilon)) 
    \]
    is entirely contained in the interior of \( \mathcal{D} \) and has Euclidean length smaller than the sum \( l_0(\sigma\rvert_{[t_0-\epsilon, t_0]}) + l_0(\sigma\rvert_{[t_0, t_0+\epsilon]}) \).
    To remedy this, we can consider the curve \( \sigma_\mathrm{down} \), which is the lift of \( \sigma \) to \( K_0 \) constructed as before, but starting at the sheet \( \mathcal{D}_\mathrm{down} \).
    Then the curve \( \sigma_\mathrm{up}\ast\sigma_\mathrm{down}^{-1} \), by construction, is a concatenation of geodesic segments, all of whose regular collisions are specular, and is therefore a geodesic of \( K_0 \).
    \par Finally, we let \( \alpha \) be the free homotopy class of \( \sigma_\mathrm{up}\ast\sigma_\mathrm{down}^{-1} \). 
    Since \( \sigma_\mathrm{up}\ast\sigma_\mathrm{down}^{-1} \) is a closed geodesic, it follows from Lemma \ref{lem:unique_minimising_geodesic} that \(\gamma^\alpha_0 = \sigma_\mathrm{up}\ast\sigma_\mathrm{down}^{-1} \).
    Thus \(p(\gamma^\alpha_0) = \sigma \), as we wanted.  
\end{proof}

\subsubsection{The enriched length functional.}
\par In this section we think of \( \mathcal{D} \) as periodic billiard table on \( \mathbb{R}^2 \), and consequently \( K \) is a periodic surface in \( \mathbb{R}^3 \). 
Our goal is to describe closed cycles as critical points of a length functional, much like in Appendix \ref{sec_periodicorbitsbilliard}.
To that end, we keep the same notation:
given a periodic sequence of scatterers \( \Gamma_{\rho_0}, \cdots , \Gamma_{\rho_{q-1}}, \  \rho_k \in \{ 1,\cdots, m \} \), for \( k \in \{ 0,\cdots, q-1 \} \). 
We let \( \rho = \rho(\sigma):=(\rho_k)_{k=0,\cdots,q-1} \). 
Let us consider the lift to \( \widetilde{\mathcal{D}} \) of a cycle \( \sigma \) starting in the cell \( (0,0) \), and denote by  \( \{(\tilde i_k,\tilde j_k)\}_{k=0,\cdots,q} \) the labels of the cells where the first \( (q+1) \) bounces happen, with \( (\tilde i_0,\tilde j_0)=(0,0) \).
For \( k\in \{1,\cdots,q\} \), set \( I_k:=(\tilde i_k,\tilde j_k)-(\tilde i_{k-1},\tilde j_{k-1}) \). 
We denote \( I = I(\sigma):=(I_k)_{k=1,\cdots,q} \).
The extended length functional \( \mathrm{EL} \) acts on a subset of \( \mathbb{T}_{\rho_0}^2 \times \mathbb{T}_{\rho_1}^2 \times \cdots \times \mathbb{T}_{\rho_{q-1}}^2 \).

\begin{defi}[Link sets] Given a pair of indices \( (i, j) \in \{1, \cdots, m\}^2 \), we define the \emph{link set} \( L_{ij} \) of the scatterers \( \Gamma_i, \ \Gamma_j \) by
\[
	L_{ij} := \{ y \in \Gamma_j  ; \ \exists \  x \in \Gamma_i \text{ such that } [x,y] \subset \mathrm{int}(\mathcal{D}) \}
\]
where \(  [x,y] \) is the line segment joining \( x \) to \( y \).
\end{defi}
Observe that these sets are closed intervals in the scatterers, the borders consisting of points whose trajectory (in a given direction), hits the next scatterer at (or after) a grazing collision.
\par We let
\[
D_i\colon \T^2_i \to \R, \ (e,s) = \min\{|e-s|, l_i-|e-s|\},
\]
that is, \( D_i(e,s)\) is the length of the smallest arc in \( \Gamma_i \) joining \(\gamma_i(e) \) to \( \gamma_i(s). \)
The function \( D_i \) is differentiable almost everywhere. 
Indeed, if we write 
\[
\begin{split}
    I_i(e) &:= [(e-l_i/2, e) \cup (e+l/2, \infty)] \cap [0,l_i]; \\
    J_i(e) &:= [(-\infty, e-l_i/2) \cup (e, e+l/2)] \cap [0,l_i].
\end{split}
\]
then we have \( I_i(e) \cup J_i(e) = [0, l_i]\setminus\{e, e \pm l_i/2\} \), and moreover:
\begin{equation}\label{eq:enriched_length_curved_part}
\begin{split}
    \partial_1D_i(e,s) &= \left\{
    \begin{array}{lcl} 
            1, &\text{ if } s \in I_i(e), \\
            -1, &\text{ if } s \in J_i(e), \\
            \text{undefined,} &\text{if } s \in \{e, e \pm l_i/2\};             
        \end{array}
        \right. \\
    \partial_2D_i(e,s) & = \left\{
    \begin{array}{lcl} 
            1, &\text{ if } s \in J_i(e), \\
            -1, &\text{ if } s \in I_i(e), \\
            \text{undefined,} &\text{if } s \in \{e, e \pm l_i/2\}.            
        \end{array}
        \right. 
\end{split}
\end{equation}

The enriched length functional measures the \(d_\mathcal{L}\) length of piecewise smooth curves consisting of concatenations of line and arc segments along the boundary.
\begin{defi}[Enriched length functional]
Let \( \mathbb{L}^I_\rho\) be the following product of link sets.
\[
   \mathbb{L}^I_\rho := L_{\rho_{q-1}\rho_{0}} \times L_{\rho_1\rho_0} \times L_{\rho_0\rho_1} \times L_{\rho_2\rho_1} \times \cdots \times L_{\rho_{q-2}\rho_{q-1}} \times L_{\rho_0\rho_{q-1}}.
\]
Let \( S =  (e_{0}, s_0, e_1, s_1, \cdots, e_{q-1}, s_{q-1}) \) be a point in \( \mathbb{L} \).
Then the enriched length functional associated to the sequence \( \rho \) is
\[
	\mathrm{EL}^{I}_\rho \colon \left\{
        \begin{array}{lcl} 
            \mathbb{L} \to \mathbb{R} \\
	       S \mapsto      {\displaystyle{\sum_{k=1}^{q}}} \tau_{I_k}(s_{k - 1},e_{k}) + D_{\rho_k}(e_{k}, s_{k}) 
            =
            L^I_\rho(s_0, e_1, \cdots, s_{q-1}, e_q) + {\displaystyle{\sum_{k=1}^{q}}} D_{\rho_k}(e_{k}, s_{k}).
        \end{array}
        \right.
\]
where we make the convention \( (e_q, s_q) = (e_0, s_0). \)
\end{defi}
Remark that if \(s =  (s_0, \cdots, s_{q-1}) \) is a periodic trajectory of the table, that is, a minimum of \( L^I_\rho, \),  then \(L^I_\rho(s) = \mathrm{EL}^I_\rho(S) \), where \(S =  (s_0, s_0, s_1, s_1, \cdots, s_{q-1}, s_{q_1}) \).
In particular, the point \(S\) is also a minimum of \( \mathrm{EL}^I_\rho \).
More generally, we have the following.
\begin{prop}
    The billiard cycles of \( \mathcal{D} \) are minima of the length functionals \( \mathrm{EL} \).
\end{prop}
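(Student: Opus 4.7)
The plan is to bypass a direct critical–point analysis (which is obstructed by the non-differentiability of the arc term $D_{\rho_k}(e_k,s_k)$ along the diagonal $e_k=s_k$) and instead realise $\mathrm{EL}^I_\rho$ as the $d_0$-length of an explicit lift of the candidate curve to the CAT(0) surface $K_0$, where Lemma~\ref{lem:unique_minimising_geodesic} can be invoked.

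Fix a billiard cycle $\sigma$ whose symbolic data is $(\rho,I)$ and whose arc endpoints and collision locations are encoded by $\bar S_\sigma=(\bar e_0,\bar s_0,\ldots,\bar e_{q-1},\bar s_{q-1})\in\mathbb{L}^I_\rho$. First, I would revisit the construction from the proof of Theorem~\ref{thm:char_billiard_cycles_1} to produce a canonical lift $\tilde\sigma\subset K_0$: one traverses the arc from $\bar e_k$ to $\bar s_k$ along the component of $B(K)$ lying over $\Gamma_{\rho_k}$, and then the straight segment from $\bar s_k$ to $\bar e_{k+1}$ inside $\mathcal{D}_\mathrm{up}$, alternating with $\mathcal{D}_\mathrm{down}$ at each subsequent transition (replacing $\tilde\sigma$ by the doubling $\sigma_\mathrm{up}\ast\sigma_\mathrm{down}^{-1}$ when the parity of $q$ forces it). Applying Lemma~\ref{lem:sheet_isometry} to each straight piece, and using that $B(K)$ projects isometrically onto $\partial\mathcal{D}$, one obtains $l_0(\tilde\sigma)=c\cdot \mathrm{EL}^I_\rho(\bar S_\sigma)$ with $c\in\{1,2\}$. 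Let $\alpha\in\mathcal{C}(K)$ denote the free homotopy class of $\tilde\sigma$.

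Next, for every $S\in\mathbb{L}^I_\rho$ sufficiently close to $\bar S_\sigma$, the same alternating-sheet recipe yields a closed curve $\tilde\sigma_S\subset K_0$; the key point is that the line segments between successive arc endpoints remain inside $\mathrm{int}(\mathcal{D})$ for small perturbations (since $\sigma$ itself is admissible and the link-set condition is open), so that the recipe actually produces a curve on $K_0$. The isometry in Lemma~\ref{lem:sheet_isometry} again gives $l_0(\tilde\sigma_S)=c\cdot\mathrm{EL}^I_\rho(S)$. Since $S\mapsto\tilde\sigma_S$ is continuous and the sheet assignment is locally constant, the curves $\tilde\sigma_S$ all lie in the homotopy class $\alpha$. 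Lemma~\ref{lem:unique_minimising_geodesic} then forces $l_0(\tilde\sigma_S)\geq l_0(\tilde\sigma)$, whence $\mathrm{EL}^I_\rho(S)\geq\mathrm{EL}^I_\rho(\bar S_\sigma)$; i.e., $\bar S_\sigma$ is a local minimum.

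The main subtlety will be the bookkeeping of the sheet alternation — in particular verifying that the lift really closes up (after doubling if necessary) and that the homotopy class is preserved as $S$ varies — together with showing that, in a neighbourhood of $\bar S_\sigma$, the piecewise recipe still defines a genuine curve on $K_0$ (avoiding obstacles). Once this geometric set-up is in place, the minimality is an immediate consequence of the fact, established in Lemma~\ref{lem:unique_minimising_geodesic}, that $\gamma^\alpha_0$ is the unique closed curve of minimal $d_0$-length in its homotopy class, which is the one-line conclusion of the argument.
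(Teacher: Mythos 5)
Your strategy---realise \( \mathrm{EL}^I_\rho(S) \) as the \( d_0 \)-length of an explicit lift to \( K_0 \) and invoke the uniqueness of the minimising geodesic in its free homotopy class (Lemma~\ref{lem:unique_minimising_geodesic})---is genuinely different from the paper's proof, which is pure convex analysis: there one observes that \( \mathbb{L}^I_\rho \) is convex and \( \mathrm{EL}^I_\rho \) is a sum of convex functions, and then checks, by comparing \eqref{eq:first_gen_fct} with \eqref{eq:enriched_length_curved_part}, that \( 0 \) lies in the subdifferential \( \partial\mathrm{EL}^I_\rho(S_\sigma) \), whence \( S_\sigma \) is a \emph{global} minimum. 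Your route is attractive because it reuses the machinery of Theorem~\ref{thm:char_billiard_cycles_1}, but it has a genuine gap precisely where the enriched functional is most interesting. The chord terms \( \tau_{I_k}(s_{k-1},e_k) \) are \emph{Euclidean} chord lengths, computed whether or not the chord is physical, and your openness claim (``the line segments remain inside \( \mathrm{int}(\mathcal{D}) \) for small perturbations'') fails whenever \( \sigma \) has a grazing tangency: tangency is not an open condition, and membership of \( S \) in the link sets \( L_{ij} \) only guarantees that \emph{some} unobstructed chord between the two scatterers exists, not that the chosen one is unobstructed. This situation is unavoidable for degenerate cycles---those containing boundary arcs, which are exactly the new objects the enriched functional is built to capture---since there the chord leaves the arc tangentially, and a half-neighbourhood of perturbations of the endpoints pushes the chord into the obstacle. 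For such \( S \) no lift \( \tilde\sigma_S \) exists, and any closed curve on \( K_0 \) realising the combinatorics of \( S \) has \( d_0 \)-length strictly \emph{greater} than \( c\,\mathrm{EL}^I_\rho(S) \) (because the Euclidean chord strictly undercuts \( d_\mathcal{L} \) there); your chain \( \mathrm{EL}^I_\rho(S) = c^{-1}l_0(\tilde\sigma_S) \geq c^{-1}l_0(\gamma^\alpha_0) = \mathrm{EL}^I_\rho(\bar S_\sigma) \) then breaks at its first link, and in the unfavourable direction. The paper's subdifferential computation is completely insensitive to physicality of the chords, which is why it covers these configurations for free.

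Two further points, subordinate but real. First, even where your construction is valid you obtain only \emph{local} minimality, while the statement (and the paper's proof, via convexity) gives a global minimum on \( \mathbb{L}^I_\rho \); to upgrade you would still need to observe convexity of \( \mathrm{EL}^I_\rho \), i.e.\ the paper's key point, at which stage the lifting argument becomes redundant. Second, your claim that the free homotopy class of \( \tilde\sigma_S \) is locally constant needs care at configurations with \( D_{\rho_k}(e_k,s_k) = l_{\rho_k}/2 \): the ``shorter arc'' selection jumps there, and the two candidate lifts differ by the class of the scatterer \( \Gamma_{\rho_k} \), so they are \emph{not} homotopic. The subdifferential \( \partial D_{\rho_k} = [-1,1]^2 \) in the paper's proof absorbs this non-differentiability automatically; your construction would have to treat it separately. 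In summary: a nice geometric idea that does prove local minimality for cycles all of whose chords are strictly interior with transversal incidence, but it does not prove the proposition as stated.
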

\begin{proof}
Remark that \( \mathbb{L} \) is a convex subset of \( \mathbb{T}_{\rho_0}^2 \times \mathbb{T}_{\rho_1}^2 \times \cdots \times \mathbb{T}_{\rho_{q-1}}^2 \), which in turn implies that \( \mathrm{EL}^I_\rho \) is convex. 
Indeed, it is the restriction to a convex set of the sum of convex functions.
In particular, the Minimum Theorem applies, and a point \( S_0 \) is a minimum of \( \mathrm{EL}^I_\rho \) if and only if \( 0 \) belongs to the subdifferential \( \partial\mathrm{EL}^I_\rho(S_0) \).
Now, since \( \mathrm{EL}^I_\rho \) is a sum, it follows that
\[
    \partial L^I_\rho(S) + \partial D_{\rho_0}(S) + \cdots + \partial {\rho_{q-1}}(S) \subset \partial\mathrm{EL}^I_\rho(S),
\]
for every point \( S \).
\par Where a function is differentiable its subdifferential is a singleton consisting of the gradient, so it is enough to look at points where \( D_{\rho_i} \) has no derivative.
At these points, the partial derivatives exist, and therefore the subdifferential is obtained from the convex hull of sided limits, meaning that
\[
    \partial D_{\rho_i} (S) = [-1, 1]^2.
\]
Hence, \( 0 \in \partial D_{\rho_i}(S) + \cdots + \partial {\rho_{q-1}}(S) \) whenever \( S \) is a discontinuity point of \( D_{\rho_i} \).
\par Given a billiard cycle \( \sigma \), we may represent it as a point in the \( \mathbb{L}^I_\rho \) space by setting
\[
    S_\sigma = (e_0, s_0, \cdots, e_{q-1}, s_{q-1})
\]
where \( (e_i, s_i) \) are the collision points. 
For a non-degenerate collision, \( e_i = s_i \), while for degenerate collisions we think of \(e_i\) and \(s_i\) as the ``entry'' and ``exit'' points of the cycle in the scatterer \( \Gamma_{\rho_i} \).
\par Now, for a cycle \(\sigma\), either every collision is specular, and therefore \( S_\sigma \) is a critical point of \( L^I_\rho \) and a point of non-differentiability for every \(D_{\rho_i}\). 
Hence
\[
    \partial L^I_\rho(S_\sigma) = \{\nabla L^I_\rho(S_\sigma)\} = \{0\},
\]
and consequently \( 0 \in \partial \mathrm{EL}^I_\rho. \)
On the other hand, at degenerate collisions, the angle of entry (and exit) must be orthogonal to the normal direction, meaning the partial derivatives of \( E^I_\rho \) at that point will be in the set \( \{-1, 1\} \).
Comparing Equations \eqref{eq:first_gen_fct} and \eqref{eq:enriched_length_curved_part} above, we see that that \( 0 \in \partial \mathrm{EL}^I_\rho(S_\sigma) \) at such collisions as well.
\end{proof}


\end{document}